\numberwithin{equation}{section}
\theoremstyle{plain}
\newtheorem{thm}{\protect\theoremname}
\theoremstyle{definition}
\newtheorem{defn}{\protect\definitionname}[section]
\theoremstyle{plain}
\theoremstyle{plain}
\theoremstyle{remark}
\newtheorem*{rem*}{\protect\remarkname}
\theoremstyle{remark}
\newtheorem{rem}[defn]{\protect\remarkname}
\theoremstyle{definition}
\newtheorem{example}[defn]{\protect\examplename}
\theoremstyle{plain}
\newtheorem{lem}[defn]{\protect\lemmaname}
\theoremstyle{plain}
\newtheorem{cor}[defn]{Corollary}
\theoremstyle{plain}
\newtheorem{property}[defn]{Property}
\theoremstyle{plain}
\newtheorem*{lem*}{\protect\lemmaname}
\theoremstyle{remark}
\newtheorem*{claim*}{\protect\claimname}
\theoremstyle{plain}
\newtheorem{prop}[defn]{\protect\propositionname}
\newtheorem*{property*}{\protect\propertyname}
\newlist{casenv}{enumerate}{4}
\setlist[casenv]{leftmargin=*,align=left,widest={iiii}}
\setlist[casenv,1]{label={{\itshape\ \casename} \arabic*.},ref=\arabic*}
\setlist[casenv,2]{label={{\itshape\ \casename} \roman*.},ref=\roman*}
\setlist[casenv,3]{label={{\itshape\ \casename\ \alph*.}},ref=\alph*}
\setlist[casenv,4]{label={{\itshape\ \casename} \arabic*.},ref=\arabic*}
\providecommand{\lemmaname}{Lemma}
\providecommand{\propositionname}{Proposition}
\providecommand{\theoremname}{Theorem}
\providecommand{\casename}{Case}
\providecommand{\claimname}{Claim}
\providecommand{\definitionname}{Definition}
\providecommand{\examplename}{Example}
\providecommand{\factname}{Fact}
\providecommand{\lemmaname}{Lemma}
\providecommand{\propositionname}{Proposition}
\providecommand{\remarkname}{Remark}
\providecommand{\theoremname}{Theorem}
\begin{document}
\title{Loosely Bernoulli Odometer-Based Systems Whose Corresponding Circular
Systems Are Not Loosely Bernoulli}
\author{Marlies Gerber$^1$}   
\thanks{$^1$ Indiana University, Department of Mathematics, Bloomington, IN 47405, USA}
\author{Philipp Kunde$^2$} 
\thanks{$^2$ Pennsylvania State University, Department of Mathematics, State College, PA 16802, USA.
	P.K. acknowledges financial support from a DFG Forschungsstipendium under Grant No. 405305501.}
\maketitle

\begin{abstract}
    M.\ Foreman and B.\ Weiss \cite{FW3} obtained an anti-classification result for smooth ergodic diffeomorphisms, up to measure isomorphism,  by using  a functor $\mathcal{F}$ (see \cite{FW2}) mapping odometer-based systems, $\mathcal{OB}$, to circular systems, $\mathcal{CB}$. This functor transfers the classification problem from $\mathcal{OB}$ to $\mathcal{CB}$, and it preserves weakly mixing extensions, compact
extensions, factor maps, the rank-one property, and certain types
of isomorphisms \cite{FW2}. Thus it is natural to ask whether  
$\mathcal{F}$ preserves other dynamical properties. We show that $\mathcal{F}$ does 
\emph{not} preserve 
    the loosely Bernoulli property 
    by providing positive and zero entropy examples of loosely Bernoulli odometer-based systems whose corresponding circular systems are not loosely Bernoulli.
    We also construct a loosely Bernoulli circular system whose corresponding odometer-based system has zero entropy and is not loosely Bernoulli.
\end{abstract}

\insert\footins{\footnotesize - \\
\textit{2010 Mathematics Subject classification:} Primary: 37A05; Secondary: 37A35, 37A20, 37B10\\
\textit{Key words: } Loosely Bernoulli, Kakutani equivalence, odometer-based systems, circular systems, anticlassification, entropy}

\section{Introduction}

An important development in ergodic theory that began in the late
1990's is the emergence of \emph{anti-classification} results for
measure-preserving transformations (MPT's) up to isomorphism. Here,
an MPT is a measure-preserving automorphism of a standard non-atomic
probability space, and two such MPT's, $T$ and $S,$ are said to
be isomorphic if there is a measure-preserving isomorphism between
the underlying probability spaces that intertwines the actions of
$T$ and $S.$ We denote by $X$ the set of MPT's on a fixed standard
non-atomic probability space $(\Omega,\mathcal{M},\mu)$, and let
the equivalence relation $\mathcal{R}\subset X\times X$ be defined
by $\mathcal{R:}=\{(T,S):T\text{\ and\ }S\text{\ are\ isomorphic}\}.$
We endow $X$ with the weak topology. (Recall that $T_{n}\rightarrow T$
in the weak topology if and only if $\mu\left(T_{n}(A)\triangle T(A)\right)\to0$
for every $A\in\mathcal{M}$.) The first anti-classification theorem
in ergodic theory is due to F.\ Beleznay and M.\ Foreman \cite{BF}, who
showed that a certain natural class of measure distal transformations
is not a Borel set in $X$. In the present context of isomorphism
of MPT's, the first result is due to G.\ Hjorth \cite{H}, who proved
that $\mathcal{R}$ is not a Borel subset of $X\times X.$ However,
this left open the question of what happens if we replace $X$ by
the subset $\widetilde{X}$ consisting of ergodic MPT's, with the
relative topology. Foreman, D.\ Rudolph, and B.\ Weiss \cite{FRW} proved
that the equivalence relation $\widetilde{R}:=\mathcal{R}\cap(\widetilde{X}\times\widetilde{X})$
is also not a Borel set. These results show that the problem of classifying
MPT's (or ergodic MPT's) up to isomorphism, which goes back to J.
von Neumann's 1932 paper \cite{Ne}, is inaccessible to countable methods
that use countable amounts of information. (See \cite{FW1}, \cite{FW2}, \cite{FW3}
for further discussion of this interpretation of these anti-classification
results.)

The most important positive results consist of P. Halmos and von Neumann's
classification of ergodic MPT's with pure point spectrum \cite{HN}
and the classification of Bernoulli shifts by their (metric) entropy
due to A. Kolmogorov \cite[Section 4.3]{KH95}, Ya. Sinai \cite{S}, and D. Ornstein \cite{Or}.
Yet many open questions remain. For example, the rank-one transformations,
        which have been studied extensively, form a dense $G_{\delta}$ subset of $X$,
and the restriction of the equivalence relation $\mathcal{R}$ to
rank-one transformations is Borel \cite{FRW}. However, there is still
no known classification of rank-one transformations up to isomorphism. 

In view of the anti-classification results mentioned above and, in
general, the difficulty of classifying ergodic MPT's, other versions
of the classification problem have been considered. One possibility
is to restrict the attempted classification to smooth ergodic diffeomorphisms
of a compact manifold $M$ with respect to a smooth measure $\mu.$
Except in dimensions one and two, there are no known obstructions
to realizing an arbitrary ergodic MPT as a diffeomorphism of a compact
manifold except the requirement, proved by A. Kushnirenko \cite{Ku},
that the ergodic MPT have finite entropy. Thus, it is not clear that
this restricted classification problem is any easier. Indeed, in this
context there is also an anti-classification result due to Foreman
and Weiss \cite{FW3}. They showed that if $X$ is replaced by the collection
$\widehat{X}$ of smooth Lebesgue-measure preserving diffeomorphisms
of the two-dimensional torus and $\widehat{X}$ is given the $C^{\infty}$
topology, then the equivalence relation $\widehat{R}$ consisting
of pairs of isomorphic elements of $\widehat{X}$ still fails to be
a Borel set. 

Another modification of the classification problem is to consider
Kakutani equivalence instead of isomorphism. Two ergodic MPT's are
said to be Kakutani equivalent if they are isomorphic to measurable
cross-sections of the same ergodic flow. It follows from Abramov's
formula, that two Kakutani-equivalent MPT's have the same
entropy type: zero entropy, finite entropy, or infinite entropy. Until
the work of A. Katok \cite{K75,K77} in the case of zero entropy, and J. Feldman
\cite{Fe} in the general case, no other restrictions were known for
achieving Kakutani equivalence. Ornstein, Rudolph, and Weiss \cite{ORW} showed, by building on the work of Feldman, that there are uncountably
many non-Kakutani equivalent ergodic MPT's of each entropy type. Thus
there is a rich variety of Kakutani equivalence classes, and classification
of ergodic MPT's up to Kakutani equivalence also remains an open problem.
It is not known whether anti-classification results analogous to those
in \cite{FRW} and \cite{FW3} can be obtained for Kakutani equivalence,
neither in the original setting of ergodic MPT's nor in the setting
of smooth diffeomorphisms preserving a smooth measure. 

In transferring the results of \cite{FRW} to the smooth setting, Foreman
and Weiss \cite{FW2} introduced a continuous functor $\mathcal{F}$ that maps
odometer-based systems to circular systems. (See Section \ref{sec:Odometer-Based-and-Circular} for definitions
of these terms.) According to an announcement in \cite{FW2}, any finite
entropy system that has an odometer factor can be represented as an
odometer-based system. It is a difficult open question whether any
transformation with a non-trivial odometer factor can be realized
as a smooth diffeomorphism on a compact manifold. Foreman and Weiss
\cite{FW2} were able to circumvent this difficulty by using the functor
$\mathcal{F}$ to transfer the classification problem for odometer-based
systems to circular systems. Under mild growth conditions on the parameters,
circular systems can be realized as smooth diffeomorphisms of the
two-dimensional torus using the Anosov-Katok method \cite{AK}. The
functor $\mathcal{F}$ preserves weakly mixing extensions, compact
extensions, factor maps, the rank-one property, and certain types
of isomorphisms, as well as numerous other properties (see \cite{FW2}).
While all circular systems have zero entropy, there exist positive-entropy
odometer-based systems, and thus $\mathcal{F}$ does not preserve
the entropy type. In connection with a possible Kakutani-equivalence
version of the results in \cite{FW3}, a natural question is whether
$\mathcal{F}$ preserves Kakutani equivalence (at least for zero-entropy
odometer-based systems). In particular, J.-P.\ Thouvenot asked whether
$\mathcal{F}$ maps loosely Bernoulli automorphisms (those Kakutani-equivalent
to an irrational rotation of the circle in case of zero entropy, or
those Kakutani-equivalent to a Bernoulli shift in case of positive
entropy) to loosely Bernoulli automorphisms. We provide examples to
show that the answer to both of these questions is `no'. We also obtain
an example which shows that $\mathcal{F}^{-1}$ fails to preserve
the loosely Bernoulli property. Our examples suggest that a different
approach may be needed for Kakutani-equivalence versions of anti-classification
results in the diffeomorphism setting. 

In Sections \ref{sec:positive entropy 1} and \ref{sec:positive entropy 2}, we give an example of a positive-entropy odometer-based
system $\mathbb{E}$ that is loosely Bernoulli, but the circular system
$\mathcal{F}(\mathbb{E})$ is not loosely Bernoulli. In this example,
$(n+1)$-blocks of the odometer-based system are constructed mostly
by independent concatenation of $n$-blocks. Because of this, $\mathbb{E}$
satisfies the positive-entropy version of the loosely Bernoulli property.
However, using techniques of A. Rothstein \cite{R}, we show that this
independent concatenation, when transferred to $\mathcal{F}(\mathbb{E}),$
causes the zero-entropy version of the loosely Bernoulli property
to fail.

The zero-entropy odometer-based system $\mathbb{K}$ constructed in
Sections \ref{sec:Feldman-patterns}--\ref{sec:Proof-of-Theorem}, is of greater interest in connection with \cite{FW2}, \cite{FW3},
because the anti-classification results of Foreman and Weiss can be
obtained by considering only zero-entropy odometer-based systems.
Our zero-entropy example is more difficult to construct than our positive-entropy
example, and it uses some delicate refinements of the methods in \cite{ORW}.
However, the heuristics of the construction can be described fairly
easily, as illustrated in Figure 2. This example is loosely Bernoulli,
but its image under $\mathcal{F}$ is not loosely Bernoulli. There
is also a simple example (see Example \ref{exa:lb}) of a zero-entropy odometer-based
system that is loosely Bernoulli and whose image under $\mathcal{F}$
is again loosely Bernoulli. This example, together with our example
$\mathbb{K},$ shows that $\mathcal{F}$ does not preserve Kakutani
equivalence. 

Finally, in Sections \ref{sec:OdomFeld}--\ref{sec:ConverseProof}, we give an example $\mathbb{M}$ of
a zero-entropy non-loosely Bernoulli odometer-based system whose corresponding
circular system is loosely Bernoulli. Figure \ref{fig:fig3} shows the idea for
this construction. Our Sections \ref{sec:Feldman-patterns}--\ref{sec:ConverseProof} with the zero-entropy odometer-based
systems can be read independently of Sections \ref{sec:positive entropy 1} and \ref{sec:positive entropy 2}.

Our results may also be of interest as another way that non-loosely
Bernoulli transformations arise naturally from loosely Bernoulli transformations.
Previous examples in this spirit include non-loosely Bernoulli Cartesian
products in which the factors are loosely Bernoulli \cite{ORW}, \cite{R1}, \cite{R2}, \cite{KR}, \cite{KW}. The functor $\mathcal{F}$ in \cite{FW2} changes the way $(n+1)$-blocks 
are built out of $n$-blocks according to a scheme that seems, upon
first consideration, likely to preserve the loosely Bernoulli property.
In this sense, our zero-entropy examples $\mathbb{K}$ and $\mathbb{M}$
were unexpected. 

\section{The $\overline{f}$ metric and the loosely Bernoulli property}
Feldman \cite{Fe} introduced a notion of distance, now called $\overline{f}$, between strings of symbols. He replaced the Hamming metric in Ornstein's very weak Bernoulli property \cite{Orbook} to define \emph{loosely Bernoulli} transformations (see Definitions \ref{def:generalLB} and \ref{def:zeroLB} below). A zero entropy version of this property was introduced independently by Katok \cite{K77}.
\begin{defn}
\label{def:fbar}A \emph{match} between two strings of symbols $a_{1}a_{2}\dots a_{n}$ and $b_{1}b_{2}\dots b_{m}$
from a given alphabet $\Sigma,$ is a collection $I$ of pairs of indices $(i_s,j_s)$, $s=1,\dots,r$ such that $1\le i_1 <i_2<\cdots <i_r\le n$, $1\le j_1 <j_2 <\cdots <j_r\le m$ and $a_{i_s}=b_{i_s}$ for $s=1,2,\dots,r.$ Then 
\begin{equation}\label{eq:cl}
\begin{array}{ll}
\overline{f}(a_{1}a_{2}\dots a_{n},b_{1}b_{2}\dots b_{m}) =\hfill \\ \displaystyle{1-\frac{2\sup\{|I|:I\text{\ is\ a\ match\ between\ }a_{1}a_{2}\cdots a_{n}\text{\ and\ }b_{1}b_{2}\cdots b_{m}\}}{n+m}.}
\end{array}
\end{equation} 
\end{defn}
We will refer to $\overline{f}(a_{1}a_{2}\cdots a_{n},b_{1}b_{2}\cdots b_{m})$
as the ``$\overline{f}$-distance'' between $a_{1}a_{2}\cdots a_{n}$
and $b_{1}b_{2}\cdots b_{m},$ even though $\overline{f}$ does not
satisfy the triangle inequality unless the strings are all of the
same length. A match $I$ is called a \emph{best possible match} if it realizes the supremum in the definition of $\overline{f}$. 

Suppose $(T,\mathcal{P})$ is a process, that is, $T$ is a measurable automorphism of a measurable space $(\Omega,\mathcal{M})$ and $\mathcal{P}=\{P_{\sigma}:\sigma\in \Sigma\}$ is a finite  measurable partition of $\Omega.$ For $x,y\in\Omega$ with $T^i(x)\in P_{a_i}$ and $T^i(y)\in P_{b_i}$ for $i=1,\dots,K$, we define $\overline{f}_{K}(x,y):=
\overline{f}(a_{1}a_{2}\dots a_{K},b_{1}b_{2}\dots b_{K}).$  
If $\nu$ and $\omega$ are probability measures on $(\Omega,\mathcal{M})$ then we say
$\overline{f}_{K}(\nu,\omega)<\epsilon$ if there is a
measure-preserving invertible map $\phi:(\Omega,\mathcal{M},\nu)\to(\Omega,\mathcal{M},\omega)$
such that there exists a set $G\subset\Omega$ with $\nu(G)>1-\varepsilon$
and $\overline{f}_{K}(x,\phi(x))<\varepsilon$ for all $x\in G.$

We now define loosely Bernoulli
in the general case (no assumptions on the entropy of the process).
\begin{defn}[Loosely Bernoulli in the general case]
\label{def:generalLB} A measure-preserving process
$(T,\mathcal{P},\nu)$ is \emph{loosely Bernoulli} if for every $\varepsilon>0,$
there exists a positive integer $K=K(\varepsilon)$ such that for
every positive integer $M$ the following holds: there exists a collection
$\mathcal{G}$ of ``good'' atoms of $\lor_{-M}^{0}T^{-i}\mathcal{P}$
whose union has measure greater than $1-\varepsilon,$ so that for
each pair $A,B$ of atoms in $\mathcal{G}$ of positive $\nu$-measure,
the measures $\nu_{A}$ and $\nu_{B}$ satisfy $\overline{f}_{K}(\nu_{A},\nu_{B})<\varepsilon.$
Here $\nu_{A}$ and $\nu_{B}$ denote the conditional measures on
$\Omega$ defined by $\nu_{A}(C)=\nu(C|A)=\nu(C\cap A)/\nu(A),$
and similarly for $\nu_{B}.$ 
\end{defn}

A measure-preserving transformation $(T,\nu)$ is \emph{loosely Bernoulli}
if $(T,\mathcal{P},\nu)$ is a loosely Bernoulli process for every
partition $\mathcal{P}.$ In fact, it suffices for $(T,\mathcal{P},\nu)$ to
be loosely Bernoulli for a generating partition $\mathcal{P}.$ 

As was pointed out in \cite{Fe}, Definition \ref{def:generalLB} is
equivalent to the definition obtained by replacing ``there exists
a positive integer $K=K(\varepsilon)"$ by ``for any sufficiently
large positive integer $K$ (how large depends on $\varepsilon)".$
Moreover, according to \cite[Corollary 2]{Fe}, ``every positive integer
$M$'' can be replaced by ``for every sufficiently large positive
integer $M$''. In fact, our Lemma \ref{lem:finer} implies \cite[Corollary 2]{Fe}, and the proof is similar. 

In the case of zero entropy,
no conditioning on the past is needed, and there
is a simpler definition of loosely Bernoulli. That is, the definition
reduces to the following.
\begin{defn}[Loosely Bernoulli in the case of zero entropy]
\label{def:zeroLB}
A measure-preserving process $(T,\mathcal{P},\nu)$ is \emph{zero-entropy loosely Bernoulli
}if for every $\varepsilon>0,$ there exists a positive integer $K=K(\varepsilon)$
and a collection $\mathcal{G}$ of ``good'' atoms of $\lor_{1}^{K}T^{-i}\mathcal{P}$
with total measure greater than $1-\varepsilon$ such that for each
pair $A,B$ of atoms in $\mathcal{G},$ $\overline{f}_{K}(x,y)<\varepsilon$
for $x\in A,$ $y\in B.$ 
\end{defn}

If this condition is satisfied, then routine estimates show that the
$(T,\mathcal{P},\nu)$ process indeed has zero entropy.

The following simple properties of $\overline{f}$, which were already
used in \cite{Fe} and \cite{ORW}, will appear frequently in our arguments.
These properties can be proved easily by considering the \emph{fit},
$1-\overline{f}(a,b),$ between two strings $a$ and $b$.

\begin{property}\label{property:omit_symbols}
Suppose $a$ and $b$ are strings of symbols
of length $n$ and $m,$ respectively, from an alphabet $\Sigma$.
If $\tilde{a}$ and $\tilde{b}$ are strings of symbols obtained by
deleting at most $\lfloor\gamma(n+m)\rfloor$ terms from $a$ and
$b$ altogether, 
where $0<\gamma<1$, then 
\begin{equation}
\overline{f}(a,b)\ge\overline{f}(\tilde{a},\tilde{b})-2\gamma.\label{eq:omit_symbols}
\end{equation}
Moreover, if there exists a best possible match between $a$ and $b$ such that no term that is deleted from $a$ and $b$ to form $\tilde{a}$ and $\tilde{b}$ is matched with a non-deleted term, then
\begin{equation}
\overline{f}(a,b)\ge\overline{f}(\tilde{a},\tilde{b})-\gamma.\label{eq:omit_symbols2}
\end{equation}
Likewise, if $\tilde{a}$ and $\tilde{b}$ are obtained by adding at most $\lfloor\gamma(n+m)\rfloor$ symbols to $a$ and $b$, then (\ref{eq:omit_symbols2}) holds. 
\end{property}

\begin{property}
\label{property:substring_matching}Suppose $x=x_{1}x_{2}\cdots x_{n}$ and
$y=y_{1}y_{2}\cdots y_{n}$ are decompositions of the strings of symbols
$x$ and $y$ into substrings such that there exists a best possible match between $x$ and $y$ where terms in $x_i$ are only matched with terms in $y_i$ (if they are matched with any term in $y$). Then 
\[
\overline{f}(x,y)=\sum_{i=1}^{n}\overline{f}(x_{i},y_{i})v_{i},
\]
where 
\begin{equation}
v_{i}=\frac{|x_{i}|+|y_{i}|}{|x|+|y|}.\label{eq:substring_matching}
\end{equation}
\end{property}

\begin{property}
\label{property:string_length}If $x$ and $y$ are strings of symbols
such that $\overline{f}(x,y)\le\gamma,$ for some $0\le\gamma<1,$
then 
\begin{equation}
\left(\frac{1-\gamma}{1+\gamma}\right)|x|\leq|y|\le\left(\frac{1+\gamma}{1-\gamma}\right)|x|.\label{eq:string_length}
\end{equation}
 We often use this property with $\gamma=1/7,$ in which case the conclusion
can be formulated as 
\begin{equation}
\frac{3|x|}{4}\le|y|\le\frac{4|x|}{3}.\label{eq:string_length_special_case}
\end{equation}
\end{property}

\section{\label{sec:Odometer-Based-and-Circular}Odometer-Based and Circular
Symbolic Systems}

In this section we review the notation and definitions for odometer-based
and circular symbolic systems. We also present
the functor $\mathcal{F}$ of \cite{FW2} between these two systems.

\subsection{Symbolic Systems}
An \emph{alphabet} is a countable or finite collection of symbols.
In the following, let $\Sigma$ be a finite alphabet endowed with
the discrete topology. Then $\Sigma^{\mathbb{Z}}$ with the product
topology is a separable, totally disconnected and compact space. The
shift 
\[
sh:\Sigma^{\mathbb{Z}}\to\Sigma^{\mathbb{Z}},\;sh(f)(n)=f(n+1)
\]
is a homeomorphism. If $\mu$ is a shift-invariant Borel measure,
then the measure-preserving dynamical system $\left(\Sigma^{\mathbb{Z}},\mathcal{B},\mu,sh\right)$
is called a \emph{symbolic system}. The closed support of $\mu$ is
a shift-invariant subset of $\Sigma^{\mathbb{Z}}$ called a \emph{symbolic
shift} or \emph{sub-shift}. 

Symbolic shifts are often described by giving a collection of words
that constitute a basis for the support of an invariant measure. A
word $w$ in $\Sigma$ is a finite sequence of elements of $\Sigma$,
and we denote its length by $\lvert w\rvert$. A \emph{language} (over
$\Sigma$) is a subset of the set of all words.
\begin{defn}
\label{def:ConstrSeq} A sequence of collection of words $\left(W_{n}\right)_{n\in\mathbb{N}}$,
where $\mathbb{N}=\left\{ 0,1,2,\dots\right\} $, satisfying the following
properties is called a \emph{construction sequence}:
\begin{enumerate}
\item for every $n\in\mathbb{N}$ all words in $W_{n}$ have the same length
$h_{n}$,
\item each $w\in W_{n}$ occurs at least once as a subword of each $w^{\prime}\in W_{n+1}$,
\item there is a summable sequence $\left(\varepsilon_{n}\right)_{n\in\mathbb{N}}$
of positive numbers such that for every $n\in\mathbb{N}$, every word
$w\in W_{n+1}$ can be uniquely parsed into segments $u_{0}w_{1}u_{1}w_{1}\dots w_{l}u_{l+1}$
such that each $w_{i}\in W_{n}$, each $u_{i}$ (called spacer or
boundary) is a word in $\Sigma$ of finite length and for this parsing
\[
\frac{\sum_{i=0}^{l+1}\lvert u_{i}\rvert}{h_{n+1}}<\varepsilon_{n+1}.
\]
\end{enumerate}
\end{defn}

We will often call words in $W_{n}$\emph{ $n$-words} or \emph{$n$-blocks},
while a general concatenation of symbols from $\Sigma$ is called
a \emph{string}. We also associate a symbolic shift with a construction
sequence: Let $\mathbb{K}$ be the collection of $x\in\Sigma^{\mathbb{Z}}$
such that every finite contiguous substring of $x$ occurs inside
some $w\in W_{n}$. Then $\mathbb{K}$ is a closed shift-invariant
subset of $\Sigma^{\mathbb{Z}}$ that is compact if $\Sigma$ is finite.
In order to be able to unambiguously parse elements of $\mathbb{K}$
we will use construction sequences consisting of uniquely readable
words.
\begin{defn}
Let $\Sigma$ be a language and $W$ be a collection of finite words
in $\Sigma$. Then $W$ is \emph{uniquely readable} iff whenever $u,v,w\in W$
and $uv=pws$ with $p$ and $s$ strings of symbols in $\Sigma$,
then either $p$ or $s$ is the empty word.
\end{defn}

Moreover, our $(n+1)$-words will be uniform in the $n$-words as
defined below. 
\begin{defn}
We call a construction sequence $\left(W_{n}\right)_{n\in\mathbb{N}}$
\emph{uniform} if for each $n\in\mathbb{N}$ there is a constant $c>0$
such that for all words $w^{\prime}\in W_{n+1}$ and $w\in W_{n}$
the number of occurrences of $w$ in $w^{\prime}$ is equal to $c$. 
\end{defn}

\begin{rem*}
In \cite{FW2} such construction sequences are called strongly uniform.
Since we will only deal with this strong notion of uniformity in this
paper, we abbreviate that terminology.
\end{rem*}
To check the loosely Bernoulli property for our symbolic systems we will use the following criterion from \cite{R} (stated there for processes constructed inductively by cutting and stacking such that all columns
of the $n$th tower have the same height, or equivalently, the associated
$n$-blocks all have same length).
\begin{lem}
\label{lem:Rothstein} Suppose $\mathbb{K}$ is a zero entropy symbolic system with uniform and uniquely readable construction sequence. Then $\mathbb{K}$ is loosely
Bernoulli if and only if for every $\varepsilon>0$ there exists $N$ such
that for $n\ge N,$ there is a set of $n$-blocks $\mathcal{G}_{n}$ with cardinality $\lvert \mathcal{G}_n \rvert > (1-\varepsilon)\lvert W_n \rvert$
such that for $A,B\in\mathcal{G}_{n},$
$\overline{f}(A,B)<\varepsilon.$ 
\end{lem}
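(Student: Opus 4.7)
My plan is to prove both directions by a block-matching argument, using Properties \ref{property:substring_matching} (substring matching as an upper bound on $\bar f$) and \ref{property:omit_symbols} (small-deletion cost), combined with a frequency fact obtained by iterating the uniformity of the construction sequence: for any fixed pattern length $L$, the empirical distribution of length-$L$ subwords within a uniformly chosen $w\in W_n$ approaches the $\mu$-distribution as $n\to\infty$, with boundary and spacer errors bounded by $\sum_{k\ge m}\varepsilon_k$ for any $m$ with $L<h_m$. Establishing this frequency fact---where unique readability and summability of $(\varepsilon_n)$ are used---is the main technical ingredient and my main obstacle; once it is in place, the rest is routine block-matching bookkeeping.

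For the $(\Rightarrow)$ direction, fix $\varepsilon>0$ and set $\varepsilon':=\varepsilon^2/100$. Definition \ref{def:zeroLB} gives $K$ and a collection $\mathcal{G}^*$ of atoms of $\bigvee_{i=1}^{K} T^{-i}\mathcal{P}$, equivalently of specific $K$-words, with $\mu(\bigcup\mathcal{G}^*)>1-\varepsilon'$ and $\bar f(v,v')<\varepsilon'$ for $v,v'\in\mathcal{G}^*$. Choose $n$ so large that $K<\varepsilon' h_n$, and partition each $A\in W_n$ into $L=\lfloor h_n/K\rfloor$ consecutive $K$-subblocks $A^{(1)}\cdots A^{(L)}$ plus a remainder of length $<K$. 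The frequency fact yields that the expected fraction (over uniform $A\in W_n$) of indices $i$ with $A^{(i)}\in\mathcal{G}^*$ is at least $1-2\varepsilon'$, and Markov's inequality produces $\mathcal{G}_n\subset W_n$ with $|\mathcal{G}_n|>(1-\sqrt{\varepsilon'})|W_n|$ on which this fraction is at least $1-\sqrt{\varepsilon'}$. For $A,B\in\mathcal{G}_n$, Property \ref{property:substring_matching} applied to the $K$-block decomposition, together with Property \ref{property:omit_symbols} for the remainders, gives $\bar f(A,B)\le\varepsilon'+4\sqrt{\varepsilon'}+O(\varepsilon')<\varepsilon$.

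For the $(\Leftarrow)$ direction, fix $\varepsilon>0$, set $\delta:=\varepsilon/10$, and obtain $n\ge N(\delta)$ and $\mathcal{G}_n\subset W_n$ with $|\mathcal{G}_n|>(1-\delta)|W_n|$ and pairwise $\bar f<\delta$; enlarge $n$ so that $\sum_{m\ge n}\varepsilon_m<\delta$, and choose $K\ge h_n/\delta^2$. For any $x,y\in\mathbb{K}$, unique readability gives a canonical level-$n$ parsing $x|_1^K=p_x A_1 u_1 A_2 u_2\cdots A_M q_x$ with $A_i\in W_n$, $u_i$ spacers at levels $\ge n$, and $p_x,q_x$ partial-block boundaries of length $\le h_n$, and analogously for $y|_1^K$. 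The total density of non-$n$-block content in $x|_1^K$ is $<3\delta$, and Property \ref{property:omit_symbols} lets us delete all such content from both strings at cost $O(\delta)$. In the stripped concatenations $A_1\cdots A_M$ and $B_1\cdots B_{M'}$, the frequency fact yields that the proportion of blocks outside $\mathcal{G}_n$ is $<2\delta$ on each side. Matching $A_i\leftrightarrow B_i$ up to $i=\min(M,M')$ and discarding the tail of the longer sequence (another $O(\delta)$ by Property \ref{property:omit_symbols}), Property \ref{property:substring_matching} bounds $\bar f$ of the stripped strings by $\delta+O(\delta)$. Combining, $\bar f(x|_1^K,y|_1^K)<\varepsilon$ for every $x,y\in\mathbb{K}$, so we may take $\mathcal{G}$ to consist of all atoms of positive measure.
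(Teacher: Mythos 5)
The paper does not prove this lemma; it simply cites Rothstein's paper \cite{R} for the criterion, so there is no in-paper proof to compare your argument against. Your overall strategy---an ergodic frequency fact combined with block-matching via Properties \ref{property:substring_matching} and \ref{property:omit_symbols}---is the natural route, and the $(\Rightarrow)$ outline is reasonable in shape. But the admitted gap is not a side issue: the ``frequency fact'' (that the empirical distribution of $L$-words within a uniformly chosen $w\in W_n$ converges to $\mu$ as $n\to\infty$, with spacer error controlled by $\sum\varepsilon_m$) is essentially the content of the lemma, and you do not supply a proof of it. You also only impose $K<\varepsilon' h_n$ in choosing $n$, which bounds the remainder but does not by itself guarantee the frequency fact at that scale; a separate requirement on $n$ (depending on $K$ and $\varepsilon'$) is needed and should be made explicit once the frequency fact has a precise quantitative statement.

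There is also a concrete error in the $(\Leftarrow)$ direction. You choose $K\ge h_n/\delta^2$, but this can be far smaller than $h_{n+1}=k_n h_n$ when $k_n$ is large. Uniformity only says that every $n$-block appears equally often in a \emph{complete} $(n+1)$-block; it says nothing about the $n$-block distribution in a strict substring of an $(n+1)$-block, which can be arbitrarily skewed (for instance, if an $(n+1)$-block begins with $\mathtt{a}^{k_n/2}\mathtt{b}^{k_n/2}$, a window of length under $h_{n+1}/2$ may contain only copies of $\mathtt{a}$). Thus with your $K$, the window $x|_1^K$ might consist entirely of $n$-blocks outside $\mathcal{G}_n$ and the claimed ``proportion $<2\delta$'' fails. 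The correct requirement is that $K$ cover many complete $(n+1)$-blocks, e.g.\ $K\ge h_{n+1}/\delta$ (in addition to $\sum_{m\ge n}\varepsilon_m<\delta$), so that boundary $(n+1)$-blocks occupy a $\delta$-fraction and the interior has the exactly uniform $n$-block count. Finally, note that what you use here is the distribution of $n$-blocks within a long window, which is a \emph{different} assertion than the frequency fact you stated (about $L$-words within $n$-blocks); conflating them hides the fact that two distinct statements each require justification.
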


\subsection{Odometer-Based Systems}

Let $\left(k_{n}\right)_{n\in\mathbb{N}}$ be a sequence of natural
numbers $k_{n}\geq2$ and 
\[
O=\prod_{n\in\mathbb{N}}\left(\mathbb{Z}/k_{n}\mathbb{Z}\right)
\]
be the $\left(k_{n}\right)_{n\in\mathbb{N}}$-adic integers. Then
$O$ has a compact abelian group structure and hence carries a Haar
measure $\lambda$. We define a transformation $T:O\to O$ to be addition
by $1$ in the $\left(k_{n}\right)_{n\in\mathbb{N}}$-adic integers
(i.e. the map that adds one in $\mathbb{Z}/k_{0}\mathbb{Z}$ and carries
right). Then $T$ is a $\lambda$-preserving invertible transformation
called \emph{odometer transformation} which is ergodic and has discrete
spectrum.

We now define the collection of symbolic systems that have odometer
systems as their timing mechanism to parse typical elements of the
system.
\begin{defn}
Let $\left(\mathtt{W}_{n}\right)_{n\in\mathbb{N}}$ be a uniquely
readable construction sequence with $\mathtt{W}_{0}=\Sigma$ and $\mathtt{W}_{n+1}\subseteq\left(\mathtt{W}_{n}\right)^{k_{n}}$
for every $n\in\mathbb{N}$. The associated symbolic shift will be
called an \emph{odometer-based system}.
\end{defn}

Thus, odometer-based systems are those built from construction sequences
$\left(\mathtt{W}_{n}\right)_{n\in\mathbb{N}}$ such that the words
in $\mathtt{W}_{n+1}$ are concatenations of a fixed number $k_{n}$
of words in $\mathtt{W}_{n}$. Hence, the words in $\mathtt{W}_{n}$
have length $\mathtt{h}_{n}$, where
\[
\mathtt{h}_{n}=\prod_{i=0}^{n-1}k_{i}
\]
 if $n>0$, and $\mathtt{h}_{0}=1$. Moreover, the spacers in part
3 of Definition \ref{def:ConstrSeq} are all the empty words (i.e.
an odometer-based transformation can be built by a cut-and-stack construction
using no spacers).

\subsection{\label{subsec:Circular-Systems}Circular Systems}

A \emph{circular coefficient sequence }is a sequence of pairs of integers
$\left(k_{n},l_{n}\right)_{n\in\mathbb{N}}$ such that $k_{n}\geq2$
and $\sum_{n\in\mathbb{N}}\frac{1}{l_{n}}<\infty.$ From these numbers
we inductively define numbers 
\[
q_{n+1}=k_{n}l_{n}q_{n}^{2}
\]
and 
\[
p_{n+1}=p_{n}k_{n}l_{n}q_{n}+1,
\]
where we set $p_{0}=0$ and $q_{0}=1$. Obviously, $p_{n+1}$ and
$q_{n+1}$ are relatively prime. Moreover, let $\Sigma$ be a non-empty
finite alphabet and $b,e$ be two additional symbols (called \emph{spacers}).
Then given a circular coefficient sequence $\left(k_{n},l_{n}\right)_{n\in\mathbb{N}}$
we build collections of words $\mathcal{W}_{n}$ in the alphabet $\Sigma\cup\{b,e\}$
by induction as follows:
\begin{itemize}
\item Set $\mathcal{W}_{0}=\Sigma$.
\item Having built $\mathcal{W}_{n}$ we choose a set $P_{n+1}\subseteq\left(\mathcal{W}_{n}\right)^{k_{n}}$
of so-called \emph{prewords }and form $\mathcal{W}_{n+1}$ by taking
all words of the form 
\[
\mathcal{C}_{n}\left(w_{0},w_{1},\dots,w_{k_{n}-1}\right)=\prod_{i=0}^{q_{n}-1}\prod_{j=0}^{k_{n}-1}\left(b^{q_{n}-j_{i}}w_{j}^{l_{n}-1}e^{j_{i}}\right)
\]
 with $w_{0}\dots w_{k_{n}-1}\in P_{n+1}$. If $n=0$ we take $j_0=0$, and for $n>0$ we let $j_i\in \{0,\dots,q_n-1\}$ be such that  
\[
j_{i}\equiv\left(p_{n}\right)^{-1}i\;\mod q_{n}.
\]
 We note that each word in $\mathcal{W}_{n+1}$ has length $k_{n}l_{n}q_{n}^{2}=q_{n+1}$.
\end{itemize}
\begin{defn}
A construction sequence $\left(\mathcal{W}_{n}\right)_{n\in\mathbb{N}}$
will be called \emph{circular }if it is built in this manner using
the $\mathcal{C}$-operators, a circular coefficient sequence and
each $P_{n+1}$ is uniquely readable in the alphabet with the words
from $\mathcal{W}_{n}$ as letters (this last property is called the
\emph{strong readability assumption}).
\end{defn}

\begin{rem}
By \cite[Lemma 45]{FW2} each $\mathcal{W}_{n}$ in a circular construction
sequence is uniquely readable even if the prewords are not uniquely
readable. However, the definition of a circular construction sequence
requires this stronger readability assumption.
\end{rem}

\begin{defn}
A symbolic shift $\mathbb{K}$ built from a circular construction
sequence is called a \emph{circular system}. For emphasis we will
often denote it by $\mathbb{K}^{c}$.
\end{defn}

For a word $w\in\mathcal{W}_{n+1}$ we introduce the following subscales
as in \cite[subsection 3.3]{FW2}: 
\begin{itemize}
\item Subscale $0$ is the scale of the individual powers of $w_{j}\in\mathcal{W}_{n}$
of the form $w_{j}^{l-1}$ and each such occurrence of a $w_{j}^{l-1}$
is called a \emph{$0$-subsection}.
\item Subscale $1$ is the scale of each term in the product $\prod_{j=0}^{k_{n}-1}\left(b^{q_{n}-j_{i}}w_{j}^{l_{n}-1}e^{j_{i}}\right)$
that has the form $\left(b^{q_{n}-j_{i}}w_{j}^{l_{n}-1}e^{j_{i}}\right)$
and these terms are called\emph{ $1$-subsections}.
\item Subscale $2$ is the scale of each term of $\prod_{i=0}^{q_{n}-1}\prod_{j=0}^{k_{n}-1}\left(b^{q_{n}-j_{i}}w_{j}^{l_{n}-1}e^{j_{i}}\right)$
that has the form $\prod_{j=0}^{k_{n}-1}\left(b^{q_{n}-j_{i}}w_{j}^{l_{n}-1}e^{j_{i}}\right)$
and these terms are called \emph{$2$-subsections}.
\end{itemize}

\subsection{The Functor $\mathcal{F}$}

For a fixed circular coefficient sequence $\left(k_{n},l_{n}\right)_{n\in\mathbb{N}}$
we consider two categories $\mathcal{OB}$ and $\mathcal{CB}$ whose
objects are odometer-based and circular systems respectively. The
morphisms in these categories are (synchronous and anti-synchronous)
graph joinings. In \cite{FW2} Foreman and Weiss define a functor taking
odometer-based systems to circular system that preserves the factor
and conjugacy structure. In this subsection we review the definition
of the functor from the odometer-based symbolic systems to the circular
symbolic systems.

For this purpose, we fix a circular coefficient sequence $\left(k_{n},l_{n}\right)_{n\in\mathbb{N}}$.
Let $\Sigma$ be an alphabet and $\left(\mathtt{W}_{n}\right)_{n\in\mathbb{N}}$
be a construction sequence for an odometer-based system with coefficients
$\left(k_{n}\right)_{n\in\mathbb{N}}$. Then we define a circular
construction sequence $\left(\mathcal{W}_{n}\right)_{n\in\mathbb{N}}$
and bijections $c_{n}:\mathtt{W}_{n}\to\mathcal{W}_{n}$ by induction:
\begin{itemize}
\item Let $\mathcal{W}_{0}=\Sigma$ and $c_{0}$ be the identity map.
\item Suppose that $\mathtt{W}_{n}$, $\mathcal{W}_{n}$ and $c_{n}$ have
already been defined. Then we define 
\[
\mathcal{W}_{n+1}=\left\{ \mathcal{C}_{n}\left(c_{n}\left(\mathtt{w}_{0}\right),c_{n}\left(\mathtt{w}_{1}\right),\dots,c_{n}\left(\mathtt{w}_{k_{n}-1}\right)\right)\::\:\mathtt{w}_{0}\mathtt{w}_{1}\dots\mathtt{w}_{k_{n}-1}\in\mathtt{W}_{n+1}\right\} 
\]
 and the map $c_{n+1}$ by setting 
\[
c_{n+1}\left(\mathtt{w}_{0}\mathtt{w}_{1}\dots\mathtt{w}_{k_{n}-1}\right)=\mathcal{C}_{n}\left(c_{n}\left(\mathtt{w}_{0}\right),c_{n}\left(\mathtt{w}_{1}\right),\dots,c_{n}\left(\mathtt{w}_{k_{n}-1}\right)\right).
\]
 In particular, the prewords are 
\[
P_{n+1}=\left\{ c_{n}\left(\mathtt{w}_{0}\right)c_{n}\left(\mathtt{w}_{1}\right)\dots c_{n}\left(\mathtt{w}_{k_{n}-1}\right)\::\:\mathtt{w}_{0}\mathtt{w}_{1}\dots\mathtt{w}_{k_{n}-1}\in\mathtt{W}_{n+1}\right\} .
\]
 
\end{itemize}
\begin{defn}
Suppose that $\mathbb{K}$ is built from a construction sequence $\left(\mathtt{W}_{n}\right)_{n\in\mathbb{N}}$
and $\mathbb{K}^{c}$ has the circular construction sequence $\left(\mathcal{W}_{n}\right)_{n\in\mathbb{N}}$
as constructed above. Then we define a map $\mathcal{F}$ from the
set of odometer-based systems (viewed as subshifts) to circular systems
(viewed as subshifts) by 
\[
\mathcal{F}\left(\mathbb{K}\right)=\mathbb{K}^{c}.
\]
 
\end{defn}

\begin{rem}
The map $\mathcal{F}$ is a bijection between odometer-based symbolic
systems with coefficients $\left(k_{n}\right)_{n\in\mathbb{N}}$ and
circular symbolic systems with coefficients $\left(k_{n},l_{n}\right)_{n\in\mathbb{N}}$
that preserves uniformity. Since the construction sequences for our
odometer-based systems will be uniquely readable, the corresponding
circular construction sequences will automatically satisfy the strong
readability assumption.
\end{rem}

In the following we will denote blocks in the odometer-based system
by letters in typewriter font (e.g. $\mathtt{A}$). For the corresponding
block in the circular system we will use calligraphic letters (e.g.
$\mathcal{A}$). As already noted the length of a $n$-block $\mathtt{w}$
in the odometer-based system is $\mathtt{h}_{n}=\prod_{i=0}^{n-1}k_{i}$,
if $n>0$, and $\mathtt{h}_{0}=1$, while the length of a $n$-block
in the circular system is $q_{n}$, i.e. $\lvert c_{n}\left(\mathtt{w}\right)\rvert=q_{n}$.
Moreover, we will use the following map from substrings of the underlying
odometer-based system to the circular system: 
\[
\mathcal{C}_{n,i}\left(\mathtt{w}_{s}\mathtt{w_{s+1}}\dots\mathtt{w}_{t}\right)=\prod_{j=s}^{t}\left(b^{q_{n}-j_{i}}\left(c_{n}\left(\mathtt{w}_{j}\right)\right)^{l_{n}-1}e^{j_{i}}\right)
\]
for any $0\leq i\leq q_{n}-1$ and $0\leq s\leq t\leq k_{n}-1$. 
\begin{example}
\label{exa:lb}We give an example of a loosely Bernoulli odometer-based
system $\mathbb{K}$ of zero measure-theoretic entropy with uniform
and uniquely readable construction sequence such that $\mathcal{F}\left(\mathbb{K}\right)$
is also loosely Bernoulli. For this purpose, let $\Sigma$ be an alphabet
with $2$ symbols and $\varepsilon_{n}\searrow0$. Assume that we
have two $n$-blocks $\mathtt{w}_{0}$ and $\mathtt{w}_{1}$ in the
odometer-based system. Then we define two $(n+1)$-blocks by the following
rule 
\begin{align*}
\mathtt{B}_{0}^{(n+1)}= & \mathtt{w}_{1}\mathtt{w}_{1}\underbrace{\mathtt{w}_{0}\mathtt{w}_{1}\mathtt{w}_{0}\mathtt{w}_{1}\dots\mathtt{w}_{0}\mathtt{w}_{1}}_{2s_{n} \text{ blocks}}\mathtt{w}_{0}\mathtt{w}_{0}\\
\mathtt{B}_{1}^{(n+1)}= & \mathtt{w}_{1}\mathtt{w}_{1}\mathtt{w}_{1}\underbrace{\mathtt{w}_{0}\mathtt{w}_{1}\dots\mathtt{w}_{0}\mathtt{w}_{1}}_{2s_{n}-2 \text{ blocks}}\mathtt{w}_{0}\mathtt{w}_{0}\mathtt{w}_{0}
\end{align*}
where we choose the integer $s_{n}$ sufficiently large to guarantee
that $\mathtt{B}_{0}^{(n+1)}$ and $\mathtt{B}_{1}^{(n+1)}$ are $\varepsilon_{n}$-close
to each other in $\overline{f}$. Clearly, the construction sequence defined
like this is uniform and uniquely readable. Moreover, the corresponding
$(n+1)$-blocks $\mathcal{B}_{0}^{(n+1)}$ and $\mathcal{B}_{1}^{(n+1)}$
in the circular system are also $\varepsilon_{n}$-close to each other
in $\overline{f}$. Hence, $\mathbb{K}$ and $\mathcal{F}\left(\mathbb{K}\right)$
are loosely Bernoulli by Lemma \ref{lem:Rothstein}.
\end{example}

\part{Positive entropy example}

\section{Positive Entropy Loosely Bernoulli Odometer-Based System} \label{sec:positive entropy 1}

In this section we construct a uniquely readable uniform odometer-based
system $\mathbb{E}$ of positive entropy that is loosely Bernoulli.
In the next section we will prove that $\mathcal{F}(\mathbb{E})$
is not loosely Bernoulli. The main idea in the construction of $\mathbb{E}$
is to concatenate $n$-blocks independently in long initial segments
of $(n+1)$-blocks, and use a relatively small final segment of the
$(n+1)$-block to achieve uniformity and unique readability. If we
used only the independent concatenation, then $\mathbb{E}$ would
be Bernoulli (and hence loosely Bernoulli), and $\mathcal{F}(\mathbb{E})$
would still be non-loosely Bernoulli. In this case the proof given
in the next section that $\mathcal{F}(\mathbb{E})$ is not loosely
Bernoulli could be simplified, but we want to achieve uniformity and
unique readability to make our example fit the framework of the odometer-based
constructions in \cite{FW2} and \cite{FW3}.

Our approach takes advantage of the fact that in the case of positive
entropy, in particular for the system $\mathbb{E}$, there will be
many $n$-blocks that are bounded apart in $\overline{f}$-distance
and the loosely Bernoulli property can still be satisfied. However,
all circular systems, as described in Section 2, and in particular
$\mathcal{F}(\mathbb{E}),$ have entropy zero. In this case the loosely
Bernoulli property fails to hold if most of the $n$-blocks are bounded
apart in the $\overline{f}$ metric. In the next section, we will
use the approach of A. Rothstein \cite{R} to prove that the independent
concatenation of $n$-blocks that are mostly bounded apart in $\overline{f}$
distance leads to $(n+1)$-blocks that are also mostly bounded apart
in $\overline{f}$ distance, and the lower bound on the $\overline{f}$
distance decreases only slightly in going from $n$-blocks to $(n+1)$-blocks. 

We begin by describing some of the conditions on the parameters involved
in the construction of $\mathbb{E}.$ Further requirements on the
lower bound on the $k_{n}$ will be imposed in the next section and
after the first lemma in the present section. First we choose positive
rational numbers $\varepsilon_{n}$ such that $\varepsilon_{n}<2^{-(n+12)}.$
Then we choose $k_{n}$ (depending on $\ell_{n},N(n),\varepsilon_{n})$
so that $\sum_{n=1}^{\infty}N(n)^{2}/(\varepsilon_{n}^{2}k_{n})<1/8.$
Furthermore, we require that $\varepsilon_{n}N(n)>2$, $\varepsilon_{n}k_{n}$
is an integer, and $k_{n}$ is a multiple of $N(n).$ Let $k_{n}'=(1-\varepsilon_{n})k_{n}.$

We now describe the construction sequence $(\mathtt{W}_{n})_{n\in\mathbb{N}}$
for $\mathbb{E}.$ Recall that $\mathtt{W}_{0}=\Sigma.$ Suppose there are
$N(n)$ distinct $n$-blocks of length $\mathtt{h}_{n}$ in $\mathtt{W}_{n},$
say $\mathtt{W}_{n}=\{\mathtt{y}_{1},\dots,\mathtt{y}_{N(n)}\}.$
Then $\mathtt{W}_{n+1}$ consists of all words of the form $\mathtt{w}_{1}\mathtt{w}_{2}\cdots\mathtt{w}_{k_{n}},$
where each $\mathtt{w}_{j}=\mathtt{y}_{i(j)}$ for some $i(j)\in\{1,\dots,N_{n}(n)\},$
subject to the following conditions:
\begin{enumerate}
\item For each $i\in\{1,2,\dots,N(n)-1\},$ card$\{j\in\{1,2,\dots,k_{n}'\}:\mathtt{w}_{j}=\mathtt{y}_{i}\}\le\frac{k_{n}}{N(n)},$
and $\mathtt{w}_{j}\ne\mathtt{y}_{N(n)}$ for $j\in\{1,2,\dots,k_{n}'$\}. 
\item For $j\in\{k_{n}-(k_{n}/N(n)),k_{n}-(k_{n}/N(n))+1,\dots,k_{n}-1\},$
$\mathtt{w}_{j}=\mathtt{y}_{N(n)}.$
\item The substring $\mathtt{w}_{(1-\varepsilon_{n})k_{n}}\cdots\mathtt{w}_{k_{n}-(k_{n}/N(n))-1}$
consists of a finite string of $\mathtt{y}_{1}$'s, followed by a
finite string of $\mathtt{y}_{2}$'s, etc. ending with a finite string
of $\mathtt{y}_{N(n)-1}$'s such that card$\{j\in\{1,2,\dots k_{n}\}:\mathtt{w}_{j}=\mathtt{y}_{i}\}=k_{n}/{N(n)}$
for every $i\in\{1,2,\dots,N(n)\}.$
\end{enumerate}
Whenever condition (1) on the first $k_{n}'$ $n$-blocks is satisfied,
then there is a unique way of completing the $(n+1)$-block so that
conditions (2) and (3) are satisfied. Condition (2) implies unique
readability, and condition (3)  implies uniformity. 
Our block construction and Lemma \ref{lem:Chebychev} below
are essentially a special case of the techniques in the Substitution Lemma in Section 8 of \cite{FRW}. 
\begin{lem}[Chebychev Application]
\label{lem:Chebychev} Suppose $k_{n}'=(1-\varepsilon_n)k_n$ symbols
are chosen independently from $\{1,2,\dots,,N(n)-1\},$ where each
symbol is equally likely to be chosen. Then the probability $\tau_{n}$
that there exists a symbol that is chosen more than $k_{n}/{N(n)}$
times satisfies $\tau_{n}<4N(n)^{2}/(\varepsilon_{n}^{2}k_{n}).$ 
\end{lem}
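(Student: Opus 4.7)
The plan is to apply Chebyshev's inequality to each of the $N(n)-1$ possible symbols and then take a union bound.

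First, for each $i\in\{1,2,\dots,N(n)-1\}$, let $X_i$ denote the number of times the symbol $i$ is chosen among the $k_n'=(1-\varepsilon_n)k_n$ independent uniform selections from $\{1,2,\dots,N(n)-1\}$. Then $X_i$ is binomial with parameters $k_n'$ and $p=1/(N(n)-1)$, so
\[
E[X_i]=\frac{(1-\varepsilon_n)k_n}{N(n)-1}\qquad\text{and}\qquad \mathrm{Var}(X_i)\le \frac{k_n}{N(n)-1}.
\]

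Next, I would estimate the gap between the threshold $k_n/N(n)$ and $E[X_i]$. A direct computation gives
\[
\frac{k_n}{N(n)}-E[X_i]=\frac{k_n\bigl(\varepsilon_n N(n)-1\bigr)}{N(n)\bigl(N(n)-1\bigr)}.
\]
Using the standing hypothesis $\varepsilon_n N(n)>2$ we have $\varepsilon_n N(n)-1>\varepsilon_n N(n)/2$, and hence
\[
\frac{k_n}{N(n)}-E[X_i]>\frac{k_n\varepsilon_n}{2\bigl(N(n)-1\bigr)}.
\]

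Then Chebyshev's inequality gives
\[
P\!\left(X_i>\tfrac{k_n}{N(n)}\right)\le \frac{\mathrm{Var}(X_i)}{\bigl(k_n/N(n)-E[X_i]\bigr)^2}\le \frac{k_n/(N(n)-1)}{k_n^2\varepsilon_n^2/\bigl(4(N(n)-1)^2\bigr)}=\frac{4(N(n)-1)}{\varepsilon_n^2 k_n}.
\]
Finally, a union bound over the $N(n)-1$ possible symbols yields
\[
\tau_n\le (N(n)-1)\cdot\frac{4(N(n)-1)}{\varepsilon_n^2 k_n}=\frac{4(N(n)-1)^2}{\varepsilon_n^2 k_n}<\frac{4N(n)^2}{\varepsilon_n^2 k_n},
\]
as required.

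There is no genuine obstacle here; the only place where a little care is needed is the estimate of the gap $k_n/N(n)-E[X_i]$, which relies critically on the assumption $\varepsilon_n N(n)>2$ imposed earlier in the section. Once that gap is bounded below by $k_n\varepsilon_n/(2(N(n)-1))$, the rest is a routine variance-over-gap-squared calculation combined with the union bound.
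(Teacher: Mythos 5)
Your proof is correct and follows essentially the same route as the paper's: Chebyshev's inequality applied per symbol, the hypothesis $\varepsilon_n N(n)>2$ used to bound the gap between the threshold $k_n/N(n)$ and the mean from below by $k_n\varepsilon_n/(2(N(n)-1))$, and a union bound over the $N(n)-1$ symbols. The only cosmetic differences are that the paper phrases Chebyshev in terms of $\sigma$ and a multiplier $\alpha$ while you work directly with the variance (bounded above by $k_n/(N(n)-1)$), and the paper routes the gap estimate through the auxiliary inequality $1/N(n)>(1-\varepsilon_n/2)/(N(n)-1)$ instead of computing the gap exactly; both are equivalent uses of $\varepsilon_n N(n)>2$.
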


\begin{proof}
For a fixed $i_{0}\in\{1,\dots,N(n)-1\},$ let $S=S_{k_{n}'}$ be
the number of times $i_{0}$ is chosen in $k_{n}'$ Bernoulli trials,
where the probability of $i_{0}$ being chosen in any one trial is
$1/({N(n)-1}).$ Then the expected value of $S$ is $E(S)=k_{n}'/(N(n)-1)$
and the standard deviation of $S$ is $\sigma=\sqrt{k_{n}'(N(n)-2)}/({N(n)-1}).$
Note that the condition $\varepsilon_{n}N(n)>2$ implies that $1/{N(n)}>(1-(\varepsilon_{n}/2))/({N(n)-1}).$
We have the following estimates on the probabilities, where we apply
Chebychev's inequality in the last step:
\[
\begin{array}{cl}
\text{Pr}\Big(S>k_{n}/{N(n)}\Big) & \le\text{Pr}\left(|S-E(S)|>\frac{k_{n}}{N(n)}-\frac{(1-\varepsilon_{n})k_{n}}{N(n)-1}\right)\\
 & \le\text{Pr}\left(|S-E(S)|>\frac{(1-(\varepsilon_{n}/2))k_{n}}{N(n)-1}-\frac{(1-\varepsilon_{n})k_{n}}{N(n)-1}\right)\\
 & =\text{Pr}\left(|S-E(S)|>\frac{(\varepsilon_{n}/2)k_{n}}{N(n)-1}\right)\\
 & =\text{Pr}\left(|S-E(S)|>\alpha\sigma\right)\\
 & <1/\alpha^{2},
\end{array}
\]
where $\alpha=\varepsilon_{n}k_{n}/\big(2{\sqrt{k_{n}'(N(n)-2)}}\big).$
Thus $\text{Pr}\big(S>k_n/{N(n)}\big)<4N(n)/(\varepsilon_{n}^{2}k_{n}).$
Since $i_{0}$ was only one of $N(n)-1$ possible symbols, the upper
bound in the statement of the lemma is obtained by multiplying
the upper bound on $\text{Pr}\big(S>k_{n}/N(n)\big)$ by $N(n).$
\end{proof}
We require the $k_{n}$'s to be sufficiently large so that $\sum_{n=1}^{\infty}\tau_{n}^{(2^{-n})}<\infty.$
From Lemma \ref{lem:Chebychev}, this is possible because $k_{n}$
is chosen after $\varepsilon_{n}$ and $N(n)$ are determined.

We apply Lemma \ref{lem:Chebychev} to the independent choice of $k_{n}'$
$n$-blocks from the first $N(n)-1$ many $n$-blocks. Suppose $E$
is a string of $k_{n}'$ $n$-blocks chosen from the first $N(n)-1$
many $n$-blocks. If this string of $k_{n}'$ $n$-blocks satisfies
condition (1) above, then $E$ is a possible initial string of $k_{n}'$
$n$-blocks in an $(n+1)$-block. If $\text{Pr}_{n}(E)$ is the probability
of $E$ in the process $\mathbb{E},$ and $\widetilde{\text{Pr}}_{n}(E)$
is the probability of $E$ in the process that consists of concatenating $k_{n}'$  $n$-blocks chosen independently from 
the first $N(n)-1$ many $n$-blocks, then 
\begin{equation}
\text{Pr}_{n}(E)=\left(\frac{1}{1-\tau_{n}}\right)\widetilde{\text{Pr}}_{n}(E)\label{eq:tau_n estimate}
\end{equation}
 if $E$ is a possible initial string, and $\text{Pr}{}_{n}(E)=0$
if $E$ is not a possible initial string. Note that if we compare
the probability distributions $\text{Pr}_{n}$ and $\widetilde{\text{Pr}}_{n}$
on the collection $\mathcal{A}$ of all strings of $k_{n}'$ $n$-blocks,
and the collection $\mathcal{A}'$ of possible initial strings of
$k_{n}'$ $n$-blocks chosen from the first $N(n)-1$ many $n$-blocks, we obtain

\begin{equation}
\begin{split}
    \sum_{E\in\mathcal{A}}|\text{Pr}_{n}(E)-\widetilde{\text{Pr}}_{n}(E)|& =\tau_{n}+\sum_{E\text{\ensuremath{\in\mathcal{A}'}}}|\text{Pr}_{n}(E)-\widetilde{\text{Pr}}_{n}(E)| \\
    & =\tau_{n}+\left(\frac{1}{1-\tau_{n}}-1\right)\left(1-\tau_{n}\right)=2\tau_{n}.
\end{split}\label{eq:compare_distributions}
\end{equation}

For $i=1,2,\dots,|\Sigma|,$ let $P_{i}$ be the set of points in
$\Sigma^{\mathbb{Z}}$ with the symbol $i$ in position $0.$ Then
$\mathcal{P}:=\{P_{1},P_{2},\dots,P_{|\Sigma|}\}$ is a generating
partition for the odometer system. For integers $a$ and $b$ with
$a\le b,$ let $\mathcal{P}_{a}^{b}$ denote the partition of $\Sigma^{\mathbb{Z}}$ 
into sets with the same $\mathcal{P}$-name from time $a$ to time
$b.$ As before, we let $\mu$ denote the invariant measure on $\Sigma^{\mathbb{Z}}$
corresponding to the process $\mathbb{E}.$ We let $H=H$(sh,$\mathcal{P})$
denote the measure entropy of the left shift on $\Sigma^{\mathbb{Z}}$
with respect to $\mathcal{P},$ and we let $H_{{\rm top}}({\rm sh},\mathcal{P})$
denote the topological entropy. Note that uniformity and unique readability
imply that sh$:\Sigma^{\mathbb{Z}}\to\Sigma^{\mathbb{\mathbb{Z}}}$
is uniquely ergodic. Therefore, by the variational principle \cite[Theorem 4.5.3]{KH95}, $H=H_{{\rm top}}({\rm sh},\mathcal{P}).$ 
\begin{lem}\label{lem:positive entropy}
If $\mathbb{E}$ is the odometer-based system constructed above, then
the entropy of $\mathbb{E}$ is positive.
\end{lem}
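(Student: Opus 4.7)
The strategy is to bound the topological entropy of the shift on the symbolic space of $\mathbb{E}$ from below. As the excerpt records, uniformity and unique readability yield unique ergodicity, so by the variational principle $H = H_{\text{top}}(\mathrm{sh}, \mathcal{P})$. Since each word in $\mathtt{W}_n$ is an allowed word of length $\mathtt{h}_n$, the number of allowed words of length $\mathtt{h}_n$ is at least $|\mathtt{W}_n| = N(n)$, hence $H_{\text{top}}(\mathrm{sh}, \mathcal{P}) \geq \limsup_n \frac{\log N(n)}{\mathtt{h}_n}$. It therefore suffices to show that this limsup is bounded away from zero.

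Each $(n+1)$-block in $\mathtt{W}_{n+1}$ is completely determined by its initial segment $\mathtt{w}_1\cdots\mathtt{w}_{k_n'}$, because conditions (2) and (3) prescribe the remaining blocks uniquely. The number of such initial segments drawn from $\{\mathtt{y}_1,\ldots,\mathtt{y}_{N(n)-1}\}$ that satisfy the cardinality condition (1) is, by Lemma \ref{lem:Chebychev}, at least a $(1-\tau_n)$-fraction of all $(N(n)-1)^{k_n'}$ such strings. Therefore
\[
N(n+1) = |\mathtt{W}_{n+1}| \geq (1-\tau_n)(N(n)-1)^{k_n'}.
\]
Setting $\alpha_n := \log N(n)/\mathtt{h}_n$, taking logarithms, dividing by $\mathtt{h}_{n+1} = k_n \mathtt{h}_n$, using $k_n' = (1-\varepsilon_n)k_n$, and applying the elementary estimate $\log(N(n)-1) \geq \log N(n) - 2/N(n)$ valid for $N(n) \geq 2$, one obtains the recursion
\[
\alpha_{n+1} \geq (1-\varepsilon_n)\,\alpha_n - \beta_n,
\]
where $\beta_n$ is a positive error term of order $O(1/\mathtt{h}_n) + O(\tau_n/(k_n\mathtt{h}_n))$.

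Iterating and using $\sum_n \varepsilon_n < \infty$ so that $\prod_j (1-\varepsilon_j) =: C > 0$, we get
\[
\alpha_n \geq C\alpha_0 - \sum_{j=0}^{n-1}\beta_j \prod_{i=j+1}^{n-1}(1-\varepsilon_i),
\]
with $\alpha_0 = \log|\Sigma|$, under the standing assumption that $|\Sigma|$ is large enough for the construction to be nondegenerate (e.g.\ $|\Sigma| \geq 3$, so that $N(n) \geq 3$ throughout). Since $\mathtt{h}_n \geq k_0 k_1 \cdots k_{n-1}$ grows rapidly and $\tau_n \to 0$, the tail $\sum_j \beta_j$ is finite, and by imposing the additional mild requirement that the $k_n$ be sufficiently large to force $\sum_j \beta_j < C\alpha_0/2$ (a condition fully compatible with the lower bounds on $k_n$ imposed earlier in the section), one concludes $\liminf_n \alpha_n \geq C\alpha_0/2 > 0$, giving positive entropy for $\mathbb{E}$. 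The main obstacle is to verify that the accumulated error $\sum_j \beta_j$ does not overwhelm the main term $C\alpha_0$; this is the only delicate point, and it is handled by the freedom to enlarge the $k_n$.
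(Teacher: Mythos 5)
Your proof is correct and follows the same route as the paper's. Both pass to $H = H_{\rm top}({\rm sh},\mathcal{P})$ via unique ergodicity and then bound $\alpha_n := \log N(n)/\mathtt{h}_n$ below by counting $(n+1)$-blocks through their freely chosen initial segment of $k_n'$ $n$-blocks (the Chebychev Application giving the factor $1-\tau_n$), since conditions (2) and (3) determine the rest. The difference is purely in how the recursion for $\alpha_n$ is closed. The paper compresses everything into a multiplicative bound $\alpha_{n+1}\ge(1-2\varepsilon_n)\alpha_n$, so that $\alpha_n\ge(\log|\Sigma|)\prod_{i<n}(1-2\varepsilon_i)>0$ with no accumulated error to control. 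You instead keep the exact base $N(n)-1$ and an additive error $\beta_n$, getting $\alpha_{n+1}\ge(1-\varepsilon_n)\alpha_n-\beta_n$, and you then need the extra (mild, correct) observation that $\sum_n\beta_n$ can be made small compared with $C\alpha_0$. This extra step can in fact be avoided and the paper's purely multiplicative form recovered: the standing hypothesis $\varepsilon_n N(n)>2$ already yields $\beta_n\le\varepsilon_n\alpha_n$, because the dominant contribution $2(1-\varepsilon_n)/(N(n)\mathtt{h}_n)$ is at most $\varepsilon_n\log N(n)/\mathtt{h}_n$ once $\log N(n)\ge 1$, and the $-\log(1-\tau_n)/(k_n\mathtt{h}_n)$ term is negligible. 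One small technical virtue of your version: you use $N(n)-1$ as the base, whereas the paper's displayed line $N(n+1)=N(n)^{k_n'}(1-\tau_n)>N(n)^{k_n'-1}$ has an $N(n)$ where $N(n)-1$ should stand, and with that correction the displayed inequality actually fails when $k_n\gg N(n)$ since $(N(n)-1)^{k_n'}\ll N(n)^{k_n'-1}$; your additive formulation sidesteps that, and the comparison to $\varepsilon_n\alpha_n$ above is what rescues the multiplicative bound.
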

\begin{proof}
The number of elements in $\mathcal{P}_{1}^{\mathtt{h}_{n}}$ is at least $N(n),$
while the number of elements in $\mathcal{P}_{1}^{k\mathtt{h}_{n}}$ is at
most $\mathtt{h}_{n}N(n)^{k+1}$. These estimates show that 
\[
H=\lim_{n\to\infty}\frac{\log N(n)}{\mathtt{h}_{n}}.
\]

Here $\mathtt{h}_{n}=\prod_{i=0}^{n-1}k_{i}$ and $N(n+1)=N(n)^{k_{n}'}(1-\tau_{n})>N(n)^{k_{n}'-1}.$
Thus 
\[
\frac{\log N(n+1)}{\mathtt{h}_{n+1}}\ge\frac{(k_{n}'-1)\log N(n)}{k_{n}\mathtt{h}_{n}}\ge\frac{(1-2\varepsilon_{n})\log N(n)}{\mathtt{h}_{n}}.
\]
 Therefore 
\[
\frac{\log N(n)}{\mathtt{h}_{n}}\ge(\log|\Sigma|)\prod_{i=0}^{n-1}(1-2\varepsilon_{i}).
\]
 Since $\prod_{i=0}^{\infty}(1-2\varepsilon_{i})$ converges to a
positive value, it follows that $H>0.$
\end{proof}
\begin{lem}[Conditioning Lemma]
\label{lem:conditioning} Suppose $\text{\rm{Pr}}$
and $\text{\rm{Pr}}'$ are two probability distributions defined on the
join $\mathcal{Q}\lor\mathcal{R}$ of two partitions $\mathcal{Q}$
and $\mathcal{R}$ of the same space. Suppose that for some $0<\varepsilon<1,$ 

\begin{equation}
\sum_{Q\in\mathcal{Q},R\in\mathcal{R}}|\text{\rm{Pr}}(Q\cap R)-\text{\rm{Pr}}'(Q\cap R)|<\varepsilon.\label{eq:close_dist}
\end{equation}
Also assume that $\text{\rm{Pr}}'(Q)>0$ whenever $\text{\rm{Pr}}(Q)>0$.
Then for all but $\text{\rm{Pr}}$ at most $\sqrt{\varepsilon}$ of the
$Q$'s in $\mathcal{Q},$ the conditional probabilities corresponding
to $\text{\rm{Pr}}$ and $\text{\rm{Pr}}'$ satisfy:

\[
\sum_{R\in\mathcal{R}}|\text{\rm{Pr}}(R|Q)-\text{\rm{Pr}}'(R|Q)|<2\sqrt{\varepsilon}.
\]
\end{lem}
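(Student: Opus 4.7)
The plan is to reduce this to a Markov-type inequality applied to the ``defect function''
\[
S(Q) := \sum_{R\in\mathcal{R}} |\mathrm{Pr}(Q\cap R) - \mathrm{Pr}'(Q\cap R)|,
\]
whose total mass, by hypothesis \eqref{eq:close_dist}, satisfies $\sum_{Q} S(Q) < \varepsilon$. The target quantity I want to control is
\[
T(Q) := \sum_{R\in\mathcal{R}} |\mathrm{Pr}(R\mid Q) - \mathrm{Pr}'(R\mid Q)|,
\]
so the heart of the matter is to show a pointwise bound of the form $T(Q) \le 2 S(Q)/\mathrm{Pr}(Q)$, and then to apply Markov's inequality.

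To get the pointwise bound, I would fix $Q$ with $\mathrm{Pr}(Q) > 0$ (note $\mathrm{Pr}'(Q) > 0$ by the auxiliary hypothesis) and write
\[
\frac{\mathrm{Pr}(Q\cap R)}{\mathrm{Pr}(Q)} - \frac{\mathrm{Pr}'(Q\cap R)}{\mathrm{Pr}'(Q)} = \frac{\mathrm{Pr}(Q\cap R) - \mathrm{Pr}'(Q\cap R)}{\mathrm{Pr}(Q)} + \mathrm{Pr}'(Q\cap R)\Bigl(\frac{1}{\mathrm{Pr}(Q)} - \frac{1}{\mathrm{Pr}'(Q)}\Bigr),
\]
apply the triangle inequality, and sum over $R$. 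The first term contributes at most $S(Q)/\mathrm{Pr}(Q)$. For the second, since $\sum_R \mathrm{Pr}'(Q\cap R) = \mathrm{Pr}'(Q)$, the sum becomes $|\mathrm{Pr}(Q) - \mathrm{Pr}'(Q)|/\mathrm{Pr}(Q)$, and the key observation
\[
|\mathrm{Pr}(Q) - \mathrm{Pr}'(Q)| = \Bigl|\sum_R \bigl(\mathrm{Pr}(Q\cap R) - \mathrm{Pr}'(Q\cap R)\bigr)\Bigr| \le S(Q)
\]
gives the desired bound $T(Q) \le 2S(Q)/\mathrm{Pr}(Q)$.

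Finally, let $\mathcal{B} := \{Q : T(Q) \ge 2\sqrt{\varepsilon}\}$. On $\mathcal{B}$, the pointwise bound yields $S(Q) \ge \sqrt{\varepsilon}\,\mathrm{Pr}(Q)$, so
\[
\sqrt{\varepsilon}\cdot \mathrm{Pr}(\mathcal{B}) \le \sum_{Q\in\mathcal{B}} S(Q) \le \sum_Q S(Q) < \varepsilon,
\]
whence $\mathrm{Pr}(\mathcal{B}) < \sqrt{\varepsilon}$. On the complement we have $T(Q) < 2\sqrt{\varepsilon}$, as required. There is no real obstacle here; the only subtlety is the auxiliary hypothesis that $\mathrm{Pr}'(Q) > 0$ whenever $\mathrm{Pr}(Q) > 0$, which is needed just to ensure that the conditional probabilities $\mathrm{Pr}'(R\mid Q)$ are defined on the $Q$'s of interest.
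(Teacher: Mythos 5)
Your proof is correct and follows essentially the same route as the paper: the same two-term decomposition of $\mathrm{Pr}(R\mid Q)-\mathrm{Pr}'(R\mid Q)$ and the same Markov-type argument applied to $S(Q)=\sum_R|\mathrm{Pr}(Q\cap R)-\mathrm{Pr}'(Q\cap R)|$. The only difference is organizational: you establish the pointwise bound $T(Q)\le 2S(Q)/\mathrm{Pr}(Q)$ first and then apply Markov, while the paper first isolates the good $Q$'s via Markov and then carries out the same two-term estimate on them.
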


\begin{proof}
It follows from (\ref{eq:close_dist}) that for all but  $\text{Pr }$ at most
$\sqrt{\varepsilon}$ of the $Q$'s in $\mathcal{Q},$
\begin{equation}
\sum_{R\in\mathcal{R}}|\text{Pr}(Q\cap R)-\text{Pr}'(Q\cap R)|<\sqrt{\varepsilon}\text{Pr}(Q).\label{eq:good_A}
\end{equation}
 Suppose $Q\in\mathcal{Q}$ is chosen so that (\ref{eq:good_A}) holds.
Then

\[
|\text{Pr}(Q)-\text{Pr}'(Q)|=\left|\sum_{R\in\mathcal{R}}[\text{Pr}(Q\cap R)-\text{Pr}'(Q\cap R)]\right\rvert <\sqrt{\varepsilon}\text{Pr}(Q).
\]
If we divide (\ref{eq:good_A}) by $\text{Pr}(Q)$, we obtain 
\begin{equation}
\sum_{R\in\mathcal{R}}\left|\frac{\text{Pr}(Q\cap R)}{\text{Pr}(Q)}-\frac{\text{Pr}'(Q\cap R)}{\text{Pr}(Q)}\right|<\sqrt{\varepsilon}.\label{eq:conditional1}
\end{equation}
We also have 
\begin{equation}
\begin{array}{cccccc}
\displaystyle{\sum_{R\in\mathcal{R}}\left|\frac{\text{Pr}'(Q\cap R)}{\text{Pr}(Q)}-\frac{\text{Pr}'(Q\cap R)}{\text{Pr}'(Q)}\right|} & = & \displaystyle{\sum_{R\in\mathcal{R}}\text{Pr}'(Q\cap R)\left|\frac{1}{\text{Pr}(Q)}-\frac{1}{\text{Pr}'(Q)}\right|}\\
 & = & \displaystyle{\text{Pr}'(Q)\frac{|\text{Pr}(Q)-\text{Pr}'(Q)|}{\text{Pr}(Q)\text{Pr}'(Q)}<\sqrt{\varepsilon}.}
\end{array}\label{eq:conditional2}
\end{equation}
The lemma now follows from (\ref{eq:conditional1}) and (\ref{eq:conditional2}).
\end{proof}

\begin{rem}\label{rem:less than 1} The above proof also holds in case $\text{Pr}'$ is a probability distribution on a larger space that contains
$\cup_{Q\in\mathcal{Q},R\in\mathcal{R}}$. That is, $\sum_{Q\in\mathcal{Q},R\in\mathcal{R}}{\text {Pr}'(Q\cap R)}$ can be less than $1$.  
\end{rem}
\begin{lem}[Finer Partitioning Lemma]
\label{lem:finer} Let $0<\varepsilon<1.$
Suppose $\mathcal{Q}$ is a refinement of $\mathcal{P}_{-M}^{0}$
and there is a probability measure $\omega$ on $\Sigma^{\mathbb{Z}}$
such that there is a collection $\tilde{\mathcal{G}}$ of ``good
atoms'' in $\mathcal{Q}$ with total $\mu$-measure greater than
$1-\varepsilon^{2}/16$ such that for $\tilde{Q}\in\tilde{\mathcal{G}}$
we have 
\[
\overline{f}_{K}\left(\mu(\cdot|\tilde{Q}),\mathcal{\omega}\right)<\varepsilon/4.
\]
 Then there is a collection $\mathcal{G}$ of ``good atoms'' in
$\mathcal{P}_{-M}^{0}$ with total $\mu$-measure greater than $1-\varepsilon/4$
such that for $Q\in\mathcal{G}$ we have 

\[
\overline{f}_{K}\left(\mu(\cdot|Q),\omega\right)<\varepsilon/2.
\]
 Consequently, 
\[
\overline{f}_{K}\left(\mu(\cdot|Q),\mu(\cdot|R)\right)<\varepsilon,
\]
 for $Q,R \in\mathcal{G}.$
\end{lem}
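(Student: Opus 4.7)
The plan is to identify $\mathcal{G}$ via a Markov-type inequality, decompose each $\mu(\cdot\mid Q)$ with $Q\in\mathcal{G}$ as a convex combination of the conditional measures on the atoms of $\mathcal{Q}$ lying inside $Q$, and then transfer the $\overline{f}_K$-closeness from $\tilde{\mathcal{G}}$ to $\mathcal{G}$ using the convex structure of $\overline{f}_K$ expressed via joinings on $\Omega\times\Omega$. The final clause will then follow from a triangle inequality for $\overline{f}_K$ on measures obtained by composing the measure-preserving maps supplied by the definition.

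For the first step, I would set $\tilde B=\Omega\setminus\bigcup\tilde{\mathcal{G}}$ (so $\mu(\tilde B)<\varepsilon^{2}/16$) and take $\mathcal{G}=\{Q\in\mathcal{P}_{-M}^{0}:\mu(\tilde B\mid Q)<\varepsilon/4\}$. Markov's inequality applied to $Q\mapsto\mu(\tilde B\mid Q)$ immediately gives $\mu\big(\bigcup\mathcal{G}\big)>1-\varepsilon/4$. Now fix $Q\in\mathcal{G}$ and, using that $\mathcal{Q}$ refines $\mathcal{P}_{-M}^{0}$, write $\mu(\cdot\mid Q)=\sigma\,\nu_g+(1-\sigma)\,\nu_b$, where $\sigma=\mu\big(\bigcup\tilde{\mathcal{G}}\mid Q\big)>1-\varepsilon/4$, $\nu_g$ is the normalized weighted average of $\mu(\cdot\mid\tilde Q)$ over good $\tilde Q\subseteq Q$, and $\nu_b$ is the corresponding average over the bad ones. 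Re-expressing the hypothesis on each good $\tilde Q$ as a joining $\pi_{\tilde Q}$ on $\Omega\times\Omega$ whose marginals are $\mu(\cdot\mid\tilde Q)$ and $\omega$ and which satisfies $\pi_{\tilde Q}\{(x,y):\overline{f}_K(x,y)<\varepsilon/4\}>1-\varepsilon/4$ (obtained from the map $\phi$ in the definition of $\overline{f}_K$ via $(\mathrm{id}\times\phi)_{\ast}$), I would form the joining $\pi=\sigma\,\pi_g+(1-\sigma)(\nu_b\otimes\omega)$, where $\pi_g$ averages the $\pi_{\tilde Q}$ with the same weights used in $\nu_g$. The marginals of $\pi$ are $\mu(\cdot\mid Q)$ and $\omega$, and $\pi\{\overline{f}_K<\varepsilon/4\}\ge\sigma(1-\varepsilon/4)>(1-\varepsilon/4)^{2}>1-\varepsilon/2$, yielding $\overline{f}_K(\mu(\cdot\mid Q),\omega)<\varepsilon/2$.

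For the last clause I would compose the measure-preserving maps $\phi_Q,\phi_R$ witnessing $\overline{f}_K(\mu(\cdot\mid Q),\omega)<\varepsilon/2$ and $\overline{f}_K(\mu(\cdot\mid R),\omega)<\varepsilon/2$: the map $\phi_R^{-1}\circ\phi_Q$ is measure-preserving from $(\Omega,\mu(\cdot\mid Q))$ to $(\Omega,\mu(\cdot\mid R))$, and on the intersection of the appropriate pull-backs of the two good sets (which has $\mu(\cdot\mid Q)$-measure $>1-\varepsilon$ by inclusion-exclusion), the (honest) triangle inequality for $\overline{f}_K$ on same-length strings gives a pointwise bound of $<\varepsilon$. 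The main subtlety I anticipate is the convexity invoked in the second paragraph; it is cleanest phrased at the level of joinings, where linearity of $\pi\mapsto\pi(A)$ for the fixed measurable set $A=\{(x,y):\overline{f}_K(x,y)<\varepsilon/4\}$ makes the averaging argument essentially automatic and avoids having to directly combine the maps $\phi_{\tilde Q}$ (whose ranges all land in the same space $(\Omega,\omega)$ and cannot be glued naively).
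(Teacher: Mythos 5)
Your proof is correct and follows the same approach as the paper's: the set $\mathcal{G}$ you define is exactly the one the paper specifies (atoms $Q$ of $\mathcal{P}_{-M}^{0}$ for which the good part of $\mathcal{Q}$ has conditional measure greater than $1-\varepsilon/4$), and your joining argument supplies the averaging step that the paper's one-line proof leaves implicit, correctly noting that the maps $\phi_{\tilde Q}$ cannot be glued directly since they all map onto $(\Omega,\omega)$. The only step you leave unstated is converting the final joining $\pi$ back into the invertible measure-preserving map required by the paper's definition of $\overline{f}_K(\nu,\omega)<\varepsilon$; this is standard for non-atomic Lebesgue spaces (restrict $\pi$ to the finite partition $\mathcal{P}_1^K\times\mathcal{P}_1^K$, which determines $\overline{f}_K$, to obtain a coupling matrix, split the atoms of $\mathcal{P}_1^K$ under $\mu(\cdot\mid Q)$ and under $\omega$ proportionally to that matrix, and match the pieces), and the paper omits this point as well.
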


\begin{proof}
We let $\mathcal{G}$ consist of those atoms $Q$ in $\mathcal{P}_{-M}^{0}$
such that a subset of $Q$ of measure greater than $(1-\varepsilon/4)\mu(Q)$
is a union of atoms in $\tilde{\mathcal{G}}.$ 
\end{proof}
The following definition is due to D. Ornstein \cite{Or}.
\begin{defn}
\label{def:epsilon_partition}A partition $\mathcal{R}$ is said to
be $\varepsilon$-independent of a partition $\mathcal{Q}$ (with
respect to a given measure $\nu$) if for a collection of atoms $Q\in\mathcal{Q}$
of total $\nu$-measure at least $1-\varepsilon,$ 
\[
\sum_{R\in\mathcal{R}}|\nu(R|Q)-\nu(R)|\le\varepsilon.
\]
 In this case we write $\mathcal{R}\perp_{\nu}^{\varepsilon}\mathcal{Q}.$
If the measure $\nu$ is understood, we may omit the subscript $\nu.$ 
\end{defn}

\begin{rem}\label{rem:refinement}
If $\mathcal{Q},\mathcal{R},$ and $\mathcal{S}$ are partitions such that $\mathcal{R}$ refines $\mathcal{S}$ and $\mathcal{R}\perp_{\nu}^{\varepsilon}\mathcal{Q}$, then by the triangle inequality, $\mathcal{S}\perp_{\nu}^{\varepsilon}\mathcal{Q}.$ 
\end{rem}
\begin{rem}\label{rem:antisymmetry}
The definition of $\varepsilon$-independence is not symmetric in
$\mathcal{Q}$ and $\mathcal{R},$ but $\mathcal{R}\perp_{\nu}^{\varepsilon}\mathcal{Q}$
implies $\mathcal{Q}\perp_{\nu}^{\sqrt{3\varepsilon}}\mathcal{R}.$
(See p. 23 of \cite{Sm}.)
\end{rem}
\begin{lem}[Epsilon Independence Lemma]
\label{lem:epsilon} Let $\tau_{n}$
be as in the Chebychev Application, and let Pr$_{n}$ be the probability
distribution on possible initial strings of $k_{n}'$ many $n$-blocks
within $(n+1)$-blocks. Let $\mathcal{Q}$ and $\mathcal{R}$ be partitions
of the union of all $(n+1)$-blocks such that $\mathcal{Q}$ is the
partition into sets that have the same $a_{n}$ initial $n$-blocks
and $\mathcal{R}$ is the partition into sets that have the same $b_{n}$
many $n$-blocks appearing as the $(a_{n}+1)$st through $(a_{n}+b_{n})$th
$n$-blocks of an $(n+1)$-block. Assume that $a_{n}+b_{n}\le k_{n}'.$
Then $\mathcal{R}\perp_{\text{Pr}_{n}}^{3\sqrt{\tau_{n}}}\mathcal{Q}.$
\end{lem}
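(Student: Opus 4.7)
My strategy is to use the fully independent distribution $\widetilde{\text{Pr}}_n$ as an intermediary. Under $\widetilde{\text{Pr}}_n$, the partitions $\mathcal{Q}$ and $\mathcal{R}$ are \emph{exactly} independent, because they are determined by disjoint block-index ranges $\{1,\ldots,a_n\}$ and $\{a_n+1,\ldots,a_n+b_n\}$ that lie entirely inside the first $k_n'$ positions, where $\widetilde{\text{Pr}}_n$ chooses $n$-blocks i.i.d.\ uniformly from $\{\mathtt{y}_1,\ldots,\mathtt{y}_{N(n)-1}\}$. The Conditioning Lemma together with the total-variation bound (\ref{eq:compare_distributions}) should then transfer this exact independence into approximate independence under the actual distribution $\text{Pr}_n$.

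First I would push both $\text{Pr}_n$ and $\widetilde{\text{Pr}}_n$ down to the join $\mathcal{Q}\vee\mathcal{R}$, which is a coarsening of the space $\mathcal{A}$ of all strings of $k_n'$ many $n$-blocks. Since projection cannot increase $\ell^1$-distance, (\ref{eq:compare_distributions}) gives
\[
\sum_{Q\in\mathcal{Q},\,R\in\mathcal{R}}\lvert\text{Pr}_n(Q\cap R)-\widetilde{\text{Pr}}_n(Q\cap R)\rvert\le 2\tau_n.
\]
Whenever $\text{Pr}_n(Q)>0$, automatically $\widetilde{\text{Pr}}_n(Q)>0$ (the latter is fully supported over $\{\mathtt{y}_1,\ldots,\mathtt{y}_{N(n)-1}\}^{a_n+b_n}$), and Remark \ref{rem:less than 1} accommodates the fact that $\widetilde{\text{Pr}}_n$ charges atoms that $\text{Pr}_n$ does not. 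Applying the Conditioning Lemma with $\varepsilon=2\tau_n$ produces a collection $\mathcal{G}\subseteq\mathcal{Q}$ of $\text{Pr}_n$-measure at least $1-\sqrt{2\tau_n}$ such that
\[
\sum_{R\in\mathcal{R}}\lvert\text{Pr}_n(R\mid Q)-\widetilde{\text{Pr}}_n(R\mid Q)\rvert<2\sqrt{2\tau_n}\quad\text{for every }Q\in\mathcal{G}.
\]

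From the exact independence under $\widetilde{\text{Pr}}_n$ we have $\widetilde{\text{Pr}}_n(R\mid Q)=\widetilde{\text{Pr}}_n(R)$, and projecting (\ref{eq:compare_distributions}) onto $\mathcal{R}$ alone yields $\sum_R\lvert\widetilde{\text{Pr}}_n(R)-\text{Pr}_n(R)\rvert\le 2\tau_n$. The triangle inequality then gives, for every $Q\in\mathcal{G}$,
\[
\sum_{R\in\mathcal{R}}\lvert\text{Pr}_n(R\mid Q)-\text{Pr}_n(R)\rvert\le 2\sqrt{2\tau_n}+2\tau_n\le 3\sqrt{\tau_n},
\]
the last inequality being valid once $\tau_n$ is sufficiently small, which is ensured by the choice of $k_n$ making $\sum_n\tau_n^{2^{-n}}<\infty$. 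Combined with $\sqrt{2\tau_n}\le 3\sqrt{\tau_n}$ on the $\text{Pr}_n$-mass of the bad complement, this establishes $\mathcal{R}\perp_{\text{Pr}_n}^{3\sqrt{\tau_n}}\mathcal{Q}$. I do not anticipate any real obstacle; the only delicate point is the support mismatch between $\text{Pr}_n$ and $\widetilde{\text{Pr}}_n$, which is exactly what Remark \ref{rem:less than 1} is designed to handle, and the constant $3$ is comfortable because the construction forces $\tau_n$ to decay rapidly.
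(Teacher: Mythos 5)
Your proof is correct and takes essentially the same approach as the paper: compare $\text{Pr}_n$ with the genuinely independent $\widetilde{\text{Pr}}_n$, apply the Conditioning Lemma (with Remark \ref{rem:less than 1} for the support mismatch), use exact $\widetilde{\text{Pr}}_n$-independence to replace $\widetilde{\text{Pr}}_n(R\mid Q)$ by $\widetilde{\text{Pr}}_n(R)$, and close with the triangle inequality. The only divergence is a constant: the paper applies the Conditioning Lemma with $\varepsilon=\tau_n$, claiming the $\ell^1$-distance on $\mathcal{Q}\vee\mathcal{R}$ restricted to occurring atoms is at most $\tau_n$, and obtains $2\sqrt{\tau_n}+\tau_n<3\sqrt{\tau_n}$ unconditionally; you use $\varepsilon=2\tau_n$ (the direct projection bound from (\ref{eq:compare_distributions})) and then need $\tau_n$ small enough that $2\sqrt{2\tau_n}+2\tau_n\le 3\sqrt{\tau_n}$, which the choice of $k_n$ guarantees. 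Your accounting is in fact the more cautious of the two: projecting onto $\mathcal{Q}\vee\mathcal{R}$ and restricting to occurring atoms still picks up both the overcounting on $\mathcal{A}'$ (mass $\tau_n$) and the $\widetilde{\text{Pr}}_n$-mass of non-possible extensions that share an occurring prefix (mass up to $\tau_n$), so the bound $2\tau_n$ is the one that holds without further argument. Since the construction forces $\tau_n$ to be tiny, the conclusion is unaffected.
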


\begin{proof}
Let $\widetilde{\text{Pr}}_{n}$ be the probability distribution for
$k_{n}'$ $n$-blocks chosen independently from the first $N(n)-1$
many $n$-blocks, with each of these $n$-blocks equally likely. Then
by equation (\ref{eq:compare_distributions}), 

\[
\sum_{Q\cap R\in\mathcal{Q}\lor\mathcal{R}}|\text{Pr}_{n}(Q\cap R)-\widetilde{\text{Pr}}_{n}(Q\cap R)|\le\tau_{n}.
\]
 Note that we are only summing over those $Q\cap R$ that actually
occur as initial strings of some $(n+1)$-block(s). Therefore by Lemma
\ref{lem:conditioning} and Remark \ref{rem:less than 1}, for a collection $\mathcal{G}$ of $Q\in\mathcal{Q}$
of total $\text{Pr}_{n}$ measure at least $1-\sqrt{\tau_{n}},$ 
\[
\sum_{R\in\mathcal{R}}|\text{Pr}_{n}(R|Q)-\widetilde{\text{Pr}}_{n}(R|Q)|\le2\sqrt{\tau_{n}}.
\]
 But $\widetilde{\text{Pr}}_{n}(R|Q)=\widetilde{\text{Pr}}_{n}(R),$
and $\sum_{R\in\mathcal{R}}|\text{Pr}_{n}(R)-\widetilde{\text{Pr}}_{n}(R)|\le\tau_{n}.$
Therefore for $Q\in\mathcal{G},$ 
\[
\sum_{R\in\mathcal{R}}|\text{Pr}_{n}(R|Q)-\text{Pr}_{n}(R)|\le2\sqrt{\tau_{n}}+\tau_{n}<3\sqrt{\tau_{n}}.
\]
 
\end{proof}
\begin{rem*}
If $b_{n}=1,$ then $\text{Pr}_{n}(R)=\widetilde{\text{Pr}}_{n}(R)$
and $\mathcal{R}\perp_{\text{Pr}_{n}}^{2\sqrt{\tau_{n}}}\mathcal{Q}.$
\end{rem*}
The following lemma will be used in the inductive step of the proof
that the odometer-based system $\mathbb{E}$ is loosely Bernoulli. 
\begin{lem}[Inductive Step]
\label{lem:inductive_step} Suppose $\nu$ is a probability
measure, and $\mathcal{R}_{1}$, $\mathcal{R}_{2}$, $\mathcal{Q}_{1}$, $\mathcal{Q}_{2}$
are measurable partitions. If $\mathcal{R}_{1}\perp_{\nu}^{\varepsilon_{1}}\mathcal{Q}_{1}$
and $\mathcal{R}_{1}=\mathcal{Q}_{2}\lor\mathcal{R}_{2},$ where $\mathcal{R}_{2}\perp_{\nu}^{\varepsilon_{2}}\mathcal{Q}_{2}$ and $0<\varepsilon_1,\varepsilon_2<1$,
then $\mathcal{R}_{2}\perp_{\nu}^{2\sqrt{\varepsilon_{1}}+2\sqrt{\varepsilon_{2}}}\mathcal{Q}_{1}\lor\mathcal{Q}_{2}.$
\end{lem}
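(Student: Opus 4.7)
The plan is to iterate the Conditioning Lemma (Lemma \ref{lem:conditioning}) once, after rewriting the first $\varepsilon$-independence in terms of the finer join that defines $\mathcal{R}_1$. Since $\mathcal{R}_1=\mathcal{Q}_2\lor\mathcal{R}_2$, the hypothesis $\mathcal{R}_1\perp_\nu^{\varepsilon_1}\mathcal{Q}_1$ gives a collection $\mathcal{G}_1\subseteq\mathcal{Q}_1$ with $\nu(\bigcup\mathcal{G}_1)\ge 1-\varepsilon_1$ such that for every $Q_1\in\mathcal{G}_1$,
\[
\sum_{Q_2\in\mathcal{Q}_2,\,R_2\in\mathcal{R}_2}\bigl|\nu(Q_2\cap R_2\mid Q_1)-\nu(Q_2\cap R_2)\bigr|\le\varepsilon_1.
\]
This is precisely the hypothesis of Lemma \ref{lem:conditioning} applied to the two probability distributions $\text{Pr}(\cdot)=\nu(\cdot\mid Q_1)$ and $\text{Pr}'(\cdot)=\nu(\cdot)$ on the join $\mathcal{Q}_2\lor\mathcal{R}_2$.

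The second step is to apply Lemma \ref{lem:conditioning} with $\mathcal{Q}=\mathcal{Q}_2$ and $\mathcal{R}=\mathcal{R}_2$: for each $Q_1\in\mathcal{G}_1$ there is a subcollection $\mathcal{H}(Q_1)\subseteq\mathcal{Q}_2$ with $\nu(\bigcup\mathcal{H}(Q_1)\mid Q_1)\ge 1-\sqrt{\varepsilon_1}$ such that for every $Q_2\in\mathcal{H}(Q_1)$,
\[
\sum_{R_2\in\mathcal{R}_2}\bigl|\nu(R_2\mid Q_1\cap Q_2)-\nu(R_2\mid Q_2)\bigr|\le 2\sqrt{\varepsilon_1}.
\]
At the same time, $\mathcal{R}_2\perp_\nu^{\varepsilon_2}\mathcal{Q}_2$ gives a collection $\mathcal{G}_2\subseteq\mathcal{Q}_2$ with $\nu(\bigcup\mathcal{G}_2)\ge 1-\varepsilon_2$ such that $\sum_{R_2}|\nu(R_2\mid Q_2)-\nu(R_2)|\le\varepsilon_2$ whenever $Q_2\in\mathcal{G}_2$. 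Combining the two displays via the triangle inequality yields, for every pair $(Q_1,Q_2)$ with $Q_1\in\mathcal{G}_1$, $Q_2\in\mathcal{H}(Q_1)\cap\mathcal{G}_2$,
\[
\sum_{R_2\in\mathcal{R}_2}\bigl|\nu(R_2\mid Q_1\cap Q_2)-\nu(R_2)\bigr|\le 2\sqrt{\varepsilon_1}+\varepsilon_2\le 2\sqrt{\varepsilon_1}+2\sqrt{\varepsilon_2},
\]
which is the desired inequality for atoms of $\mathcal{Q}_1\lor\mathcal{Q}_2$.

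The last step is the measure bookkeeping. Integrating the lower bound $\nu(\bigcup\mathcal{H}(Q_1)\mid Q_1)\ge 1-\sqrt{\varepsilon_1}$ over $Q_1\in\mathcal{G}_1$ and subtracting the measure of atoms with $Q_1\notin\mathcal{G}_1$ or $Q_2\notin\mathcal{G}_2$, the collection of ``good'' atoms $Q_1\cap Q_2$ in $\mathcal{Q}_1\lor\mathcal{Q}_2$ has $\nu$-measure at least
\[
1-\varepsilon_1-\sqrt{\varepsilon_1}-\varepsilon_2\ \ge\ 1-2\sqrt{\varepsilon_1}-2\sqrt{\varepsilon_2},
\]
where the last inequality uses $\varepsilon_i<1$ so that $\varepsilon_i\le\sqrt{\varepsilon_i}$. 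This establishes $\mathcal{R}_2\perp_\nu^{2\sqrt{\varepsilon_1}+2\sqrt{\varepsilon_2}}\mathcal{Q}_1\lor\mathcal{Q}_2$.

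There is no real obstacle in this argument; it is essentially an application of the Conditioning Lemma followed by the triangle inequality. The only place that requires care is the measure accounting in the last paragraph, in particular remembering that the $1-\sqrt{\varepsilon_1}$ bound from Lemma \ref{lem:conditioning} is with respect to $\nu(\cdot\mid Q_1)$, not $\nu$ itself, so one must weight by $\nu(Q_1)$ when summing over $Q_1\in\mathcal{G}_1$ to obtain a bound in the original measure $\nu$.
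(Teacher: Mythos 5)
Your proof is correct and follows the same essential route as the paper: rewrite the first $\varepsilon_1$-independence using $\mathcal{R}_1=\mathcal{Q}_2\lor\mathcal{R}_2$, apply the Conditioning Lemma with $\text{Pr}=\nu(\cdot\mid Q_1)$ and $\text{Pr}'=\nu$, and finish with the triangle inequality. The one place you genuinely diverge is the final measure bookkeeping: you bound the bad set by the additive union bound $\varepsilon_1+\sqrt{\varepsilon_1}+\varepsilon_2$, whereas the paper first applies a Markov-type estimate to show that $\mathcal{G}_2$ has $\nu(\cdot\mid Q_1)$-measure at least $1-\sqrt{\varepsilon_2}$ for all but $\sqrt{\varepsilon_2}$ in measure of the $Q_1$'s, folds that into the definition of $\mathcal{G}_1$, and then obtains the multiplicative bound $(1-\varepsilon_1-\sqrt{\varepsilon_2})(1-\sqrt{\varepsilon_1}-\sqrt{\varepsilon_2})$. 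Both yield $1-2\sqrt{\varepsilon_1}-2\sqrt{\varepsilon_2}$ using $\varepsilon_i\le\sqrt{\varepsilon_i}$; your version is marginally shorter since it avoids the extra Markov step, and it gives a nominally better constant ($1-\varepsilon_1-\sqrt{\varepsilon_1}-\varepsilon_2$ versus the paper's product), but there is no substantive gain. Your closing remark about weighting $\nu(\cup\mathcal{H}(Q_1)\mid Q_1)$ by $\nu(Q_1)$ before summing is precisely the point that must be handled carefully, and you have it right.
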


\begin{proof}
Since $\mathcal{R}_{2}\perp_{\nu}^{\varepsilon_{2}}\mathcal{Q}_{2},$
for a collection $\mathcal{G}_{2}$ of atoms $Q_{2}$ of $\mathcal{Q}_{2}$
with $\nu(\cup_{Q_{2}\in\mathcal{G}_{2}}Q_{2})\ge1-\varepsilon_{2},$
\begin{equation}
\sum_{R_{2}\in\mathcal{R}_{2}}|\nu(R_{2}|Q_{2})-\nu(R_{2})|\le\varepsilon_{2}.\label{eq:Inductive1}
\end{equation}
 For at least total $\nu$-measure $1-\sqrt{\varepsilon_{2}}$ of
the $Q_{1}$ in $\mathcal{Q}_{1},$ 
\begin{equation}
\nu\left(Q_{1}\cap\left(\cup_{Q_{2}\in\mathcal{G}_{2}}Q_{2}\right)\right)\ge\left(1-\sqrt{\varepsilon_{2}}\right)\nu(Q_{1}).\label{eq:Inductive2}
\end{equation}
 That is, $\mathcal{G}_{2}$ has total $\nu(\cdot|Q_{1})$ measure
at least $1-\sqrt{\varepsilon_{2}}.$ Since $\mathcal{R}_{1}=\mathcal{Q}_{2}\lor\mathcal{R}_{2}$
and $\mathcal{R}_{1}\perp_{\nu}^{\varepsilon_{1}}\mathcal{Q}_{1}$, for
a collection of atoms $Q_{1}\in\mathcal{Q}_{1}$ of total $\nu$-measure
at least $1-\varepsilon_{1},$ 
\begin{equation}
\sum_{Q_{2}\cap R_{2}\in\mathcal{Q}_{2}\lor\mathcal{R}_{2}}|\nu(Q_{2}\cap R_{2}|Q_{1})-\nu(Q_{2}\cap R_{2})|\le\varepsilon_{1}.\label{eq:Inductive3}
\end{equation}
 Let $\mathcal{G}_{1}$ be the collection of $Q_{1}\in\mathcal{Q}_{1}$
such that (\ref{eq:Inductive2}) and (\ref{eq:Inductive3}) hold.
Then we have $\nu\left(\cup_{Q_{1}\in\mathcal{G}_{1}}Q_{1}\right)\ge1-\varepsilon_{1}-\sqrt{\varepsilon_{2}}.$
Fix a choice of $Q_{1}\in\mathcal{G}_{1}.$ By Lemma \ref{lem:conditioning}
applied to $\text{Pr}=\nu(\cdot|Q_{1})$ and $\text{Pr}'=\nu,$ it
follows from (\ref{eq:Inductive3}) that for a collection $\mathcal{G}_{3}=\mathcal{G}_{3}(Q_{1})$
of atoms $Q_{2}\in\mathcal{Q}_{2}$ with total $\nu(\cdot|Q_{1})$
measure at least $1-\sqrt{\varepsilon_{1}},$ 
\begin{equation}
\sum_{R_{2}\in\mathcal{R}_{2}}|\nu(R_{2}|Q_{1}\cap Q_{2})-\nu(R_{2}|Q_{2})|\le2\sqrt{\varepsilon_{1}}.\label{eq:Inductive 4}
\end{equation}
 Then $\nu\left(\cup_{Q_{2}\in\mathcal{G}_{2}\cap\mathcal{G}_{3}}Q_{2}|Q_{1}\right)\ge1-\sqrt{\varepsilon_{1}}-\sqrt{\varepsilon_{2}},$
and for $Q_{2}\in\mathcal{G}_{2}\cap\mathcal{G}_{3},$ it follows
from (\ref{eq:Inductive1}) and (\ref{eq:Inductive 4}) that 
\begin{equation}
\sum_{R_{2}\in\mathcal{R}_{2}}|\nu(R_{2}|Q_{1}\cap Q_{2})-\nu(R_{2})|\le2\sqrt{\varepsilon_{1}}+\varepsilon_{2}.\label{eq:Inductive5}
\end{equation}
 Since $\nu\left(\cup_{Q_{1}\in\mathcal{G}_{1}}Q_{1}\right)\ge1-\varepsilon_{1}-\sqrt{\varepsilon_{2}}$
and for any $Q_{1}\in\mathcal{G}_{1},$ $\nu\left(\cup_{Q_{2}\in\mathcal{G}_{2}\cap\mathcal{G}_{3}}Q_{2}|Q_{1}\right)\ge1-\sqrt{\varepsilon_{1}}-\sqrt{\varepsilon_{2}},$
the collection $\mathcal{G}$ of $Q_{1}\cap Q_{2}\in\mathcal{Q}_{1}\lor\mathcal{Q}_{2}$
such that (\ref{eq:Inductive5}) holds has total $\nu$-measure at
least $(1-\varepsilon_{1}-\sqrt{\varepsilon_{2}})(1-\sqrt{\varepsilon_{1}}-\sqrt{\varepsilon_{2}})>1-2\sqrt{\varepsilon_{1}}-2\sqrt{\varepsilon_{2}}.$
Therefore $\mathcal{R}_{2}\perp_{\nu}^{2\sqrt{\varepsilon_{1}}+2\sqrt{\varepsilon_{2}}}\mathcal{Q}_{1}\lor\mathcal{Q}_{2}.$
\end{proof}
\begin{thm}
\label{thm:positive_entropy_LB}The odometer-based system $\mathbb{E}$
constructed in this section is loosely Bernoulli.
\end{thm}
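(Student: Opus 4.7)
The plan is to verify Definition \ref{def:generalLB} directly. Fix $\varepsilon>0$. The heuristic is that because $(n+1)$-blocks are built by nearly independent concatenation of $n$-blocks, the conditional measure on the future $K$ symbols, given a sufficiently refined past, will be close in total variation to a reference ``independent concatenation'' measure that depends on the past only through the scale-$n_0$ hierarchical phase $\phi$. Since total variation dominates $\overline{f}_K$, and reference measures at different phases differ only by shifts of length at most $\mathtt{h}_{n_0}$ (contributing $\le \mathtt{h}_{n_0}/K$ to $\overline{f}_K$), a single reference $\omega:=\omega_{0}$ will suffice once $K\gg\mathtt{h}_{n_0}$, permitting one clean application of the Finer Partitioning Lemma.

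First, choose $n_0$ large enough that the errors from iterating Lemma \ref{lem:inductive_step} over scales $\ge n_0$ stay below $\varepsilon^{2}/16$. This is feasible because $(k_n)$ was chosen so that $\sum_n\tau_n^{2^{-n}}<\infty$, forcing ultra-rapid decay of $\tau_n$; the iteration map $(\varepsilon_1,\varepsilon_2)\mapsto 2\sqrt{\varepsilon_1}+2\sqrt{\varepsilon_2}$ supplied by Lemma \ref{lem:inductive_step} telescopes into a sum of terms of the form $C\,\tau_n^{2^{-k}}$ that converges. Next, set $K:=\lceil 200\,\mathtt{h}_{n_0}/\varepsilon\rceil$ and pick an auxiliary scale $n_1>n_0$ with $\mathtt{h}_{n_1}>K/\varepsilon$. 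For any $M$, let $\mathcal{Q}$ be the refinement of $\mathcal{P}_{-M}^{0}$ that additionally specifies the hierarchical position at each scale $n_0, n_0+1,\ldots, n_1$.

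Applying the Epsilon Independence Lemma (Lemma \ref{lem:epsilon}) at each scale $n\in\{n_0,\ldots,n_1-1\}$ and combining via the Inductive Step Lemma yields a collection $\tilde{\mathcal{G}}$ of atoms of $\mathcal{Q}$ with $\mu$-measure $>1-\varepsilon^{2}/16$, such that for $\tilde Q\in\tilde{\mathcal{G}}$, the conditional distribution on the next $K$ symbols is within total variation $\varepsilon/32$ of the reference iid-uniform distribution $\omega_{\phi(\tilde Q)}$ (uniform over the $N(n_0)-1$ non-terminal $n_0$-blocks, starting from scale-$n_0$ phase $\phi(\tilde Q)$). Atoms whose $K$-window reaches a deterministic tail region have total $\mu$-measure $O(\varepsilon)$ by the choice of $n_1$ and are excluded from $\tilde{\mathcal{G}}$. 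Since total variation dominates $\overline{f}_K$, one gets $\overline{f}_K(\mu(\cdot|\tilde Q),\omega_{\phi(\tilde Q)})<\varepsilon/32$, and coupling $\omega_\phi$ with $\omega_0$ as shifts of a common iid $n_0$-block sequence yields $\overline{f}_K(\omega_\phi,\omega_0)\le \mathtt{h}_{n_0}/K<\varepsilon/200$, so $\overline{f}_K(\mu(\cdot|\tilde Q),\omega_0)<\varepsilon/4$.

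Finally, the Finer Partitioning Lemma (Lemma \ref{lem:finer}) applied with target $\omega:=\omega_0$ produces a collection $\mathcal{G}$ of atoms of $\mathcal{P}_{-M}^{0}$ of $\mu$-measure $>1-\varepsilon/4$ such that $\overline{f}_K(\mu_A,\mu_B)<\varepsilon$ for all $A,B\in\mathcal{G}$, as required. The hard part is controlling the compounded errors in the iterated Inductive Step; the ultra-rapid decay of $\tau_n$ built into the choice of $k_n$ is exactly what tames this compounding and makes the single-reference strategy go through.
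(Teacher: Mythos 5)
Your high-level strategy is the same as the paper's: pick a reference measure $\omega$, show the conditional distribution on the next $K$ symbols is total-variation close to $\omega$ once a fine enough ``hierarchical'' partition is conditioned on, and then invoke the Finer Partitioning Lemma and the fact that total variation dominates $\overline{f}_K$. You also correctly identify the key tools (Lemmas \ref{lem:epsilon}, \ref{lem:inductive_step}, \ref{lem:finer}) and the role of the summability of $\tau_n^{2^{-n}}$ in taming the compounding $2\sqrt{\varepsilon_1}+2\sqrt{\varepsilon_2}$ error. The handling of the phase by deleting at most $\mathtt{h}_{n_0}$ symbols to reduce to a single reference $\omega_0$ is consistent with what the paper does by absorbing the current $n$-block into the first $\mathtt{h}_n$ of the $K=(\varepsilon_n k_n+1)\mathtt{h}_n$ symbols.

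However, there is a genuine gap: you fix $n_1$ (via $\mathtt{h}_{n_1}>K/\varepsilon$) \emph{before} $M$ is given, and then iterate the Inductive Step only over scales $n_0,\dots,n_1-1$. That iteration proves that $\mathcal{R}_{n_0}$ is approximately independent of $\mathcal{Q}_{n_1}\lor\cdots\lor\mathcal{Q}_{n_0}$, but your partition $\mathcal{Q}$ is \emph{finer} than this join as soon as $M>\mathtt{h}_{n_1}$: $\mathcal{P}_{-M}^{0}$ also reveals the $(n_1+1)$-blocks, $(n_1+2)$-blocks, etc., preceding the current ones, and approximate independence from a coarser partition does not pass to approximate independence from a finer one. (Conditioning on extra information at higher scales can, in principle, force the current high-level block and hence the $K$-window.) The paper's proof of Theorem \ref{thm:positive_entropy_LB} handles exactly this by first fixing $m$ with $\mathtt{h}_{n+m-1}\geq M$, conditioning on the $(n+m)$-block structure $S$, discarding atoms where time $0$ sits in the first $(n+m-1)$-block, and then iterating the Inductive Step over \emph{all} scales $n,\dots,n+m-1$ so that $\mathcal{Q}_{n+m-1}\lor\cdots\lor\mathcal{Q}_{n}$ really does refine $S\cap\mathcal{P}_{-M}^{0}$. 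The summability $\sum_j\tau_j^{(2^{-j})}<\infty$ is what lets $m$ grow with $M$ while keeping $\eta_n$ small. To repair your argument, replace the fixed $n_1$ by a scale $m'=m'(M)$ with $\mathtt{h}_{m'}\geq M$, condition additionally on the $(m'+1)$-block structure, and iterate the Inductive Step over $n_0,\dots,m'$; with that change your proof matches the paper's.
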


\begin{proof}
Let $0<\varepsilon<1.$ Fix a choice of $n\ge2$ sufficiently large
so that $\sum_{j\ge n}\varepsilon_{j}<\varepsilon^{2}/100,$ and $\sum_{j\ge n}\tau_j^{(2^{-j})}<\varepsilon^{2}/100,$
and $(\varepsilon_{n}k_{n})^{-1}<\varepsilon^{2}/16.$ Let $K=(\varepsilon_{n}k_{n}+1)h_{n}$
and $M\ge h_{n+1}.$ We will show that the conclusion of Definition
\ref{def:generalLB} holds for these choices of $K$ and $M.$ Let
$\omega$ be any probability measure on $\Sigma^{\mathbb{Z}}$ such
that
\[
\omega\{(x_{k}):x_{1}x_{2}\cdots x_{\varepsilon_{n}k_{n}h_{n}}=c_{1}c_{2}\cdots c_{\varepsilon_{n}k_{n}h_{n}}\}=\text{Pr}_{n}(c_{1}c_{2}\cdots c_{\varepsilon_{n}k_{n}h_{n}}),
\]
which is the probability that the string $c_{1}c_{2}\cdots c_{\varepsilon_{n}k_{n}h_{n}}$
comprises the $\varepsilon_{n}k_{n}$ initial $n$-blocks in an $(n+1)$-block.
By Lemma \ref{lem:finer}, it suffices to show that for some refinement
$\mathcal{Q}$ of $\mathcal{P}_{-M}^{0},$ there is a collection $\mathcal{G}$
of good atoms of $\mathcal{Q}$ of total measure at least $1-\varepsilon^{2}/16$
such that for $Q\in\mathcal{G},$
\begin{equation}
\overline{f}_{K}(\mu(\cdot|Q),\omega)<\varepsilon/4.\label{eq:LBgoal}
\end{equation}
Fix a choice of $m$ such that $h_{n+m-1}\ge M.$ Let $\mathcal{S}$
be the partition of the space $\Sigma^{\mathbb{Z}}$ into sets that
have the same $(n+m)$-block structure, that is, time $0$ is in the
same position within the $(n+m)$-block.

We now describe the collection $\tilde{\mathcal{G}}$ of good atoms
in $\mathcal{S}.$ First we eliminate those atoms in $\mathcal{S}$
such that for any $j=2,\dots,m$, the deterministic part of the $(n+j)$-block
containing time $0$ overlaps with the time interval $[1,K].$ (The
deterministic part of an $(n+j)$-block consists of the last $\varepsilon_{n+j-1}k_{n+j-1}$ many
$(n+j-1)$-blocks within the $(n+j)$-block.) We also eliminate those
atoms in $\mathcal{S}$ such that time $0$ lies in the first $(n+m-1)$-block
within the $(n+m)$-block containing time $0.$ Moreover, we eliminate
those atoms in $\mathcal{S}$ such that time $0$ occurs in any of
the last $2\varepsilon_{n}k_{n}$ $n$-blocks within an $(n+1)$-block.
The total measure of the sets eliminated is less than $2(\varepsilon_{n}+\cdots+\varepsilon_{n+m-1})<\varepsilon^{2}/50.$
Let $\tilde{\mathcal{G}}$ be the collection of atoms in $\mathcal{S}$
that remain, and fix a choice $S\in\tilde{\mathcal{G}}.$ For this
$S$ and $j=0,\dots,m-1,$ let $a_{n+j}$ be the number of $(n+j)$-blocks
preceding the $(n+j)$-block containing time $0$ within the $(n+j+1)$-block
containing time $0.$ Since $a_{n+m-1}\ge1$ and $h_{n+m-1}\ge M,$
the beginning of the $(n+m)$-block containing time $0$ occurs at
or before time $-M.$ Let $\nu=\nu_{S}$ be the normalized restriction
of $\mu$ to $S.$

Let $\mathcal{Q}_{n}$ be the partition of $S$ into sets with the
same collection of $n$-blocks appearing in positions $1$ to $a_{n}+1$
at the beginning of the $(n+1)$-block containing time $0,$ and let
$\mathcal{R}_{n}$ be the partition of $S$ into sets with the same
collection of $n$-blocks appearing in the $\varepsilon_{n}k_{n}$
$n$-blocks that follow the $n$-block containing time $0.$ For $k=1,2,\dots,m-1,$
let $\mathcal{Q}_{n+m-k}$ be the partition of $S$ into sets with
the same collection of $(n+m-k)$-blocks comprising the $a_{n+m-k}$
$(n+m-k)$-blocks that precede the $(n+m-k)$-block that contains
time $0,$ and let $\mathcal{R}_{n+m-k}$ be the partition of $S$
into sets with $(n+m-k)$-blocks containing time $0$ agreeing up to the position of the last symbol in the $\varepsilon_nk_n$ $n$-blocks that follow the $n$-block containing time $0$  (see Figure \ref{fig:fig1}).  

\begin{figure}
    \centering
    \includegraphics[width=\textwidth]{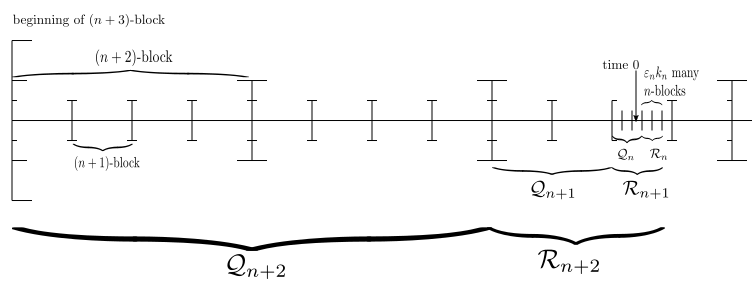}
    \caption{For $j=n,n+1,n+2$, the $\mathcal{Q}_j$'s and $\mathcal{R}_j$'s are partitions into sets according to the $\mathcal{P}$-names that appear in the indicated $j$-blocks. The actual numbers of $j$-blocks are much larger than can be depicted in the figure.}
    \label{fig:fig1}
\end{figure}
\begin{claim*}
Let $\eta_{n+m-k}=4(\tau_{n+m-1}^{2^{-(k+1)}}+\tau_{n+m-2}^{2^{-k}}+\cdots+\tau_{n+m-(k-1)}^{2^{-3}}+\tau_{n+m-k}^{2^{-2}})$
for $k=1,2,\dots,m.$ Then $\mathcal{R}_{n+m-k}\perp_{\nu}^{\eta_{n+m-k}}(\mathcal{Q}_{n+m-1}\lor\mathcal{Q}_{n+m-2}\lor\cdots\lor\mathcal{Q}_{n+m-k}),$
for $k=1,2,\dots,m.$ 
\end{claim*}
\begin{proof}
According to Lemma \ref{lem:epsilon} and Remark \ref{rem:refinement}, $\mathcal{R}_{n+m-1}\perp_{\nu}^{3\sqrt{\tau_{n+m-1}}}\mathcal{Q}_{n+m-1}.$
Since $3\sqrt{\tau_{n+m-1}}<4\tau_{n+m-1}^{2^{-2}}=\eta_{n+m-1},$
the claim holds for $k=1.$ Now suppose the claim holds for some $k=1,2,\dots,m-1,$
that is, $\mathcal{R}_{n+m-k}\perp_{\nu}^{\eta_{n+m-k}}(\mathcal{Q}_{n+m-1}\lor\mathcal{Q}_{n+m-2}\lor\cdots\lor\mathcal{Q}_{n+m-k}).$
We have $\mathcal{R}_{n+m-k}=\mathcal{Q}_{n+m-(k+1)}\lor\mathcal{R}_{n+m-(k+1)},$
and by Lemma \ref{lem:epsilon} and Remark \ref{rem:refinement}, $\mathcal{R}_{n+m-(k+1)}\perp_{\nu}^{3\sqrt{\tau_{n+m-(k+1)}}}\mathcal{Q}_{n+m-(k+1)}.$
Therefore, by Lemma \ref{lem:inductive_step}, $\mathcal{R}_{n+m-(k+1)}\perp_{\nu}^{\eta}(\mathcal{Q}_{n+m-1}\lor\mathcal{Q}_{n+m-2}\lor\cdots\lor\mathcal{Q}_{n+m-(k+1)}),$ where
\begin{align*}
\eta & =2\sqrt{\eta_{n+m-k}}+2\sqrt{3}\tau_{n+m-(k+1)}^{2^{-2}}\\
 & =4[\tau_{n+m-1}^{2^{-(k+1)}}+\tau_{n+m-2}^{2^{-k}}+\cdots+\tau_{n+m-(k+1)}^{2^{-3}}+\tau_{n+m-k}^{2^{-2}}]^{1/2}+2\sqrt{3}\tau_{n+m-(k+1)}^{2^{-2}}\\
 & \le4[\tau_{n+m-1}^{2^{-(k+2)}}+\tau_{n+m-2}^{2^{-(k+1)}}+\cdots+\tau_{n+m-(k+1)}^{2^{-4}}+\tau_{n+m-k}^{2^{-3}}+\tau_{n+m-(k+1)}^{2^{-2}}]\\
 & =\eta_{n+m-(k+1).}
\end{align*}
 This completes the inductive step. Therefore the claim holds. 
\end{proof}
Applying the claim with $k=m,$ we obtain $\mathcal{R}_{n}\perp_{\nu}^{\eta_{n}}(\mathcal{Q}_{n+m-1}\lor\cdots\lor\mathcal{Q}_{n}).$
Note that $\mathcal{Q}_{n+m-1}\lor\cdots\lor\mathcal{Q}_{n}$ is a
refinement of $S\cap\mathcal{P}_{-M}^{0}$ and $\mathcal{R}_{n}$
is the partition into the possible $\varepsilon_{n}k_{n}$ $n$-blocks
comprising a fraction $\varepsilon_{n}k_{n}h_{n}/K=\varepsilon_{n}k_{n}/(\varepsilon_{n}k_{n}+1)>1-\varepsilon^{2}/16$
of the $\mathcal{P}_{1}^{K}$ names. Since $\mathcal{R}_{n}\perp_{\nu}^{\eta_{n}}(\mathcal{Q}_{n+m-1}\lor\cdots\lor\mathcal{Q}_{n}),$
for a set $\mathcal{G}_{S}$ of atoms $Q\in\mathcal{Q}_{n+m-1}\lor\cdots\lor\mathcal{Q}_{n}$
of $\nu$-measure at least $1-\eta_{n},$ there is a measure-preserving
map $\phi_{S}:(S,\nu)\to(S\cap Q,\nu(\cdot|Q))$ such that on a set
of $\nu$-measure at least $1-\eta_{n},$ $\phi_{S}(x)$ is contained
in the intersection with $Q$ of that atom of $\mathcal{R}_{n}$ that
contains $x,$ and the $\mathcal{R}_{n}$ part of the $\mathcal{P}_{1}^{K}$
name of $x$ is the same as that of $\phi_{S}(x).$ Thus, $\overline{f}_{K}(\nu(\cdot|Q),\omega)<\varepsilon/4$
for $Q\in\mathcal{G}_{S}.$ Finally, we let $\mathcal{G}=\cup_{S\in\mathcal{\tilde{G}}}\mathcal{G}_{S}.$
Then the total $\mu$-measure of atoms in $\mathcal{G}$ is greater
than $1-\varepsilon^{2}/50-\eta_{n}>1-\varepsilon^{2}/16.$ Note that
for $Q\in\mathcal{G}_{S},$ $\nu_{S}(\cdot|Q)$ is the same as $\mu(\cdot|Q).$
Therefore, for $Q\in\mathcal{G}_{S},$ $\overline{f}_{K}(\mu(\cdot|Q),\omega)<\varepsilon/4.$
Then Lemma \ref{lem:finer} implies that Definition \ref{def:generalLB}
is satisfied. 
\end{proof}

\section{Non-Loosely Bernoulli Circular System Arising From Positive Entropy
Loosely Bernoulli Odometer-Based System} \label{sec:positive entropy 2}
\begin{thm}
\label{non-LB_Rothstein} If $\mathbb{E}$ is the positive entropy
loosely Bernoulli odometer-based system constructed in the previous
section, and $\mathcal{F}$ is the map from odometer-based systems
to circular systems defined in Section \ref{sec:Odometer-Based-and-Circular}, then $\mathcal{F}(\mathbb{E})$
is non-loosely Bernoulli. 
\end{thm}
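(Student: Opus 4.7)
Since $\mathcal{F}(\mathbb{E})$ has zero entropy and a uniform, uniquely readable circular construction sequence, Lemma~\ref{lem:Rothstein} reduces the task to exhibiting a single $\varepsilon_0 > 0$ such that, for infinitely many $n$, at least an $\varepsilon_0$-fraction of pairs $(\mathcal{A},\mathcal{B})\in \mathcal{W}_n\times\mathcal{W}_n$ satisfy $\overline{f}(\mathcal{A},\mathcal{B})\ge \varepsilon_0$. The plan is to prove the stronger inductive statement that there exist sequences $\delta_n \searrow \delta > 0$ and $\eta_n \searrow 0$ such that at least a $1-\eta_n$ fraction of pairs of circular $n$-blocks have $\overline{f}$-distance at least $\delta_n$. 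The base case holds once $n$ is large enough that two independently generated circular $n$-words are forced apart by the fact that their underlying odometer words employ distinct symbols of $\Sigma$ at a positive fraction of positions.

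The inductive step is where the Rothstein technique \cite{R} does the essential work. Let $\mathtt{A} = \mathtt{w}_1 \cdots \mathtt{w}_{k_n}$ and $\mathtt{B} = \mathtt{w}_1' \cdots \mathtt{w}_{k_n}'$ be two odometer $(n+1)$-blocks drawn from $\mathtt{W}_{n+1}$ independently according to the distribution determined by conditions (1)--(3) of Section~\ref{sec:positive entropy 1}, and set $\mathcal{A} = c_{n+1}(\mathtt{A})$, $\mathcal{B} = c_{n+1}(\mathtt{B})$. Starting from any best possible match between $\mathcal{A}$ and $\mathcal{B}$, I would first delete all spacer symbols $b,e$ (a fraction $O(1/l_n)$ of each word) and the contributions of the terminal deterministic segment (a further fraction $\varepsilon_n$); by Property~\ref{property:omit_symbols} this alters $\overline{f}$ by at most $O(\varepsilon_n + 1/l_n)$. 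What remains is a match between two rigidly periodic words, each a sequence of $q_n\cdot k_n'$ runs of the form $c_n(\mathtt{w}_j)^{l_n-1}$. Using the strong readability of the $\mathcal{C}_n$-construction together with the $(l_n-1)$-fold repetition of each $c_n(\mathtt{w}_j)$, one can ``straighten'' the match at a further cost of $O(1/l_n + 1/q_n)$ into one that splits, via Property~\ref{property:substring_matching}, as a concatenation of matches between individual occurrences of $c_n(\mathtt{w}_j)$ in $\mathcal{A}$ and individual occurrences of $c_n(\mathtt{w}_{j'}')$ in $\mathcal{B}$, giving
\[
\overline{f}(\mathcal{A},\mathcal{B}) \;\ge\; \frac{1}{|J|}\sum_{(j,j')\in J}\overline{f}\bigl(c_n(\mathtt{w}_j),c_n(\mathtt{w}_{j'}')\bigr) \;-\; O(\varepsilon_n + 1/l_n + 1/q_n),
\]
where $J$ is the multiset of index pairs produced by the straightening.

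To close the induction I would apply a Chebychev-type concentration argument in the spirit of Lemma~\ref{lem:Chebychev}, exploiting that condition (3) forces each $n$-block $\mathtt{y}_i$ to appear with exact frequency $1/N(n)$ in every $(n+1)$-block while the coordinates of $\mathtt{A}$ and $\mathtt{B}$ are nearly independent (in the sense of Lemma~\ref{lem:epsilon}). Outside a Chebychev-exceptional set of pairs $(\mathtt{A},\mathtt{B})$ of probability $\eta_{n+1}\to 0$, the fraction of pairs $(j,j')\in J$ with $\mathtt{w}_j = \mathtt{w}_{j'}'$ is $O(1/N(n))$; for the remaining pairs the induction hypothesis gives $\overline{f}(c_n(\mathtt{w}_j),c_n(\mathtt{w}_{j'}')) \ge \delta_n$. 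Averaging then yields $\delta_{n+1} := \delta_n - O(\varepsilon_n + 1/l_n + 1/q_n + 1/N(n))$, which remains bounded below by some $\delta > 0$ thanks to the summability of $\varepsilon_n$ and to suitably rapid growth of $l_n, q_n, N(n)$, closing the induction and hence forcing the failure of the Rothstein criterion. The main obstacle will be making the straightening step quantitative: a priori a match could exploit the spacers and the $\mathcal{C}_n$-periodicity to cross-match a single $c_n(\mathtt{w}_j)$-occurrence in $\mathcal{A}$ against fragments of many distinct $c_n(\mathtt{w}_{j'}')$-occurrences in $\mathcal{B}$, and establishing that eliminating such slippage costs only $O(1/l_n + 1/q_n)$ is where the rigidity of the circular construction must be used in earnest.
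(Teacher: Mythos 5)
Your skeleton --- Rothstein's criterion from Lemma \ref{lem:Rothstein}, induction over $n$ showing that most circular $n$-blocks stay far apart in $\overline{f}$, removal of the $O(\varepsilon_n+1/l_n)$ fraction of spacer and deterministic symbols via Property \ref{property:omit_symbols}, and a reduction to the constituent $n$-blocks --- is the same as the paper's. But the concentration step as you propose it does not work, and repairing it is exactly where Rothstein's technique earns its keep. You assert that outside a Chebychev-exceptional set the coincidence fraction in $J$ is $O(1/N(n))$; but $J$ is read off a best possible match, hence is chosen adversarially after $\mathtt{A},\mathtt{B}$ are revealed. A second-moment bound controls one fixed alignment, not the supremum over the exponentially many monotone alignments that $\overline{f}$ ranges over. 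For two i.i.d.\ strings over an $N$-letter alphabet that supremum already attains a coincidence rate of order $1/\sqrt{N}$ (Chv\'atal--Sankoff scaling), not $1/N$, and proving even that requires a first-moment union bound over alignments backed by exponential (not polynomial) concentration. The paper's Lemma \ref{lem:Rothstein_Inductive_Step} does precisely this: for a fixed monotone $\sigma$, near-independence (via (\ref{eq:tau_n estimate})) gives the bound $(3\delta_n)^{((b_n/2)-\varepsilon_n)k_n'}$, exponentially small in $k_n$, on the probability that a $(b_n/2)$-fraction of the $\mathcal{A}_i$'s are close to the targeted $\mathcal{B}_j$'s; the union over $\sigma$'s is paid for with $\binom{7k_n}{k_n}\leq 2^{8k_n}$ from Lemma \ref{lem:binomial}; and the competition between the two exponentials is what the recursion for $\xi_n$ in (\ref{eq:xi_definition}) records. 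Without this union bound the induction does not close; Chebychev is the wrong tool.

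Two smaller issues. First, your inductive hypothesis about typical pairs of $n$-blocks is not strong enough to be re-applied: under $\mathcal{C}_n$ each $c_n(\mathtt{w}_j)$ is repeated $l_n-1$ times and the whole inner product cycles $q_n$ times, so a straightened match lands $\mathcal{A}_i^{l_n-1}$ on a substring of a \emph{cyclic repetition} of one or more $\mathcal{B}_j$'s. The paper therefore carries the one-sided hypothesis ``for any fixed target $\mathcal{B}$, at least $1-\delta_n$ of the $n$-blocks $\mathcal{A}$ satisfy $\overline{f}(\mathcal{A},\mathcal{T}(\mathcal{B}))>a_n$,'' where $\mathcal{T}$ is the closure under cyclic extensions (Definition \ref{def:cyclic names}, Lemma \ref{lem:repeated_substring}); you need both the $\mathcal{T}$-closure and the one-sided framing (so the randomness lives entirely in the near-i.i.d.\ $\mathcal{A}_i$'s against a fixed $\mathcal{B}$). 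Second, you flag the straightening as the main obstacle, but the paper dispatches it cheaply: a match of cost below $1/3$ forces each $\mathcal{A}_i^{l_n-1}$ to overlap at most three consecutive $\mathcal{B}_j^{l_n-1}$'s (else Property \ref{property:string_length} is violated), after which removing $O(1/l_n)$ of the symbols aligns whole $\mathcal{A}_i$-copies with substrings of repeated $\mathcal{B}_j$'s. The genuine difficulty is the union bound.
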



We will prove $\mathcal{F}(\mathbb{E})$ is not loosely Bernoulli
by proving that the condition in Lemma \ref{lem:Rothstein} does
not hold. 

In the construction of $\mathcal{W}_{n+1}$ words in the circular system, we have many repetitions
of $\text{\ensuremath{\mathcal{W}_{n}} words.}$ To get lower bounds
on the $\overline{f}$ distance between $\mathcal{W}_{n+1}$ words,
we will make use of Definition \ref{def:cyclic names} and Lemma \ref{lem:repeated_substring}
below.
\begin{defn}
\label{def:cyclic names} If $b_{1}b_{2}\cdots b_{s}$ is any string
of $s$ symbols, let $\mathcal{T}(b_{1}b_{2}\cdots b_{s})$ denote
the collection of all finite consecutive substrings of $(b_{1}b_{2}\cdots b_{s})^{t}$
for any $t\ge1.$ 
\end{defn}

\begin{lem}[Repeated substring matching lemma]\label{lem:repeated_substring}
Suppose $a_{1}\cdots a_{r}$ and $b_{1}\cdots b_{s}$ are
strings of symbols. Then for any $\ell,\tilde{\ell}\ge1$ 
\[
\overline{f}\big((a_{1}\cdots a_{r})^{\ell},(b_{1}\cdots b_{s})^{\tilde{\ell}}\big)\ge\overline{f}\big(a_{1}\cdots a_{r},\mathcal{T}(b_{1}\cdots b_{s})\big),
\]
where the right hand side denotes the infimum of the $\overline{f}$
distance from $a_{1}\cdots a_{r}$ to any element of $\mathcal{T}(b_{1}\cdots b_{s}).$
\end{lem}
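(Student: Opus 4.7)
The plan is to start from a best possible match $I$ between $A:=(a_{1}\cdots a_{r})^{\ell}$ and $B:=(b_{1}\cdots b_{s})^{\tilde{\ell}}$ and to decompose $I$ according to the natural partition of $A$ into $\ell$ consecutive copies of $a_{1}\cdots a_{r}$. By the strict monotonicity requirement on both index sequences in Definition \ref{def:fbar}, the $B$-indices matched with indices lying in the $p$-th copy of $a_{1}\cdots a_{r}$ are contained in some interval $[L_{p},R_{p}]\subseteq\{1,\dots,\tilde{\ell}s\}$, and these intervals are pairwise disjoint and appear in the correct order $R_{1}<L_{2}\leq R_{2}<L_{3}\leq\cdots$. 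In particular, $\sum_{p=1}^{\ell}(R_{p}-L_{p}+1)\le\tilde{\ell}s$.

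Next I would observe that each substring $B_{p}:=b_{L_{p}}\cdots b_{R_{p}}$ is a contiguous substring of $(b_{1}\cdots b_{s})^{\tilde{\ell}}$, hence an element of $\mathcal{T}(b_{1}\cdots b_{s})$. Let $c_{p}$ denote the number of matched pairs contributed by the $p$-th copy, let $m_{p}:=R_{p}-L_{p}+1$, and set $\alpha:=\overline{f}(a_{1}\cdots a_{r},\mathcal{T}(b_{1}\cdots b_{s}))$. The restriction of $I$ to the $p$-th copy is a match between $a_{1}\cdots a_{r}$ and $B_{p}\in\mathcal{T}(b_{1}\cdots b_{s})$, so by the definition of $\alpha$ as an infimum we get $2c_{p}/(r+m_{p})\leq 1-\alpha$, i.e.\ $c_{p}\leq(1-\alpha)(r+m_{p})/2$. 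Copies contributing no matches are handled trivially by setting $m_{p}=0$, since then $c_{p}=0\leq(1-\alpha)r/2$.

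Summing these inequalities over $p$ and using $\sum_{p}m_{p}\leq\tilde{\ell}s$ produces
\[
|I|\;=\;\sum_{p=1}^{\ell}c_{p}\;\leq\;\frac{1-\alpha}{2}\bigl(\ell r+\tilde{\ell}s\bigr),
\]
which rearranges exactly to $\overline{f}(A,B)=1-2|I|/(\ell r+\tilde{\ell}s)\geq\alpha$, the desired inequality. There is no serious obstacle in this argument: the only thing to verify carefully is that the $B$-side images of consecutive copies of $a_{1}\cdots a_{r}$ sit in strictly monotone order, which is immediate from the strict monotonicity built into the definition of a match. The reason we must work with $\mathcal{T}(b_{1}\cdots b_{s})$ rather than $b_{1}\cdots b_{s}$ itself is precisely that each $B_{p}$ may begin and end at arbitrary positions inside the periodic sequence, not only at multiples of $s$.
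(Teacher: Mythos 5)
Your proof is correct and follows essentially the same approach as the paper: decompose a best match according to the $\ell$ copies of $a_1\cdots a_r$, observe that each induced piece of $B$ lies in $\mathcal{T}(b_1\cdots b_s)$, and bound the total by the weighted sum of the per-piece bounds. The only cosmetic difference is that you re-derive this weighted-average bound by hand via the count $c_p \le (1-\alpha)(r+m_p)/2$, whereas the paper simply invokes its already-established Property \ref{property:substring_matching} with $x_1=\cdots=x_\ell=a_1\cdots a_r$.
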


\begin{proof}
Apply Property \ref{property:substring_matching} with $x_{1}=x_{2}=\cdots=x_{\ell}=a_{1}a_{2}\cdots a_{r}.$
\end{proof}
The estimate below is proved in \cite{R} by a simple argument using
just the binomial theorem. A similar estimate can be obtained from
Stirling's formula.
\begin{lem}
\label{lem:binomial}If $m$ is a positive integer and $0<\sigma<1,$
then we have the following inequality for the binomial coefficient:
\[
\left(\begin{array}{c}
m\\
\lfloor\sigma m\rfloor
\end{array}\right)<2^{3m\sqrt{\sigma}}.
\]
\end{lem}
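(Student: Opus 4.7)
The plan is to apply the binomial theorem with a carefully chosen parameter and retain only a single summand. Starting from the identity $(1+\sigma)^m = \sum_{k=0}^m \binom{m}{k}\sigma^k$ and keeping only the term with index $k = \lfloor\sigma m\rfloor$, I obtain the pointwise bound
\[
\binom{m}{\lfloor\sigma m\rfloor}\,\sigma^{\lfloor\sigma m\rfloor} \le (1+\sigma)^m.
\]
Because $0<\sigma<1$, the map $y \mapsto \sigma^y$ is decreasing, and $\lfloor\sigma m\rfloor \le \sigma m$ gives $\sigma^{\lfloor\sigma m\rfloor} \ge \sigma^{\sigma m}$. Dividing then yields the clean estimate $\binom{m}{\lfloor\sigma m\rfloor} \le (1+\sigma)^m\,\sigma^{-\sigma m}$.

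The next step is to take natural logarithms and apply the standard inequality $\ln(1+\sigma)\le\sigma$, which gives
\[
\ln \binom{m}{\lfloor\sigma m\rfloor} \le m\bigl[\ln(1+\sigma) + \sigma\ln(1/\sigma)\bigr] \le m\sigma\bigl[1 + \ln(1/\sigma)\bigr].
\]
The conclusion $\binom{m}{\lfloor\sigma m\rfloor} < 2^{3m\sqrt{\sigma}}$ is equivalent to $\ln\binom{m}{\lfloor\sigma m\rfloor} < 3m\sqrt{\sigma}\,\ln 2$, so the whole argument is reduced to the elementary one-variable inequality
\[
\sqrt{\sigma}\bigl[1+\ln(1/\sigma)\bigr] \le 3\ln 2 \quad\text{for all }\sigma \in (0,1).
\]
I would verify this by straightforward calculus: setting $h(\sigma) = \sqrt{\sigma}[1+\ln(1/\sigma)]$, one computes $h'(\sigma) = \frac{1}{2\sqrt{\sigma}}\bigl[\ln(1/\sigma)-1\bigr]$, which vanishes only at $\sigma=1/e$. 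The maximum on $(0,1]$ is $h(1/e) = 2/\sqrt{e} \approx 1.213$, well below $3\ln 2 \approx 2.079$, and $h(\sigma)\to 0$ as $\sigma \to 0^+$, so the inequality is strict throughout $(0,1)$.

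The only real obstacle is picking the right parameter in the binomial theorem. The most tempting choice, $x = 2^{3\sqrt{\sigma}}-1$, makes $(1+x)^m$ equal to $2^{3m\sqrt{\sigma}}$ exactly but leaves $x^{\lfloor\sigma m\rfloor}$ in the denominator, and this factor is $\ge 1$ only when $\sigma \ge 1/9$, forcing an awkward case split for small $\sigma$. Taking $x = \sigma$ instead absorbs all the slack into the elementary estimate on $h(\sigma)$ and gives a uniform proof for every $\sigma \in (0,1)$, using nothing beyond the binomial theorem, $\ln(1+\sigma)\le\sigma$, and single-variable calculus.
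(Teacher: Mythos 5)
Your proof is correct: the single-term bound $\binom{m}{\lfloor\sigma m\rfloor}\sigma^{\lfloor\sigma m\rfloor}\le(1+\sigma)^m$, the elementary estimate $\ln(1+\sigma)\le\sigma$, and the calculus fact that $\sqrt{\sigma}\bigl(1+\ln(1/\sigma)\bigr)$ attains its maximum $2/\sqrt{e}<3\ln 2$ at $\sigma=1/e$ fit together without gaps and give the strict inequality for every $\sigma\in(0,1)$, including the degenerate case $\lfloor\sigma m\rfloor=0$. The paper itself supplies no proof, stating only that the estimate is proved in Rothstein's paper ``by a simple argument using just the binomial theorem'' (with Stirling's formula as an alternative), and your argument is exactly such a binomial-theorem proof, so it follows essentially the same route as the one the paper cites.
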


We make the following choices of parameters (in addition to those
already described in the previous section). Let $b_{n}=2^{-(n+10)},$
$\ell_{n}>2^{n+10.}$ Recall that $\varepsilon_{n}<2^{-(n+12)}.$
\begin{lem}[Inductive step in Rothstein's
argument to obtain a lower bound on the $\overline{f}$ distance]\label{lem:Rothstein_Inductive_Step}
For $n\in\mathbb{N}$ and $0<\delta_{n}<1$ define
\begin{equation}
\xi_{n}=\xi_{n}(\delta_{n}):=2^{8}(3\delta_{n})^{((b_{n}/2)-\varepsilon_{n})(1-\varepsilon_{n})}2^{3(1-\varepsilon_{n})\sqrt{(b_{n}/2)}}.\label{eq:xi_definition}
\end{equation}
Fix a particular $n\in\mathbb{N}$ and suppose $\delta_{n}$ is sufficiently
small that $\xi_{n}<1.$ Assume that for at least $(1-\delta_{n})$
of the $n$-blocks in $\mathcal{W}_{n}$ the $\overline{f}$ distance
from the $n$-block to any specific $n$-block or substrings of its
extensions (as in Definition \ref{def:cyclic names}) is greater than
$a_{n}$, where $0<a_{n}<1/3.$ Then for $k_{n}$ sufficiently large,
depending only on parameters with subscript $n,$ there exists $\delta_{n+1}$
such that $\xi_{n+1}=\xi_{n+1}(\delta_{n+1})<1,$ and for at least $(1-\delta_{n+1})$ of
the $(n+1)$-blocks in $\mathcal{W}_{n+1}$ the $\overline{f}$ distance
from the $(n+1)$-block to any specific $(n+1)$-block or substrings of
its extensions is greater than $a_{n+1}:=a_{n}(1-b_{n})-15\ell_{n}^{-1}.$
\end{lem}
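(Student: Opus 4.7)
The plan is to fix an arbitrary target $\mathcal{A}$ --- either an $(n+1)$-block in $\mathcal{W}_{n+1}$ or a substring of its cyclic extension $\mathcal{T}(\mathcal{A}')$ --- and bound the number of $(n+1)$-blocks $\mathcal{B}\in\mathcal{W}_{n+1}$ with $\overline{f}(\mathcal{A},\mathcal{B})\le a_{n+1}$. Setting $\delta_{n+1}$ equal to the ratio of this bound to $|\mathcal{W}_{n+1}|$ will give the conclusion, and taking $k_n$ sufficiently large will push $\delta_{n+1}$ below the threshold ensuring $\xi_{n+1}(\delta_{n+1})<1$.

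First I would strip spacers. Each $(n+1)$-block has the form $\prod_{i,j}(b^{q_n-j_i}w_j^{\ell_n-1}e^{j_i})$, and its total spacer count is at most $2k_nq_n^2$ out of length $q_{n+1}=k_n\ell_nq_n^2$. Since $b,e$ lie in an alphabet disjoint from $\Sigma$, no best match pairs a spacer with a non-spacer, so inequality \eqref{eq:omit_symbols2} of Property \ref{property:omit_symbols} yields $\overline{f}(\tilde{\mathcal{A}},\tilde{\mathcal{B}})\le a_{n+1}+O(1/\ell_n)$ for the stripped strings. These have the compact form $\tilde{\mathcal{A}}=\mathcal{U}_{\mathcal{A}}^{q_n}$ and $\tilde{\mathcal{B}}=\mathcal{U}_{\mathcal{B}}^{q_n}$ with $\mathcal{U}_{\mathcal{A}}=\prod_{j=0}^{k_n-1}(w_j^{\mathcal{A}})^{\ell_n-1}$ and similarly for $\mathcal{U}_{\mathcal{B}}$. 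Two applications of Lemma \ref{lem:repeated_substring}, interleaved with Property \ref{property:substring_matching}, then localize the match to individual $n$-blocks: the first application reduces matters to bounding $\overline{f}(\mathcal{U}_{\mathcal{A}},\mathcal{T}(\mathcal{U}_{\mathcal{B}}))$, while decomposing $\mathcal{U}_{\mathcal{A}}$ into its $k_n$ pieces $(w_j^{\mathcal{A}})^{\ell_n-1}$ and applying Lemma \ref{lem:repeated_substring} a second time bounds the per-piece distance below by $\overline{f}(w_j^{\mathcal{A}},\mathcal{T}(w_{j'}^{\mathcal{B}}))$ for some index $j'$ determined by the match.

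Next, a Markov-type estimate shows that for at least $(b_n/2)k_n$ positions $j$ this local distance is at most $a_n$: the weighted average is at most $a_n(1-b_n)-15/\ell_n+O(1/\ell_n)\le a_n(1-b_n/2)$ once $\ell_n$ is sufficiently large, and the constant $15$ in $a_{n+1}=a_n(1-b_n)-15/\ell_n$ is precisely what accommodates the spacer-stripping loss together with Markov slack. The inductive hypothesis, applied at each good position, then confines $w_{j'}^{\mathcal{B}}$ to at most $\delta_n|\mathcal{W}_n|$ choices. Since each $(n+1)$-block is parametrized by its first $k_n'=(1-\varepsilon_n)k_n$ independently chosen $n$-blocks, at least $(b_n/2-\varepsilon_n)k_n$ good positions fall in that random segment. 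Counting compatible $\mathcal{B}$'s yields
\[
\binom{k_n'}{\lfloor(b_n/2-\varepsilon_n)k_n\rfloor}\bigl(\delta_n|\mathcal{W}_n|\bigr)^{\lfloor(b_n/2-\varepsilon_n)k_n\rfloor}|\mathcal{W}_n|^{k_n'-\lfloor(b_n/2-\varepsilon_n)k_n\rfloor},
\]
and dividing by $|\mathcal{W}_{n+1}|\ge(1-\tau_n)|\mathcal{W}_n|^{k_n'}$ and using Lemma \ref{lem:binomial} produces a ratio of the form $\xi_n^{k_n}$, with the constant $2^8$ inside $\xi_n$ absorbing the $(1-\tau_n)^{-1}$ correction, the factor $3$ arising from the cyclic-extension option, and the choice of offset inside $\mathcal{T}(\mathcal{A}')$. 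Setting $\delta_{n+1}:=\xi_n^{k_n}$ completes the inductive step: taking $k_n$ large makes $\delta_{n+1}$ as small as desired, and in particular guarantees $\xi_{n+1}(\delta_{n+1})<1$.

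The main obstacle is the double invocation of Lemma \ref{lem:repeated_substring} inside the match analysis: the matched interval in $\mathcal{T}(\mathcal{U}_{\mathcal{B}})$ corresponding to a piece $(w_j^{\mathcal{A}})^{\ell_n-1}$ generally straddles several underlying $n$-blocks of $\mathcal{B}$, and the repeated-substring structure of $\mathcal{U}_{\mathcal{B}}$ must be exploited a second time to isolate a single $w_{j'}^{\mathcal{B}}$ at each good position. Simultaneously tracking the various $O(1/\ell_n)$ losses from spacer stripping and from short boundary portions of these intervals, together with the Markov slack, so that the net degradation is exactly $a_n(1-b_n)-15/\ell_n$ is the delicate bookkeeping that pins down the constant $15$.
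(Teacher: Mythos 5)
Your plan follows the paper's own route: strip spacers (a fraction $\ell_n^{-1}$ of the block), use Lemma \ref{lem:repeated_substring} to collapse the $q_n$-fold repetition, decompose via Property \ref{property:substring_matching} and apply a Markov/weighted-average argument to produce at least $(b_n/2-\varepsilon_n)k_n$ ``good'' positions inside the independently chosen initial segment, invoke the inductive hypothesis at each good position, and then count. The reductions you name are the right ones, and your observation that the $15\ell_n^{-1}$ loss is the cumulative spacer/boundary slack is also correct.

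The gap is in the final count. Your displayed formula
\[
\binom{k_n'}{\lfloor(b_n/2-\varepsilon_n)k_n\rfloor}\bigl(\delta_n|\mathcal{W}_n|\bigr)^{\lfloor(b_n/2-\varepsilon_n)k_n\rfloor}|\mathcal{W}_n|^{k_n'-\lfloor(b_n/2-\varepsilon_n)k_n\rfloor}
\]
presupposes that, at each good position $i$, you already know \emph{which} $\mathcal{B}$-piece the block $\mathcal{A}_i$ must be close to, and so can restrict it to a set of size $O(\delta_n)|\mathcal{W}_n|$. But the index $\sigma(i)$ (and the decision of whether the match extends to $\sigma(i)+1,\sigma(i)+2$) is determined by the best match, which depends on the entire $\mathcal{A}$-string; the argument therefore must union-bound over all admissible nondecreasing alignment sequences $(\sigma(i))$. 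There are at most $k_n\binom{7k_n}{k_n}\le k_n 2^{8k_n}$ of these (Lemma \ref{lem:binomial}), and this --- not the $(1-\tau_n)^{-1}$ correction, not the factor $3$, and not the offset choice --- is the origin of the $2^{8}$ in the definition of $\xi_n$. In the paper's bound $\delta_{n+1}<k_n(1-\tau_n)^{-1}\xi_n^{k_n}$, the $(1-\tau_n)^{-1}$ is a bounded multiplicative constant kept outside $\xi_n$, the $3$ lives in the base as $(3\delta_n)^{(b_n/2-\varepsilon_n)(1-\varepsilon_n)}$ (one union bound over the three candidate pieces at each good position), and the offset $\sigma(1)$ contributes only the isolated prefactor $k_n$. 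The omission is not mere bookkeeping: without first fixing the alignment sequence you can only say $\mathcal{A}_i$ is close to \emph{some} $\mathcal{T}(\mathcal{B}_{j'})$ among $k_n$ candidates, and a per-position union bound over all $k_n$ shifts gives $(k_n\delta_n)^{O(k_n)}$, which does not tend to $0$ as $k_n\to\infty$ for fixed $\delta_n$. So the $\binom{7k_n}{k_n}$ enumeration is essential and needs to appear explicitly in your count.
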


\begin{proof}
Fix a particular choice 
\[
\prod_{i=0}^{q_{n}-1}\prod_{j=0}^{k_{n}-1}\left(b^{q_{n}-j_{i}}\mathcal{B}_{j}^{\ell_{n}-1}e^{j_{i}}\right)
\]
 of $(n+1)$-block in $\mathcal{W}_{n+1}$, where $\mathcal{B}_{0},\mathcal{B}_{1},\dots,\mathcal{B}_{k_{n-1}}$
are $n$-blocks in $\mathcal{W}_{n}$. Let $\mathcal{T}_0\coloneqq \mathcal{T}\left(\prod_{i=0}^{q_{n}-1}\prod_{j=0}^{k_{n}-1}\left(b^{q_{n}-j_{i}}\mathcal{B}_{j}^{\ell_{n}-1}e^{j_{i}}\right)\right)$. We will prove the inequality
\begin{equation}\label{eq:goal_lower_bound}
\overline{f}\left(\prod_{i=0}^{q_{n}-1}\prod_{j=0}^{k_{n}-1}\left(b^{q_{n}-j_{i}}\mathcal{A}_{j}^{\ell_{n}-1}e^{j_{i}}\right),\mathcal{T}_0\right)
> a_{n}(1-b_{n})-15\ell_{n}^{-1}
\end{equation}
holds for at least $(1-\delta_{n+1})$ of the $(n+1)$-blocks $\prod_{i=0}^{q_{n}-1}\prod_{j=0}^{k_{n}-1}\left(b^{q_{n}-j_{i}}\mathcal{A}_{j}^{\ell_{n}-1}e^{j_{i}}\right)$
in $\mathcal{W}_{n+1},$ where $\delta_{n+1}$ will be specified later
in the proof. The $b$'s and $e$'s that are newly added in constructing
$(n+1$)-blocks from $n$-blocks in the circular system make up a
fraction $\ell_{n}^{-1}$ of the symbols in any $(n+1)$-block.
We may assume that the smallest $\overline{f}$ distance between $\prod_{i=0}^{q_{n}-1}\prod_{j=0}^{k_{n}-1}\left(b^{q_{n}-j_{i}}\mathcal{A}_{j}^{\ell_{n}-1}e^{j_{i}}\right)$
and any element of $\mathcal{T}_0 $
occurs for an element of $\mathcal{T}_0$
of length at least $q^2_{n}k_{n}\ell_{n}/2,$ because otherwise it follows
from (\ref{eq:string_length}) that the $\overline{f}$
distance in (\ref{eq:goal_lower_bound}) is greater than $1/3.$ For
such an element of $\mathcal{T}_0,$
the number of newly added $b$'s and $e$'s is a fraction less than
$2\ell_{n}^{-1}$ of the length of that element. Therefore by Property
\ref{property:omit_symbols} and Lemma \ref{lem:repeated_substring},
to obtain (\ref{eq:goal_lower_bound}) it suffices to show that 
\begin{equation}
\overline{f}\big(\mathcal{A}_{1}^{\ell_{n}-1}\mathcal{A}_{2}^{\ell_{n}-1}\cdots \mathcal{A}_{k_{n}}^{\ell_{n}-1},\mathcal{T}(\mathcal{B}_{1}^{\ell_{n}-1}\mathcal{B}_{2}^{\ell_{n}-1}\cdots \mathcal{B}_{k_{n}}^{\ell_{n}-1})\big)>a_{n}(1-b_{n})-9\ell_{n}^{-1},\label{eq:new_goal_lower}
\end{equation}
without repeating the strings $q_{n}$ times. Suppose to the contrary
of (\ref{eq:new_goal_lower}) that 

\begin{equation}
\begin{split}
&\overline{f}\big(\mathcal{A}_{1}^{\ell_{n}-1}\mathcal{A}_{2}^{\ell_{n}-1}\cdots \mathcal{A}_{k_{n}}^{\ell_{n}-1},\mathcal{T}(\mathcal{B}_{1}^{\ell_{n}-1}\mathcal{B}_{2}^{\ell_{n}-1}\cdots \mathcal{B}_{k_{n}}^{\ell_{n}-1})\big) \\
\le & a_{n}(1-b_{n})-9\ell_{n}^{-1} <(a_{n}-9\ell_{n}^{-1})(1-b_{n})<1/3,\label{eq:contrary_goal}
\end{split}
\end{equation}
for some $k_{n}$ many $n$-blocks $\mathcal{A}_{1},\mathcal{A}_{2},\dots,\mathcal{A}_{k_{n}}.$
We will show that this happens for at most $\delta_{n+1}$ of the
$(n+1)$-blocks in $\mathcal{W}_{n+1}.$ Choose a match between the
$\mathcal{A}$-string and a $\mathcal{B}$-string that realizes the $\overline{f}$
distance in (\ref{eq:contrary_goal}). For each substring $\mathcal{A}_{i}^{\ell_{n}-1}$ of the $\mathcal{A}$-string,
let $f_{i}$ be the $\overline{f}$ distance between $\mathcal{A}_{i}^{\ell_{n}-1}$
and the corresponding part of the $\mathcal{B}$-string. Let $v_{i}$ be the
ratio of the number of symbols in $\mathcal{A}_{i}^{\ell_{n}-1}$ plus the number
of symbols in the corresponding part of the $\mathcal{B}$-string to the total
length of the $\mathcal{A}$- and $\mathcal{B}$-strings. Then by Property \ref{property:substring_matching},
the $\overline{f}$ distance for the entire strings is $\sum_{i=1}^{k_{n}}f_{i}v_{i},$
that is, a weighted average of the $f_{i}$ with weights $v_{i}.$
Since this weighted average is less than $(a_{n}-9\ell_{n}^{-1})(1-b_{n})$,
the weights $v_{i}$ for whose $f_{i}$ with $f_{i}<a_{n}-9\ell_{n}^{-1}<1/3$
must have sum at least $b_{n}.$ For these weights $v_{i},$ Property
\ref{property:string_length} and the assumptions that the $\overline{f}$
distance in (\ref{eq:contrary_goal}) and $f_{i}$ are both less than
$1/3$ imply that $v_{i}<2k_{n}^{-1}.$ Thus there must be at least
$(b_{n}/2)k_{n}$ indices $i$ such that $f_{i}<a_{n}-9\ell_{n}^{-1}.$
Then for at least a fraction $(b_{n}/2)-\varepsilon_{n}$ of the
indices $i\in\{1,2,\dots,k_{n}'\}$ the $\overline{f}$ distance
between $\mathcal{A}_{i}^{\ell_{n}-1}$ and the corresponding part of the $\mathcal{B}$-block
is less than $a_{n}-9\ell_{n}^{-1}.$ For each such $i$ , let $\sigma(i)$
be the first index such that the part of the $\mathcal{B}$-block corresponding
to $\mathcal{A}_{i}^{\ell_{n}-1}$ starts with a substring of a $\mathcal{B}_{\sigma(i)}^{\ell_{n}-1}.$
Then $\mathcal{A}_{i}^{\ell_{n}-1}$ may correspond just to a substring of $\mathcal{B}_{\sigma(i)}^{\ell_{n}-1}$
or to a substring of $\mathcal{B}_{\sigma(i)}^{\ell_{n}-1}\mathcal{B}_{\sigma(i)+1}^{\ell_{n}-1}$
or to a substring of $\mathcal{B}_{\sigma(i)}^{\ell_{n}-1}\mathcal{B}_{\sigma(i)+1}^{\ell_{n}-1}\mathcal{B}_{\sigma(i)+2}^{\ell_{n}-1}.$
Here the addition in the subscripts is modulo $k_{n}.$ Any correspondence
between $\mathcal{A}_{i}^{\ell_{n}-1}$ and strings of four or more
$\mathcal{B}_{j}^{\ell_{n}-1}$'s would lead to the $\overline{f}$
distance between $\mathcal{A}_{i}^{\ell_{n}-1}$ and the corresponding part
of the $\mathcal{B}$-string to be greater than $1/3,$ and therefore we may
disregard this possibility. The number of ways of choosing the $\sigma(i)$'s
and deciding whether to use just $\sigma(i)$ or to continue with
just $\sigma(i)+1$ or to continue with both $\sigma(i)+1$ and $\sigma(i)+2$
is at most $k_{n}\left(\begin{array}{c}
7k_{n}\\
k_{n}
\end{array}\right)$. Here we estimate $7k_{n}>3(2k_{n}+1),$ which is an upper bound
on the number of possible $\sigma(i)$ combinations corresponding
to $\mathcal{A}_{i}^{\ell_{n}-1}$ strings, allowing for the $\mathcal{B}$-string to
be up to twice the length of the $\mathcal{A}$-string (and thus contained in at most $2k_n+1$ 
consecutive $\mathcal{B}_j^{l_n-1}$ strings) and allowing for the
three choices: just $\sigma(i),$ just $\sigma(i)$ and $\sigma(i)+1,$
and all of $\sigma(i)$, $\sigma(i)$+1, and $\sigma(i)+2.$ The additional
factor of $k_{n}$ in front is due to being able to start the $\mathcal{B}$-string
with $\sigma(1)$ being any of $1,2,\dots,k_{n}.$ According to the
estimate on binomial coefficients given in Lemma \ref{lem:binomial},
$\left(\begin{array}{c}
7k_{n}\\
k_{n}
\end{array}\right)\le2^{21k_{n}\sqrt{1/7}}\le2^{8k_{n}}.$ When we match $\mathcal{A}_{i}^{l_{n}-1}$ with a substring of $\mathcal{B}_{\sigma(i)}^{\ell_{n}-1}\mathcal{B}_{\sigma(i)+1}^{\ell_{n}-1}\mathcal{B}_{\sigma(i)+2}^{\ell_{n}-1},$
we divide $\mathcal{A}_{i}^{\ell_{n}-1}$ into as many as three substrings according
to which part corresponds to each of  $\mathcal{B}_{\sigma(i)}^{\ell_{n}-1},$
$B_{\sigma(i)+1}^{\ell_{n}-1},$ or $\mathcal{B}_{\sigma(i)+2}^{\ell_{n}-1}.$
By removing at most a fraction $4(\ell_{n}-1)^{-1}$ of the symbols
in each $\mathcal{A}_{i}^{\ell_{n}-1}$ string, we may assume that full $\mathcal{A}_{i}$
strings correspond to each corresponding substring of $\mathcal{B}_{\sigma(i)}^{\ell_{n}-1},$
$\mathcal{B}_{\sigma(i)+1}^{\ell_{n}-1},$ and $\mathcal{B}_{\sigma(i)+2}^{\ell_{n}-1}.$
By Property \ref{property:omit_symbols} this removal will increase the $\overline{f}$
distance from $\mathcal{A}_{i}^{\ell_{n}-1}$ to the corresponding part of the
$\mathcal{B}$-string by at most $9\ell_{n}^{-1}.$ Thus by Lemma \ref{lem:repeated_substring},
for a fraction of at least $(b_{n}/2)-\varepsilon_{n}$ of the indices in $\{1,2,\dots,k_{n}'\},$
at least one of the three $\overline{f}$ distances from $\mathcal{A}_{i}$
to a string in $\mathcal{T}(\mathcal{B}_{\sigma(i)})$ or from $\mathcal{A}_{i}$ to a
string in $\mathcal{T}(\mathcal{B}_{\sigma(i)+1})$ or from $\mathcal{A}_{i}$ to a string
in $\mathcal{T}(\mathcal{B}_{\sigma(i)+2})$ must be less than $a_{n}.$ By
assumption, the probability that the $n$-block $\mathcal{A}_{i}$ satisfies
at least one of these three conditions is less than $3\delta_{n}.$
Thus if the first $k_{n}'$ $n$-blocks in the odometer $(n+1)$-block 
corresponding to the $\mathcal{A}$-string were selected independently, then the probability
$\delta_{n+1}$ that 
\[
\overline{f}\big(\mathcal{A}_{1}^{\ell_{n}-1}\mathcal{A}_{2}^{\ell_{n}-1}\cdots \mathcal{A}_{k_{n}}^{\ell_{n}-1},\mathcal{T}(\mathcal{B}_{1}^{\ell_{n}-1}\mathcal{B}_{2}^{\ell_{n}-1}\cdots \mathcal{B}_{k_{n}}^{\ell_{n}-1})\big)<a_{n}-9\ell_{n}^{-1}
\]
would be less than $k_{n}2^{8k_{n}}(3\delta_{n})^{((b_{n}/2)-\varepsilon_{n})k_{n}'}\left(\begin{array}{c}
k_{n}'\\
(b_{n}/2)k_{n}'
\end{array}\right).$ Since the selection of the $k_{n}'$ $n$-blocks is not quite independent,
we apply (\ref{eq:tau_n estimate}), and multiply our bound on the
probability by $(1-\tau_{n})^{-1}.$ Therefore from Lemma \ref{lem:binomial},
we obtain $\delta_{n+1}<k_{n}(1-\tau_{n})^{-1}\xi_{n}^{k_{n}}$ .
By assumption $\xi_{n}<1.$ Thus we can choose $k_{n}$ sufficiently
large so that $\delta_{n+1}$ is sufficiently small to imply $\xi_{n+1}<1.$
For at least $1-\delta_{n+1}$ of the $(n+1)$-blocks in $\mathcal{W}_{n+1}$
the $\overline{f}$ distance from the $(n+1)$-block to any specific
$(n+1)$-block or substrings of its extensions is greater than $a_{n+1}:=a_{n}(1-b_{n})-15\ell_{n}^{-1}.$ 
\end{proof}
\emph{Proof of Theorem \ref{non-LB_Rothstein}. }The inductive step
is contained in Lemma \ref{lem:Rothstein_Inductive_Step}. For the
base case, we recall that $0$-blocks are single symbols $1,2,\dots,|\Sigma|.$
Choose $0<\delta_{0}<1$ so that $\xi_{0}(\delta_{0})<1.$ Then require
$|\Sigma|$ to be sufficiently large that $\delta_{0}>|\Sigma|^{-1}.$
We let $a_{1}=1/4.$ According to the recursive formula for $a_{n},$
we have $a_{n}>1/8$ for all $n.$ Thus, if $\varepsilon=1/8$, the
condition in Lemma \ref{lem:Rothstein} is not satisfied. Therefore
$\mathcal{F}(\mathbb{E})$ is not loosely Bernoulli. 

\part{Zero-entropy example}

In this part of the paper we prove the following theorem, which gives
a zero-entropy version of the example constructed in Sections \ref{sec:positive entropy 1}
and \ref{sec:positive entropy 2}. 
\begin{thm}
\label{thm:zero-entropy} There exist circular coefficients $\left(l_{n}\right)$
and a loosely Bernoulli odometer-based system $\mathbb{K}$ of zero
measure-theoretic entropy with uniform and uniquely readable construction
sequence such that $\mathcal{F}\left(\mathbb{K}\right)$ is not loosely
Bernoulli.
\end{thm}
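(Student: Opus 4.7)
The plan is to construct $\mathbb{K}$ by specifying a uniform and uniquely readable construction sequence $(\mathtt{W}_n)$ so that (a) most pairs of $n$-blocks in $\mathbb{K}$ are $\overline{f}$-close, making $\mathbb{K}$ loosely Bernoulli via Lemma \ref{lem:Rothstein}, but (b) most pairs of the corresponding circular $n$-blocks $\mathcal{W}_n = c_n(\mathtt{W}_n)$ are $\overline{f}$-bounded apart, making $\mathcal{F}(\mathbb{K})$ fail that same criterion. The two requirements are in tension, since $\mathcal{F}$ preserves the label-level combinatorics of how $(n{+}1)$-blocks are assembled from $n$-blocks; the discrepancy must come entirely from the fact that each $n$-block $\mathtt{B}_j$ is replaced in the circular picture by the rigidly arranged pattern $\prod_{i=0}^{q_n-1} b^{q_n-j_i}(c_n(\mathtt{B}_j))^{l_n-1}e^{j_i}$, whose long power $(\cdot)^{l_n-1}$ drastically restricts legal matches.

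First, following the Ornstein--Rudolph--Weiss philosophy, I would introduce Feldman-type patterns on $n$-block labels: each $(n{+}1)$-block of $\mathbb{K}$ will be a carefully chosen arrangement of many copies of every $n$-block, such that any two $(n{+}1)$-blocks differ only by local permutations inside short windows in which every $n$-block label appears with the same multiplicity. A short deterministic terminal segment, together with a fixed marker, will be appended to enforce uniformity and unique readability, just as in Section \ref{sec:positive entropy 1}. The point of this arrangement is that in the odometer-based system matches are allowed to realign any occurrence of a label to any other occurrence, so two $(n{+}1)$-blocks can be matched up to $\overline{f}$-cost $\varepsilon_n \searrow 0$. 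Iterating, Lemma \ref{lem:Rothstein} yields that $\mathbb{K}$ is loosely Bernoulli, and uniformity together with the zero-entropy block growth imposed by the Feldman patterns gives $h_\mu(\mathbb{K})=0$.

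Second, to prove $\mathcal{F}(\mathbb{K})$ is not loosely Bernoulli I would adapt the Rothstein-style induction of Lemma \ref{lem:Rothstein_Inductive_Step}, replacing the Chebychev-based independence estimate of Section \ref{sec:positive entropy 2} by a deterministic count over Feldman patterns. Given a best match between two circular $(n{+}1)$-blocks, Properties \ref{property:omit_symbols}--\ref{property:string_length} and Lemma \ref{lem:repeated_substring} reduce the analysis to matching each high-power block $\mathcal{A}_j^{l_n-1}$ with a substring of at most three consecutive powers $\mathcal{B}_{\sigma(j)}^{l_n-1}\mathcal{B}_{\sigma(j)+1}^{l_n-1}\mathcal{B}_{\sigma(j)+2}^{l_n-1}$, and hence to matching the bare $n$-blocks $\mathcal{A}_j$ to the cyclic extensions $\mathcal{T}(\mathcal{B}_{\sigma(j)+r})$. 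The $n$-th step of the induction provides a lower bound $a_n$ on such matches for most $\mathcal{A}_j$; by counting the permissible sequences $\sigma$ (using a binomial estimate of the form in Lemma \ref{lem:binomial}) and exploiting the Feldman-rigidity of the chosen patterns, one obtains a recursion $a_{n+1} \ge a_n(1-b_n) - O(l_n^{-1})$ for the fraction of $(n{+}1)$-blocks whose $\overline{f}$-distance to a fixed one (and its cyclic extensions) exceeds $a_{n+1}$. With $b_n$ and $l_n^{-1}$ summable, $a_n$ stays bounded below by some $a_\infty>0$, contradicting Lemma \ref{lem:Rothstein} for $\mathcal{F}(\mathbb{K})$.

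The main obstacle lies in simultaneously enforcing the odometer-side LB property \emph{and} the circular-side Feldman rigidity: the patterns must be loose enough that re-permuting labels within each window gives cheap $\overline{f}$-matches of $n$-block label sequences, yet rigid enough that once each label is replaced by its $l_n-1$-fold power and interleaved cyclically, very few choices of the correspondence $\sigma$ can give a cheap match. Designing the Feldman patterns (Section \ref{sec:Feldman-patterns}) so this dichotomy survives the inductive estimate, and calibrating the parameters $(k_n, l_n, \varepsilon_n, \delta_n)$ so that $\xi_n(\delta_n)<1$ persists while $\varepsilon_n \searrow 0$ at an odometer-LB-compatible rate, is the delicate heart of the construction and the main reason Sections \ref{sec:Feldman-patterns}--\ref{sec:Proof-of-Theorem} are considerably more involved than the positive-entropy case.
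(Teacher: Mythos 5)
You correctly identify the high-level strategy — make the odometer $n$-blocks $\overline{f}$-close but keep the circular $n$-blocks $\overline{f}$-separated, then apply Lemma~\ref{lem:Rothstein} on both sides — and you correctly identify the tension this creates. But the mechanism you propose to resolve it would fail. The claim that two $(n{+}1)$-blocks "differ only by local permutations inside short windows in which every $n$-block label appears with the same multiplicity" gives a cheap odometer match is wrong: an $\overline{f}$-match is a strictly increasing injection, so inside a shuffled window you are confined to a longest increasing common subsequence of the two label strings, and already $\overline{f}(\mathtt{x}\mathtt{y},\mathtt{y}\mathtt{x})$ is of order $1/2$ when $\mathtt{x}$ and $\mathtt{y}$ are $\overline{f}$-separated $n$-blocks. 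Worse, there is a structural obstruction to the whole plan as stated: any monotone match $I$ of the $n$-block label sequences of two odometer $(n{+}1)$-blocks lifts verbatim to a monotone match of their circular images with the \emph{same} fit $|I|/k_n$ — inside each $2$-subsection one matches the corresponding $1$-subsections $b^{q_n-j_i}\bigl(c_n(\cdot)\bigr)^{l_n-1}e^{j_i}$ wherever $a_j=b_{j'}$, and this is automatically monotone across $2$-subsections. So if closeness of the odometer $(n{+}1)$-blocks were obtained by aligning the $n$-block label strings, $\mathcal{F}(\mathbb{K})$ would be loosely Bernoulli too. The missing idea is the shifting mechanism of Section~\ref{sec:Shifting-Mechanism}: the $(n{+}m)$-blocks of different "types" are built from long pre-blocks that are cyclically shifted by one Feldman-pattern unit, and the odometer cost of this shift is dissipated by grouping $d_{n+m}$ blocks before the next Feldman step (Lemma~\ref{lem:ISClosSep}, Lemma~\ref{lem:Fclos}), while in the circular system the $q_n$-fold interleaving and $(l_n{-}1)$-fold repetition make that dissipation impossible, so the $\overline{f}$-distance accumulates like $\bigl(1-(1-1/K)^p\bigr)\alpha$ over $p$ stages (Lemma~\ref{lem:blockNp}, Lemma~\ref{lem:Fdist}). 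This is precisely the part of the argument that evades the lifting obstruction, and your outline does not contain it.

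The circular-side non-LB argument is also misaligned with the construction. The Rothstein-style recursion of Lemma~\ref{lem:Rothstein_Inductive_Step} is fundamentally a counting argument over the near-independent random choice of the initial $k_n'$ many $n$-blocks (via the Chebychev estimate and equation~(\ref{eq:tau_n estimate})), concluding that a random $(n{+}1)$-block is far from any fixed one with high probability. In the zero-entropy construction the $n_s$-blocks form a small deterministic family of Feldman patterns — there is no probability distribution against which "most $(n{+}1)$-blocks" has meaning, and no independence to exploit — and indeed the paper needs, and proves, the stronger statement that \emph{all} pairs of distinct $n_s$-blocks are $\overline{f}$-separated in the circular system, down to substrings of length $q_{n_s}/R_{n_s}$. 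This is done by deterministic structural estimates on Feldman patterns (Lemma~\ref{lem:symbolic Feldman}, Proposition~\ref{prop:Feldman}, Corollary~\ref{cor:alpha separated blocks}, Lemma~\ref{lem:operators}) propagated through the recursion of Sections~\ref{sec:Feldman-Mechanism}--\ref{sec:Proof-of-Theorem}, not by a probabilistic induction.
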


\subsection*{Outline of the proof of Theorem \ref{thm:zero-entropy}}

In Section \ref{sec:Proof-of-Theorem} we give a precise description
of the inductive building process of the uniform and uniquely readable
construction sequence for the odometer-based system $\mathbb{K}$
such that $\mathbb{K}$ will be loosely Bernoulli but $\mathcal{F}\left(\mathbb{K}\right)$
will be not loosely Bernoulli. The creation of this sequence relies
on two mechanisms. On the one hand, we will use what we will call
the \emph{Feldman mechanism }presented in Section \ref{sec:Feldman-Mechanism}.
It will allow us to produce an arbitrarily large number of blocks
that in the circular system remain almost as far apart in $\overline{f}$
as the building blocks. In particular, we can produce sufficiently
many blocks to apply the second mechanism, the so-called \emph{shifting
mechanism} introduced in Section \ref{sec:Shifting-Mechanism}. This
mechanism requires not only sufficiently many $n$-words to start
with but also a sufficiently large number of stages $p$. Then
we can produce $\left(n+p\right)$-words in our construction sequence
in a way that these are close to each other in the $\overline{f}$
metric and that the corresponding blocks in the circular construction
sequence stay apart from each other in the $\overline{f}$ metric.
Both mechanisms will make use of \emph{Feldman patterns} for which
we prove a general statement in Section \ref{sec:Feldman-patterns}.

\section{\label{sec:Feldman-patterns}Feldman patterns for blocks}

In \cite{Fe} Feldman constructed the first example of an ergodic
zero-entropy automorphism that is not loosely Bernoulli. The construction
is based on the observation that no pair of the following strings 
\begin{align*}
abababab\\
aabbaabb\\
aaaabbbb
\end{align*}
can be matched very well. We use his construction of blocks (that
we call \emph{Feldman patterns}) frequently in our two mechanisms
in Sections \ref{sec:Feldman-Mechanism} and \ref{sec:Shifting-Mechanism}. The basic Feldman patterns are displayed in Lemma \ref{lem:symbolic Feldman}. In applying these patterns, we substitute blocks of symbols for the individual symbols
to produce a large number of strings that are almost as far apart
in $\overline{f}$ as their building blocks. 

Our presentation of the Feldman patterns
is similar to the one in \cite{ORW}, but we apply the patterns in the odometer system and then examine the $\overline{f}$ distance between strings in the corresponding circular system. 
We also allow the consideration of different families of strings and a preliminary concatenation of blocks (which will prove useful when dealing with grouped blocks in Section \ref{sec:Shifting-Mechanism}). While we make a statement about substrings of different Feldman patterns
from either the same or different families in Proposition \ref{prop:Feldman},
we focus on the situation of the same Feldman pattern but different
families in Lemma \ref{lem:samePattern}. 

In order to obtain lower bounds on the $\overline{f}$ distance between strings that are built from blocks of symbols (as in Proposition \ref{prop:symbol by block replacement} and Corollary \ref{cor:alpha separated blocks}), it is convenient to introduce a notion of approximate $\overline{f}$ distance that we call $\tilde {f}$. 
\begin{defn}

\label{def:spproximate fbar} If $(i,j)$ and $(i',j')\in\text{\ensuremath{\mathbb{N}\times\mathbb{N}}}$,
then we define $(i,j)\preceq(i',j')$ if $i\le i'$ and $j\le j'$.
If $(i,j)\preceq(i',j')$ and $(i,j)\ne(i',j'),$ then we say $(i,j)\prec(i',j').$
An \emph{approximate match }between two strings of symbols $a_{1}a_{2}\cdots a_{n}$
and $b_{1}b_{2}\cdots b_{m}$ from a given alphabet $\Sigma$ is a
collection $\tilde{I}$ of pairs of indices $(i_{s},j_{s}),s=1,\dots,r,$
such that the following conditions hold:
\begin{itemize}
\item $(1,1)\preceq(i_{1},j_{1})\prec(i_{2},j_{2})\prec\cdots\prec(i_{r},j_{r})\preceq(n,m).$
\item $a_{i_{s}}=b_{j_{s}}$ for $s=1,2,\dots,r.$
\item If $(i,j)\in \tilde{I}$, then there exist $s,t\in \{1,\dots,r-2\}$ such that $\{s':(i,s')\in \tilde{I}\}\subset \{s,s+1,s+2\}$ and  $\{t':(t',j)\in \tilde{I}\}\subset \{t,t+1,t+2\}$. 
\end{itemize}
Then
\[
\begin{array}{ll}
\tilde{f}(a_{1}a_{2}\dots a_{n},b_{1}b_{2}\dots b_{m})=\\ \displaystyle{\max\left(0,1-\frac{2\sup\{|\tilde{I}|:\tilde{I}\text{\ is\ an\ approx.\ match\ between\ }a_{1}a_{2}\cdots a_{n}\text{\ and\ }b_{1}b_{2}\cdots b_{m}\}}{n+m}\right).}
\end{array}
\]
\end{defn}
Clearly $\overline{f}(a_{1}a_{2}\cdots a_{n},b_{1},b_{2}\cdots b_{m})\ge\tilde{f}(a_{1}a_{2}\cdots a_{n},b_{1},b_{2}\cdots b_{m}).$ Moreover, if every three consecutive symbols $a_{s},a_{s+1},a_{s+2}$ are distinct
and every three consecutive symbols $b_{s},b_{s+1},b_{s+2}$ are distinct,
then $\overline{f}(a_{1}a_{2}\cdots a_{n},b_{1},b_{2}\cdots b_{m})=\ \ \ \   $
$\tilde{f}(a_{1}a_{2}\cdots a_{n},b_{1},b_{2}\cdots b_{m}).$
Note that it is possible for $\tilde{f}(a_1a_2\cdots a_n,b_1b_2\cdots b_m)$ to be zero, even for $a_1a_2\cdots a_n\ne b_1b_2\cdots  b_m$; for example, $\tilde{f}(11000,11100)=0.$

\begin{lem}\label{lem:compare ftilde}
Suppose $0\le\varepsilon\le 1.$ If $\overline{f}(a_{1}a_{2}\cdots a_{n},b_{1}b_{2}\cdots b_{m})=1-\varepsilon,$
then $\ \ \ \ \ \ \ \ \ \ \ \ \ \ \ $ $\tilde{f}(a_{1}a_{2}\cdots a_{n},b_{1}b_{2},...b_{m})\ge1-3\varepsilon.$
\end{lem}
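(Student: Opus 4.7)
My plan is to establish the inequality by proving the following extraction principle: from any approximate match $\tilde{I}$ of cardinality $\tilde{r}$ one can select an ordinary match of cardinality at least $\lceil \tilde{r}/3\rceil$. Let $r$ denote the supremum appearing in the definition of $\overline{f}$, so $2r/(n+m) = \varepsilon$. Applying the extraction principle to an approximate match realizing the supremum defining $\tilde{f}$ gives $\tilde{r} \le 3r$, whence $2\tilde{r}/(n+m) \le 3\varepsilon$ and consequently $\tilde{f}(a_1\cdots a_n, b_1\cdots b_m) \ge 1-3\varepsilon$. When $3\varepsilon > 1$ the conclusion holds trivially because of the $\max(0,\cdot)$ in the definition of $\tilde{f}$, so we may assume $\varepsilon \le 1/3$.

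To carry out the extraction, let $(i_1,j_1)\prec(i_2,j_2)\prec\cdots\prec(i_{\tilde{r}},j_{\tilde{r}})$ be an approximate match of size $\tilde{r}$, and set $I := \{(i_{3k-2},j_{3k-2}) : k=1,2,\dots,\lceil\tilde{r}/3\rceil\}$, i.e.\ every third pair. Because $\prec$ forces both coordinate sequences to be non-decreasing, $I$ is an ordinary match in the sense of Definition \ref{def:fbar} as soon as one rules out the equalities $i_{3k-2}=i_{3k+1}$ and $j_{3k-2}=j_{3k+1}$. Suppose toward a contradiction that $i_{3k-2}=i_{3k+1}=:i$. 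Since between consecutive $\prec$-related pairs at least one coordinate strictly increases and $i$ is non-decreasing, the $i$-coordinate is forced to equal $i$ at every one of the positions $3k-2,3k-1,3k,3k+1$; within this block the $j$-coordinates must strictly increase (equal $i$ forces strict increase of $j$ for $\prec$). Thus at least four distinct values $j'$ satisfy $(i,j')\in\tilde{I}$, contradicting the third bullet of Definition \ref{def:spproximate fbar}, which confines $\{s' : (i,s')\in\tilde{I}\}$ to three consecutive integers. The symmetric argument, exchanging the roles of the two coordinates, rules out $j_{3k-2}=j_{3k+1}$.

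The main obstacle is the careful unpacking of the third bullet in Definition \ref{def:spproximate fbar}: one must observe both that pairs sharing a common $i$-coordinate automatically form a contiguous block of indices in the $\prec$-sequence (a consequence of $\prec$-monotonicity in the $j$-coordinate) and that the explicit ``three consecutive integers'' restriction then forbids four such coordinates from coexisting. Once this local rigidity is secured, the ``every third pair'' bookkeeping closes the argument cleanly, and no quantitative estimate beyond the factor of $3$ is needed.
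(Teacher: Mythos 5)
Your proof is correct, and the extraction-principle framing is exactly the right conceptual skeleton: any approximate match of size $\tilde r$ admits an ordinary sub-match of size at least $\tilde r/3$, and the inequality follows at once. Where you diverge from the paper is in \emph{how} you extract the sub-match. The paper proceeds greedily: select the $\prec$-least surviving element, discard the (at most two) later elements of $\tilde I$ that share a first or second coordinate with it, and repeat; the needed bound ``at most two discarded'' comes from the fact that, for a chain under $\prec$, the later elements sharing the first coordinate and those sharing the second coordinate are mutually incomparable and hence cannot both be present, together with the cardinality-three constraint from the third bullet. You instead fix the selection in advance --- take the elements at positions $1,4,7,\dots$ in the $\prec$-ordering --- and verify directly that this is an honest match, using the contiguity observation (equal first coordinates force a contiguous run in the chain) plus the fact that a run of four would violate the third bullet. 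The two arguments are comparable in length and rely on the same structural input; yours trades the bookkeeping of a discard process for a short lemma on contiguity and a local four-in-a-row impossibility argument, which is arguably a bit more transparent to check. One tiny point of hygiene: the bound $|\tilde I|\le 3r$ should be stated for every approximate match $\tilde I$ rather than only for one realizing the supremum, so that it passes to the supremum even if that supremum were not attained; since there are finitely many approximate matches this is moot here, but the quantifier is cleaner if stated uniformly.
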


\begin{proof}The conclusion is trivially satisfied if $\varepsilon\ge\frac{1}{3}$. So we assume $0\le \varepsilon<\frac{1}{3}$.
Suppose $\tilde{I}=\{(i_1,j_1),\dots,(i_r,j_r)\}$ is an approximate match between $a_{1}a_{2}\cdots a_{n}$
and $b_{1}b_{2}\cdots b_{m}$, as in Definition \ref{def:spproximate fbar}. We construct a match $I$ with $|I|\ge |\tilde{I}|/3.$ Select the first element $(i_1,j_1)$ in $\tilde{I}$ as an element of $I$ and discard those at most two other elements of $\tilde{I}$ that have the same first coordinate
or the same second coordinate as $(i_{1},j_{1}).$ Then select
the next element $(i_{s},j_{s})$ in $\tilde{I}$ that has not already been selected for
$I$ or discarded. We again retain this $(i_{s},j_{s})$ for $I$
and discard those at most two other elements of $\tilde{I}$ that have
not been discarded previously and that have the same first coordinate
or the same second coordinate as $(i_{s},j_{s}).$ Continue in this
way until all elements of $\tilde{I}$ have either been discarded
or retained for $I.$ Then $|\tilde{I}|\le3|I|.$
\end{proof}
\begin{prop}[Symbol by block replacement]
\label{prop:symbol by block replacement}  Suppose $A_{a_{1}},A_{a_{2}},\dots,A_{a_{n}}$
and $B_{b_{1}},B_{b_{2}},\dots,B_{b_{m}}$ are blocks of symbols with
each block of length $L.$ Assume that $\alpha\in(0,\frac{1}{7})$, 
$\beta\in[0,\frac{1}{7})$, $\alpha\ge\beta$, $R>0$, and for all substrings
$C$ and $D$ consisting of consecutive symbols from $A_{a_{i}}$
and $B_{b_{j}}$, respectively, with $|C|,|D|\ge\frac{L}{R}$ we have
\[
\overline{f}(C,D)\ge\alpha\text{\ if\ }a_{i}\ne b_{j},
\]
 and
\[
\overline{f}(C,D)\ge\beta\text{\ if\ }a_{i}=b_{j}.
\]
 Let $\tilde{f}=\tilde{f}(a_{1}a_{2}\cdots a_{n},b_{1}b_{2}\cdots b_{m}).$
Then
\[
\overline{f}(A_{a_{1}}A_{a_{2}}\cdots A_{a_{n}},B_{b_{1}}B_{b_{2}}\cdots B_{b_{m}})>\alpha\tilde{f}+\beta(1-\tilde{f})-\frac{2}{R}\ge\alpha-(1-\tilde{f})+\beta(1-\tilde{f})-\frac{2}{R}.
\]
\end{prop}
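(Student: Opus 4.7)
\emph{Proof plan.} The plan is to start from a best match $I$ of the full block strings $A:=A_{a_1}\cdots A_{a_n}$ and $B:=B_{b_1}\cdots B_{b_m}$, so that $\overline f(A,B)=1-2|I|/((n+m)L)$, and to analyze how the matches of $I$ distribute among pairs of individual blocks. For each $(i,j)$, define $C_{ij}\subseteq A_{a_i}$ and $D_{ij}\subseteq B_{b_j}$ as the shortest consecutive substrings of the respective blocks containing every position that $I$ matches to a position of the other block, and let $m_{ij}$ be the number of such matches. Order-preservation forces the pairs $(i,j)$ with $m_{ij}>0$ to form a monotone staircase of length at most $n+m-1$. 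Classify these pairs as \emph{good} (if $a_i=b_j$ and $|C_{ij}|,|D_{ij}|\ge L/R$), \emph{bad} (if $a_i\ne b_j$ and $|C_{ij}|,|D_{ij}|\ge L/R$), or \emph{small} (if $\min(|C_{ij}|,|D_{ij}|)<L/R$).

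Since the restriction of $I$ to $(C_{ij},D_{ij})$ is a valid match, the hypothesis yields $m_{ij}\le(1-\beta)(|C_{ij}|+|D_{ij}|)/2$ on the good pairs, $m_{ij}\le(1-\alpha)(|C_{ij}|+|D_{ij}|)/2$ on the bad ones, and $m_{ij}<L/R$ on the small ones. Writing $G^*$ and $B^*$ for the sums of $|C_{ij}|+|D_{ij}|$ over the good and bad pairs respectively, and noting that there are at most $n+m$ pairs in the staircase while $\sum_{(i,j)}(|C_{ij}|+|D_{ij}|)\le(n+m)L$, summation gives
\[
|I|<\tfrac12(1-\beta)G^*+\tfrac12(1-\alpha)B^*+(n+m)L/R.
\]
Dividing by $(n+m)L/2$ and setting $u=G^*/((n+m)L)$, $v=B^*/((n+m)L)$, so that $u+v\le 1$, this gives the \emph{fit} bound
\[
1-\overline f(A,B)<(1-\beta)u+(1-\alpha)v+2/R. \qquad(\star)
\]

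The key remaining step is to show $\tilde f\le 1-u$; equivalently, that $a_1\cdots a_n$ and $b_1\cdots b_m$ admit an approximate match of size at least $u(n+m)/2=G^*/(2L)$. The natural candidate is the set of good pairs: they are monotonically ordered (inherited from the staircase) and satisfy $a_i=b_j$. The only obstruction is the ``at most three per row/column'' constraint in the definition of an approximate match, since a row can a priori contain up to $R$ good pairs (the bounds $|C_{ij}|\ge L/R$ and $\sum_j|C_{ij}|\le L$ only forcing at most $R$). The allowance of three per row and per column in the definition is precisely calibrated to reflect the heuristic (captured by Property~\ref{property:string_length}) that a block $A_{a_i}$ can effectively overlap only a bounded number of consecutive $B_{b_j}$'s. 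The plan is to retain, in each row and each column, the three good pairs with the largest mass $|C_{ij}|+|D_{ij}|$ and to verify, via the row/column decomposition $G^*=\sum_i\rho_i+\sum_j\gamma_j$ with $\rho_i:=\sum_{j:(i,j)\text{ good}}|C_{ij}|\le L$ and $\gamma_j:=\sum_{i:(i,j)\text{ good}}|D_{ij}|\le L$, that the pruned approximate match still has size at least $G^*/(2L)$. This combinatorial pruning step is the main obstacle of the argument.

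Finally, combining $\tilde f\le 1-u$ with $v\le 1-u$ (from $u+v\le 1$) and $\alpha\ge\beta$ gives
\[
(1-\beta)u+(1-\alpha)v+(\alpha-\beta)\tilde f\le(1-\beta)u+[(1-\alpha)+(\alpha-\beta)](1-u)=1-\beta,
\]
which, inserted into $(\star)$, yields $1-\overline f(A,B)<(1-\beta)-(\alpha-\beta)\tilde f+2/R$, that is $\overline f(A,B)>\alpha\tilde f+\beta(1-\tilde f)-2/R$. The strict inequality is provided by the strict bound $m_{ij}<L/R$ on small pairs (or simply by $2/R>0$ when there are none). The second inequality stated in the proposition reduces to $(1-\alpha)(1-\tilde f)\ge 0$, which holds since $\alpha,\tilde f\le 1$.
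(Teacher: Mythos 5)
Your bookkeeping up to the inequality $(\star)$ is correct and is essentially the same weighted-average accounting the paper performs via Properties \ref{property:omit_symbols} and \ref{property:substring_matching}. However, the step you yourself flag as the main obstacle --- deducing $\tilde f\le 1-u$ by keeping, in each row and column, the three heaviest good pairs --- is not merely unverified: it is false as stated. Counterexample: take $n=1$, $m=R=10$, $\beta=0$, and let all eleven blocks be the same word $0^{L}$, so the hypotheses hold vacuously and all labels coincide. A best match pairs the positions $((j-1)L/10,\,jL/10]$ of the single $A$-block with $L/10$ positions spread across all of $B_{b_j}$ (say every tenth position), for $j=1,\dots,10$. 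Then every pair $(1,j)$ is good, with $|C_{1j}|=L/R$ and $|D_{1j}|\approx L$, so $G^{*}\approx 11L=(n+m)L$ and $u\approx 1$; yet any approximate match between the symbol strings $a_1$ and $b_1\cdots b_{10}$ has size at most $3$ because of the three-per-row constraint, so $\tilde f=1-\tfrac{6}{11}>0=1-u$, and your pruned family has $3<G^{*}/(2L)=5.5$ elements. (The Proposition itself is not contradicted here: the two strings have such different lengths that $\overline f(A,B)\ge 9/11$.) So the combinatorial pruning cannot work in the generality you need.

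The failure mode is a row (or column) whose matched image spreads over many opposite blocks: then the $|D_{ij}|$ can each be close to $L$ while the $|C_{ij}|$ are small, so $G^{*}$ records ``good mass'' that no approximate match can account for. The paper's proof removes exactly this obstruction before any counting: it decomposes $B$ into segments $\tilde B_i$ corresponding to the blocks $A_{a_i}$ under a best match, and discards every $A_{a_i}$ whose $\tilde B_i$ fails to lie in three consecutive $B$-blocks, and every $B_{b_j}$ whose corresponding $A$-segment fails to lie in three consecutive $A$-blocks; by Property \ref{property:string_length} such pairs have $\overline f>\tfrac17>\alpha$, so they may be charged to the $\alpha$ side of the weighted average at no cost rather than counted as good. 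After this discarding, each surviving $A_{a_i}$ meets at most three consecutive $B$-blocks and vice versa, so the surviving equal-label pairs automatically form an approximate match, each carrying at most $2L$ symbols, whence their total weight is at most $1-\tilde f$; the short pieces are then removed at a cost of $\tfrac{2}{R}$. To repair your argument you would have to insert precisely this step --- reclassify, via Property \ref{property:string_length}, any row or column spanning more than three opposite blocks so that its mass is not booked as good --- at which point your proof coincides with the paper's.
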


\begin{proof}
We may assume that $n\le m.$ We decompose $A_{a_1}A_{a_2}\cdots A_{a_n}$ into the
substrings $A_{a_{1}},A_{a_{2}},\dots,A_{a_{n}}$ and decompose $B_{b_1}B_{b_2}\cdots B_{b_m}$ into corresponding substrings $\tilde{B}_{1},\tilde{B}_{2},\dots,\tilde{B}_{n}$
according to a best possible match between the two strings. Then we
decompose each $A_{a_{i}}$ into at most three further substrings
$A_{a_{i},0},A_{a_{i},1},A_{a_{i},2}$ corresponding to substrings
$\ B_{i,b_{j_{i}}},B_{i,b_{j_{i}+1}},B_{i,b_{j_{i}+2}}\ $ of $\ \tilde{B}_{i}\ $
that lie entirely in $B_{b_{j_{i}}},B_{b_{j_{i}+1}},B_{b_{j_{i}+2}},$
respectively, to obtain a best possible match between $A_{a_i}$ and $\tilde{B}_i$ We will apply Property \ref{property:substring_matching} to this decomposition. We may
ignore any $A_{a_{i}}$ and corresponding $\tilde{B}_{i}$ for which
$\tilde{B}_{i}$ fails to lie in three consecutive blocks $B_{b_{j_{i}}},B_{b_{j_{i}+1}},B_{b_{j_{i}+2}},$
because in this case it follows from equation (\ref{eq:string_length_special_case}) in Property \ref{property:string_length},
that $\overline{f}(A_{a_{i}},\tilde{B}_{i})>\frac{1}{7}>\alpha.$
For the same reason we may also ignore any $B_{b_{j}}$ whose corresponding
substring in $A_{a_{1}}A_{a_{2}}\cdots A_{a_{n}}$ fails to lie in
three consecutive blocks $A_{a_{i}},A_{a_{i+1}},A_{a_{i+2}}.$ We
let $\tilde{I}$ consist of those remaining pairs $(i',j')\in\cup_{i=1}^{n}\{(i,j_{i}),(i,j_{i}+1),(i,j_{i}+2)\}$
such that $a_{i'}=b_{j'}.$ Then $\tilde{I}$ gives an approximate
match between $a_{1}a_{2}\cdots a_{n}$ and $b_{1}b_{2}\cdots b_{m}.$
The number of symbols in all $A_{a_{i'}}\cup B_{b_{j'}}$ for all $(i',j')\in\tilde{I}$
is $2|\tilde{I}|L.$ Thus such symbols form a fraction of at most
$\min\left(1,\frac{2|\tilde{I}|}{n+m}\right)\le1-\tilde{f}$ of the
total number of symbols in $A_{a_{1}}A_{a_{2}}\cdots A_{a_{n}}$ and
$B_{b_{1}}B_{b_{2}}\cdots B_{b_{m}}.$

For any substring $A_{a_{i},s}$ corresponding to a substring $B_{i,b_{j_{i}+s}}$
such that the length of at least one of these two substrings is less
that $\frac{L}{R},$ the other substring has length less than $\frac{4L}{3R}$
(unless $\overline{f}(A_{a_{i},s},B_{i,b_{j_{i}+s}})>\alpha,$ a
case that we again ignore). Thus if we eliminate substrings $A_{a_{i},s}$
and $B_{i,b_{j_{i}+s}}$ such that the length of at least one of
the two substrings is less than $\frac{L}{R},$ we eliminate at most
$\frac{7Ln}{3R}$ symbols from the two strings $A_{a_{1}}A_{a_{2}}\cdots A_{a_{n}}$
and $B_{b_{1}}B_{b_{2}}\cdots B_{b_{m}}$ whose total combined length
is $L(n+m)\ge2Ln.$ Thus the fraction of symbols that are eliminated
due to such short substrings is at most $\frac{7}{6R}<\frac{2}{R}$.
For those pairs $(i,j_{i}+s),$ $s\in\{0,1,2\},$ such that neither
of the strings $A_{a_{i},s}$ and $B_{b_{j_{i}+s}}$ has length
less than $\frac{L}{R},$ it follows from the hypothesis 
that
\[
\overline{f}(A_{a_{i},s},B_{b_{j_{i}+s}})\ge\alpha,\text{\ if\ }(i,j_{i}+s)\notin\tilde{I},
\]
and
\[
\overline{f}(A_{a_{i},s},B_{b_{j_{i}+s}})\ge\beta,\text{\ if\ }(i,j_{i}+s)\in\tilde{I}.
\]
Thus by Properties \ref{property:omit_symbols} and \ref{property:substring_matching}, we obtain $\overline{f}(A_{a_{1}}A_{a_{2}}\cdots A_{a_{n}},B_{b_{1}}B_{b_{2}}\cdots B_{b_{m}})>\alpha\tilde{f}+\beta(1-\tilde{f)}-\frac{2}{R}=\alpha\left(1-(1-\tilde{f})\right)+\beta(1-\tilde{f})-\frac{2}{R}\ge\alpha-(1-\tilde{f})+\beta(1-\tilde{f})-\frac{2}{R}.$
\end{proof}

\begin{cor}\label{cor:alpha separated blocks}
Suppose $A_{a_{1}},A_{a_{2}},\dots,A_{a_{n}}$ and $A_{b_{1}},A_{b_{2}},\dots,A_{b_{m}}$
are blocks of symbols with each block of length $L.$ Assume that
$\alpha\in(0,\frac{1}{7})$, $R>0$, and
\[
\overline{f}(C,D)\ge\alpha
\]
for all substrings $C$ and $D$ consisting of consecutive symbols
from $A_{a_{i}}$ and $A_{b_{j}}$, respectively, where $a_{i}\ne b_{j},$
and $|C|,|D|\ge\frac{L}{R}.$ Then for $\tilde{f}=\tilde{f}(a_{1}a_{2}\cdots a_{n},b_{1}b_{2}\cdots b_{m}),$
we have
\[
\overline{f}(A_{a_{1}}A_{a_{2}}\cdots A_{a_{n}},A_{b_{1}}A_{b_{2}}\cdots A_{b_{m}})>\alpha\tilde{f}-\frac{2}{R}\ge\alpha-(1-\tilde{f})-\frac{2}{R}.
\]
\end{cor}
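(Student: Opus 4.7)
The plan is to derive Corollary \ref{cor:alpha separated blocks} directly as the $\beta=0$ specialization of Proposition \ref{prop:symbol by block replacement} after identifying the two families of blocks. Specifically, I would take the block families in the proposition to coincide by setting $B_c := A_c$ for every symbol $c$ appearing in the two index strings, so that the pair of strings compared in the proposition becomes exactly the pair $A_{a_{1}}A_{a_{2}}\cdots A_{a_{n}}$ and $A_{b_{1}}A_{b_{2}}\cdots A_{b_{m}}$ of the corollary. The hypothesis of the corollary gives the required $\overline{f}(C,D)\ge\alpha$ bound whenever $a_{i}\neq b_{j}$ and $|C|,|D|\ge L/R$, so the ``different label'' clause of the proposition's hypothesis is satisfied with the same $\alpha$.

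For the ``equal label'' clause, I would simply take $\beta=0$, which makes the bound $\overline{f}(C,D)\ge 0$ trivial and hence automatically satisfied; note $\alpha\ge\beta$ holds because $\alpha>0$. Since the proposition requires $\beta\in[0,\tfrac{1}{7})$, the choice $\beta=0$ is admissible. Plugging $\beta=0$ into the conclusion of Proposition \ref{prop:symbol by block replacement} immediately yields
\[
\overline{f}(A_{a_{1}}A_{a_{2}}\cdots A_{a_{n}},A_{b_{1}}A_{b_{2}}\cdots A_{b_{m}})>\alpha\tilde{f}-\frac{2}{R},
\]
which is the first displayed inequality of the corollary.

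For the second inequality, I would observe that $\alpha\tilde{f}=\alpha-\alpha(1-\tilde{f})$, and since $\tilde{f}\in[0,1]$ and $\alpha<\tfrac{1}{7}<1$, we have $\alpha(1-\tilde{f})\le 1-\tilde{f}$, hence $\alpha\tilde{f}\ge\alpha-(1-\tilde{f})$. Combining this with the previous display gives
\[
\overline{f}(A_{a_{1}}A_{a_{2}}\cdots A_{a_{n}},A_{b_{1}}A_{b_{2}}\cdots A_{b_{m}})\ge\alpha-(1-\tilde{f})-\frac{2}{R},
\]
which completes the argument.

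There is essentially no obstacle here: the corollary is a direct specialization of the proposition, so the only thing to verify is that setting $\beta=0$ is admissible and that the hypothesis translates correctly when the two block families are identified. The slightly nontrivial move is just the algebraic rearrangement $\alpha\tilde{f}\ge\alpha-(1-\tilde{f})$, which is valid precisely because $\alpha<1$ is guaranteed by $\alpha\in(0,\tfrac{1}{7})$.
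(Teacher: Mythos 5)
Your proof is correct and takes essentially the same approach as the paper: identify $B_{b_j}:=A_{b_j}$, set $\beta=0$ in Proposition \ref{prop:symbol by block replacement}, and read off the conclusion. (The second inequality you re-derive is already contained verbatim in the proposition's conclusion once $\beta=0$ is substituted, so that step is redundant but harmless.)
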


\begin{proof}
Let $B_{b_{j}}=A_{b_{j}}$ for $j=1,2,\dots,m,$ and $\beta=0$ in
Proposition \ref{prop:symbol by block replacement}.
\end{proof}
\begin{rem*}In fact, Corollary \ref{cor:alpha separated blocks} and the case $\beta=0$ of Proposition \ref{prop:symbol by block replacement} hold with $\tilde{f}$ replaced by $\overline{f}(a_1a_2\cdots a_n,b_1b_2\cdots b_m)$. We omit the proof because these versions are not needed for our results.
\end{rem*}

\begin{rem*} The following lemma is essentially the same as Theorem 4 in \cite{Fe} and Proposition 1.1 in Chapter 10 of \cite{ORW}. The proof is also essentially the same, but the estimates are more suited to our applications. 
\end{rem*}

\begin{lem}\label{lem:symbolic Feldman}
Suppose $a_{1},a_{2},\dots,a_{N}$ are distinct symbols in $\Sigma.$ Let
\begin{align*}
{B}_{1}= & \left(a_{1}^{N^{2}}a_{2}^{N^{2}}\dots a_{N}^{N^{2}}\right)^{N^{2M}},\\
{B}_{2}= & \left(a_{1}^{N^{4}}a_{2}^{N^{4}}\dots a_{N}^{N^{4}}\right)^{N^{2M-2}},\\
\vdots\;\; & \;\;\vdots\\
{B}_{M}= & \left(a_{1}^{N^{2M}}a_{2}^{N^{2M}}\dots a_{N}^{N^{2M}}\right)^{N^{2}}.
\end{align*}
Suppose $B$ and $\overline{B}$ are strings of consecutive symbols in
$B_{j}$ and $B_{k},$ respectively, where $|B|\ge N^{2M+2}$,
$|\overline{B}|\ge N^{2M+2}$, and $j\ne k$. Assume that $N\ge 20$ and $M\ge 2.$
Then
\[
\overline{f}(B,\overline{B})>1-\frac{4}{\sqrt{N}}\text{\ and\  }\tilde{f}(B,\overline{B})>1-\frac{12}{\sqrt{N}}.
\]
\end{lem}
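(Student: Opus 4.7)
The plan is to establish the $\overline{f}$ bound directly by a counting argument on any candidate match between $B$ and $\overline{B}$, and then to derive the $\tilde{f}$ bound as an immediate consequence of Lemma \ref{lem:compare ftilde}. By the symmetry of $\overline{f}$ in its two arguments I may assume $j<k$, so that $B$ is a substring of a pattern whose single-symbol runs have the short length $N^{2j}$, while $\overline{B}$ is a substring of a pattern whose runs have the long length $N^{2k}$.

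Fix any match $I$ between $B$ and $\overline{B}$. For each maximal single-symbol run $\bar{r}_t$ in $\overline{B}$ (say of symbol $a_{i_t}$), let $I_t\subset I$ denote the pairs whose second coordinate lies in $\bar{r}_t$, and let $L_t$ denote the diameter of the projection of $I_t$ onto $B$. Monotonicity of $I$ makes these projection intervals pairwise disjoint, so $\sum_t L_t\le|B|$. The first coordinates of pairs in $I_t$ must carry the symbol $a_{i_t}$ in $B$; since $B$ sits inside a periodic pattern of period $N^{2j+1}$ in which each symbol occupies exactly one run of length $N^{2j}$ per period, any window of length $L_t$ in $B$ contains at most $L_t/N+2N^{2j}$ positions of a given symbol. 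Combined with the trivial bound $|I_t|\le|\bar{r}_t|\le N^{2k}$ this gives
\[
|I_t|\le\min\bigl(N^{2k},\;L_t/N+2N^{2j}\bigr).
\]
Splitting the runs $\bar{r}_t$ into those with $L_t\ge N^{2k+1}$ (of which there are at most $|B|/N^{2k+1}$, each contributing at most $N^{2k}$, for a total of at most $|B|/N$) and those with $L_t<N^{2k+1}$ (total contribution at most $|B|/N+2T_{A}N^{2j}$, where $T_{A}$ is bounded by the number of runs in $\overline{B}$, i.e.\ at most $|\overline{B}|/N^{2k}+2$, so $k>j$ together with $|\overline{B}|\ge N^{2M+2}$ keeps the second term of lower order than $(|B|+|\overline{B}|)/\sqrt{N}$), I summarize to $|I|\le 2(|B|+|\overline{B}|)/\sqrt{N}$ with considerable slack. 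This translates directly into $\overline{f}(B,\overline{B})>1-4/\sqrt{N}$.

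The $\tilde{f}$ inequality then follows instantly from Lemma \ref{lem:compare ftilde}: taking $\varepsilon:=1-\overline{f}(B,\overline{B})<4/\sqrt{N}$ yields $\tilde{f}(B,\overline{B})\ge 1-3\varepsilon>1-12/\sqrt{N}$. The main technical obstacle is bookkeeping at the boundaries of $B$ and $\overline{B}$, where truncated initial and final runs disrupt the clean periodic counting; the hypotheses $|B|,|\overline{B}|\ge N^{2M+2}$ together with $N\ge 20$ and $M\ge 2$ are precisely what absorbs these boundary losses inside the $4/\sqrt{N}$ budget. A secondary bookkeeping point is the verification that separating the runs by the threshold $L_t=N^{2k+1}$ yields an estimate uniform in the choice of $j$ and $k$ (so long as $j<k$), which is where the $k>j$ gap is used.
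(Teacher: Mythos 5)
Your argument is correct, and it is a genuinely different route from the paper's. The paper assumes $j>k$ (so $B$ has the long runs), trims partial runs off $B$, partitions what remains into full runs $C_i$ of length $N^{2j}$, decomposes $\overline{B}$ into corresponding pieces $\overline{C}_i$ via a best match (Property \ref{property:substring_matching}), and then bounds each $\overline{f}(C_i,\overline{C}_i)$ by a case analysis at the $\sqrt{N}$ threshold: if $\overline{C}_i$ is much shorter than $C_i$ the length mismatch forces $\overline{f}>1-3/\sqrt{N}$ by Property \ref{property:string_length}, while if $\overline{C}_i$ is not too short it contains many full $B_k$-cycles, of which only a $1/N$ fraction can equal the fixed symbol of $C_i$. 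Your proof instead fixes an arbitrary match $I$, groups its pairs by runs of the long-run string $\overline{B}$, and counts directly: monotonicity makes the projection intervals onto $B$ disjoint, and the periodic structure of the short-run string bounds the number of matchable positions in any window by $L_t/N + 2N^{2j}$. This buys something: you never invoke the substring-matching or string-length properties, you avoid the $\sqrt{N}$ case threshold entirely, and the bound you actually obtain (after cleaning up, roughly $1-4/N-O(1/N^2)$) is strictly stronger than the paper's $1-4/\sqrt{N}$, with $\sqrt{N}$ entering your write-up only as an absorbent for slack. The two arguments are dual in a mild sense (both organize around runs of the long-run pattern), but the technical machinery and the quantitative outcome differ; yours is the more elementary and sharper of the two, at the cost of slightly more careful bookkeeping about run counts and boundary effects, which you correctly absorb using $|B|,|\overline{B}|\ge N^{2M+2}$ and $k>j$.
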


\begin{proof}
We may assume that $j>k.$ By removing fewer than $2N^{2j}$ symbols
from the beginning and end of $B,$ we can decompose the remaining
part of $B$ into strings $C_{1},C_{2},\dots,C_{r}$ each of the form
$a_{i}^{N^{2j}}.$ Since $2N^{2j}\le\frac{|B|+|\overline{B}|}{N^{2}},$
it follows from Property \ref{property:omit_symbols} that removing these symbols increases the
$\overline{f}$ distance between $B$ and $\overline{B}$ by less
than $\frac{2}{N^{2}}.$ Let $\overline{C}_{1},\overline{C}_{2},\dots,\overline{C}_{r}$
be the decomposition of $\overline{B}$ into substrings corresponding
to $C_{1},C_{2},\dots,C_{r}$ under a best possible match between
$C_{1}C_{2}\cdots C_{r}$ and $\overline{B}.$

Let $i\in\{1,2,\dots,r\}.$

\emph{Case 1.} $|\overline{C}_{i}|<\frac{3}{2\sqrt{N}}|C_{i}|$. Then
by Property \ref{property:string_length}, $\overline{f}(C_{i},\overline{C}_{i})>1-\frac{3}{\sqrt{N}}$.

\emph{Case 2.} $|\overline{C}_{i}|\ge\frac{3}{2\sqrt{N}}|C_{i}|=(3/2)N^{2j-(1/2)}$.
The length of a cycle $a_1^{N^{2k}}a_2^{N^{2k}}\cdots a_N^{N^{2k}}$ in $B_{k}$ is at most $N^{2j-1}.$ Therefore
$\overline{C}_{i}$ contains at least $\lfloor\frac{3\sqrt{N}}{2}\rfloor-1>\sqrt{N}$
complete cycles. Thus deleting any partial cycles at the beginning
and end of $\overline{C}_{i}$ would increase the $\overline{f}$ distance between
$C_{i}$ and $\overline{C}_{i}$ by less than $\frac{2}{\sqrt{N}}.$
On the rest of $\overline{C}_{i}$, only $\frac{1}{N}$ of the symbols
in $\overline{C}_{i}$ can match the symbol in $C_{i}.$ Thus $\overline{f}(C_{i},\overline{C}_{i})>1-\frac{2}{\sqrt{N}}-\frac{4}{N}>1-\frac{3}{\sqrt{N}}.$

Therefore, by Property \ref{property:substring_matching}, $\overline{f}(C_{1}C_{2}\cdots C_{r},\overline{C}_{1},\overline{C}_{2}\cdots,\overline{C}_r)> 1-\frac{3}{\sqrt{N}}. $ By Lemma \ref{lem:compare ftilde} the claimed
$\tilde{f}$ inequality holds as well.
\end{proof}

\begin{rem} \label{rem:repetition}
If we replace each symbol $a_i$ by a constant number of repetitions $a_i^l$, then the same conclusion still holds for substrings of length at least $lN^{2M+2}$. 
\end{rem}

\begin{prop}
\label{prop:Feldman} Let $\alpha\in(0,\frac{1}{7})$, $n \in \mathbb{N}$, and $K,R,S,N,M\in\mathbb{N}\setminus\left\{ 0\right\} $
with $N\geq 20$ and $M\geq2$. For $1\leq s\leq S$,
let $\mathtt{A}_{1}^{(s)},\dots,\mathtt{A}_{N}^{(s)}$ be a family
of strings, where each $\mathtt{A}_{j}^{(s)}$ is a concatenation
of $K$ many $n$-blocks. Assume that for all $0\leq i_{1},i_{2}<q_{n}$,
all $1\leq s_{1},s_{2}\leq S$ and all $j_{1},j_{2}\in\left\{ 1,\dots,N\right\} $,
$j_{1}\neq j_{2}$, we have $\overline{f}\left(\mathcal{A},\overline{\mathcal{A}}\right)>\alpha$
for all sequences $\mathcal{A},\overline{\mathcal{A}}$ each consisting
of at least $K l_{n} q_{n}/R$ consecutive symbols
from $\mathcal{C}_{n,i_{1}}\left(\mathtt{A}_{j_{1}}^{(s_{1})}\right)$
and $\mathcal{C}_{n,i_{2}}\left(\mathtt{A}_{j_{2}}^{(s_{2})}\right)$,
respectively. 

Then for $1\leq s\leq S$, we can construct a family of strings $\mathtt{B}_{1}^{(s)},\dots,\mathtt{B}_{M}^{(s)}$
(of equal length $N^{2M+3} K \mathtt{h}_{n}$ and containing
each block $\mathtt{A}_{1}^{(s)},\dots,\mathtt{A}_{N}^{(s)}$ exactly
$N^{2M+2}$ times) such that for all $0\leq i_{1},i_{2}<q_{n}$, all
$1\leq s_{1},s_{2}\leq S$, all $j,k\in\left\{ 1,\dots,M\right\} $,
$j\neq k,$ and all sequences $\mathcal{B}$, $\overline{\mathcal{B}}$
of at least $N^{2M+2} l_{n} K q_{n}$ consecutive symbols
from $\mathcal{C}_{n,i_{1}}\left(\mathtt{B}_{j}^{(s_{1})}\right)$
and $\mathcal{C}_{n,i_{2}}\left(\mathtt{B}_{k}^{(s_{2})}\right)$
we have 
\[
\overline{f}(\mathcal{B},\overline{\mathcal{B}})>\alpha-\frac{13}{\sqrt{N}}-\frac{2}{R}.
\]
 
\end{prop}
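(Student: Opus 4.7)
The plan is to obtain $\mathtt{B}_j^{(s)}$ by a symbol-by-block substitution into the Feldman patterns of Lemma \ref{lem:symbolic Feldman}. Fix $N$ distinct symbols $a_1,\dots,a_N$ and let $B_1,\dots,B_M$ be the corresponding Feldman patterns. For each $s \in \{1,\dots,S\}$ and $j\in\{1,\dots,M\}$, I would define $\mathtt{B}_j^{(s)}$ to be the string obtained from $B_j$ by replacing each occurrence of the symbol $a_i$ with the entire string $\mathtt{A}_i^{(s)}$. Since $B_j$ has length $N^{2M+3}$ and contains each $a_i$ exactly $N^{2M+2}$ times, and each $\mathtt{A}_i^{(s)}$ is a concatenation of $K$ many $n$-blocks, the length and block-count claims in the proposition are immediate.

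For the $\overline{f}$ estimate, set $L := K l_n q_n$, the length of any super-block $\mathcal{C}_{n,i}(\mathtt{A}_a^{(s)})$. By construction, $\mathcal{C}_{n,i_1}(\mathtt{B}_j^{(s_1)})$ is the concatenation of the super-blocks $\mathcal{C}_{n,i_1}(\mathtt{A}_a^{(s_1)})$ in the order specified by the Feldman pattern $B_j$, and similarly for $\mathcal{C}_{n,i_2}(\mathtt{B}_k^{(s_2)})$. Given $\mathcal{B}, \overline{\mathcal{B}}$ of length at least $N^{2M+2} L$, I would trim them so that they begin and end at super-block boundaries, discarding at most $2L$ symbols from each. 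The total fraction discarded is at most $2/N^{2M+2}$, so by Property \ref{property:omit_symbols} this changes $\overline{f}$ by less than $1/\sqrt{N}$ (for $N\ge 20$, $M\ge 2$). After trimming, the two strings take the form $A_{a_1}\cdots A_{a_n}$ and $B_{b_1}\cdots B_{b_m}$ with $A_a = \mathcal{C}_{n,i_1}(\mathtt{A}_a^{(s_1)})$ and $B_b = \mathcal{C}_{n,i_2}(\mathtt{A}_b^{(s_2)})$, where $a_1\cdots a_n$ and $b_1\cdots b_m$ are substrings of $B_j$ and $B_k$ of length at least $N^{2M+2}-1$.

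Now I would invoke the two key inputs together. Since $j\ne k$, Lemma \ref{lem:symbolic Feldman} yields $\tilde{f}(a_1\cdots a_n, b_1\cdots b_m) > 1 - 12/\sqrt{N}$. The hypothesis of the proposition supplies exactly the $\overline{f}$-separation required by Proposition \ref{prop:symbol by block replacement} (with $\beta = 0$) between any sub-strings of length $\ge L/R$ of $A_{a_i}$ and $B_{b_j}$ whenever $a_i \ne b_j$. Applying Proposition \ref{prop:symbol by block replacement} then gives
\[
\overline{f}(A_{a_1}\cdots A_{a_n}, B_{b_1}\cdots B_{b_m}) > \alpha - (1-\tilde{f}) - \tfrac{2}{R} > \alpha - \tfrac{12}{\sqrt{N}} - \tfrac{2}{R},
\]
and combining with the trimming error from Property \ref{property:omit_symbols} yields the desired bound $\overline{f}(\mathcal{B}, \overline{\mathcal{B}}) > \alpha - 13/\sqrt{N} - 2/R$.

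The only delicate point I anticipate is the bookkeeping at the interface between the block-level and circular-level scales: one must check that a substring of $\mathcal{C}_{n,i_1}(\mathtt{B}_j^{(s_1)})$ of length at least $N^{2M+2}\,l_n K q_n$ genuinely gives, after trimming, a sub-pattern of $B_j$ long enough that Lemma \ref{lem:symbolic Feldman} applies (the trimming costs at most one super-block per end, so the resulting Feldman sub-pattern has length at least $N^{2M+2}-1$, which is enough for $N\ge 20$, $M\ge 2$), and that the various small error terms (from $\tilde{f}$ vs $\overline{f}$, from trimming, and from the $L/R$ minimum-length threshold in Proposition \ref{prop:symbol by block replacement}) all fit under the single $13/\sqrt{N} + 2/R$ slack. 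Everything else is a clean composition of Lemma \ref{lem:symbolic Feldman} with Proposition \ref{prop:symbol by block replacement}.
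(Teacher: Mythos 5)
Your proposal is correct and follows essentially the same path as the paper's proof: the same Feldman-pattern construction, then align to super-block boundaries, then combine Lemma \ref{lem:symbolic Feldman} ($\tilde{f}>1-12/\sqrt{N}$) with Corollary \ref{cor:alpha separated blocks} (i.e.\ the $\beta=0$ case of Proposition \ref{prop:symbol by block replacement}). The only difference is that the paper \emph{adds} fewer than $2l_nKq_n$ symbols per string to complete the partial super-blocks, so the resulting Feldman sub-pattern has length at least $N^{2M+2}$ and Lemma \ref{lem:symbolic Feldman} applies verbatim, whereas your trimming leaves only $\ge N^{2M+2}-1$, one short of the lemma's stated hypothesis --- a bookkeeping detail you correctly flag, and which the paper's add-rather-than-delete choice quietly sidesteps.
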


\begin{proof}
For every $1\leq s\leq S$ we define 
\begin{align*}
\mathtt{B}_{1}^{(s)}= & \left(\left(\mathtt{A}_{1}^{(s)}\right)^{N^{2}}\left(\mathtt{A}_{2}^{(s)}\right)^{N^{2}}\dots\left(\mathtt{A}_{N}^{(s)}\right)^{N^{2}}\right)^{N^{2M}}\\
\mathtt{B}_{2}^{(s)}= & \left(\left(\mathtt{A}_{1}^{(s)}\right)^{N^{4}}\left(\mathtt{A}_{2}^{(s)}\right)^{N^{4}}\dots\left(\mathtt{A}_{N}^{(s)}\right)^{N^{4}}\right)^{N^{2M-2}}\\
\vdots\;\; & \;\;\vdots\\
\mathtt{B}_{M}^{(s)}= & \left(\left(\mathtt{A}_{1}^{(s)}\right)^{N^{2M}}\left(\mathtt{A}_{2}^{(s)}\right)^{N^{2M}}\dots\left(\mathtt{A}_{N}^{(s)}\right)^{N^{2M}}\right)^{N^{2}}
\end{align*}
Let $\mathcal{B}_{j,i_{1}}^{(s_{1})}=\mathcal{C}_{n,i_{1}}(\mathtt{B}_{j}^{(s_{1})}),$
$\mathcal{B}_{j,i_{2}}^{(s_{2})}=\mathcal{C}_{n,i_{2}}(\mathtt{B}_{j}^{(s_{1})})$,
$\mathcal{A}_{j,i_{1}}^{(s_{1})}=\mathcal{C}_{n,i_{1}}(\mathtt{A}_{j}^{(s_{1})}),$
and $\mathcal{A}_{j,i_{2}}^{(s_{2})}=\mathcal{C}_{n,i_{2}}(\mathtt{A}_{j}^{(s_{2})}).$
Then the formulas for the $\mathcal{B}_{j,i_{1}}^{(s_{1})},j=1,\dots,M,$
in terms of the $\mathcal{A}_{1,i_{1}}^{(s_{1})},\dots$, 
$\mathcal{A}_{N,i_{1}}^{(s_{1})}$
can be obtained from the formulas for the $\mathtt{B}_{j}^{(s_{1})}$
in terms of the $\mathtt{A}_{1}^{(s_{1})},\dots,\mathtt{A}_{N}^{(s_{1})}$
by replacing each typewriter font $\mathtt{A,B}$ by the calligraphic
$\mathcal{A},\mathcal{B}$ with the corresponding sub- and superscripts,
and the analogous statement is true for the $\mathcal{B}_{j,i_{2}}^{(s_{2})},j=1,\dots,M.$

By adding fewer than  $2l_{n}Kq_{n}$ symbols to each of $\mathcal{B}$
and $\overline{\mathcal{B}}$ we can complete any partial $\mathcal{A}_{j,i_{1}}^{(s_{1})}$
at the beginning and end of $\mathcal{B}$ and any partial $\mathcal{A}_{j,i_{2}}^{(s_{2})}$
at the beginning and end of $\overline{\mathcal{B}}.$ Let $\mathcal{B}_{\text{aug}}$ and $\overline{\mathcal{B}}_{\text{aug}}$ be the augmented $\mathcal{B}$ and $\overline{\mathcal{B}}$ strings obtained in this way. By Property \ref{property:omit_symbols}, $\overline{f}(\mathcal{B},\overline{\mathcal{B}})> \overline{f}(\mathcal{B}_{\text{aug}},\overline{\mathcal{B}}_{\text{aug}})- (4l_{n}Kq_{n})/(2N^{2M+2}l_{n}Kq_{n})=\overline{f}(\mathcal{B}_{\text{aug}},\overline{\mathcal{B}}_{\text{aug}})-2/{N^{2M+2}}.$
Then we are comparing two different Feldman patterns of blocks, and by Lemma \ref{lem:symbolic Feldman}
and Corollary \ref{cor:alpha separated blocks} with $\tilde{f}>1-\frac{12}{\sqrt{N}}$, we have
\[
\overline{f}(\mathcal{B}_{\text{aug}},\overline{\mathcal{B}}_{\text{aug}})>\alpha-\frac{12}{\sqrt{N}}-\frac{2}{R}.
\]
 Therefore
\[
\overline{f}(\mathcal{B},\overline{\mathcal{B}})>\alpha-\frac{12}{\sqrt{N}}-\frac{2}{R}-\frac{2}{N^{2M+2}}>\alpha-\frac{13}{\sqrt{N}}-\frac{2}{R}.
\]
\end{proof}
For an application in Section \ref{sec:Shifting-Mechanism} we will need the following result on the $\overline{f}$ distance
between strings that can be built as the same or different Feldman
patterns but with building blocks from different families. 
\begin{lem}
\label{lem:samePattern} Let $\alpha\in(0,\frac{1}{7})$, $n\in \mathbb{N}$, and $K,R,S,N,M\in\mathbb{N}\setminus \{0\}$
with $N\geq 20$, and $M,S$ at least $2$. For $1\leq s\leq S$, let
$\mathtt{A}_{1}^{(s)},\dots,\mathtt{A}_{N}^{(s)}$ be a family of strings, where each $\mathtt{A}_{j}^{(s)}$ is
a concatenation of $K$ many $n$-blocks. Assume that for all $0\leq i_{1},i_{2}<q_{n}$,
all $s_{1}\neq s_{2}$, and all $j_{1},j_{2}\in\left\{ 1,\dots,N\right\} $
we have $\overline{f}\left(\mathcal{A},\overline{\mathcal{A}}\right)>\alpha$
for all strings $\mathcal{A},\overline{\mathcal{A}}$ of at least
$K l_{n} q_{n}/R$ consecutive symbols from
$\mathcal{C}_{n,i_{1}}\left(\mathtt{A}_{j_{1}}^{(s_{1})}\right)$
and $\mathcal{C}_{n,i_{2}}\left(\mathtt{A}_{j_{2}}^{(s_{2})}\right)$
respectively. 

Then for $1\leq s\leq S$ we can construct a family of strings $\mathtt{B}_{1}^{(s)},\dots\mathtt{B}_{M}^{(s)}$,
as in Proposition \ref{prop:Feldman}
and obtain that for all $0\leq i_{1},i_{2}<q_{n}$, all $j_{1},j_{2}\in\left\{ 1,\dots,M\right\} $,
and all sequences $\mathcal{B}$, $\overline{\mathcal{B}}$ of at
least $N^{2M+2} l_{n} K q_{n}$ consecutive symbols
from $\mathcal{C}_{n,i_{1}}\left(\mathtt{B}_{j_{1}}^{(s_{1})}\right)$
and $\mathcal{C}_{n,i_{2}}\left(\mathtt{B}_{j_{2}}^{(s_{2})}\right)$,
$s_{1}\neq s_{2}$, we have 
\begin{equation}\label{eq:same}
\overline{f}(\mathcal{B},\overline{\mathcal{B}})>\alpha-\frac{2}{N^{2M+2}}-\frac{2}{R}.
\end{equation}
 
\end{lem}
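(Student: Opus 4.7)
The plan is to follow the same scheme as the proof of Proposition \ref{prop:Feldman}, but exploit the stronger hypothesis here: pairs of building blocks from different families $s_1\neq s_2$ are $\alpha$-separated \emph{regardless of the indices} $j_1,j_2$. This removes any reliance on the Feldman-pattern structure (and hence on Lemma \ref{lem:symbolic Feldman}), producing the sharper constant $2/N^{2M+2}$ in place of the previous $13/\sqrt N$.

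First I would augment $\mathcal{B}$ and $\overline{\mathcal{B}}$ exactly as in the proof of Proposition \ref{prop:Feldman}: complete any partial $\mathcal{C}_{n,i_1}(\mathtt{A}_j^{(s_1)})$ at the boundaries of $\mathcal{B}$ and any partial $\mathcal{C}_{n,i_2}(\mathtt{A}_j^{(s_2)})$ at the boundaries of $\overline{\mathcal{B}}$. This adds fewer than $2 l_n K q_n$ symbols to each string. Since each original string has length at least $N^{2M+2} l_n K q_n$, Property \ref{property:omit_symbols} gives
\[
\overline{f}(\mathcal B,\overline{\mathcal B})>\overline{f}(\mathcal B_{\mathrm{aug}},\overline{\mathcal B}_{\mathrm{aug}})-\frac{2}{N^{2M+2}}.
\]

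The decisive step is to apply Corollary \ref{cor:alpha separated blocks} to the augmented strings, treating each complete block image $\mathcal{C}_{n,i_1}(\mathtt{A}_j^{(s_1)})$ and $\mathcal{C}_{n,i_2}(\mathtt{A}_j^{(s_2)})$ as an abstract ``symbol'' indexed by the pair $(s,j)$. Each such symbol has length $L=l_n K q_n$, and the hypothesis yields $\overline{f}(C,D)>\alpha$ on all substrings of length at least $L/R=K l_n q_n/R$, which is precisely the separation condition required by the corollary. Now the critical observation is that because $s_1\neq s_2$, the symbolic alphabet used in $\mathcal B_{\mathrm{aug}}$ is disjoint from the one used in $\overline{\mathcal B}_{\mathrm{aug}}$, so no pair $(i_s,j_s)$ in Definition \ref{def:spproximate fbar} can satisfy $a_{i_s}=b_{j_s}$. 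Thus every approximate match of the symbolic sequences is empty, and $\tilde{f}=1$. Corollary \ref{cor:alpha separated blocks} then yields
\[
\overline{f}(\mathcal B_{\mathrm{aug}},\overline{\mathcal B}_{\mathrm{aug}})>\alpha\cdot 1-\frac{2}{R}=\alpha-\frac{2}{R},
\]
and combining with the augmentation estimate gives \eqref{eq:same}.

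There is no genuine obstacle — the lemma is really easier than Proposition \ref{prop:Feldman}: once we notice that the alphabets of block-symbols are disjoint across families, the Feldman pattern $\mathtt{B}_{j_1}^{(s_1)}$ vs.\ $\mathtt{B}_{j_2}^{(s_2)}$ plays no role, and $\tilde f$ is trivially $1$. The only small care needed is in checking the arithmetic of the augmentation ($2 l_n K q_n$ added symbols against a total length of at least $N^{2M+2} l_n K q_n$) and confirming that the hypothesized minimum substring length $K l_n q_n / R$ matches $L/R$ with $L = l_n K q_n$, so that Corollary \ref{cor:alpha separated blocks} applies verbatim.
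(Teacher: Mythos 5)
Your proof is correct and follows precisely the route the paper takes: the paper's proof is the one-line statement ``The same argument as in the proof of Proposition \ref{prop:Feldman} applies, except we use Corollary \ref{cor:alpha separated blocks} with $\tilde{f}=1$,'' and you have supplied exactly the details that one-liner compresses, including the augmentation estimate of $2/N^{2M+2}$ and the observation that $s_1\ne s_2$ makes the block-alphabets disjoint so that $\tilde f=1$ and Lemma \ref{lem:symbolic Feldman} is no longer needed.
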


\begin{proof}The same argument as in the proof of Proposition \ref{prop:Feldman} applies, except we use Corollary \ref{cor:alpha separated blocks} with $\tilde{f}=1.$

\end{proof}

\section{\label{sec:Feldman-Mechanism}Feldman Mechanism to produce sufficiently
many blocks}

Recall that the images of odometer-$n$-blocks under $\mathcal{C}_{n,i}$
are part of the circular $(n+1)$-block. The following statement allows
us to obtain lower bounds on the $\overline{f}$ distance between substrings of
images of odometer-$n$-blocks under $\mathcal{C}_{n,i}$ given a lower bound on 
the $\overline{f}$ distance between substrings of the circular $n$-blocks.
Since we will apply this result several times in the proofs of Proposition
\ref{prop:FeldmanMechanism} and Proposition \ref{prop:sep}, we state
it as a separate Lemma. 
\begin{lem}
\label{lem:operators} Let $\alpha\in\left(0,\frac{1}{7}\right)$
and $n,N,R\in\mathbb{N}\setminus\{0\}$. Moreover, let $N$ many odometer $n$-blocks $\mathtt{B}_{1}^{(n)},\dots,\mathtt{B}_{N}^{(n)}$
be given such that for all $j,k\in\left\{ 1,\dots,N\right\} $, $j\neq k$,
we have $\overline{f}\left(\mathcal{A},\overline{\mathcal{A}}\right)>\alpha$
for any sequences $\mathcal{A},\overline{\mathcal{A}}$ of at least
$q_{n}/R$ consecutive symbols from the circular $n$-blocks $\mathcal{B}_{j}^{(n)}$
and $\mathcal{B}_{k}^{(n)}$ respectively. Then for all $j,k\in\left\{ 1,\dots,N\right\} $,
$j\neq k$, any $0\leq i_{1},i_{2}<q_{n}$, and any sequences $\mathcal{D},\overline{\mathcal{D}}$
of at least $q_{n}l_{n}/R$ consecutive symbols
from $\mathcal{C}_{n,i_{1}}\left(\mathtt{B}_{j}^{(n)}\right)$ and
$\mathcal{C}_{n,i_{2}}\left(\mathtt{B}_{k}^{(n)}\right)$, respectively, we have
\[
\overline{f}\left(\mathcal{D},\overline{\mathcal{D}}\right)>\alpha-\frac{2}{R}-\frac{4R}{l_{n}}.
\]
\end{lem}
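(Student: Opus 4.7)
My plan is to peel $\mathcal{D}$ and $\overline{\mathcal{D}}$ down to pure concatenations of complete circular $n$-blocks $\mathcal{B}_j^{(n)}$ and $\mathcal{B}_k^{(n)}$, and then invoke Corollary \ref{cor:alpha separated blocks}. Each $\mathcal{C}_{n,i_1}(\mathtt{B}_j^{(n)})$ has the form $b^{a}(\mathcal{B}_j^{(n)})^{l_n-1}e^{c}$ with $a+c=q_n$, and likewise for $\mathcal{C}_{n,i_2}(\mathtt{B}_k^{(n)})$, so any substring $\mathcal{D}$ of length at least $q_nl_n/R$ consists of at most $q_n$ symbols of type $b$ or $e$, situated only at the ends, together with a substring of $(\mathcal{B}_j^{(n)})^{l_n-1}$; the same applies to $\overline{\mathcal{D}}$ with $\mathcal{B}_k^{(n)}$.

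The first step is to delete every $b$ and $e$ from both strings. In any $\overline{f}$-match, $b$ can only be paired with $b$ and $e$ with $e$; since we delete every such occurrence on both sides, no deleted term is matched with a non-deleted term, and the sharper second form of Property \ref{property:omit_symbols} applies, costing at most $\gamma\le 2q_n/(2q_nl_n/R)=R/l_n$ in $\overline{f}$ (rather than $2\gamma$). The second step is to complete the at-most-two-per-side partial copies of $\mathcal{B}_j^{(n)}$ and $\mathcal{B}_k^{(n)}$ flanking the interior of the stripped strings --- but by \emph{adding} the missing symbols rather than deleting the partials. The addition version of Property \ref{property:omit_symbols} always carries the factor-$\gamma$ penalty, and since at most $4q_n$ symbols are added from a combined length $\ge 2q_n(l_n/R-1)$, this contributes at most $3R/l_n$ to the total $\overline{f}$-penalty (for $l_n$ sufficiently large relative to $R$; otherwise the claimed lower bound is vacuous). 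The resulting strings $\mathcal{D}^\ast$ and $\overline{\mathcal{D}}^\ast$ are concatenations of full copies of $\mathcal{B}_j^{(n)}$ and $\mathcal{B}_k^{(n)}$, all of common length $L=q_n$.

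At this point Corollary \ref{cor:alpha separated blocks} applies with constant index sequences ($j$'s on one side, $k$'s on the other), so $\tilde{f}=1$, and the hypothesis that substrings of length $\ge q_n/R=L/R$ from $\mathcal{B}_j^{(n)}$ and $\mathcal{B}_k^{(n)}$ are $\alpha$-separated is exactly the premise of the corollary. It yields $\overline{f}(\mathcal{D}^\ast,\overline{\mathcal{D}}^\ast)>\alpha\cdot 1-2/R=\alpha-2/R$, and adding back the two $\overline{f}$-penalties from the trimming/padding gives $\overline{f}(\mathcal{D},\overline{\mathcal{D}})>\alpha-2/R-4R/l_n$, as required. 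The main subtlety is choosing the right truncation operation at each step so the total cost fits within the $4R/l_n$ budget: deletion for the spacers (using that they can only match like symbols, invoking the sharper form of Property \ref{property:omit_symbols}) and addition for the partial blocks (where Property \ref{property:omit_symbols} already guarantees the sharper factor without any structural hypothesis on the match).
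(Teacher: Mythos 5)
Your proof is correct and takes essentially the same approach as the paper: both reduce $\mathcal{D}$ and $\overline{\mathcal{D}}$ to pure powers $(\mathcal{B}_j^{(n)})^{l}$ and $(\mathcal{B}_k^{(n)})^{\overline{l}}$ via Property \ref{property:omit_symbols} and then apply Corollary \ref{cor:alpha separated blocks} with $\tilde{f}=1$. The paper completes the partial $n$-blocks first and then strips the spacers (budgeting $2R/l_n + 2R/l_n$), whereas you reverse the order and invoke the sharper $\gamma$-form of Property \ref{property:omit_symbols} for the spacer deletion (noting $b$'s and $e$'s can only match like symbols), arriving at the same $4R/l_n$ total after observing the bound is vacuous when $l_n < 3R$.
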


\begin{proof}
Let $\mathcal{D},\overline{\mathcal{D}}$ be arbitrary sequences of
at least $q_{n}l_{n}/R$ many consecutive symbols
from $\mathcal{C}_{n,i_{1}}\left(\mathtt{B}_{j}^{(n)}\right)=b^{q_n-j_{i_1}}\left(\mathcal{B}_j^{(n)}\right)^{l_n-1}e^{j_{i_1}}$ and from 
$\mathcal{C}_{n,i_{2}}\left(\mathtt{B}_{k}^{(n)}\right) = b^{q_n-j_{i_2}}$ \hfill  $\left(\mathcal{B}_j^{(n)}\right)^{l_n-1}e^{j_{i_2}}$, respectively, for any
$j\neq k$ and any $0\leq i_{1},i_{2}<q_{n}$. We modify $\mathcal{D}$ and $\overline{\mathcal{D}}$ by first completing any partial blocks $\mathcal{B}_j^{(n)}$ and $\mathcal{B}_k^{(n)}$, which can be accomplished by adding fewer than $2q_n$ symbols to each of $\mathcal{D}$ and $\overline{\mathcal{D}}$. Then we remove any of the $b$'s preceding the $\mathcal{B}_j^{(n)}$'s and any of the $e$'s following the $\mathcal{B}_j^{(n)}$'s that are included in $\mathcal{D}$'s, and similarly for such $b$'s and $e$'s in 
$\overline{\mathcal{D}}$. At most $q_n$ symbols are removed from each of $\mathcal{D}$ and $\overline{\mathcal{D}}$. Let $\mathcal{D}_{\text{mod}}$ and $\overline{\mathcal{D}}_{\text{mod}}$ be these modified versions of $\mathcal{D}$ and $\overline{\mathcal{D}}.$ Then by Property \ref{property:omit_symbols},
\[
\overline{f}\left(\mathcal{D},\overline{\mathcal{D}}\right)>\overline{f}\left(\mathcal{D}_{\text{mod}},\overline{\mathcal{D}}_{\text{mod}}\right)-\frac{2R}{l_n}-\frac{2R}{l_n}.
\]
We have $\mathcal{D}_{\text{mod}}=\left(\mathcal{B}^{(n)}_j\right)^l$ and $\overline{\mathcal{D}}_{\text{mod}}=\left(\mathcal{B}^{(n)}_k\right)^{\overline{l}}$, for some positive integers $l$ and $\overline{l}$. We are given that $\overline{f}\left(\mathcal{A},\overline{\mathcal{A}}\right)>\alpha$ for any strings of at least $q_n/R$ consecutive symbols from $\mathcal{B}_j^{(n)}$ and $\mathcal{B}_k^{(n)}$ respectively. Thus it follows from Corollary \ref{cor:alpha separated blocks} with $\tilde{f}=1$ that 

\[
\overline{f}\left(\mathcal{D}_{\text{mod}},\overline{\mathcal{D}}_{\text{mod}}\right)>\alpha-\frac{2}{R}.
\]

\end{proof}

In the proofs of both Propositions \ref{prop:FeldmanMechanism} and \ref{prop:sep} we will
also use the sequence $\left(R_{n}\right)^{\infty}_{n=1}$, where $R_1=N(0)$ (with $N(0)+1$ the number of symbols in our alphabet) and $R_{n}=k_{n-2}\cdot q_{n-2}^{2}$
for $n\geq 2$. We note that 
for $n\geq 2$
\begin{equation}
\frac{q_{n}}{R_{n}}=\frac{k_{n-1}\cdot l_{n-1}\cdot\left(k_{n-2}\cdot l_{n-2}\cdot q_{n-2}^{2}\right)\cdot q_{n-1}}{k_{n-2}\cdot q_{n-2}^{2}}=l_{n-2}\cdot k_{n-1}\cdot l_{n-1}\cdot q_{n-1}.\label{eq:sn}
\end{equation}
Hence, for $n\geq 2$ a substring of at least $q_{n}/R_{n}$ consecutive
symbols in a circular $n$-block contains at least $l_{n-2}-1$ complete
$2$-subsections which have length $k_{n-1}l_{n-1}q_{n-1}$ (recall
the notion of a $2$-subsection from the end of Subsection \ref{subsec:Circular-Systems}).
This will allow us to ignore incomplete $2$-subsections at the ends
of the substring.

\begin{prop}
\label{prop:FeldmanMechanism} Let $\alpha\in\left(0,\frac{1}{7}\right)$ and $n,N,M\in\mathbb{N}$ with $N\geq 100$ and $M\geq2$.
Suppose $\mathtt{A}_{0},\dots,\mathtt{A}_{N}$ is the collection of $n$-blocks, which have
equal length $\mathtt{h_{n}}$ and satisfy the unique readability
property. Furthermore, if $n>0$ assume that for all $j_{1},j_{2}\in\left\{ 0,\dots,N\right\} $,
$j_{1}\neq j_{2}$, we have $\overline{f}\left(\mathcal{A},\overline{\mathcal{A}}\right)>\alpha$
for any sequences $\mathcal{A},\overline{\mathcal{A}}$ of at least
$q_{n}/R_{n}$ many consecutive symbols from $\mathcal{A}_{j_{1}}$
and $\mathcal{A}_{j_{2}}$ respectively. Then we can construct
$M$ many $(n+1)$-blocks $\mathtt{B}_{1},\dots,\mathtt{B}_{M}$
of equal length $\mathtt{h}_{n+1}$ (which are uniform in the $n$-blocks
and satisfy the unique readability property) such that for all $j,k\in\left\{ 1,\dots,M\right\} $,
$j\neq k$, and any sequences $\mathcal{B}$, $\overline{\mathcal{B}}$
of at least $q_{n+1}/R_{n+1}$ consecutive symbols
from $\mathcal{B}_{j}$ and $\mathcal{B}_{k}$ we
have 
\[
\overline{f}(\mathcal{B},\overline{\mathcal{B}})> \begin{cases} \alpha-\left(\frac{4}{R_n}+\frac{4R_n}{l_n}+\frac{14}{\sqrt{N}}+\frac{2}{l_{n-1}}\right), & \text{ if } n>0; \\
1-\frac{5}{\sqrt{N}}-\frac{2}{l_0}, & \text{ if } n=0.
\end{cases}
\]
\end{prop}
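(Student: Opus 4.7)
The plan is to construct $\mathtt{B}_1, \dots, \mathtt{B}_M$ as Feldman patterns of the $\mathtt{A}_j$'s (in the sense of Lemma \ref{lem:symbolic Feldman}) and then derive the desired $\overline{f}$-separation on substrings of the corresponding circular $(n+1)$-blocks by chaining Lemma \ref{lem:operators}, Proposition \ref{prop:Feldman}, and Corollary \ref{cor:alpha separated blocks}. I will set $k_n := N^{2M+3}$, so each $\mathtt{B}_j$ has length $k_n \mathtt{h}_n = \mathtt{h}_{n+1}$; uniformity is immediate (each $\mathtt{A}_k$ occurs exactly $N^{2M+2}$ times in each $\mathtt{B}_j$), and unique readability of $\mathtt{W}_{n+1}$ follows by the standard argument that distinct Feldman patterns are distinguishable from the lengths of their runs of consecutive identical $n$-blocks.

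For $n>0$, I will first apply Lemma \ref{lem:operators} with $R = R_n$ to upgrade the hypothesized separation of the circular $n$-blocks $\mathcal{A}_j$ to a separation between substrings of length at least $q_n l_n / R_n$ inside $\mathcal{C}_{n,i_1}(\mathtt{A}_{j_1})$ and $\mathcal{C}_{n,i_2}(\mathtt{A}_{j_2})$ (for any $i_1,i_2$ and $j_1 \neq j_2$), with new separation at least $\alpha - 2/R_n - 4R_n/l_n$. Proposition \ref{prop:Feldman} with $K=S=1$, $R = R_n$, and $\alpha$ replaced by this weaker separation then produces the Feldman-pattern $(n+1)$-blocks $\mathtt{B}_1, \dots, \mathtt{B}_M$, and guarantees that substrings of length at least $N^{2M+2} l_n q_n$ inside $\mathcal{C}_{n,i_1}(\mathtt{B}_j)$ and $\mathcal{C}_{n,i_2}(\mathtt{B}_k)$ are separated by more than $\alpha - 4/R_n - 4R_n/l_n - 13/\sqrt{N}$ in $\overline{f}$.

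To extend this to substrings of the full circular $(n+1)$-blocks $\mathcal{B}_j = \prod_{i=0}^{q_n-1} \mathcal{C}_{n,i}(\mathtt{B}_j)$, I will view the $q_n$ successive $\mathcal{C}_{n,i}$-pieces (each of length $L := k_n l_n q_n$) as super-blocks labeled by the pair $(j,i)$. Because $q_{n+1}/R_{n+1} = l_{n-1} L$, any substring $\mathcal{B}$ of $\mathcal{B}_j$ of length at least $q_{n+1}/R_{n+1}$ contains at least $l_{n-1} - 1$ complete super-blocks; by adding at most $2L$ symbols to each end of $\mathcal{B}$ and $\overline{\mathcal{B}}$ to complete the partial super-blocks at the endpoints, I lose at most $2/l_{n-1}$ in $\overline{f}$ via the ``adding symbols'' form of Property \ref{property:omit_symbols}. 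The augmented strings are concatenations of whole super-blocks whose labels in $\mathcal{B}_j$ and in $\mathcal{B}_k$ are disjoint when $j \neq k$, so Corollary \ref{cor:alpha separated blocks} applied with this $L$ and $R = N$ (so that $L/R = N^{2M+2} l_n q_n$ matches the threshold from the previous step) and $\tilde{f} = 1$ gives a separation greater than $\alpha - 4/R_n - 4R_n/l_n - 13/\sqrt{N} - 2/N$. Combining with the $2/l_{n-1}$ loss from the augmentation and absorbing $2/N$ into $1/\sqrt{N}$ via $N \geq 100$ then yields the claimed bound $\alpha - 4/R_n - 4R_n/l_n - 14/\sqrt{N} - 2/l_{n-1}$.

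For the base case $n=0$, no inductive hypothesis is available, so I will build $\mathtt{B}_1, \dots, \mathtt{B}_M$ directly as Feldman patterns of the distinct symbols $\mathtt{A}_0, \dots, \mathtt{A}_N$ using Lemma \ref{lem:symbolic Feldman}. Since $q_0 = 1$, the circular $1$-block $\mathcal{B}_j = \mathcal{C}_{0,0}(\mathtt{B}_j)$ is the Feldman pattern of $\mathcal{A}$-symbols with one $b$ inserted every $l_0$ positions; removing those $b$'s costs at most $2/l_0$ in $\overline{f}$, and Lemma \ref{lem:symbolic Feldman} combined with Remark \ref{rem:repetition} (to absorb the $l_0 - 1$-fold repetition) then yields $\overline{f} > 1 - 4/\sqrt{N}$ on substrings of length at least $q_1/R_1$, producing the claimed bound $1 - 5/\sqrt{N} - 2/l_0$. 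The main obstacle throughout is the passage from substrings of a single $\mathcal{C}_{n,i}(\mathtt{B}_j)$ to substrings of the entire circular $(n+1)$-block $\mathcal{B}_j$: this is only possible because the chosen threshold $q_{n+1}/R_{n+1}$ forces the partial $\mathcal{C}_{n,i}$-pieces at the endpoints to comprise at most a $2/l_{n-1}$ fraction of any such substring.
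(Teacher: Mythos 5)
Your overall strategy — lift the hypothesized separation through Lemma \ref{lem:operators}, build Feldman patterns, and then control the passage from a single $\mathcal{C}_{n,i}$-piece to a full circular $(n+1)$-block by completing $2$-subsections and invoking Corollary \ref{cor:alpha separated blocks} with $\tilde{f}=1$ — is the same as the paper's, and the bookkeeping of the error terms ($2/R_n$, $4R_n/l_n$, $13/\sqrt{N}$, $2/l_{n-1}$, absorbing $2/N$ into $1/\sqrt{N}$) matches the paper's final estimate.

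However, there is a genuine gap in how you handle unique readability. You propose to build $\mathtt{B}_1,\dots,\mathtt{B}_M$ as \emph{pure} Feldman patterns in the sense of Lemma \ref{lem:symbolic Feldman} and claim that unique readability ``follows by the standard argument that distinct Feldman patterns are distinguishable from the lengths of their runs.'' This is not unique readability in the sense of the paper's definition. Distinguishability of distinct patterns says nothing about locating boundaries between two occurrences of the \emph{same} pattern: for example $\mathtt{B}_1\mathtt{B}_1 = (\mathtt{A}_1^{N^2}\cdots\mathtt{A}_N^{N^2})^{2N^{2M}}$ equals $p\,\mathtt{B}_1\,s$ for many non-trivial choices of $p$ and $s$ (any cyclic shift by a whole number of cycles), so the collection $\{\mathtt{B}_1,\dots,\mathtt{B}_M\}$ fails the definition of a uniquely readable word family. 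The paper avoids this by designating $\mathtt{A}_0$ as a marker and using the modified pattern
\[
\mathtt{B}_{k}=\Bigl(\bigl(\mathtt{A}_{1}^{N^{2k}}\cdots\mathtt{A}_{N}^{N^{2k}}\bigr)^{N^{2(M-k)+1}}\,\mathtt{A}_{0}^{N^{2M+1}-1}\Bigr)^{N}\,\mathtt{A}_{0}^{N},
\]
so that the run $\mathtt{A}_{0}^{N^{2M+1}+N-1}$ occurs only at the end of an $(n+1)$-block; this is what makes the boundary findable. Your proposal omits this, and no purely Feldman-pattern construction can repair it.

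There are two secondary problems that stem from the same omission. First, your arithmetic is inconsistent: you state $k_n=N^{2M+3}$ yet also that ``each $\mathtt{A}_k$ occurs exactly $N^{2M+2}$ times,'' which, applied to all $N+1$ blocks $\mathtt{A}_0,\dots,\mathtt{A}_N$, gives $k_n=(N+1)N^{2M+2}$, not $N^{2M+3}$; and if you drop $\mathtt{A}_0$ to make the count work, property (2) of Definition \ref{def:ConstrSeq} (every $n$-block must appear in every $(n+1)$-block) fails. Second, once the marker blocks are present, your $\overline{f}$ estimate must also account for removing them — an extra cost of roughly $2/(N+1)$, exactly as the paper does — before applying Corollary \ref{cor:alpha separated blocks}. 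Fortunately that extra term is absorbed by the same $N\geq 100$ bound, so the final estimate survives; but your proposal, as written, does not confront this step at all because it does not construct the markers in the first place.
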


\begin{proof}
We choose the block $\mathtt{A}_{0}$ as a ``marker'', that is, an $n$-block whose appearances can be used to identify the end of an $(n+1)$-block.  We distribute
the marker blocks over the new words and modify the classical Feldman
patterns on the building blocks $\mathtt{A}_{1},\dots,\mathtt{A}_{N}$
in the following way to define the $(n+1)$-blocks:
\begin{align*}
\mathtt{B}_{1}= & \left(\left(\left(\mathtt{A}_{1}\right)^{N^{2}}\left(\mathtt{A}_{2}\right)^{N^{2}}\dots\left(\mathtt{A}_{N}\right)^{N^{2}}\right)^{N^{2M-1}}\left(\mathtt{A}_{0}\right)^{N^{2M+1}-1}\right)^N\left(\mathtt{A}_{0}\right)^{N}\\
\mathtt{B}_{2}= & \left(\left(\left(\mathtt{A}_{1}\right)^{N^{4}}\left(\mathtt{A}_{2}\right)^{N^{4}}\dots\left(\mathtt{A}_{N}\right)^{N^{4}}\right)^{N^{2M-3}}\left(\mathtt{A}_{0}\right)^{N^{2M+1}-1}\right)^N\left(\mathtt{A}_{0}\right)^{N}\\
\vdots\;\; & \;\;\vdots\\
\mathtt{B}_{M}= & \left(\left(\left(\mathtt{A}_{1}\right)^{N^{2M}}\left(\mathtt{A}_{2}\right)^{N^{2M}}\dots\left(\mathtt{A}_{N}\right)^{N^{2M}}\right)^{N}\left(\mathtt{A}_{0}\right)^{N^{2M+1}-1}\right)^N\left(\mathtt{A}_{0}\right)^{N}
\end{align*}
 We note that every $(n+1)$-block $\mathtt{B}_{k}$ contains each
$n$-block $\mathtt{A}_{l}$ exactly $N^{2M+2}$ many times and has
length $(N+1)\cdot N^{2M+2}\cdot\mathtt{h}_{n}$. Moreover, the new
blocks are uniquely readable because the string $\left(\mathtt{A}_{0}\right)^{N^{2M+1}+N-1}$
only occurs at the end of an $(n+1)$-block. We also observe that $\mathtt{B}_{k}$ is built with $N^{2\cdot(M-k+1)}$ \emph{cycles} 
\[
\mathtt{F}_k \coloneqq \left(\mathtt{A}_{1}\right)^{N^{2k}}\left(\mathtt{A}_{2}\right)^{N^{2k}}\dots\left(\mathtt{A}_{N}\right)^{N^{2k}}.
\]

Let $\mathcal{B}$ and $\overline{\mathcal{B}}$ be sequences of at
least $q_{n+1}/R_{n+1}$ consecutive symbols from
$\mathcal{B}_{j}$ and $\mathcal{B}_{k},$ $j\ne k$.
In case of $n>0$ we note that $\mathcal{B}$ and $\overline{\mathcal{B}}$ have at least the length of $l_{n-1}$ complete 
$2$-subsections by equation (\ref{eq:sn}). By adding fewer than $2l_nk_nq_n$ symbols 
to each of $\mathcal{B}$ and $\overline{\mathcal{B}}$, we can complete any partial $2$-subsections
at the beginning and end of $\mathcal{B}$ and $\overline{\mathcal{B}}$. This change can increase the
$\overline{f}$ distance between $\mathcal{B}$ and $\overline{\mathcal{B}}$, but by less than $2/l_{n-1}$. In addition we remove the marker blocks, possibly increasing $\overline{f}$ by at most $\frac{2}{N+1}$. The modified strings $\mathcal{B}_{\text{mod}}$ and $\overline{\mathcal{B}}_{\text{mod}}$ obtained satisfy $\overline{f}\left(\mathcal{B},\overline{\mathcal{B}}\right)>\overline{f}\left(\mathcal{B}_{\text{mod}},\overline{\mathcal{B}}_{\text{mod}}\right)-\frac{2}{l_{n-1}}-\frac{2}{N+1}.$

By Lemma \ref{lem:operators}, $\overline{f}\left(\mathcal{D},\overline{\mathcal{D}}\right)>\alpha-\frac{2}{R_n}-\frac{4R_n}{l_n}$ for any substrings $\mathcal{D}$,$\overline{\mathcal{D}}$ of at least $q_nl_n/R_n$ consecutive symbols from $\mathcal{C}_{n,i_1}(\mathtt{A}_{j_1})$ and $\mathcal{C}_{n,i_2}(\mathtt{A}_{j_2})$ with $j_1\ne j_2$. If we let $\Phi_{j,i_1}$ and $\Phi_{k,i_2}$ be the $j$th and $k$th Feldman patterns built from $\mathcal{C}_{n,i_1}(\mathtt{A}_1),\mathcal{C}_{n,i_1}(\mathtt{A}_2),\dots,$\hfill $\mathcal{C}_{n,i_1}(\mathtt{A}_{N})$ and $\mathcal{C}_{n,i_2}(\mathtt{A}_1),\mathcal{C}_{n,i_2}(\mathtt{A}_2),\dots,\mathcal{C}_{n,i_2}(\mathtt{A}_{N})$, respectively, then the same argument as in Proposition \ref{prop:Feldman} shows that for any substrings 
$\mathcal{E}$,$\overline{\mathcal{E}}$ consisting of at least $\frac{|\Phi_{j,i_1}|}{N}=\frac{|\Phi_{k,i_1}|}{N}$ consecutive symbols from $\Phi_{j,i_2}$ and $\Phi_{k,i_2}$, we have $\overline{f}(\mathcal{E},\overline{\mathcal{E}})>\alpha-\frac{4}{R_n}-\frac{4R_n}{l_n}-\frac{13}{\sqrt{N}}$. 

Note that $\mathcal{B}_{\text{mod}}$ and $\overline{\mathcal{B}}_{\text{mod}}$ consist, respectively, of a string of $\Phi_{j,i}$'s and a string of $\Phi_{k,i}$'s ($j$ and $k$ fixed, $i$'s varying). Therefore, by Corollary \ref{cor:alpha separated blocks} with $\tilde{f}=1$, $\overline{f}(\mathcal{B}_{\text{mod}},\overline{\mathcal{B}}_{\text{mod}})>\alpha-\frac{4}{R_n}-\frac{4R_n}{l_n}-\frac{13}{\sqrt{N}}-\frac{2}{N}$. Thus in case $n>0$ we obtain $\overline{f}(\mathcal{B},\overline{\mathcal{B}})>\alpha-\frac{4}{R_n}-\frac{4R_n}{l_n}-\frac{13}{\sqrt{N}}-\frac{2}{N}-\frac{2}{l_{n-1}}-\frac{2}{N}>\alpha-\frac{4}{R_n}-\frac{4R_n}{l_n}-\frac{14}{\sqrt{N}}-\frac{2}{l_{n-1}}.$

In case of $n=0$ we complete strings $\mathcal{C}_{0,0}(\mathtt{F}_{j})$ and $\mathcal{C}_{0,0}(\mathtt{F}_{k})$ at the beginning and end of $\mathcal{B}$ and $\overline{\mathcal{B}}$ respectively by adding fewer than $2l_0N^{2M+1}$ symbols to each of $\mathcal{B}$ and $\overline{\mathcal{B}}$. This corresponds to a fraction of at most $2/(N+1)$ of the total length. In the next step we remove the marker blocks and spacers $b$, possibly increasing $\overline{f}$ by at most $\frac{6}{N}+\frac{2}{l_0}$. On the remaining strings we apply Remark \ref{rem:repetition} (note that we have enough symbols by our completion above) to obtain
the claim for $n=0$.
\end{proof}

\begin{rem*}
In the proof above we cannot put all markers at the end of the new $(n+1)$-block
since these markers would cover a fraction $\frac{\mathtt{h}_{n+1}}{N+1}$
of that block due to uniformity. Thus, the conclusion would not hold true in case of $n=0$. We will also need the chosen form of the $(n+1)$-blocks in an analogous statement for the odometer-based system in Proposition \ref{prop:OdomFeld}.
\end{rem*}

\section{\label{sec:Shifting-Mechanism}Mechanism to produce closeness in
Odometer-Based System and separation in corresponding Circular System}

\begin{figure}
    \centering
    \includegraphics[scale=0.8]{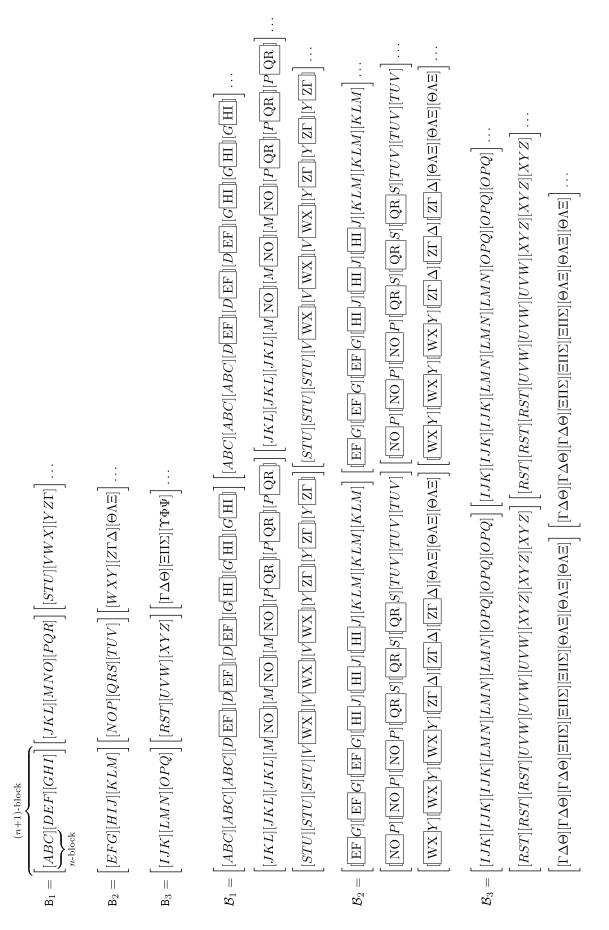}
    \caption{{\bf Heuristic representation of two stages of the shifting mechanism.} Parts of three $(n+2)$-blocks $\mathtt{B}_1, \mathtt{B}_2, \mathtt{B}_3$ in the odometer based system and parts of their images $\mathcal{B}_1, \mathcal{B}_2, \mathcal{B}_3$ under the circular operator are represented. The marked letters indicate a best possible $\overline{f}$ match between $\mathcal{B}_1$ and $\mathcal{B}_2$ with a fit of approximately $\left(1-\frac{1}{3}\right)^2$ (ignoring spacers and boundary effects) while the blocks $\mathtt{B}_1$ and $\mathtt{B}_2$ have a very good fit in the odometer-based system.}
    \label{fig:fig2}
\end{figure}

We impose the following conditions on the circular coefficients $\left(l_n\right)_{n\in \mathbb{N}}$:
\begin{equation} \label{eq:lcond}
    l_{n+1} \geq l^2_n \text{ and } l_{n} \geq 4R_{n+1} \text{ for every } n \in \mathbb{N}.
\end{equation}

\begin{prop}
\label{prop:sep} Let $K\geq2$, $0<\varepsilon<\alpha$, $\delta>0$, and
$\alpha\in\left(0,\frac{1}{7}\right)$. Then there are numbers $N,p\in\mathbb{N}$ such that for $N+1$ many uniquely readable $n$-blocks $\mathtt{B}^{(n)}_{0},\mathtt{B}^{(n)}_{1},\dots,\mathtt{B}^{(n)}_{N}$
with $\overline{f}(\mathcal{A},\overline{\mathcal{A}})>\alpha$ for
all sequences $\mathcal{A},\overline{\mathcal{A}}$ of at least $q_{n}/R_{n}$
consecutive symbols from $\mathcal{B}^{(n)}_{i}$ and $\mathcal{B}^{(n)}_{j}$,
$i>j$, we can build $K$ many $(n+p)$-blocks $\mathtt{B}^{(n+p)}_{1},\dots,\mathtt{B}^{(n+p)}_{K}$
of equal length $\mathtt{h}_{n+p}$ (satisfying the unique readability
property and uniformity in all blocks from stage $n$ through $n+p$)
with the following properties: 
\begin{enumerate}
\item $\overline{f}(\mathtt{B}^{(n+p)}_{i},\mathtt{B}^{(n+p)}_{j})<\delta$ for all $i,j$ 
\item $\overline{f}\left(\mathcal{B},\overline{\mathcal{B}}\right)>\alpha-\varepsilon - \sum^{p-1}_{s=0} \frac{6}{R_{n+s}}$
for all sequences $\mathcal{B}$, $\overline{\mathcal{B}}$ of at
least $q_{n+p}/R_{n+p}$ consecutive symbols
from $\mathcal{B}^{(n+p)}_{i}$ and $\mathcal{B}^{(n+p)}_{j}$, $i>j$. 
\end{enumerate}
\end{prop}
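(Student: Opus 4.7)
The proof proceeds by constructing the $(n+p)$-blocks through $p$ inductive stages. I would first fix $p$ large enough that the odometer $\overline{f}$-distance produced at the final stage is below $\delta$, and then fix $N$ sufficiently large that the cumulative error terms from the $p$ applications of Proposition \ref{prop:FeldmanMechanism} and the per-stage residuals from Lemma \ref{lem:operators} (controlled using \eqref{eq:lcond}) sum to less than $\varepsilon$. In the intermediate \emph{growth stages}, I would apply Proposition \ref{prop:FeldmanMechanism} to enlarge the pool of available blocks while preserving uniformity, unique readability, and circular $\overline{f}$-separation up to a small error. At the final \emph{shifting stage}, I would construct the $K$ blocks $\mathtt{B}^{(n+p)}_1,\dots,\mathtt{B}^{(n+p)}_K$ as cyclic rearrangements of a common Feldman-style arrangement of the stage-$(n+p-1)$ blocks, so that any two of them differ only by the displacement of a small prefix or suffix of their constituent subblocks.

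Property (1) then follows quickly: a monotone $\overline{f}$-match between two such cyclic rearrangements, obtained by deleting the short overhanging subblocks (Property \ref{property:omit_symbols}), has fit close to $1$, so the odometer $\overline{f}$-distance between any two final blocks is at most $\delta$. For property (2), the key observation is that the $\mathcal{C}_{n+p-1}$-operator distributes the spacers $b^{q_{n+p-1}-j_i}$ and $e^{j_i}$ according to the \emph{positional} index within the preword rather than the content of its entries. A cyclic shift of the preword therefore does \emph{not} translate into a cyclic shift of the circular image, and the spacer pattern of the shifted circular block is incompatible with that of the unshifted one. Any monotone match between two circular images of length at least $q_{n+p}/R_{n+p}$ must therefore, after completing partial $2$-subsections and removing at most a fraction $6/R_{n+p-1}$ of symbols, pair substrings of $\mathcal{C}_{n+p-1,i_1}(\mathtt{A})$ and $\mathcal{C}_{n+p-1,i_2}(\mathtt{A}')$ with $\mathtt{A}\ne\mathtt{A}'$; Lemma \ref{lem:operators} combined with the inductive circular separation from the earlier Feldman-growth stages then yields the lower bound $\alpha-\varepsilon-\sum_{s=0}^{p-1} 6/R_{n+s}$.

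The principal obstacle will be a precise analysis of how a monotone $\overline{f}$-match between the circular images can interact with the hierarchical subscale-$0$/$1$/$2$ spacer structure: one must rule out a clever match that exploits cyclic shifting at some intermediate subscale to gain a better fit than the crude bound above suggests. Making this rigorous requires tracking the propagation of subblock mismatches across all three subscales at every one of the $p$ stages, and invoking the growth condition $l_n\ge 4R_{n+1}$ to ensure that cross-boundary and cross-subsection matches contribute only negligibly to the overall match length. Uniformity and unique readability are maintained throughout because they are preserved by both Proposition \ref{prop:FeldmanMechanism} and by the explicit cyclic-rearrangement construction at the final stage.
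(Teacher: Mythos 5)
Your proposal has a fundamental gap: replacing the paper's $p$ iterated shifting stages by $p-1$ Feldman-growth stages followed by a single cyclic rearrangement cannot give property (2). If the final shift displaces only a small fraction $1/M$ of the constituent $(n+p-1)$-blocks (which is exactly what you need for property (1)), then the two corresponding circular images share all but a fraction $1/M$ of those subblocks in matching positions inside every $2$-subsection, and the greedy offset-by-one match gives a fit close to $1$; the resulting circular $\overline{f}$-distance is on the order of $\alpha/M$, nowhere near $\alpha-\varepsilon$. A single shift at the final stage makes the odometer and circular $\overline{f}$-distances comparable, whereas the proposition requires them to differ by essentially the full factor $\alpha$. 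Your explanation by spacer incompatibility is also incorrect: in $\mathcal{C}_n(w_0,\dots,w_{k_n-1})=\prod_{i=0}^{q_n-1}\prod_{j=0}^{k_n-1}\bigl(b^{q_n-j_i}w_j^{l_n-1}e^{j_i}\bigr)$ the exponents $j_i$ depend only on the $2$-subsection index $i$, not on the position $j$ within the preword, so a cyclic shift of the preword leaves the spacer pattern of every $2$-subsection unchanged, and the offset match aligns the spacers exactly. The spacers contribute nothing to the separation.

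What actually produces circular separation in the paper is the iteration of the shift through all $p$ stages. At stage $n+1$ the circular separation between pre-blocks of the same Feldman pattern but different type is only about $\frac{1}{K}\alpha$ (Lemma \ref{lem:DistN}); the shift is then re-applied at every subsequent stage, and in Lemma \ref{lem:blockNp} the positions with matching Feldman patterns inherit the inductive separation while the $\lvert i_2-i_1\rvert$ newly misaligned patterns contribute the full $\alpha$, yielding $\bigl(1-(1-\frac{1}{K})^m\bigr)\alpha$ at stage $n+m$. The number of stages $p$ is chosen so that $(1-\frac{1}{K})^p<\frac{\varepsilon}{2}$, after which Lemma \ref{lem:Fdist} gives property (2). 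At the same time, the per-stage grouping of $d_{n+m-1}$ consecutive blocks of each type (Subsection \ref{subsec:IStep}) shrinks the odometer-side cost of the shift to roughly $1/d_{n+m-1}+1/(N(n+m-2)+1)$, so the odometer $\overline{f}$-distance of the final blocks is bounded by $\sum_{u=1}^{p-1}\bigl(1/(N(n+u-1)+1)+1/d_{n+u}\bigr)<\delta$ (Lemma \ref{lem:Fclos}). Both halves of the proposition therefore require the full $p$-stage shifting-and-grouping iteration, not a one-shot rearrangement at the end.
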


The number of stages $p$ will be the least integer such that
\[
\left(1-\frac{1}{K}\right)^{p} < \frac{\varepsilon}{2}.
\] 
The proof bases upon an inductive construction (called \emph{shifting
mechanism}, see Figure \ref{fig:fig2} for a sketch of its idea) and a final step to align blocks in the odometer-based
system. For this construction process and the given $n\in \mathbb{N}$ let $\left(u_{n+m}\right)_{m\in\mathbb{N}}$
and $\left(e_{n+m}\right)_{m\in\mathbb{N}}$ be increasing sequences
of natural numbers 
such that

\begin{equation}
\sum_{m\in\mathbb{N}}\frac{1}{u_{n+m}^{2}}<\frac{\delta}{4}\label{eq:Delta}
\end{equation}
and

\begin{equation}
\sum_{m\in\mathbb{N}}\left(\frac{8}{u_{n+m}}+\frac{17}{\sqrt{e_{n+m}}}\right)<\frac{\varepsilon}{8}.\label{eq:eps}
\end{equation}
Moreover, we will use the sequence $\left(d_{n+m}\right)_{m\in\mathbb{N}}$,
where 
\begin{equation}
d_{n+m}=u_{n+m}^{2}.\label{eq:d}
\end{equation}
In the following, we will also use the notation 
\[
\lambda_{n+m} = d_{n+m}\cdot e_{n+m} 
\]
and $N(n+m)+1=K\lambda_{n+m}+1$ will be
the number of $(n+m)$-blocks. In particular, we start with $N(n)+1=N+1$
many $n$-blocks and we require $N$ to satisfy 
\begin{equation}
N>\max\left(2/\delta,\left(100/\varepsilon\right)^2\right).\label{eq:Neps}
\end{equation}

\subsection{Initial stage of the Shifting Mechanism: Construction of $(n+1)$-blocks} \label{subsec:initial}

First of all, we choose one $n$-block $\mathtt{B}^{(n)}_{0}$ as a marker. Then we apply Proposition
\ref{prop:Feldman} on the remaining $n$-blocks $\mathtt{B}^{(n)}_{1},\dots,\mathtt{B}^{(n)}_{N}$
to build $\tilde{N}(n+1)\coloneqq2K\lambda_{n+1}$ many
\emph{pre-$(n+1)$-blocks }denoted by \emph{$\mathtt{A}_{i,j}$, $i=1,\dots,K$,
$j=1,\dots,2\lambda_{n+1}$}. In particular, these have length $\tilde{\mathtt{h}}_{n+1}=N^{2\cdot\tilde{N}(n+1)+3}\cdot\mathtt{h}_{n}$
and are uniform in the $n$-blocks $\mathtt{B}^{(n)}_{1},\dots,\mathtt{B}^{(n)}_{N}$
by construction. More precisely, every pre-$(n+1)$-block contains
each $n$-block $\mathtt{B}^{(n)}_{j}$, $1\leq j\leq N$, exactly $N^{2\cdot\tilde{N}(n+1)+2}$
many times and pre-$(n+1)$-blocks $\mathcal{A}_{i,j}$ in the circular
system (i.e. images of $\mathtt{A}_{i,j}$ under the operator $\mathcal{C}_{n,k}$
for some $k\in\left\{ 1,\dots,q_{n}\right\} $, where the value of $k$ does not matter for the following investigation) have length $\tilde{q}_{n+1}=N^{2\cdot\tilde{N}(n+1)+3}\cdot l_{n}\cdot q_{n}$.
Moreover, with the aid of Lemma \ref{lem:operators}, Proposition
\ref{prop:Feldman} also implies that different pre-$(n+1)$-blocks
in the circular system are at least
\begin{equation} 
\alpha-\beta_{n+1}, \ \text{ where } \beta_{n+1}\coloneqq \frac{4}{R_{n}}+\frac{13}{\sqrt{N}}+\frac{4R_{n}}{l_{n}},\label{eq:pren}
\end{equation}
$\overline{f}$ apart on substantial substrings of length at least $N^{2\cdot\tilde{N}(n+1)+2}\cdot l_{n}\cdot q_{n}=\frac{\tilde{q}_{n+1}}{N}$.
Finally, we introduce the abbreviation 
\[
\mathtt{a}_{n}=\left(\mathtt{B}^{(n)}_{0}\right)^{K\cdot N^{2\tilde{N}(n+1)+2}}.
\]

We use these pre-$(n+1)$-blocks to construct \emph{$(n+1)$-blocks}
$\mathtt{B}_{i,j}^{(n+1)}$ of $K$ different types (the index $i$
indicates the type, $j=1,\dots,\lambda_{n+1}=d_{n+1}e_{n+1}$
numbers the $(n+1)$-blocks of that type consecutively): 

\begin{eqnarray*}
\begin{array}{lll}
\mbox{\ensuremath{(n+1)}-blocks of type \ensuremath{1}:} &  & \mathtt{B}_{1,1}^{(n+1)}=\mathtt{A}_{1,1}\mathtt{A}_{2,1}\dots\mathtt{A}_{K,1}\mathtt{a}_{n},\\
 &  & \mathtt{B}_{1,2}^{(n+1)}=\mathtt{A}_{1,2}\mathtt{A}_{2,2}\dots\mathtt{A}_{K,2}\mathtt{a}_{n},\\
 &  & \mathtt{B}_{1,3}^{(n+1)}=\mathtt{A}_{1,3}\mathtt{A}_{2,3}\dots\mathtt{A}_{K,3}\mathtt{a}_{n},\\
 &  & \;\vdots\\
 &  & \mathtt{B}_{1,\lambda_{n+1}}^{(n+1)}=\mathtt{A}_{1,\lambda_{n+1}}\mathtt{A}_{2,\lambda_{n+1}}\dots\mathtt{A}_{K,\lambda_{n+1}}\mathtt{a}_{n}\\
\mbox{\ensuremath{(n+1)}-blocks of type \ensuremath{2}:} &  & \mathtt{B}_{2,1}^{(n+1)}=\mathtt{A}_{2,1}\mathtt{A}_{3,1}\dots\mathtt{A}_{K,1}\mathtt{A}_{1,2}\mathtt{a}_{n},\\
 &  & \mathtt{B}_{2,2}^{(n+1)}=\mathtt{A}_{2,2}\mathtt{A}_{3,2}\dots\mathtt{A}_{K,2}\mathtt{A}_{1,3}\mathtt{a}_{n},\\
 &  & \mathtt{B}_{2,3}^{(n+1)}=\mathtt{A}_{2,3}\mathtt{A}_{3,3}\dots\mathtt{A}_{K,3}\mathtt{A}_{1,4}\mathtt{a}_{n},\\
 &  & \;\vdots\\
 &  & \mathtt{B}_{2,\lambda_{n+1}}^{(n+1)}=\mathtt{A}_{2,\lambda_{n+1}}\mathtt{A}_{3,\lambda_{n+1}}\dots\mathtt{A}_{1,\lambda_{n+1}}\mathtt{a}_{n}\\
\vdots &  & \;\vdots\\
\mbox{(\ensuremath{n}+1)-blocks of type \ensuremath{K}:} &  & \mathtt{B}_{K,1}^{(n+1)}=\mathtt{A}_{K,1}\mathtt{A}_{1,2}\dots\mathtt{A}_{K-1,2}\mathtt{a}_{n},\\
 &  & \mathtt{B}_{K,2}^{(n+1)}=\mathtt{A}_{K,2}\mathtt{A}_{1,3}\dots\mathtt{A}_{K-1,3}\mathtt{a}_{n},\\
 &  & \mathtt{B}_{K,3}^{(n+1)}=\mathtt{A}_{K,3}\mathtt{A}_{1,4}\dots\mathtt{A}_{K-1,4}\mathtt{a}_{n},\\
 &  & \;\vdots\\
 &  & \mathtt{B}_{K,\lambda_{n+1}}^{(n+1)}=\mathtt{A}_{K,\lambda_{n+1}}\mathtt{A}_{1,\lambda_{n+1}+1}\dots\mathtt{A}_{K-1,\lambda_{n+1}+1}\mathtt{a}_{n}
\end{array}
\end{eqnarray*}
Moreover, we define an additional $(n+1)$-block $\mathtt{B}_{0}^{(n+1)}$
which will play the role of a marker: 
\[
\mathtt{B}_{0}^{(n+1)}=\mathtt{A}_{1,\lambda_{n+1}+2}\dots\mathtt{A}_{K,\lambda_{n+1}+2}\mathtt{a}_{n},
\]
where the pre-$(n+1)$-blocks $\mathtt{A}_{i,\lambda_{n+1}+2}$, $i=1,\dots, K$,
are not used in any other $(n+1)$-block. In total, there are $Kd_{n+1}e_{n+1}+1$
many $(n+1)$-blocks. Since each of the pre-$(n+1)$-blocks contains
each $n$-block $\mathtt{B}^{(n)}_{i}$, $1\leq i\leq N$, exactly $N^{2\tilde{N}(n+1)+2}$
many times, and each $(n+1)$-block contains $\mathtt{B}^{(n)}_0$ exactly $KN^{2\tilde{N}(n+1)+2}$ times, every $(n+1)$-block is uniform in the $n$-blocks. 
\begin{lem}[Distance between $(n+1)$-blocks in the odometer-based system] \label{lem:distODn+1}
 For every $i_{1},i_{2}\in\{1,\dots,K\}$ and every $j\in\{1,\dots,d_{n+1} e_{n+1}\}$
we have 
\begin{equation}
\overline{f}\left(\mathtt{B}_{i_{1},j}^{(n+1)},\mathtt{B}_{i_{2},j}^{(n+1)}\right)\leq\left(\frac{N}{N+1}\right)\cdot\frac{\lvert i_{2}-i_{1}\rvert}{K}.\label{eq:od_distance}
\end{equation}
\end{lem}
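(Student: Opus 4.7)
The plan is to exhibit an explicit (not necessarily optimal) match between $\mathtt{B}_{i_1,j}^{(n+1)}$ and $\mathtt{B}_{i_2,j}^{(n+1)}$ whose size already realizes the claimed upper bound. By symmetry of $\overline{f}$ and since the bound is trivial when $i_1=i_2$, it suffices to handle $i_1<i_2$.

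First I would unfold the cyclic definitions and read off the lists of pre-$(n+1)$-blocks constituting the two words. The word $\mathtt{B}_{i_1,j}^{(n+1)}$ is the cyclic rotation starting at $\mathtt{A}_{i_1,j}$ and transitioning from second index $j$ to $j+1$ once the $K$-cycle wraps, followed by the tail $\mathtt{a}_n$, and similarly for $\mathtt{B}_{i_2,j}^{(n+1)}$ starting at $\mathtt{A}_{i_2,j}$. A direct comparison shows that the two lists share exactly $K-(i_2-i_1)$ common pre-$(n+1)$-blocks, namely the consecutive run $\mathtt{A}_{i_2,j},\dots,\mathtt{A}_{K,j},\mathtt{A}_{1,j+1},\dots,\mathtt{A}_{i_1-1,j+1}$, appearing in the same relative order in both words but shifted by exactly $(i_2-i_1)$ block positions. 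The terminal copy of $\mathtt{a}_n$ is also common.

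Next I would match each shared pre-$(n+1)$-block with its counterpart in the other word symbol-by-symbol, together with the two copies of $\mathtt{a}_n$ at the end. Because the shared blocks occur at strictly smaller positions in $\mathtt{B}_{i_2,j}^{(n+1)}$ than in $\mathtt{B}_{i_1,j}^{(n+1)}$ but in the same order, the induced sequence of paired indices is simultaneously strictly increasing in both coordinates, so the proposed pairing is a bona fide match in the sense of Definition \ref{def:fbar}. This is essentially the only point requiring (minor) care, and it is the only ``obstacle'' in the argument.

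Finally I would count matched symbols and simplify. The match has size $|I|=(K-(i_2-i_1))\tilde{\mathtt{h}}_{n+1}+|\mathtt{a}_n|$, while both words have length $\mathtt{h}_{n+1}=K\tilde{\mathtt{h}}_{n+1}+|\mathtt{a}_n|$. Hence
\[
\overline{f}\bigl(\mathtt{B}_{i_1,j}^{(n+1)},\mathtt{B}_{i_2,j}^{(n+1)}\bigr)\le 1-\frac{|I|}{\mathtt{h}_{n+1}}=\frac{(i_2-i_1)\,\tilde{\mathtt{h}}_{n+1}}{\mathtt{h}_{n+1}}.
\]
Plugging in $\tilde{\mathtt{h}}_{n+1}=N^{2\tilde N(n+1)+3}\mathtt{h}_n$ and $|\mathtt{a}_n|=KN^{2\tilde N(n+1)+2}\mathtt{h}_n$, so that $|\mathtt{a}_n|/\tilde{\mathtt{h}}_{n+1}=K/N$ and $\mathtt{h}_{n+1}/\tilde{\mathtt{h}}_{n+1}=K(N+1)/N$, the right-hand side collapses to $\frac{N}{N+1}\cdot\frac{i_2-i_1}{K}$, which is exactly (\ref{eq:od_distance}).
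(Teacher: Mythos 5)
Your proof is correct and takes essentially the same approach as the paper: exhibit the common consecutive run $\mathtt{A}_{i_2,j},\dots,\mathtt{A}_{K,j},\mathtt{A}_{1,j+1},\dots,\mathtt{A}_{i_1-1,j+1}$ of $K-(i_2-i_1)$ pre-$(n+1)$-blocks together with the terminal $\mathtt{a}_n$, and count. You are somewhat more explicit than the paper (which leaves the matching of $\mathtt{a}_n$ implicit and does not comment on monotonicity) in verifying that pairing the shifted common run with the aligned tail copies of $\mathtt{a}_n$ still gives index pairs increasing in both coordinates, hence a genuine match in the sense of Definition \ref{def:fbar}; that is a worthwhile clarification but not a different argument.
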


\begin{proof}
Observe that 
\[
\lvert\mathtt{a}_{n}\rvert=\frac{\mathtt{h}_{n+1}}{N+1},
\]
due to the uniformity of $n$-blocks within the $(n+1)$-blocks. Thus the pre-$(n+1)$-block part of each $\mathtt{B}_{i,j}^{(n+1)}$, that is, the part before $\mathtt{a}_n$, forms a fraction $\frac{N}{N+1}$ of $\mathtt{B}_{i,j}^{(n+1)}$. Without loss of generality,
let $i_{2}>i_{1}$. We note that $\mathtt{B}_{i_1,j}^{(n+1)}$ and $\mathtt{B}_{i_2,j}^{(n+1)}$ have 
\[
\mathtt{A}_{i_2,j}\mathtt{A}_{i_2+1,j}\cdots \mathtt{A}_{K,j}\mathtt{A}_{1,j+1}\cdots \mathtt{A}_{i_1-1,j+1}
\]
as a common substring of their pre-$(n+1)$-block parts. This substring forms a fraction $\frac{K-(i_2-i_1)}{K}$ of the pre-$(n+1)$-block part of each of $\mathtt{B}_{i_1,j}^{(n+1)}$ and $\mathtt{B}_{i_2,j}^{(n+1)}$.
Therefore (\ref{eq:od_distance}) holds.
\end{proof}
\begin{lem}[Distance between $(n+1)$-blocks in the circular system]
\label{lem:DistN} Let $\mathtt{B}_{1}^{(n+1)}$ and $\mathtt{B}_{2}^{(n+1)}$
be $(n+1)$-blocks and $J$ be the number of pre-$(n+1)$-blocks both
have in common. Then for any sequences $\mathcal{B}$ and $\overline{\mathcal{B}}$
of at least $q_{n+1}/R_{n+1}$ consecutive symbols in $\mathcal{B}_{1}^{(n+1)}$
and $\mathcal{B}_{2}^{(n+1)}$ we have 
\[
\overline{f}\left(\mathcal{B},\overline{\mathcal{B}}\right)\geq\left(1-\frac{J}{K}\right)\alpha-E_{n+1}, \ \text{ where } E_{n+1} \coloneqq \beta_{n+1}+ \frac{4}{N}+\frac{4}{l_{n-1}}
\]
with $\beta_{n+1}$ as in equation (\ref{eq:pren}).
\end{lem}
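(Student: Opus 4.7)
The plan is to reduce the $\overline{f}$-distance estimate to an application of Corollary~\ref{cor:alpha separated blocks}, viewing each pre-$(n+1)$-block circular image $\mathcal{C}_{n,i_1}(\mathtt{A}_{r,s})$ as a block of symbols of common length $\tilde{q}_{n+1} = N^{2\tilde{N}(n+1)+3}\,l_n q_n$. The first step is trimming. By equation~(\ref{eq:sn}) applied at stage $n+1$, $q_{n+1}/R_{n+1} = l_{n-1}\,k_n l_n q_n$, so each of $\mathcal{B}$ and $\overline{\mathcal{B}}$ contains at least $l_{n-1}$ complete 2-subsections. By adjoining fewer than $2 k_n l_n q_n$ symbols to each string I can complete any partial 2-subsections at the endpoints, and then delete the marker portions $\mathcal{C}_{n,i_1}(\mathtt{a}_n)$, which by the uniformity of $\mathtt{a}_n$ occupy a fraction exactly $1/(N+1)$ of every 2-subsection. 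Property~\ref{property:omit_symbols} bounds the total resulting change in $\overline{f}$ by $4/l_{n-1} + 2/(N+1)$.

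After trimming, the modified strings $\mathcal{B}'$ and $\overline{\mathcal{B}}'$ are each a concatenation of pre-$(n+1)$-block circular images, with exactly $K$ such images per 2-subsection. Estimate~(\ref{eq:pren}) asserts that any two such images coming from distinct pre-$(n+1)$-blocks---regardless of their outer indices $i_1,i_2$---satisfy $\overline{f} > \alpha - \beta_{n+1}$ on all substrings of length at least $\tilde{q}_{n+1}/N$. Hence Corollary~\ref{cor:alpha separated blocks} applies with $L = \tilde{q}_{n+1}$, $R = N$, and $\alpha$ replaced by $\alpha - \beta_{n+1}$, yielding
\[
\overline{f}(\mathcal{B}',\overline{\mathcal{B}}') > (\alpha - \beta_{n+1})\,\tilde{f} - 2/N,
\]
where $\tilde{f}$ denotes the approximate $\overline{f}$-distance between the label sequences of pre-$(n+1)$-blocks appearing in $\mathcal{B}'$ and $\overline{\mathcal{B}}'$.

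The remaining ingredient is the lower bound $\tilde{f} \ge 1 - J/K$ (up to negligible error given the at least $l_{n-1}$ 2-subsections present). Within each 2-subsection the two sequences share exactly $J$ pre-$(n+1)$-block labels, and since all $K$ labels in one 2-subsection are pairwise distinct while consecutive occurrences of a fixed label in either sequence are separated by $K$ positions, the 3-consecutive clause of Definition~\ref{def:spproximate fbar} together with the strict ordering condition caps $|\tilde{I}|$ at essentially $J$ times the number of 2-subsections. Combining the Corollary estimate with the two trimming losses, and using $(\alpha - \beta_{n+1})(1 - J/K) \ge \alpha(1 - J/K) - \beta_{n+1}$ together with $2/N + 2/(N+1) \le 4/N$, I obtain $\overline{f}(\mathcal{B},\overline{\mathcal{B}}) \ge (1 - J/K)\alpha - E_{n+1}$, as required. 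The main obstacle I anticipate is verifying the $\tilde{f}$-bound rigorously, since the approximate-match definition in principle permits some repetition of matches that must be ruled out before the ``shift-matching'' is seen to be optimal; once that is in hand the rest is bookkeeping of the trimming losses against the error terms absorbed into $E_{n+1}$.
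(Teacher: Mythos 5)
Your proposal is correct and follows essentially the same route as the paper: trim to complete $2$-subsections, remove markers, then apply Corollary~\ref{cor:alpha separated blocks} to the pre-$(n+1)$-block circular images using estimate~(\ref{eq:pren}) with $\tilde{f}\ge 1-J/K$, with the same bookkeeping of error terms. The $\tilde{f}$-bound you flag as the main obstacle is also asserted without elaboration in the paper; since the $K$ pre-$(n+1)$-block labels within each $2$-subsection are pairwise distinct (so, for $K\ge 3$, any three consecutive labels are distinct), the remark following Definition~\ref{def:spproximate fbar} gives $\tilde{f}=\overline{f}$ on the label sequences, and $\overline{f}\ge 1-J/K$ is then immediate because labels not among the $J$ common pre-blocks cannot be matched at all.
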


\begin{proof}
In order to get the estimate in the
circular system we recall that under the circular operator $\mathcal{C}_{n}$
the whole $(n+1)$-word consists of $q_{n}$ many $2$-subsections
which differ from each other just in the exponents of the newly introduced spacers
$b$ and $e$. Then we consider $\mathcal{B}$ and $\overline{\mathcal{B}}$ to be a concatenation of complete $2$-subsections
ignoring incomplete ones at the ends which constitute
a fraction of at most $2/l_{n-1}$ of the total length of $\mathcal{B}$ and $\overline{\mathcal{B}}$
by equation (\ref{eq:sn}). In the following consideration we ignore the marker segments which amount to a
fraction of $\frac{1}{N+1}$ of the total length.  Accordingly, we consider $\mathcal{B}$ and $\overline{\mathcal{B}}$
to be a concatenation of complete pre-$(n+1)$-blocks $\mathcal{A}_{h}$. Using the estimate from equation (\ref{eq:pren}) we apply Corollary \ref{cor:alpha separated blocks} with $\tilde{f} \geq 1-\frac{J}{K}$ and obtain
\[
\overline{f}\left(\mathcal{B},\overline{\mathcal{B}}\right)\geq\left(1-\frac{J}{K}\right)\cdot\left(\alpha- \beta_{n+1}\right) - \frac{2}{N}-\frac{4}{l_{n-1}}-\frac{2}{N+1},
\]
which yields the claim.
\end{proof}

\subsection{Induction step: Construction of $(n+m)$-blocks} \label{subsec:IStep}

We will follow the inductive scheme for the construction of $(n+m)$-blocks described in this subsection
for $2\leq m < p$, where $p>2$ is the smallest number such that 
\begin{equation}
\left(1-\frac{1}{K}\right)^{p}<\frac{\varepsilon}{2}.
\end{equation} 
Assume that in our inductive construction we have constructed
$K\lambda_{n+m-1}$ many \emph{$(n+m-1)$-blocks}
$\mathtt{B}_{i,j}^{(n+m-1)}$ of $K$ different types (once again, the index $1\leq i\leq K$ indicates the type, $1\leq j\leq \lambda_{n+m-1}$
numbers the $(n+m-1)$-blocks of that type consecutively), where for $m=2$ the $(n+1)$-blocks are the ones constructed in Subsection \ref{subsec:initial} and for $m>2$ the $(n+m-1)$-blocks are constructed according to the following formula (with $\lambda=\lambda_{n+m-1}$)

\begin{align*}
& \ \ \ \ \ \ \ (n+m-1)\text{-blocks of type }1:   \\
 & \mathtt{B}_{1,1}^{(n+m-1)}=\mathtt{A}_{1,1}^{(n+m-1)}\mathtt{A}_{2,2}^{(n+m-1)}\dots\mathtt{A}_{K,K}^{(n+m-1)}\mathtt{a}_{n+m-2},\\
 & \mathtt{B}_{1,2}^{(n+m-1)}=\mathtt{A}_{1,K+1}^{(n+m-1)}\mathtt{A}_{2,K+2}^{(n+m-1)}\dots\mathtt{A}_{K,2K}^{(n+m-1)}\mathtt{a}_{n+m-2},\\
 & \;\vdots\\
 & \mathtt{B}_{1,\lambda}^{(n+m-1)}=\mathtt{A}_{1,(\lambda-1)K+1}^{(n+m-1)}\mathtt{A}_{2,(\lambda-1)K+2}^{(n+m-1)}\dots\mathtt{A}_{K,\lambda K}^{(n+m-1)}\mathtt{a}_{n+m-2},\\
& \ \ \ \ \ \ \ (n+m-1)\text{-blocks of type }2: \\
& \mathtt{B}_{2,1}^{(n+m-1)}=\mathtt{A}_{1,2}^{(n+m-1)}\mathtt{A}_{2,3}^{(n+m-1)}\dots\mathtt{A}_{K,K+1}^{(n+m-1)}\mathtt{a}_{n+m-2},\\
 & \mathtt{B}_{2,2}^{(n+m-1)}=\mathtt{A}_{1,K+2}^{(n+m-1)}\mathtt{A}_{2,K+3}^{(n+m-1)}\dots\mathtt{A}_{K,2K+1}^{(n+m-1)}\mathtt{a}_{n+m-2},\\
  & \;\vdots\\
 & \mathtt{B}_{2,\lambda}^{(n+m-1)}=\mathtt{A}_{1,(\lambda-1)K+2}^{(n+m-1)}\mathtt{A}_{2,(\lambda-1)K+3}^{(n+m-1)}\dots\mathtt{A}_{K,\lambda K+1}^{(n+m-1)}\mathtt{a}_{n+m-2},\\
 & \;\vdots\\
& \ \ \ \ \ \ \ (n+m-1)\text{-blocks of type }K:  \\
& \mathtt{B}_{K,1}^{(n+m-1)}=\mathtt{A}_{1,K}^{(n+m-1)}\mathtt{A}_{2,K+1}^{(n+m-1)}\dots\mathtt{A}_{K,2K-1}^{(n+m-1)}\mathtt{a}_{n+m-2},\\
  & \mathtt{B}_{K,2}^{(n+m-1)}=\mathtt{A}_{1,2K}^{(n+m-1)}\mathtt{A}_{2,2K+1}^{(n+m-1)}\dots\mathtt{A}_{K,3K-1}^{(n+m-1)}\mathtt{a}_{n+m-2},\\
  & \;\vdots \\
& \mathtt{B}_{K,\lambda}^{(n+m-1)}=\mathtt{A}_{1,\lambda K}^{(n+m-1)}\mathtt{A}_{2,\lambda K+1}^{(n+m-1)}\dots\mathtt{A}_{K,(\lambda +1)K-1}^{(n+m-1)}\mathtt{a}_{n+m-2},
\end{align*}
with marker segment $\mathtt{a}_{n+m-2} = \left(\mathtt{B}_{0}^{(n+m-2)}\right)^{\bar{N}(n+m-1)}$, where $\bar{N}(n+m-1)$ is chosen according to (\ref{eq:N(n+m)}) to guarantee uniformity of $(n+m-2)$-blocks in the $(n+m-1)$-blocks. Moreover, we have an additional marker block 
\[
\mathtt{B}_{0}^{(n+m-1)}=\mathtt{A}_{1,\lambda_{n+m-1}+2}^{(n+m-1)}\mathtt{A}_{2,\lambda_{n+m-1}+2}^{(n+m-1)}\dots\mathtt{A}_{K,\lambda_{n+m-1}+2}^{(n+m-1)}\mathtt{a}_{n+m-2}.
\]

These blocks are defined using pre-$(n+m-1)$-blocks $\mathtt{A}_{i,j}^{(n+m-1)}$ with 
\begin{equation}
\overline{f}\left(\mathtt{A}_{i_{1},j}^{(n+m-1)},\mathtt{A}_{i_{2},j}^{(n+m-1)}\right)\leq\sum_{u=1}^{m-2}\left(\frac{1}{N(n+u-1)+1}+\frac{1}{d_{n+u}}\right).\label{eq:I1}
\end{equation}
in case of $m>2$ (note that the assumption is void in case of $m=2$). Moreover, for any sequences $\mathcal{B}$ and $\overline{\mathcal{B}}$
of at least $\frac{q_{n+m-1}}{R_{n+m-1}}$ consecutive symbols in
$\mathcal{B}_{i_{1},j_{1}}^{(n+m-1)}$ and $\mathcal{B}_{i_{2},j_{2}}^{(n+m-1)}$
for some $i_{1},i_{2}\in\left\{ 0,1,\dots,K\right\} $, $i_1 \leq i_2$, and $j_{1},j_{2}\in\left\{ 1,\dots,\lambda_{n+m-1}\right\} $
we assume
\begin{equation}
\overline{f}\left(\mathcal{B},\overline{\mathcal{B}}\right)\geq\alpha-E_{n+m-1}\;\text{ for \ensuremath{i_{1}=i_{2}} and \ensuremath{j_{1}\neq j_{2}},}\label{eq:I2}
\end{equation}

\begin{equation}
\overline{f}\left(\mathcal{B},\overline{\mathcal{B}}\right)\geq\left(1-\left(1-\frac{1}{K}\right)^{m-1}\right)\alpha-E_{n+m-1}\;\text{ for \ensuremath{i_{1}<i_{2}} and \ensuremath{j_{1}=j_{2}} or \ensuremath{j_{1}=j_{2}+1},}\label{eq:I3}
\end{equation}
 
\begin{equation}
\overline{f}\left(\mathcal{B},\overline{\mathcal{B}}\right)\geq\alpha-E_{n+m-1}\;\text{ for \ensuremath{i_{1}<i_{2}} and all other cases of \ensuremath{j_{1}\neq j_{2}},}\label{eq:I4}
\end{equation}
 where 

\begin{equation}
E_{n+m-1}=E_{n+1}+\sum_{i=1}^{m-2}\left(\frac{6}{R_{n+i}}+\frac{8}{u_{n+i}}+\frac{17}{\sqrt{e_{n+i}}}\right).\label{eq:I5}
\end{equation}
 
We point out that assumptions (\ref{eq:I2})-(\ref{eq:I5}) hold in case of $m=2$ by Lemma \ref{lem:DistN}. \\

In order to continue the inductive construction we
define \emph{grouped $(n+m-1)$-blocks} of type $i$ by concatenating
$d_{n+m-1}$ many $(n+m-1)$-blocks of type $i$: 
\[
\mathtt{G}_{i,s}^{(n+m-1)}=\mathtt{B}_{i,s\cdot d_{n+m-1}+1}^{(n+m-1)}\mathtt{B}_{i,s\cdot d_{n+m-1}+2}^{(n+m-1)}\dots\mathtt{B}_{i,(s+1)\cdot d_{n+m-1}}^{(n+m-1)},\;\;\text{for}\;s=0,\dots,e_{n+m-1}-1.
\]

\begin{lem}[Distance between grouped $(n+m-1)$-blocks in the circular system]
\label{lem:ISGroup} Let $\mathcal{G}$ and $\overline{\mathcal{G}}$
be sequences of at least $u_{n+m-1}l_{n+m-1}q_{n+m-1}$ consecutive
symbols in $\mathcal{G}_{i_{1},s_{1}}^{(n+m-1)}$ and $\mathcal{G}_{i_{2},s_{2}}^{(n+m-1)}$
for some $i_{1},i_{2}\in\left\{ 1,\dots,K\right\} $ and $s_{1},s_{2}\in\left\{ 0,\dots,e_{n+m-1}-1\right\} $.
\begin{enumerate}
\item In case of $i_{1}=i_{2}$ and $s_{1}\neq s_{2}$ we have 
\[
\overline{f}\left(\mathcal{G},\overline{\mathcal{G}}\right)\geq \alpha-E_{n+m-1}-\frac{4}{R_{n+m-1}}-\frac{4R_{n+m-1}}{l_{n+m-1}}-\frac{4}{u_{n+m-1}}.
\]
\item In case of $i_{1}\neq i_{2}$ and $s_{1}=s_{2}$ we have 
\[
\overline{f}\left(\mathcal{G},\overline{\mathcal{G}}\right)\geq\left(1-\left(1-\frac{1}{K}\right)^{m-1}\right)\cdot\alpha-E_{n+m-1}-\frac{4}{R_{n+m-1}}-\frac{4R_{n+m-1}}{l_{n+m-1}}-\frac{4}{u_{n+m-1}}.
\]
\item In case of $i_{1}\neq i_{2}$ and $s_{1}\neq s_{2}$ we have 
\[
\overline{f}\left(\mathcal{G},\overline{\mathcal{G}}\right)\geq\alpha-E_{n+m-1}-\frac{4}{R_{n+m-1}}-\frac{4R_{n+m-1}}{l_{n+m-1}}-\frac{6}{u_{n+m-1}}.
\]
\end{enumerate}
\end{lem}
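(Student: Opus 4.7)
The proof plan follows the template of Proposition \ref{prop:FeldmanMechanism}: I view each $\mathcal{G}^{(n+m-1)}_{i,s}$ as the image $\mathcal{C}_{n+m-1,k}(\mathtt{G}^{(n+m-1)}_{i,s})$ under a subsection of the circular operator at level $n+m-1$, so that it is a concatenation of $d_{n+m-1}$ many \emph{cycles} of the form $b^{q_{n+m-1}-j'_k}(\mathcal{B}^{(n+m-1)}_{i,j})^{l_{n+m-1}-1}e^{j'_k}$, each of length $l_{n+m-1}q_{n+m-1}$. Since $|\mathcal{G}|,|\overline{\mathcal{G}}|\ge u_{n+m-1}l_{n+m-1}q_{n+m-1}$, each string contains at least $u_{n+m-1}$ complete cycles after boundary completion. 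I then chain Lemma \ref{lem:operators} (which lifts $\overline{f}$-lower bounds from individual $(n+m-1)$-blocks $\mathcal{B}^{(n+m-1)}_{i,j}$ to cycles) with Corollary \ref{cor:alpha separated blocks} (which propagates bounds on cycles to their concatenation).

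First I would complete any partial cycles at the two ends of $\mathcal{G}$ and $\overline{\mathcal{G}}$, adding at most $2l_{n+m-1}q_{n+m-1}$ symbols to each string; by Property \ref{property:omit_symbols} this costs at most $4/u_{n+m-1}$ in $\overline{f}$. Next, for any two cycles whose underlying $(n+m-1)$-blocks are distinct, Lemma \ref{lem:operators} applied at level $n+m-1$ with $R=R_{n+m-1}$ converts the $(n+m-1)$-block lower bound $\alpha'$ supplied by one of (\ref{eq:I2}), (\ref{eq:I3}), (\ref{eq:I4}) into the cycle-level bound $\alpha'-2/R_{n+m-1}-4R_{n+m-1}/l_{n+m-1}$ on substrings of length at least $l_{n+m-1}q_{n+m-1}/R_{n+m-1}$. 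Treating cycles as blocks of length $L=l_{n+m-1}q_{n+m-1}$ in Corollary \ref{cor:alpha separated blocks} with the same $R$ adds a further $2/R_{n+m-1}$ correction, so the overall adjustment to $\alpha'$ is $4/R_{n+m-1}+4R_{n+m-1}/l_{n+m-1}$ plus the boundary-completion $4/u_{n+m-1}$.

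For Case (1), the position ranges of the two grouped blocks are disjoint, so every cycle pair has distinct labels of the same type $i$, and (\ref{eq:I2}) uniformly supplies $\alpha'=\alpha-E_{n+m-1}$; Corollary \ref{cor:alpha separated blocks} then applies with $\tilde{f}=1$ since no two labels coincide. For Case (2), after reducing to $i_1<i_2$ by swapping $\mathcal{G}$ and $\overline{\mathcal{G}}$ if necessary, every cycle pair has $j_1,j_2$ in the same position range and in the worst case the matching aligns them with $j_1=j_2$ or $j_1=j_2+1$, so (\ref{eq:I3}) furnishes the uniform bound $\alpha'=(1-(1-1/K)^{m-1})\alpha-E_{n+m-1}$, again with $\tilde{f}=1$; this weaker bound must be used throughout since Corollary \ref{cor:alpha separated blocks} demands a single $\alpha'$ valid for all distinct label pairs. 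Combining these with the corrections above produces the two claimed inequalities.

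Case (3) is the main technical point. After taking $i_1<i_2$, disjointness of the position ranges forces $j_1\ne j_2$ for every cycle pair, and moreover $j_1=j_2+1$ is possible only when $s_1=s_2+1$ with $(j_1,j_2)=(s_1 d_{n+m-1}+1,(s_2+1)d_{n+m-1})$; that is, at most one cycle pair falls under (\ref{eq:I3}) while all others are governed by the better bound $\alpha-E_{n+m-1}$ from (\ref{eq:I4}). I would remove those two offending boundary cycles from $\mathcal{G}$ and $\overline{\mathcal{G}}$; by Property \ref{property:omit_symbols} this costs an additional $2/u_{n+m-1}$, after which Corollary \ref{cor:alpha separated blocks} with $\alpha'=\alpha-E_{n+m-1}$ and $\tilde{f}=1$ applies to the trimmed strings. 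Summed with the $4/u_{n+m-1}$ of boundary completion, this produces the $6/u_{n+m-1}$ deficit in Case (3). The only genuine obstacle is the classification of which of (\ref{eq:I2})--(\ref{eq:I4}) governs each cycle pair as a function of the relative positions of $(i_1,s_1)$ and $(i_2,s_2)$, and confirming that the single problematic pair in Case (3) contributes only the lower-order error indicated.
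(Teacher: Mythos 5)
Your proposal is correct and follows essentially the same framework as the paper's proof: after boundary completion (cost $4/u_{n+m-1}$), decompose $\mathcal{G}$ and $\overline{\mathcal{G}}$ into $1$-subsections of length $l_{n+m-1}q_{n+m-1}$, lift the block-level bounds from (\ref{eq:I2})--(\ref{eq:I4}) to the cycle level via Lemma \ref{lem:operators} (cost $2/R_{n+m-1}+4R_{n+m-1}/l_{n+m-1}$), and then propagate with Corollary \ref{cor:alpha separated blocks} or Proposition \ref{prop:symbol by block replacement} (cost $2/R_{n+m-1}$). In Cases (1) and (2) you apply Corollary \ref{cor:alpha separated blocks} with $\tilde f = 1$ using the uniform cycle bound — exactly as in the paper, noting correctly that in Case (2) every pair at the same pattern index potentially falls under (\ref{eq:I3}), so the uniform bound must be the weaker one.

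The one place you deviate from the paper is Case (3). You identify the single boundary cycle pair that can fall under (\ref{eq:I3}) when $s_1 = s_2+1$ and delete both offending cycles, paying a further $2/u_{n+m-1}$ via Property \ref{property:omit_symbols} and then applying Corollary \ref{cor:alpha separated blocks} with $\tilde f = 1$ and the uniform bound $\alpha - E_{n+m-1}$. The paper instead retains those cycles, labels that unique pair as a ``same-label'' pair, and invokes Proposition \ref{prop:symbol by block replacement} with $\beta$ equal to the (\ref{eq:I3}) bound and $\tilde f \ge 1 - 2/u_{n+m-1}$, which likewise costs an extra $2/u_{n+m-1}$. Both routes are valid and yield identical error budgets; your deletion argument is, if anything, slightly more elementary since it avoids the two-level bound of Proposition \ref{prop:symbol by block replacement}. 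One small caveat: the claim that only $s_1 = s_2+1$ produces a bad pair depends on having fixed the WLOG $i_1 < i_2$ before asking which $s$-order occurs — if instead $s_2 \ge s_1 + 1$ there are no bad pairs at all and your extra deletion is harmless overcounting, so the argument goes through.
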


\begin{proof}
We factor $\mathcal{G}$ and $\overline{\mathcal{G}}$ into 2-subsections $\mathcal{C}_{n,j_1}\left(\mathtt{B}_{i_{1},t}^{(n+m-1)}\right)$ and $\mathcal{C}_{n,j_2}\left(\mathtt{B}_{i_{2},u}^{(n+m-1)}\right)$ for some $j_1,j_2 \in \{0,1,\dots,q_n-1\}$
omitting partial blocks at the ends which constitute a portion of at most $2/u_{n+m-1}$ of the total length of $\mathcal{G}$ and $\overline{\mathcal{G}}$. With the aid of Lemma \ref{lem:operators} we can transfer the estimates in equations (\ref{eq:I2})-(\ref{eq:I4}) to estimates on the $\overline{f}$ distance of substrings of at least $q_{n+m-1}l_{n+m-1}/R_{n+m-1}$ consecutive symbols in these strings of the form $\mathcal{C}_{n,j_1}\left(\mathtt{B}_{i_{1},t}^{(n+m-1)}\right)$ and $\mathcal{C}_{n,j_2}\left(\mathtt{B}_{i_{2},u}^{(n+m-1)}\right)$ respectively.

We now examine the particular situation of each part of the Lemma.
\begin{enumerate}
\item Since the $(n+m-1)$-blocks in $\mathcal{G}_{i_{1},s_{1}}^{(n+m-1)}$
and $\mathcal{G}_{i_{1},s_{2}}^{(n+m-1)}$ are of same type but different
pattern, Corollary \ref{cor:alpha separated blocks} (with $\tilde{f}=1$) and the modified version of equation (\ref{eq:I2}) yield
\[
\overline{f}\left(\mathcal{G},\overline{\mathcal{G}}\right)\geq \alpha-E_{n+m-1}-\frac{2}{R_{n+m-1}}-\frac{4R_{n+m-1}}{l_{n+m-1}}-\frac{2}{R_{n+m-1}}-\frac{4}{u_{n+m-1}}.
\]
\item In case of $m=2$ we note that $\mathtt{B}_{i_{1},t}^{(n+1)}$ and $\mathtt{B}_{i_{2},u}^{(n+1)}$
in $\mathtt{G}_{i_{1},s_{1}}^{(n+1)}$ and $\mathtt{G}_{i_{2},s_{1}}^{(n+1)}$
respectively have at most $J=\max\left(\lvert i_{2}-i_{1}\rvert,\:K-\lvert i_{2}-i_{1}\rvert\right)$
pre-$(n+1)$-blocks in common. Then we apply Lemma \ref{lem:DistN}, Lemma \ref{lem:operators}, and Corollary \ref{cor:alpha separated blocks} (with $\tilde{f}=1$)
to conclude the inequality.

In order to obtain a lower bound on the $\overline{f}$ distance in case of $m>2$ we
note that $\mathtt{B}_{i_{1},t}^{(n+m-1)}$ and $\mathtt{B}_{i_{2},u}^{(n+m-1)}$
in $\mathtt{G}_{i_{1},s_{1}}^{(n+m-1)}$ and $\mathtt{G}_{i_{2},s_{1}}^{(n+m-1)}$ could fall under the situation of the worst possible estimate in equation (\ref{eq:I3}). By another application of Lemma \ref{lem:operators} and Corollary \ref{cor:alpha separated blocks} (with $\tilde{f}=1$) we get the claim.

\item Without loss of generality let $i_2>i_1$. Once again, we start with the proof in case of $m=2$. There is at most one pair of $(n+1)$-blocks in $\mathtt{G}_{i_{1},s_{1}}^{(n+1)}$ and $\mathtt{G}_{i_{2},s_{2}}^{(n+1)}$ respectively that have $i_{2}-i_{1}$
many pre-$(n+1)$-blocks in common while all other pairs have no pair
in common. This corresponds to a proportion of at most $2/u_{n+1}$. Then we use Lemma \ref{lem:DistN}, Lemma \ref{lem:operators}, and Proposition \ref{prop:symbol by block replacement} (with $\tilde{f}\geq 1-\frac{2}{u_{n+1}}$) to obtain 
\begin{align*}
\overline{f}\left(\mathcal{G},\overline{\mathcal{G}}\right) & \geq\left(1-\frac{2}{u_{n+1}}+\frac{2}{u_{n+1}}\left(1-\frac{i_{2}-i_{1}}{K}\right)\right)\alpha-E_{n+1}-\frac{4}{R_{n+1}}-\frac{4R_{n+1}}{l_{n+1}}-\frac{4}{u_{n+1}}\\
 & \geq \alpha-E_{n+1}-\frac{4}{R_{n+1}}-\frac{4R_{n+1}}{l_{n+1}}-\frac{6}{u_{n+1}}.
\end{align*}

We proceed with the case of $m>2$. There is at most one pair of $n$-blocks $\mathtt{B}_{i_{1},j_{1}}^{(n+m-1)}$
and $\mathtt{B}_{i_{2},j_{2}}^{(n+m-1)}$ in $\mathtt{G}_{i_{1},s_{1}}^{(n+m-1)}$
and $\mathtt{G}_{i_{2},s_{2}}^{(n+m-1)}$ respectively with $j_{1}=j_{2}+1$
while for all other pairs $j_{1}\neq j_{2}$ and $j_{1}\neq j_{2}+1$. Since this corresponds to a proportion of at most $2/u_{n+m-1}$, we use the modified versions of equations (\ref{eq:I3}) and (\ref{eq:I4}) and Proposition \ref{prop:symbol by block replacement} (with $\tilde{f}\geq 1-\frac{2}{u_{n+m-1}}$) to obtain the estimate on $\overline{f}\left(\mathcal{G},\overline{\mathcal{G}}\right)$ by the same calculation as above replacing the subscripts $n+1$ by $n+m-1$ and the term $1-\frac{i_{2}-i_{1}}{K}$ by $1-\left(1-\frac{1}{K}\right)^{m-1}$. \\

In all other cases $s_{1}\neq s_{2}$ all pairs of $n$-blocks $\mathtt{B}_{i_{1},j_{1}}^{(n+m-1)}$
and $\mathtt{B}_{i_{2},j_{2}}^{(n+m-1)}$ in $\mathtt{G}_{i_{1},s_{1}}^{(n+m-1)}$
and $\mathtt{G}_{i_{2},s_{2}}^{(n+m-1)}$, respectively, have $j_{1}\neq j_{2}$
and $j_{1}\neq j_{2}+1$. Then we use the same estimate as in part (1).
\end{enumerate}
\end{proof}
Let us introduce the notation 
\[
\gamma_{n+m-1}\coloneqq E_{n+m-1}+\frac{4}{R_{n+m-1}}+\frac{4R_{n+m-1}}{l_{n+m-1}}+\frac{6}{u_{n+m-1}}.
\]

We continue the inductive construction by building $2K\lambda_{n+m}=2Kd_{n+m}e_{n+m}$
many \emph{pre-$(n+m)$-blocks} $\mathtt{A}_{i,j}^{(n+m)}$, where
$j\in \{ 1,\dots,2\lambda_{n+m} \}$ stands for the $j$-th Feldman pattern and $i\in\left\{ 1,\dots,K\right\} $
indicates the type of $(n+m-1)$-blocks used. We let $\mathtt{A}_{i,j}^{(n+m)}$ be the $j$-th Feldman
pattern built of the grouped $(n+m-1)$-blocks of type $i$. We point
out that $\mathtt{A}_{i,j}^{(n+m)}$ contains each $(n+m-1)$-block
of type $i$ 
\begin{equation}\label{eq:N(n+m)}
\bar{N}(n+m)=\left(e_{n+m-1}\right)^{4K\lambda_{n+m}+2}
\end{equation}
many times because it is uniform in the building blocks $\mathtt{G}_{i,s}^{(n+m-1)}$
by construction of the Feldman patterns in Proposition \ref{prop:Feldman}
and each $(n+m-1)$-block of type $i$ is contained in exactly one
grouped $(n+m-1)$-block. Moreover, this number of occurrences is
the same for every chosen Feldman pattern $j$. We will denote the
length of the circular image $\mathcal{A}_{i,j}^{(n+m)}=\mathcal{C}_{n+m-1,k}\left(\mathtt{A}^{(n+m)}_{i,j}\right)$, $k\in \{0,\dots,q_{n+m-1}-1\}$, by $\tilde{q}_{n+m}$.

\begin{lem}[Closeness and Separation of pre-$(n+m)$-blocks of same Feldman pattern]
\label{lem:ISClosSep} For every $j\in\left\{ 1,\dots,2K d_{n+m} e_{n+m}\right\} $
and $i_{1},i_{2}\in\left\{ 1,\dots,K\right\} $ we have
\[
\overline{f}\left(\mathtt{A}_{i_{1},j}^{(n+m)},\mathtt{A}_{i_{2},j}^{(n+m)}\right)\leq\sum_{u=1}^{m-1}\left(\frac{1}{N(n+u-1)+1}+\frac{1}{d_{n+u}}\right)
\]
in the odometer-based system. In the circular system we have for $i_{1}\neq i_{2}$
and for any sequences $\mathcal{A}$ and $\overline{\mathcal{A}}$
of at least $\tilde{q}_{n+m}/e_{n+m-1}$ consecutive symbols
in $\mathcal{A}_{i_{1},j}^{(n+m)}$ and $\mathcal{A}_{i_{2},j}^{(n+m)}$,
respectively, that 
\[
\overline{f}\left(\mathcal{A},\overline{\mathcal{A}}\right)
\geq \left(1-\left(1-\frac{1}{K}\right)^{m-1}\right)\alpha-\gamma_{n+m-1}-\frac{2}{e_{n+m-1}} -\frac{2}{u_{n+m-1}}. 
\]
\end{lem}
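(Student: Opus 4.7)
The plan is to prove the two inequalities separately by induction on $m$, with the odometer bound driving a shift-based matching at the pre-block level and the circular bound being essentially a direct application of Lemma \ref{lem:samePattern}.

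For the odometer inequality, the blocks $\mathtt{A}_{i_1,j}^{(n+m)}$ and $\mathtt{A}_{i_2,j}^{(n+m)}$ are built as the same $j$-th Feldman pattern applied to the grouped blocks $\mathtt{G}_{i_1,s}^{(n+m-1)}$ and $\mathtt{G}_{i_2,s}^{(n+m-1)}$, respectively, so Property \ref{property:substring_matching} (used as an upper bound via the obvious match respecting the decomposition into grouped blocks) reduces the claim to a uniform-in-$s$ bound on $\overline{f}(\mathtt{G}_{i_1,s}^{(n+m-1)},\mathtt{G}_{i_2,s}^{(n+m-1)})$. For the latter I would construct, WLOG for $i_2>i_1$, a shift-based match that pairs the pre-$(n+m-1)$-block at position $l$ of $\mathtt{G}_{i_1,s}^{(n+m-1)}$ with the one at position $l-(i_2-i_1)$ of $\mathtt{G}_{i_2,s}^{(n+m-1)}$ for $l=i_2-i_1+1,\dots,Kd_{n+m-1}$. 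Monotonicity in $l$ on both sides makes this a valid match in the sense of Definition \ref{def:fbar}, even though the markers $\mathtt{a}_{n+m-2}$ sit at shifted absolute positions in the two grouped blocks.

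In the base case $m=2$, the cyclic structure of the initial-stage blocks $\mathtt{B}_{i,r}^{(n+1)}$ from Subsection \ref{subsec:initial} makes the matched pre-blocks literally identical, so the only $\overline{f}$ losses are the fraction $\frac{1}{N(n)+1}$ of marker symbols $\mathtt{a}_n$ (as already computed in Lemma \ref{lem:distODn+1}) and the $i_2-i_1$ pre-blocks on each side left unmatched by the shift, contributing a fraction at most $\frac{i_2-i_1}{Kd_{n+1}}\le\frac{1}{d_{n+1}}$. For the inductive step $m\ge3$, the formula in Subsection \ref{subsec:IStep} gives pre-blocks at matched positions that have the same \emph{second} index $sd_{n+m-1}K+l+i_1-1=sd_{n+m-1}K+(l-(i_2-i_1))+i_2-1$ but first indices differing by $(i_2-i_1)\bmod K\ne0$; the induction hypothesis (\ref{eq:I1}) then bounds each matched pair's $\overline{f}$ by $\sum_{u=1}^{m-2}(\frac{1}{N(n+u-1)+1}+\frac{1}{d_{n+u}})$. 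Adding the marker contribution $\frac{1}{N(n+m-2)+1}$ from $\mathtt{a}_{n+m-2}$ and the shift loss $\frac{1}{d_{n+m-1}}$ via Property \ref{property:substring_matching} then yields the telescoping sum up to $u=m-1$.

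For the circular inequality, I would apply Lemma \ref{lem:samePattern} with families indexed by the type $i\in\{1,\dots,K\}$ (so its $S$ is our $K$), with the $N$ building strings per family being the $e_{n+m-1}$ grouped blocks $\mathtt{G}_{i,s}^{(n+m-1)}$, with its $K$ being our $d_{n+m-1}$, its $M$ being our $2\lambda_{n+m}$, and its $R$ being $u_{n+m-1}$, so that the substring length $d_{n+m-1}l_{n+m-1}q_{n+m-1}/R=u_{n+m-1}l_{n+m-1}q_{n+m-1}$ required by the hypothesis is precisely the length at which Lemma \ref{lem:ISGroup} provides separation. Parts (2) and (3) of Lemma \ref{lem:ISGroup} supply the required cross-family separation for every pair of within-family indices, the weaker estimate being $(1-(1-\frac{1}{K})^{m-1})\alpha-\gamma_{n+m-1}$ from part (2). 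Lemma \ref{lem:samePattern} then yields, on substrings of length at least $e_{n+m-1}^{4\lambda_{n+m}+2}d_{n+m-1}l_{n+m-1}q_{n+m-1}$ (which is at most $\tilde{q}_{n+m}/e_{n+m-1}$ because $\bar{N}(n+m)=e_{n+m-1}^{4K\lambda_{n+m}+2}\ge e_{n+m-1}^{4\lambda_{n+m}+2}$), the lower bound $(1-(1-\frac{1}{K})^{m-1})\alpha-\gamma_{n+m-1}-\frac{2}{e_{n+m-1}^{4\lambda_{n+m}+2}}-\frac{2}{u_{n+m-1}}$, which implies the claim since $e_{n+m-1}^{4\lambda_{n+m}+2}\ge e_{n+m-1}$.

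The main delicate point will be the odometer argument: one must verify that the shift-based pairing of pre-blocks remains a valid match once the intervening markers are accounted for, and track that the fractional contributions from markers and from the unshifted boundary pre-blocks combine cleanly into the two additional summands $\frac{1}{N(n+m-2)+1}$ and $\frac{1}{d_{n+m-1}}$ at each inductive step, so that the telescoping with the induction hypothesis (\ref{eq:I1}) produces exactly the sum $\sum_{u=1}^{m-1}(\frac{1}{N(n+u-1)+1}+\frac{1}{d_{n+u}})$ in the statement.
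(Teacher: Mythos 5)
Your proposal matches the paper's proof: the odometer bound comes from a shift-based pre-block match on grouped $(n+m-1)$-blocks, using (\ref{eq:I1}) for the matched pairs (which agree in Feldman pattern but differ in type by $i_2-i_1$) together with the marker fraction and the $\lvert i_2-i_1\rvert/(Kd_{n+m-1})$ boundary loss, and the circular bound is exactly Lemma \ref{lem:samePattern} applied to the grouped blocks with the separation hypothesis supplied by parts (2) and (3) of Lemma \ref{lem:ISGroup}, which is all the paper says. The one minor slip is that the Feldman-pattern count plugged into Lemma \ref{lem:samePattern} should be $M=2K\lambda_{n+m}$ (consistent with the range of $j$ in the statement and with $\bar{N}(n+m)$ in (\ref{eq:N(n+m)})) rather than $2\lambda_{n+m}$, but since $2/N^{2M+2}\le 2/e_{n+m-1}$ either way, this has no effect on the final estimate.
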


\begin{proof} Without loss of generality, let $i_{2}>i_{1}$. 

We start with the proof of the first statement in case of $m=2$: Ignoring the marker
segment $\mathtt{a}_{n}$ we note that for every $s=0,\dots,e_{n+1}-1$
the grouped $n$-block $\mathtt{G}_{i_{2},s}^{(n+1)}$ is obtained
from $\mathtt{G}_{i_{1},s}^{(n+1)}$ by a shift of
$i_{2}-i_{1}$ many pre-$(n+1)$-blocks. Since a grouped $(n+1)$-block
consists of $Kd_{n+1}$ many pre-$(n+1)$-blocks, the grouped
$(n+1)$-blocks $\mathtt{G}_{i_{1},s}^{(n+1)},\mathtt{G}_{i_{2},s}^{(n+1)}$
can be matched on a portion of at least $1-\frac{i_{2}-i_{1}}{K d_{n+1}}$ of the part of pre-$(n+1)$-blocks. Thus, we have 
\[
\overline{f}\left(\mathtt{G}_{i_{1},s}^{(n+1)},\mathtt{G}_{i_{2},s}^{(n+1)}\right)\leq\frac{1}{N+1}+\frac{N}{N+1}\cdot\frac{i_{2}-i_{1}}{K d_{n+1}},
\]
which yields the claim because $\mathtt{A}_{i_{1},j}^{(n+2)}$
and $\mathtt{A}_{i_{2},j}^{(n+2)}$ are constructed as the same Feldman
pattern with these grouped $(n+1)$-blocks of different types as building
blocks.

Proof of the first statement in case of $m>2$: We ignore the marker blocks 
which occupy a fraction of 
\[
\frac{1}{K\lambda_{n+m-2}+1}=\frac{1}{N(n+m-2)+1}
\]
of each $(n+m-1)$-block (since there are $1+K\lambda_{n+m-2}$ many $(n+m-2)$-blocks, it follows from the uniformity of $(n+m-2)$-blocks within each $(n+m-1)$-block that the marker segment occupies a fraction $1/(1+K\lambda_{n+m-2})$ of the $(n+m-1)$-block). Let
$M$ denote the right-hand side from inequality (\ref{eq:I1}), i.e. an upper bound for the $\overline{f}$ distance between pre-$(n+m-1)$-blocks of type
$i_{1}$ and $i_{2}$ and the same Feldman pattern.
Since in the definition of $(n+m-1)$-blocks of types $i_1$ and $i_2$ the used Feldman patterns are shifted by $i_2-i_1$, we obtain for every $s\in\{0,\dots,e_{n+m-1}-1\}$ 
$\mathtt{G}_{i_{1},s}^{(n+m-1)}$ and $\mathtt{G}_{i_{2},s}^{(n+m-1)}$
\begin{align*}
& \overline{f}\left(\mathtt{G}_{i_{1},s}^{(n+m-1)},\mathtt{G}_{i_{2},s}^{(n+m-1)}\right) \\
\leq & \frac{1}{N(n+m-2)+1}+\frac{\left(Kd_{n+m-1}-\lvert i_{1}-i_{2}\rvert\right)}{Kd_{n+m-1}}\cdot M+\frac{\lvert i_{1}-i_{2}\rvert}{Kd_{n+m-1}} \\
\leq & \sum_{u=1}^{m-1}\left(\frac{1}{N(n+u-1)+1}+\frac{1}{d_{n+u}}\right)
\end{align*}
using equation (\ref{eq:I1}). This yields the claim because $\mathtt{A}_{i_{1},j}^{(n+m)}$
and $\mathtt{A}_{i_{2},j}^{(n+m)}$ are constructed as the same Feldman
pattern with these grouped $(n+m-1)$-blocks of different type as
building blocks.

The second statement follows from Lemma \ref{lem:samePattern} and
Lemma \ref{lem:ISGroup}.

\end{proof}

We will need a statement on the $\overline{f}$ distance of different Feldman
patterns in the circular system only, but not in the odometer-based
system.

\begin{lem}[Separation of pre-$(n+m)$-blocks of different Feldman patterns]
\label{lem:ISSepDiff} For any sequences $\mathcal{A}$ and $\overline{\mathcal{A}}$
of at least $\tilde{q}_{n+m}/e_{n+m-1}$ consecutive symbols
in $\mathcal{A}_{i_{1},j_{1}}^{(n+m)}$ and $\mathcal{A}_{i_{2},j_{2}}^{(n+m)}$
for some $i_{1},i_{2}\in\left\{ 1,\dots,K\right\} $ and $j_{1},j_{2}\in\left\{ 1,\dots,2Kd_{n+m}e_{n+m}\right\} $,
$j_{1}\neq j_{2}$, we have 
\[
\overline{f}\left(\mathcal{A},\overline{\mathcal{A}}\right)\geq \alpha-\gamma_{n+m-1}-\frac{13}{\sqrt{e_{n+m-1}}}-\frac{2}{u_{n+m-1}}.
\]
\end{lem}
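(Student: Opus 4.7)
The approach is to derive Lemma \ref{lem:ISSepDiff} as a direct instance of Proposition \ref{prop:Feldman}. By construction each $\mathtt{A}_{i,j}^{(n+m)}$ is the $j$-th Feldman pattern built from the family of grouped blocks $\mathtt{G}_{i,0}^{(n+m-1)},\dots,\mathtt{G}_{i,e_{n+m-1}-1}^{(n+m-1)}$ associated to type $i$. Since the conclusion concerns different Feldman patterns ($j_1\neq j_2$) across possibly different types, this is precisely the setting of Proposition \ref{prop:Feldman}, and the main task is merely to translate parameters.

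The first step is to fix the dictionary with Proposition \ref{prop:Feldman}: the base level ``$n$'' becomes $n+m-1$; the number of families $S$ is our $K$, indexed by type $i$; within each type there are $N:=e_{n+m-1}$ building blocks (the grouped blocks indexed by $s$); each grouped block is a concatenation of $d_{n+m-1}$ many $(n+m-1)$-blocks, so the ``$K$'' of Proposition \ref{prop:Feldman} is $d_{n+m-1}$; the number of Feldman patterns is $M:=2Kd_{n+m}e_{n+m}$; and I would set $R:=u_{n+m-1}$, so that the hypothesis threshold $d_{n+m-1}l_{n+m-1}q_{n+m-1}/R$ simplifies to $u_{n+m-1}l_{n+m-1}q_{n+m-1}$.

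With this dictionary, the hypothesis of Proposition \ref{prop:Feldman}, with ``$\alpha$'' replaced by $\alpha-\gamma_{n+m-1}$, is supplied directly by cases (1) and (3) of Lemma \ref{lem:ISGroup}: both treat the situation $s_1\neq s_2$ (regardless of whether $i_1=i_2$ or $i_1\neq i_2$) and both yield $\overline{f}$-distance at least $\alpha-\gamma_{n+m-1}$ on substrings of the required length, by the very definition of $\gamma_{n+m-1}$. Proposition \ref{prop:Feldman} then delivers
\[
\overline{f}(\mathcal{A},\overline{\mathcal{A}}) > (\alpha-\gamma_{n+m-1}) - \frac{13}{\sqrt{e_{n+m-1}}} - \frac{2}{u_{n+m-1}}
\]
on substrings of at least $N^{2M+2}l_{n+m-1}d_{n+m-1}q_{n+m-1}$ consecutive symbols; the final identification is that this length equals $\tilde{q}_{n+m}/e_{n+m-1}$, which follows from the occurrence count $\bar{N}(n+m)=e_{n+m-1}^{4Kd_{n+m}e_{n+m}+2}$ appearing in the definition of $\mathtt{A}_{i,j}^{(n+m)}$.

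The work is essentially a bookkeeping exercise; no substantive new obstacle appears, since the combinatorial heart of the argument is already contained in Proposition \ref{prop:Feldman}, and the separation of grouped blocks with distinct $s$-indices (independent of whether they live in the same or different type-families) feeds directly into its hypothesis.
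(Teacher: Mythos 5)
Your proposal is correct and matches the paper's proof, which in its entirety reads ``The result follows from Proposition \ref{prop:Feldman} and Lemma \ref{lem:ISGroup}.'' You have simply spelled out the dictionary that the paper leaves implicit: base level $n+m-1$, families indexed by type $i$ with $S=K$, building blocks $\mathtt{G}_{i,s}^{(n+m-1)}$ with $N=e_{n+m-1}$ per family, each a concatenation of $d_{n+m-1}$ blocks, $M=2Kd_{n+m}e_{n+m}$ patterns, $R=u_{n+m-1}$, with cases (1) and (3) of Lemma \ref{lem:ISGroup} (both $s_1\neq s_2$, the worst being case (3) which gives exactly $\alpha-\gamma_{n+m-1}$) supplying the hypothesis threshold $u_{n+m-1}l_{n+m-1}q_{n+m-1}$, and the conclusion's length threshold $N^{2M+2}l_{n+m-1}d_{n+m-1}q_{n+m-1}$ correctly identified with $\tilde{q}_{n+m}/e_{n+m-1}$.
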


\begin{proof}
The result follows from Proposition \ref{prop:Feldman} and Lemma
\ref{lem:ISGroup}.
\end{proof}
Then we set 
\[
\beta_{n+m}\coloneqq  \gamma_{n+m-1}+\frac{13}{\sqrt{e_{n+m-1}}}+\frac{2}{u_{n+m-1}}.
\]

We continue our construction process by building \emph{$(n+m)$-blocks}
$\mathtt{B}_{i,j}^{(n+m)}$ of $K$ different types (once again, the
index $1\leq i \leq K$ indicates the type, $1 \leq j \leq \lambda=\lambda_{n+m}$ numbers the $(n+m)$-blocks of
that type consecutively) using the formula from the beginning of Subsection \ref{subsec:IStep} (with $n+m-1$ replaced by $n+m$).  Moreover, we define an additional $(n+m)$-block $\mathtt{B}_{0}^{(n+m)}$
which will play the role of a marker again: 
\[
\mathtt{B}_{0}^{(n+m)}=\mathtt{A}_{1,\lambda_{n+m}+2}^{(n+m)}\mathtt{A}_{2,\lambda_{n+m}+2}^{(n+m)}\dots\mathtt{A}_{K,\lambda_{n+m}+2}^{(n+m)}\mathtt{a}_{n+m-1},
\]
where the pre-$(n+m)$-blocks $\mathtt{A}_{i,\lambda_{n+m}+2}^{(n+m)}$
are not used in any other $(n+m)$-block. Hence, there are $K\lambda_{n+m}+1$
many $(n+m)$-blocks in total. We also note that
each $(n+m)$-block $\mathtt{B}_{i,s}^{(n+m)}$ contains exactly one
pre-$(n+m)$-block of each type. So, it is uniform in the $(n+m-1)$-blocks
by our observation above. 

\begin{lem}[Distance between $(n+m)$-blocks in the circular system]
\label{lem:blockNp} Let $\mathcal{B}$ and $\overline{\mathcal{B}}$
be sequences of at least $q_{n+m}/R_{n+m}$ consecutive symbols
in $\mathcal{B}_{i_{1},j_{1}}^{(n+m)}$ and $\mathcal{B}_{i_{2},j_{2}}^{(n+m)}$
for some $i_{1},i_{2}\in\left\{ 0,1,\dots,K\right\} $ and $j_{1},j_{2}\in\left\{ 1,\dots,\lambda_{n+m}\right\} $.
\begin{enumerate}
\item For blocks of same type: For $i_{1}=i_{2}$ and $j_{1}\neq j_{2}$
we have
\[
\overline{f}\left(\mathcal{B},\overline{\mathcal{B}}\right)\geq \alpha-\beta_{n+m}-\frac{4}{e_{n+m-1}}-\frac{4}{l_{n+m-2}}.
\]
\item For blocks of different type: For $i_{1}<i_{2}$ we have 
\[
\overline{f}\left(\mathcal{B},\overline{\mathcal{B}}\right)\geq\left(1-\left(1-\frac{1}{K}\right)^{m}\right)\cdot \alpha- \beta_{n+m}-\frac{4}{e_{n+m-1}}-\frac{4}{l_{n+m-2}}
\]
in case of $j_{1}=j_{2}$ or $j_{1}=j_{2}+1$. For all other cases of $j_{1}\neq j_{2}$ we have 
\[
\overline{f}\left(\mathcal{B},\overline{\mathcal{B}}\right)\geq \alpha-\beta_{n+m}-\frac{4}{e_{n+m-1}}-\frac{4}{l_{n+m-2}}.
\]
\end{enumerate}
\end{lem}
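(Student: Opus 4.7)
The plan is to mirror the proof of Lemma~\ref{lem:DistN} (which handles the base case $m=1$) but at stage $n+m$, substituting the pre-$(n+m)$-block bounds from Lemmas~\ref{lem:ISClosSep} and~\ref{lem:ISSepDiff} for the raw $(n+1)$-level estimate~\eqref{eq:pren}. First I would ignore any partial $2$-subsections at the two ends of $\mathcal{B}$ and $\overline{\mathcal{B}}$; by equation~\eqref{eq:sn} these occupy a fraction at most $2/l_{n+m-2}$ of the total length, so Property~\ref{property:omit_symbols} costs at most $4/l_{n+m-2}$ in $\overline{f}$. Next, excise from each surviving $2$-subsection the marker segment $\mathcal{C}_{n+m-1,i'}(\mathtt{a}_{n+m-1})$, which accounts for a fraction $1/(N(n+m-1)+1)$ of the $2$-subsection and whose removal contributes only a negligible $\overline{f}$ correction absorbable into the cushion $\beta_{n+m}$. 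What remains is a concatenation of circular pre-$(n+m)$-block images $\mathcal{C}_{n+m-1,i'}(\mathtt{A}^{(n+m)}_{k,s})$, where within a $2$-subsection of $\mathcal{B}^{(n+m)}_{i,j}$ the slot $k\in\{1,\dots,K\}$ carries the pre-block of type $k$ and Feldman-pattern index $(j-1)K+i+k-1$.

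I would then apply Proposition~\ref{prop:symbol by block replacement} with the pre-blocks as ``symbols'' labelled by their pattern indices and parameters $\alpha'=\alpha-\beta_{n+m}$ (from Lemma~\ref{lem:ISSepDiff}, whenever pattern labels differ) and $\beta'=(1-(1-1/K)^{m-1})\alpha-\beta_{n+m}$ (from Lemma~\ref{lem:ISClosSep}, whenever pattern labels agree but types differ), with block length $\tilde{q}_{n+m}$ and $R=e_{n+m-1}$. Because the pattern indices inside a single $(n+m)$-block form an arithmetic progression with common difference $1$, every three consecutive symbols of the two pattern sequences are distinct, so $\tilde{f}$ agrees with $\overline{f}$ on these sequences. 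For part (1) (same type $i$, $j_1\ne j_2$) the two pattern ranges $\{(j_\ell-1)K+i,\dots,j_\ell K+i-1\}$ differ by the nonzero multiple $(j_2-j_1)K$, so they are disjoint and $\tilde{f}=1$; Corollary~\ref{cor:alpha separated blocks} then delivers $\alpha-\beta_{n+m}-2/e_{n+m-1}$, which combined with the end-effect errors gives the asserted bound.

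For part (2), set $d=i_2-i_1\ge 1$. When $j_1=j_2$ the pattern ranges overlap on $K-d$ consecutive integers, aligning slot $k$ of the first block with slot $k-d$ of the second, so $1-\tilde{f}=(K-d)/K$; when $j_1=j_2+1$ an analogous calculation gives $1-\tilde{f}=d/K$; for every other instance of $j_1\ne j_2$ the ranges are disjoint and $\tilde{f}=1$. Substituting these values into Proposition~\ref{prop:symbol by block replacement} produces
\[
\overline{f}(\mathcal{B},\overline{\mathcal{B}}) > \alpha\Bigl[1-(1-1/K)^{m-1}\tfrac{K-d}{K}\Bigr]-\beta_{n+m}-\tfrac{2}{e_{n+m-1}}
\]
in the $j_1=j_2$ subcase, and the mirror expression with $K-d$ replaced by $d$ in the $j_1=j_2+1$ subcase. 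Both are minimised over admissible $d$ precisely when the bracket reduces to $1-(1-1/K)^m$ (at $d=1$ and $d=K-1$ respectively), while the remaining cases of $j_1\ne j_2$ give the stronger bound via the disjoint-range analysis of part (1). Adding the already-accounted end-effect errors yields the stated bound $\alpha(1-(1-1/K)^m)-\beta_{n+m}-4/e_{n+m-1}-4/l_{n+m-2}$. The chief obstacle is the combinatorial identification of the matching alignment in the $j_1=j_2+1$ subcase and verifying that no other relative position of $j_1,j_2$ (in particular $j_2=j_1+1$, which fails to produce overlap when $i_1<i_2$) contributes a non-disjoint pattern range; once this is in hand the rest is routine bookkeeping with the tools of Section~\ref{sec:Feldman-patterns}.
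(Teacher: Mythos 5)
Your proposal is correct and follows the same route as the paper's proof: both decompose $\mathcal{B},\overline{\mathcal{B}}$ into complete $2$-subsections (paying $4/l_{n+m-2}$ by Property~\ref{property:omit_symbols}), strip the marker segments, view the remainder as a concatenation of circular pre-$(n+m)$-blocks, count the overlapping Feldman-pattern indices $K-(i_2-i_1)$ (when $j_1=j_2$) and $i_2-i_1$ (when $j_1=j_2+1$), and feed the resulting $\tilde{f}$ value together with the bounds from Lemmas~\ref{lem:ISClosSep} and~\ref{lem:ISSepDiff} into Proposition~\ref{prop:symbol by block replacement} (or Corollary~\ref{cor:alpha separated blocks} with $\tilde f=1$ in the disjoint-range cases). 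Your identification of the pattern index at slot $k$ of $\mathtt{B}^{(n+m)}_{i,j}$ as $(j-1)K+i+k-1$, and hence the overlap counts and the minimization over $d=i_2-i_1$ giving $1-(1-1/K)^m$, all match the paper exactly. The one imprecision is attributing the marker-removal cost to "the cushion $\beta_{n+m}$"—in the paper it is part of the $4/e_{n+m-1}$ term, roughly $2/e_{n+m-1}$ from stripping the markers plus $2/e_{n+m-1}$ from the $2/R$ deficit in Proposition~\ref{prop:symbol by block replacement}—but since your final tally of error terms ($4/e_{n+m-1}+4/l_{n+m-2}$) is correct, this is a labeling slip rather than a gap.
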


\begin{proof}
As in the proof of Lemma \ref{lem:DistN} we consider $\mathcal{B}$ and $\overline{\mathcal{B}}$ to be a concatenation of complete $2$-subsections ignoring incomplete ones at the beginnings and ends which constitute a fraction of at most $2/l_{n+m-2}$ of the total length of $\mathcal{B}$ and $\overline{\mathcal{B}}$
by equation (\ref{eq:sn}). In the following consideration we ignore the marker segments $\mathtt{a}_{n+m-1}$ which amount to a
fraction of $\frac{1}{1+K\lambda_{n+m-1}}<\frac{1}{e_{n+m-1}}$ of the total length due to uniformity.  Accordingly, we consider $\mathcal{B}$ and $\overline{\mathcal{B}}$ to be a concatenation of complete pre-$(n+m)$-blocks $\mathcal{A}^{(n+m)}_{i_1,h_1}$ and $\mathcal{A}^{(n+m)}_{i_2,h_2}$ respectively. Finally, we examine the particular situation of each case of this Lemma: 
\begin{enumerate}
\item We note that all Feldman patterns for pre-$(n+m)$-blocks used in $\mathtt{B}_{i_{1},j_{1}}^{(n+m)}$
and $\mathtt{B}_{i_{1},j_{2}}^{(n+m)}$ are different from each other.
Accordingly, we apply Lemma \ref{lem:ISSepDiff} and Corollary \ref{cor:alpha separated blocks} (with $\tilde{f}=1$). 
\item In case of $i_{1}< i_{2}$ the blocks $\mathtt{B}_{i_{1},j_{2}}^{(n+m)}$
and $\mathtt{B}_{i_{2},j_{2}}^{(n+m)}$ have $K-( i_{2}-i_{1})$
many Feldman patterns in common. Then we use the second statement
of Lemma \ref{lem:ISClosSep} to estimate their $\overline{f}$ distance
while we use Lemma \ref{lem:ISSepDiff} for the $ i_{2}-i_{1}$
many differing Feldman patterns. Altogether we conclude with the aid of Proposition \ref{prop:symbol by block replacement} that $\overline{f}\left(\mathcal{B},\overline{\mathcal{B}}\right)$ is at least
\begin{align*}
 & \left(\frac{ i_{2}-i_{1}}{K}+\left( 1-\left(1-\frac{1}{K}\right)^{m-1}\right)\frac{K-( i_{2}-i_{1})}{K}\right)\alpha-\beta_{n+m}-\frac{4}{e_{n+m-1}}-\frac{4}{l_{n+m-2}}\\
\geq & \left(1-\left(1-\frac{1}{K}\right)^{m}\right) \alpha-\beta_{n+m}-\frac{4}{e_{n+m-1}}-\frac{4}{l_{n+m-2}}.
\end{align*}
In our case of $i_{1}<i_{2}$ the blocks $\mathtt{B}_{i_{1},j_{2}+1}^{(n+m)}$
and $\mathtt{B}_{i_{2},j_{2}}^{(n+m)}$ have $ i_{2}-i_{1}$
many Feldman patterns in common. With the aid of the second part of
Lemma \ref{lem:ISClosSep}, Lemma \ref{lem:ISSepDiff}, and Proposition \ref{prop:symbol by block replacement} again we obtain that $\overline{f}\left(\mathcal{B},\overline{\mathcal{B}}\right)$ is at least
\begin{align*}
 & \left(\frac{i_{2}-i_{1}}{K}\left(1-\left(1-\frac{1}{K}\right)^{m-1}\right)+1-\frac{ i_{2}-i_{1}}{K}\right)\alpha-\beta_{n+m}-\frac{4}{e_{n+m-1}}-\frac{4}{l_{n+m-2}}\\
\geq & \left(1-\left(1-\frac{1}{K}\right)^{m}\right)\alpha-\beta_{n+m}-\frac{4}{e_{n+m-1}}-\frac{4}{l_{n+m-2}}.
\end{align*}
In all other cases of $j_{1}\neq j_{2}$ $\mathtt{B}_{i_{1},j_{1}}^{(n+m)}$
and $\mathtt{B}_{i_{2},j_{2}}^{(n+m)}$ do not have any Feldman pattern
in common. Hence, we use the same estimate as for the statement in part (1).
\end{enumerate}
\end{proof}
By the conditions on the sequence $\left(l_n\right)_{n\in \mathbb{N}}$ from equation (\ref{eq:lcond}) we have
\begin{align*}
& \beta_{n+m}+\frac{4}{e_{n+m-1}}+\frac{4}{l_{n+m-2}} \\
= & E_{n+m-1}+\frac{4}{R_{n+m-1}}+\frac{8}{u_{n+m-1}}+\frac{13}{\sqrt{e_{n+m-1}}}+\frac{4}{e_{n+m-1}}+\frac{4}{l_{n+m-2}}+\frac{4R_{n+m-1}}{l_{n+m-1}}\\
\leq & E_{n+m-1}+\frac{6}{R_{n+m-1}}+\frac{8}{u_{n+m-1}}+\frac{17}{\sqrt{e_{n+m-1}}}.
\end{align*}
Accordingly, we set 
\begin{align*}
E_{n+m} & \coloneqq E_{n+m-1}+\frac{6}{R_{n+m-1}}+\frac{8}{u_{n+m-1}}+\frac{17}{\sqrt{e_{n+m-1}}}\\
 & =E_{n+1}+\sum_{i=1}^{m-1}\left(\frac{6}{R_{n+i}}+\frac{8}{u_{n+i}}+\frac{17}{\sqrt{e_{n+i}}}\right).
\end{align*}

\begin{rem}
We note that the equations (\ref{eq:I1}) to (\ref{eq:I5}) are satisfied
on stage $n+m$. Hence, the induction step has been accomplished successfully. 
\end{rem}

\subsection{Final step: Construction of $\left(n+p\right)$-blocks}

Recall that we follow the inductive scheme described in the previous subsection
until 
\begin{equation*}
\left(1-\frac{1}{K}\right)^{p}<\frac{\varepsilon}{2},
\end{equation*}
and we have constructed pre-$(n+p)$-blocks $\mathtt{A}_{i,j}^{(n+p)}$
(of type $i$ and Feldman pattern $j$) with the following properties:
\begin{equation}
\overline{f}\left(\mathtt{A}_{i_{1},j}^{(n+p)},\mathtt{A}_{i_{2},j}^{(n+p)}\right)\leq\sum_{u=1}^{p-1}\left(\frac{1}{N(n+u-1)+1}+\frac{1}{d_{n+u}}\right);\label{eq:F1}
\end{equation}
for any sequences $\mathcal{A}$ and $\overline{\mathcal{A}}$ of
at least $\tilde{q}_{n+p}/e_{n+p-1}$ consecutive symbols
in $\mathcal{A}_{i_{1},j_{1}}^{(n+p)}$ and $\mathcal{A}_{i_{2},j_{2}}^{(n+p)}$, respectively,
for some $i_{1},i_{2}\in\left\{ 1,\dots,K\right\} $ and $j_{1},j_{2}\in\left\{ 1,\dots,2K\lambda_{n+p}\right\} $
we have

\begin{equation}
\overline{f}\left(\mathcal{A},\overline{\mathcal{A}}\right)\geq
\begin{cases}
\left(1-\frac{\varepsilon}{2}\right)\cdot\alpha -\beta_{n+p} & \text{ for all }i_{1}\neq i_{2}  \text{ and } j_1=j_2, \\
\alpha-\beta_{n+p} & \text{ for all }i_{1},i_{2}\text{ and }j_{1}\neq j_{2},
\end{cases}\label{eq:F2}
\end{equation}

where 
\begin{equation}
\beta_{n+p}=E_{n+p-1}+\frac{4}{R_{n+p-1}}+\frac{8}{u_{n+p-1}}+\frac{13}{\sqrt{e_{n+p-1}}}+\frac{4R_{n+p-1}}{l_{n+p-1}},\label{eq:F4a}
\end{equation}
and

\begin{equation}
E_{n+p-1}=E_{n+1}+\sum_{i=1}^{p-2}\left(\frac{6}{R_{n+i}}+\frac{8}{u_{n+i}}+\frac{17}{\sqrt{e_{n+i}}}\right).\label{eq:F4}
\end{equation}

By construction every pre-$(n+p)$-blocks $\mathtt{A}_{i,j}^{(n+p)}$
contains each $(n+p-1)$-block of type $i$ exactly $\bar{N}(n+p)$ many times
and this number of occurrences is the same for every chosen Feldman
pattern $j$. 

Then we construct $K$ many $(n+p)$-blocks 
\begin{align*}
(n+p)\text{-block of type }1: & \;\;\mathtt{B}_{1}^{(n+p)}=\mathtt{A}_{1,1}^{(n+p)}\mathtt{A}_{2,2}^{(n+p)}\dots\mathtt{A}_{K,K}^{(n+p)}\left(\mathtt{B}_{0}^{(n+p-1)}\right)^{\bar{N}(n+p)}\\
(n+p)\text{-block of type }2: & \;\;\mathtt{B}_{2}^{(n+p)}=\mathtt{A}_{2,1}^{(n+p)}\mathtt{A}_{3,2}^{(n+p)}\dots\mathtt{A}_{1,K}^{(n+p)}\left(\mathtt{B}_{0}^{(n+p-1)}\right)^{\bar{N}(n+p)}\\
(n+p)\text{-block of type }3: & \;\;\mathtt{B}_{3}^{(n+p)}=\mathtt{A}_{3,1}^{(n+p)}\mathtt{A}_{4,2}^{(n+p)}\dots\mathtt{A}_{2,K}^{(n+p)}\left(\mathtt{B}_{0}^{(n+p-1)}\right)^{\bar{N}(n+p)}\\
\vdots & \;\;\vdots\\
(n+p)\text{-block of type }\text{K}: & \;\;\mathtt{B}_{K}^{(n+p)}=\mathtt{A}_{K,1}^{(n+p)}\mathtt{A}_{1,2}^{(n+p)}\dots\mathtt{A}_{K-1,K}^{(n+p)}\left(\mathtt{B}_{0}^{(n+p-1)}\right)^{\bar{N}(n+p)}
\end{align*}
 
\begin{rem}
We note that every $(n+p)$-block contains exactly one pre-$(n+p)$-block
of each type. Hence, it is uniform in the $(n+p-1)$-blocks by our
observation above.
\end{rem}

\begin{lem}[Closeness of $(n+p)$-blocks in the Odometer-based System]
\label{lem:Fclos} For every $i_{1},i_{2}\in \{1,\dots,K\}$ we have
\begin{equation}
\overline{f}\left(\mathtt{B}_{i_{1}}^{(n+p)},\mathtt{B}_{i_{2}}^{(n+p)}\right)\leq\sum_{u=1}^{p-1}\left(\frac{1}{N(n+u-1)+1}+\frac{1}{d_{n+u}}\right).
\end{equation}
\end{lem}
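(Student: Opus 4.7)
The plan is to decompose each $(n+p)$-block into its $K$ pre-$(n+p)$-block pieces together with the trailing marker segment, match these pieces positionally, and apply the per-position bound \eqref{eq:F1}.

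First I would observe that the construction of the $(n+p)$-blocks places, at position $j\in\{1,\dots,K\}$ of $\mathtt{B}_{i}^{(n+p)}$, the pre-$(n+p)$-block $\mathtt{A}_{i+j-1,\,j}^{(n+p)}$, where the first index is read modulo $K$ and shifted into $\{1,\dots,K\}$. Crucially, the \emph{second} index (the Feldman pattern) is $j$ in both $\mathtt{B}_{i_1}^{(n+p)}$ and $\mathtt{B}_{i_2}^{(n+p)}$. Therefore at position $j$ we compare two pre-$(n+p)$-blocks of possibly different types but of the \emph{same} Feldman pattern, and \eqref{eq:F1} applies to give
\[
\overline{f}\left(\mathtt{A}_{i_1+j-1,\,j}^{(n+p)},\mathtt{A}_{i_2+j-1,\,j}^{(n+p)}\right)\le M,\qquad M:=\sum_{u=1}^{p-1}\left(\frac{1}{N(n+u-1)+1}+\frac{1}{d_{n+u}}\right).
\]
All pre-$(n+p)$-blocks have the common length, say $L$, since they are Feldman patterns built from grouped $(n+p-1)$-blocks of a fixed (and type-independent) size. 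The trailing marker segment $\left(\mathtt{B}_{0}^{(n+p-1)}\right)^{\bar{N}(n+p)}$ is literally the same in $\mathtt{B}_{i_1}^{(n+p)}$ and $\mathtt{B}_{i_2}^{(n+p)}$, contributing $\overline{f}$-distance $0$.

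Next I would combine best possible matches for each of the $K$ corresponding pre-$(n+p)$-block pairs together with the identity match between the two marker tails. This yields a single match between $\mathtt{B}_{i_1}^{(n+p)}$ and $\mathtt{B}_{i_2}^{(n+p)}$ whose cardinality is the sum of the cardinalities of the piece-wise best matches. Computing the resulting upper bound on $\overline{f}$ (equivalently, using the weighted-average inequality that is the $\le$ version of Property \ref{property:substring_matching}) gives
\[
\overline{f}\left(\mathtt{B}_{i_1}^{(n+p)},\mathtt{B}_{i_2}^{(n+p)}\right)\le \frac{KL}{KL+L_m}\,M+\frac{L_m}{KL+L_m}\cdot 0\le M,
\]
where $L_m=\bar{N}(n+p)\,\lvert \mathtt{B}_0^{(n+p-1)}\rvert$ is the length of the marker segment. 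This is exactly the claimed bound.

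There is no real obstacle: the argument is a direct application of the already-established inequality \eqref{eq:F1} together with the fact that concatenating partial matches into a global match can only decrease $\overline{f}$. The only points requiring a brief check are that the Feldman-pattern indices in corresponding positions of $\mathtt{B}_{i_1}^{(n+p)}$ and $\mathtt{B}_{i_2}^{(n+p)}$ do coincide (immediate from the construction in the list of type-$i$ blocks above) and that all pre-$(n+p)$-blocks entering the construction share a common length (immediate from uniformity of the Feldman construction in Proposition \ref{prop:Feldman}).
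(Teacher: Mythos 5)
Your proposal is correct and takes essentially the same route as the paper: both decompose each $\mathtt{B}_i^{(n+p)}$ into the $K$ pre-$(n+p)$-blocks (aligned so that the Feldman-pattern index coincides at each position) plus the common marker tail, apply the per-position bound from equation (\ref{eq:F1}), and conclude by the weighted average. The paper's proof is more terse but records the additional observation that the marker tail occupies a fraction $1/(N(n+p-1)+1)$ of the block, giving the slightly sharper intermediate bound $\left(1-\frac{1}{N(n+p-1)+1}\right)M\le M$.
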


\begin{proof}
Let $M$ denote the right-hand side of inequality (\ref{eq:F1}).
We observe that the Feldman patterns of pre-$(n+p)$-blocks are aligned
in all the $(n+p)$-blocks by construction. Moreover, the marker segments
are aligned as well and these occupy a fraction of $\frac{1}{N(n+p-1)+1}$
of each $(n+p)$-block by uniformity. Hence, by equation (\ref{eq:F1}) we have  
\[
\overline{f}\left(\mathtt{B}_{i_{1}}^{(n+p)},\mathtt{B}_{i_{2}}^{(n+p)}\right)\leq\left(1-\frac{1}{N(n+p-1)+1}\right)\cdot M\leq M.
\]
\end{proof}

\begin{lem}[Distance between $(n+p)$-blocks in the Circular System]
\label{lem:Fdist} For every $i_{1},i_{2}\in\left\{ 1,\dots,K\right\} $,
$i_{1}\neq i_{2}$, and any sequences $\mathcal{B}$ and $\overline{\mathcal{B}}$
of at least $q_{n+p}/R_{n+p}$ consecutive symbols in $\mathcal{B}^{(n+p)}_{i_{1}}$
and $\mathcal{B}^{(n+p)}_{i_{2}}$we have 
\begin{equation}\label{eq:np}
\overline{f}\left(\mathcal{B},\overline{\mathcal{B}}\right)\geq\left(1-\frac{\varepsilon}{2}\right)\cdot\alpha-E_{n+p}.
\end{equation}
\end{lem}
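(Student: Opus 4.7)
My plan is to mimic the proof of Lemma \ref{lem:blockNp}(2), adapting it to the structure of the $(n+p)$-blocks from the final step. Recall that
\[
\mathtt{B}^{(n+p)}_i=\mathtt{A}^{(n+p)}_{i,1}\mathtt{A}^{(n+p)}_{i+1,2}\cdots \mathtt{A}^{(n+p)}_{i-1,K}\bigl(\mathtt{B}^{(n+p-1)}_0\bigr)^{\bar N(n+p)},
\]
where the first subscript is read modulo $K$. The crucial structural observation is that at each position $k\in\{1,\dots,K\}$ the two blocks $\mathcal{B}^{(n+p)}_{i_1}$ and $\mathcal{B}^{(n+p)}_{i_2}$ carry the \emph{same} Feldman pattern $k$ but \emph{different} types (since $i_1\neq i_2\pmod K$). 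So in the unique situation that allows non-trivial matching between pre-$(n+p)$-blocks, we are in the first case of (\ref{eq:F2}), which provides the factor $(1-\varepsilon/2)$; any matching between pre-$(n+p)$-blocks of \emph{different} positions falls in the second case of (\ref{eq:F2}) and is strictly better.

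The first step is the usual cleanup. Complete any partial $2$-subsections at the two ends of $\mathcal{B}$ and $\overline{\mathcal{B}}$; by (\ref{eq:sn}) and the hypothesis on the lengths, this alters $\overline{f}$ by at most $2/l_{n+p-2}$. Then discard the marker segments coming from $\mathcal{C}_{n+p-1,\cdot}(\mathtt{B}_0^{(n+p-1)})^{\bar N(n+p)}$; by uniformity they occupy a fraction at most $1/(N(n+p-1)+1)\le 1/e_{n+p-1}$ of each $(n+p)$-block, so the error is bounded by (a small constant over) $e_{n+p-1}$. What remains, call it $\mathcal{B}_{\text{mod}}$ and $\overline{\mathcal{B}}_{\text{mod}}$, is a concatenation of complete pre-$(n+p)$-blocks $\mathcal{A}^{(n+p)}_{\cdot,\cdot}$ under the operators $\mathcal{C}_{n+p-1,\cdot}$.

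Next I apply (\ref{eq:F2}) together with Corollary \ref{cor:alpha separated blocks} to these decompositions. Every pair of pre-$(n+p)$-blocks that could be matched across $\mathcal{B}_{\text{mod}}$ and $\overline{\mathcal{B}}_{\text{mod}}$ either has $j_1=j_2$ with $i_1\ne i_2$ (giving $\overline{f}\ge (1-\varepsilon/2)\alpha-\beta_{n+p}$) or has $j_1\ne j_2$ (giving $\overline{f}\ge \alpha-\beta_{n+p}$). Taking the uniform lower bound $(1-\varepsilon/2)\alpha-\beta_{n+p}$ and noting that the minimal substring length in (\ref{eq:F2}) is $\tilde q_{n+p}/e_{n+p-1}$, I take $R=e_{n+p-1}$ in Corollary \ref{cor:alpha separated blocks} (with $\tilde f=1$), obtaining
\[
\overline{f}(\mathcal{B}_{\text{mod}},\overline{\mathcal{B}}_{\text{mod}})\ge (1-\varepsilon/2)\alpha-\beta_{n+p}-\tfrac{2}{e_{n+p-1}}.
\]

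Finally I collect the error terms. Adding back the contributions from the partial $2$-subsections and the marker removal and using (\ref{eq:F4a}) together with condition (\ref{eq:lcond})—specifically $l_{n+p-2}\ge 4R_{n+p-1}$, which absorbs $2/l_{n+p-2}$ into $2/R_{n+p-1}$, and $4R_{n+p-1}/l_{n+p-1}\le 1/R_{n+p-1}$—yields
\[
\beta_{n+p}+\tfrac{6}{e_{n+p-1}}+\tfrac{2}{l_{n+p-2}}\le E_{n+p-1}+\tfrac{6}{R_{n+p-1}}+\tfrac{8}{u_{n+p-1}}+\tfrac{17}{\sqrt{e_{n+p-1}}}=E_{n+p},
\]
which establishes (\ref{eq:np}). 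The main obstacle is the bookkeeping: I have to verify that the specific constants produced by the cleanup and Corollary \ref{cor:alpha separated blocks} actually combine into the prescribed error $E_{n+p}$, which forces me to use the summability hypotheses (\ref{eq:lcond}) to bring $4R_{n+p-1}/l_{n+p-1}$ and $2/l_{n+p-2}$ below the available budget of $2/R_{n+p-1}$.
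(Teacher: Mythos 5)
Your argument is correct and is essentially the route the paper intends: the paper's terse proof cites Lemma~\ref{lem:DistN} (with $J=0$) and case~(1) of Lemma~\ref{lem:blockNp}, which is precisely the cleanup of partial $2$-subsections and marker segments followed by an application of Corollary~\ref{cor:alpha separated blocks} with $\tilde f=1$ using the worst-case bound $(1-\varepsilon/2)\alpha-\beta_{n+p}$ from (\ref{eq:F2}), since all pre-$(n+p)$-blocks appearing in $\mathtt{B}^{(n+p)}_{i_1}$ and $\mathtt{B}^{(n+p)}_{i_2}$ are distinct, and you identify this correctly. One small bookkeeping slip: removing the incomplete $2$-subsections (a fraction $\le 2/l_{n+p-2}$ of each string) costs $4/l_{n+p-2}$, not $2/l_{n+p-2}$, by Property~\ref{property:omit_symbols}; this does not affect the conclusion because (\ref{eq:lcond}) and the coarse bound $4/e_{n+p-1}\le 4/\sqrt{e_{n+p-1}}$ still put $\beta_{n+p}+4/e_{n+p-1}+4/l_{n+p-2}$ below $E_{n+p}$, exactly as in the displayed computation preceding the definition of $E_{n+m}$.
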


\begin{proof}
By the same proof as in Lemma \ref{lem:DistN} (with $J=0$), as well as case (1) of Lemma \ref{lem:blockNp},
and the observation that all pre-$(n+p)$-blocks used in the construction of the $(n+p)$-blocks are distinct,
(\ref{eq:np}) follows from equations (\ref{eq:F2})-(\ref{eq:F4}).
\end{proof}
\begin{proof}[Proof of Proposition \ref{prop:sep}]
 By Lemma \ref{lem:Fclos} and equations (\ref{eq:Delta}), (\ref{eq:d}), and (\ref{eq:Neps}),
we have 
\begin{align*}
\overline{f}\left(\mathtt{B}_{i}^{(n+p)},\mathtt{B}_{j}^{(n+p)}\right) & \leq\sum_{u=1}^{p-1}\left(\frac{1}{N(n+u-1)+1}+\frac{1}{d_{n+u}}\right)\\
 & <\frac{1}{N+1}+\sum_{u=1}^{p-1}\left(\frac{1}{Kd_{n+u}e_{n+u}+1}+\frac{1}{d_{n+u}}\right)<\delta,
\end{align*}
which is the first statement of Proposition \ref{prop:sep}. In order
to prove the second statement we note that 
\begin{align*}
E_{n+p} & =E_{n+1}+\sum_{i=1}^{p-1}\left(\frac{6}{R_{n+i}}+\frac{8}{u_{n+i}}+\frac{17}{\sqrt{e_{n+i}}}\right) \\
& \leq \frac{14}{\sqrt{N}}+\sum_{i=1}^{p-1}\left(\frac{8}{u_{n+i}}+\frac{17}{\sqrt{e_{n+i}}}\right)+\sum^{p-1}_{i=0}\frac{6}{R_{n+i}}\\
 & <\frac{\varepsilon}{2}+\sum^{p-1}_{i=0}\frac{6}{R_{n+i}}
\end{align*}
by equations (\ref{eq:eps}), (\ref{eq:Neps}), and our assumption on the circular coefficients $\left(l_{n}\right)_{n\in\mathbb{N}}$ in (\ref{eq:lcond}). Hereby, we conclude for any sequences $\mathcal{B}$ and $\overline{\mathcal{B}}$
of at least $q_{n+p}/R_{n+p}$ consecutive symbols in $\mathcal{B}_{i}^{(n+p)}$
and $\mathcal{B}_{j}^{(n+p)}$, respectively, that
\[
\overline{f}\left(\mathcal{B},\overline{\mathcal{B}}\right)\geq\left(1-\frac{\varepsilon}{2}\right)\cdot\alpha-E_{n+p}\geq\alpha-\frac{\varepsilon}{2}-E_{n+p}>\alpha-\varepsilon  -\sum^{p -1}_{i=0}\frac{6}{R_{n+i}}
\]
 with the aid of Lemma \ref{lem:Fdist}. 
\end{proof}

\section{\label{sec:Proof-of-Theorem}Proof of Theorem \ref{thm:zero-entropy}}

We define the construction sequence for the odometer-based system
inductively. We begin by choosing an integer $R_1 \geq 400$, an increasing sequence $\left(K_{s}\right)_{s\in\mathbb{N}}$
of positive integers such that 
\begin{equation}
\sum_{s\in\mathbb{N}}\frac{14}{\sqrt{K_{s}}}<\frac{1}{32},\label{eq:K}
\end{equation}
and two decreasing sequences $\left(\varepsilon_{s}\right)_{s\in\mathbb{N}}$
and $\left(\delta_{s}\right)_{s\in\mathbb{N}}$ of positive real numbers
such that $\delta_{s}\searrow0$ and 
\begin{equation}
\sum_{s\in\mathbb{N}}\varepsilon_{s}<\frac{1}{64}.\label{eq:epsFinal}
\end{equation}
In addition to (\ref{eq:lcond}) we also impose the condition 
\[
\frac{6}{R_1} + \sum_{n\in \mathbb{N}} \frac{6}{k_n} < \frac{1}{32}
\]
on the circular coefficients $\left(k_n,l_n\right)_{n\in \mathbb{N}}$. In particular, this yields
\begin{equation} \label{eq:Rcond}
    \sum^{\infty}_{n =1} \frac{6}{R_n} < \frac{1}{32}
\end{equation}
by $R_n = k_{n-2}q^2_{n-2}$ for $n\geq 2$. We start with $R_1+1$ symbols and let $\alpha_{0}=\frac{1}{8}$.

The first application of Proposition \ref{prop:sep} will be on $1$-blocks
and we will apply it for $\varepsilon=\varepsilon_{1}$, $\delta=\delta_{1}$,
and $K=K_{1}+1$. In order to apply the Proposition we need sufficiently many $1$-blocks. Moreover, we want the $1$-blocks in the circular system to be at
least $\alpha_{0}-\varepsilon_{1}$ apart in the $\overline{f}$ metric
on substantial substrings of at least $\frac{q_{1}}{R_{1}}$ consecutive
symbols. To produce such a family of $1$-blocks we apply Proposition
\ref{prop:FeldmanMechanism}.

After the application of Proposition \ref{prop:sep} on the $1$-blocks
we have $K_{1}+1$ many $n_{1}$-blocks (where $n_{1}=1+p_1$ with $p_1$ from Proposition \ref{prop:sep}, i.e. the least integer such that $\left(1-\frac{1}{K_1+1}\right)^{p_1}<\frac{\varepsilon_1}{2}$) which
are $\delta_{1}$-close in the odometer-based system and at least
$\alpha_{1}\coloneqq \alpha_{0}-2\varepsilon_{1}-\sum^{n_1-1}_{s=1}\frac{6}{R_{s}}$ apart
in the $\overline{f}$ metric on substantial substrings of at least
$q_{n_{1}}/R_{n_{1}}$ consecutive symbols. The next application
of Proposition \ref{prop:sep} will be on $(n_1+1)$-blocks and
we will apply it for $\varepsilon=\varepsilon_{2}$, $\delta=\delta_{2}$,
and $K=K_{2}+1$. Once again this will require sufficiently many $(n_1+1)$-blocks, and we apply Proposition \ref{prop:FeldmanMechanism}
to produce such a family of $(n_1+1)$-blocks that are at
least $\alpha_{1}-\frac{14}{\sqrt{K_{1}}}-\frac{4}{R_{n_1}}-\varepsilon_{2}$ apart on
substantial substrings. This imposes the condition $\frac{2}{l_{n_1-1}}+\frac{4R_{n_1}}{l_{n_1}}<\varepsilon_2$ which by condition (\ref{eq:lcond}) is fulfilled if $\frac{3}{l_{n_1-1}}<\varepsilon_2$ and we choose $l_{n_1-1}$ sufficiently large to satisfy this requirement.

Continuing like this we use Proposition \ref{prop:FeldmanMechanism} and \ref{prop:sep} alternately to produce
$K_{s}+1$ many $n_{s}$-blocks which are at least
\begin{equation}
\alpha_{s}=\alpha_{0}-\sum_{i=1}^{s-1}\frac{14}{\sqrt{K_{i}}}-\sum_{i=1}^{s}2\varepsilon_{i} - \sum^{n_s-1}_{i=1}\frac{6}{R_i}
\end{equation}
apart on substantial substrings of length at least $\frac{h_{n_s}}{K_s+1}$ in the circular system and $\delta_{s}$-close
in the odometer-based system. In the next step, we want to apply Proposition
\ref{prop:sep} on $(n_{s}+1)$-blocks with
$\varepsilon=\varepsilon_{s+1}$, $\delta=\delta_{s+1}$, and $K=K_{s+1}+1$.
In order to have sufficiently many $(n_{s}+1)$-blocks for this application
we make use of Proposition \ref{prop:FeldmanMechanism} (imposing the condition on $l_{n_s-1}$ as described above) and
produce the required number of $(n_{s}+1)$-blocks which are at least
$\alpha_{s}-\frac{14}{\sqrt{K_{s}}}-\frac{4}{R_{n_s}}-\varepsilon_{s+1}$
apart on substantial substrings in the circular system. After the
intended application of Proposition \ref{prop:sep} we have $K_{s+1}+1$
many $n_{s+1}$-blocks (where $n_{s+1}=n_s +1 +p_{s+1}$ with $p_{s+1}$ the least integer such that $\left(1-\frac{1}{K_{s+1}+1}\right)^{p_{s+1}}<\frac{\varepsilon_{s+1}}{2}$) which are at least
\begin{equation}
\alpha_{s+1}=\alpha_{s}-\frac{14}{\sqrt{K_{s}}}-2\varepsilon_{s+1}- \sum^{n_{s+1}-1}_{i=n_s}\frac{6}{R_i}
\end{equation}
apart on substantial substrings in the circular system and $\delta_{s+1}$-close
in the odometer-based system. This completes the inductive step.

By the requirements (\ref{eq:K}), (\ref{eq:epsFinal}), and (\ref{eq:Rcond}) this shows
that any two distinct $n_s$-blocks in the circular system are at least 
\begin{equation}
\alpha_{0}-\sum^{\infty}_{s=1}\left(\frac{14}{\sqrt{K_{s}}}+2\varepsilon_{s}+ \frac{6}{R_s}\right)>\frac{1}{8}-\frac{1}{32}-\frac{1}{32} - \frac{1}{32}=\frac{1}{32}
\end{equation}
apart on substantial substrings. Hence, the
circular system cannot be loosely Bernoulli by Lemma \ref{lem:Rothstein}.

On the other hand, the $\overline{f}$ distance between $n_s$-blocks in
the odometer-based system goes to zero because $\delta_{s}\searrow0$.
Thus, the odometer-based system is loosely Bernoulli by Lemma \ref{lem:Rothstein}. Since the blocks
constructed by Propositions \ref{prop:FeldmanMechanism} and \ref{prop:sep}
satisfy the properties of uniformity and unique readability, our construction
sequence satisfies those as well.

\part{Non-loosely Bernoulli Odometer-Based System whose corresponding Circular
System is Loosely Bernoulli}

In the first two parts of this paper we showed that $\mathcal{F}$ does not preserve the loosely Bernoulli property. In the following four sections we will show that $\mathcal{F}^{-1}$ also does not preserve the loosely Bernoulli property.
\begin{thm}
\label{thm:converse} There exist circular coefficients $\left(l_{n}\right)_{n\in\mathbb{N}}$
and a non-loosely Bernoulli odometer-based system $\mathbb{M}$ of
zero measure-theoretic entropy with uniform and uniquely readable
construction sequence such that $\mathcal{F}\left(\mathbb{M}\right)$
is loosely Bernoulli.
\end{thm}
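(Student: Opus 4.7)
The plan is to mirror the inductive construction used to prove Theorem \ref{thm:zero-entropy}, but with the roles of the odometer-based and circular systems exchanged. I will build $\mathbb{M}$ stage-by-stage using two complementary mechanisms applied alternately: an odometer analogue of the Feldman mechanism (Proposition \ref{prop:FeldmanMechanism}) that produces arbitrarily many $(n+1)$-blocks pairwise $\overline{f}$-far in the odometer system on substantial substrings, and a reverse shifting mechanism (the analogue of Proposition \ref{prop:sep}) that takes $N+1$ many $n$-blocks pairwise far in the odometer and produces $K$ many $(n+p)$-blocks which remain pairwise far in the odometer but are pairwise $\delta$-close in the corresponding circular system. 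Alternating the two mechanisms with carefully chosen parameters will yield a uniform and uniquely readable construction sequence whose odometer $\overline{f}$ distances between blocks stay bounded below by a positive constant (giving non-loosely Bernoulli via Lemma \ref{lem:Rothstein}), while the circular $\overline{f}$ distances tend to zero (giving loosely Bernoulli via the same lemma).

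The odometer Feldman mechanism will apply the classical Feldman patterns of Lemma \ref{lem:symbolic Feldman} directly at the block level. Because concatenation in the odometer system is rigid, substrings $\mathtt{A}_1,\dots,\mathtt{A}_N$ that are pairwise $\overline{f}$-far on substantial substrings yield $(n+1)$-blocks whose odometer $\overline{f}$ distance is close to that of the building blocks, and a marker block, as in Proposition \ref{prop:FeldmanMechanism}, ensures uniformity and unique readability. The reverse shifting mechanism will exploit the redundancy inherent in the circular operator: each circular $n$-block contains $l_{n-1}-1$ consecutive repetitions of each $(n-1)$-block and $q_{n-1}$ copies of each $2$-subsection. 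This double redundancy allows good $\overline{f}$-matches between circular $(n+p)$-blocks even when their underlying odometer blocks differ significantly, as long as the differences lie in positions that can be absorbed by small shifts within the repeated substructure. The quantitative $\overline{f}$ bounds will be established by analyses similar in spirit to Lemmas \ref{lem:operators} and \ref{lem:ISGroup}, but with the direction of the inequalities reversed: I aim for an upper bound on the circular $\overline{f}$ distance that shrinks with each stage.

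The main obstacle will be designing the reverse shifting mechanism and controlling the accumulation of error across its $p$ stages. In the forward direction, the odometer distance was allowed to contract to zero while the circular distance retained a positive separation losing only a summable error; here the roles are reversed, and the circular distance must actually decay to zero while the odometer distance stays bounded below by the Feldman-type separation. Achieving this will require choosing $l_n$ to grow fast enough that the circular repetitions dominate the stage-by-stage contributions to $\overline{f}$, while $k_n$ and the Feldman parameters are chosen to preserve odometer separation at every scale. Preserving uniformity, unique readability, and the zero-entropy property throughout, while simultaneously driving the circular $\overline{f}$ down to zero on most pairs of $n$-blocks and keeping the odometer $\overline{f}$ bounded below, is the delicate point. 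Once these two mechanisms are in place, combining them via the inductive scheme of Section \ref{sec:Proof-of-Theorem} (with the circular and odometer roles swapped) will complete the proof.
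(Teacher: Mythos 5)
Your outline matches the paper's argument for Theorem \ref{thm:converse}: the paper proves an odometer Feldman mechanism (Propositions \ref{prop:OdomFeld}, \ref{prop:OdomFeldMech}) paralleling Section \ref{sec:Feldman-Mechanism}, a reverse-direction mechanism that the paper calls the \emph{cycling mechanism} (Proposition \ref{prop:cycling}), and then alternates them exactly as you propose in Section \ref{sec:ConverseProof}. Your key structural observation --- that differences between odometer blocks which are cyclic rearrangements can be absorbed in the circular system because the circular operator repeats each sub-block $l_n-1$ times, so that a cyclic shift of the pre-blocks becomes a literal shift by one sub-block inside a long run of repetitions, an $O(1/l_n)$ error --- is precisely what Lemma \ref{lem:N+mCircular} formalizes; and your insistence on taking $l_n$ to grow fast (so these errors are summable) is condition (\ref{eq:l-1}). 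The one detail not apparent in your sketch, but worth being aware of when you fill it in, is that a single stage of the cycling mechanism only guarantees an odometer lower bound of roughly $\alpha/K$ (since a cyclic shift is well-matched on $K-1$ of $K$ pre-blocks), and the paper therefore runs the mechanism for $m_n$ stages with a boosting recursion $\beta_{n+m+1} = \beta_{n+m} + \tfrac1K(\alpha_{n+m}-\beta_{n+m}) - \text{(error)}$ (equations (\ref{eq:BetaN+2})--(\ref{eq:BetaN+M})) until $\beta_{n+m_n}$ climbs back up to within $\varepsilon$ of $\alpha$; this is the analogue, but not a mirror image, of the $(1-\tfrac1K)^p<\varepsilon/2$ iteration in Proposition \ref{prop:sep}.
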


\section{Feldman Patterns Revisited} \label{sec:OdomFeld}
\begin{prop}
\label{prop:OdomFeld}Let $\alpha\in(0,\frac{1}{7})$ and $n\in \mathbb{N}$, $K,R,S,N,M\in\mathbb{N}\setminus \{0\}$
with $N\geq 20$, and $M\geq2$. For $1\leq s\leq S$, let
$\mathtt{A}_{1}^{(s)},\dots,\mathtt{A}_{N}^{(s)}$ be a family of strings, where each $\mathtt{A}_{j}^{(s)}$ is
a concatenation of $K$ many $n$-blocks. Assume that for all $1\leq s_{1},s_{2}\leq S$
and all $j_{1},j_{2}\in\left\{ 1,\dots,N\right\} $ with $j_{1}\neq j_{2}$,
we have $\overline{f}\left(\mathtt{A},\overline{\mathtt{A}}\right)>\alpha$
for all strings $\mathtt{A},\overline{\mathtt{A}}$ of at least $K\mathtt{h}_n/R$
consecutive symbols from $\mathtt{A}_{j_{1}}^{(s_{1})}$ and
$\mathtt{A}_{j_{2}}^{(s_{2})},$ respectively. 

Then for $1\leq s\leq S$, we can construct a family of strings $\mathtt{B}_{1}^{(s)},\dots,\mathtt{B}_{M}^{(s)}$ (of equal length $N^{2M+3}\cdot K\cdot\mathtt{h}_{n}$
and containing each block $\mathtt{A}_{1}^{(s)},\dots,\mathtt{A}_{N}^{(s)}$
exactly $N^{2M+2}$ many times) such that for all $1\leq s_{1},s_{2}\leq S$,
all $j,k\in\left\{ 1,\dots,M\right\} $ with $j\neq k,$ and all strings
$\mathtt{B}$, $\overline{\mathtt{B}}$ of at least $N^{2M+2}\cdot K\cdot \mathtt{h}_{n}$
consecutive symbols from $\mathtt{B}_{j}^{(s_{1})}$ and $\mathtt{B}_{k}^{(s_{2})},$ respectively,
we have 
\[
\overline{f}(\mathtt{B},\overline{\mathtt{B}})>\alpha-\frac{13}{\sqrt{N}}-\frac{2}{R}.
\]
\end{prop}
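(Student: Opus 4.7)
The plan is to mimic the proof of Proposition \ref{prop:Feldman} directly in the odometer setting. Since no circular operator is involved, we do not need Lemma \ref{lem:operators} or the bookkeeping with $b$- and $e$-spacers; the argument becomes a clean application of Lemma \ref{lem:symbolic Feldman} and Corollary \ref{cor:alpha separated blocks}, treating the strings $\mathtt{A}_1^{(s)},\dots,\mathtt{A}_N^{(s)}$ as ``letters'' of length $K\mathtt{h}_n$.

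First I would define the Feldman patterns exactly as in Proposition \ref{prop:Feldman}:
\[
\mathtt{B}_j^{(s)} \;=\; \Bigl(\bigl(\mathtt{A}_1^{(s)}\bigr)^{N^{2j}}\bigl(\mathtt{A}_2^{(s)}\bigr)^{N^{2j}}\cdots\bigl(\mathtt{A}_N^{(s)}\bigr)^{N^{2j}}\Bigr)^{N^{2M-2j+2}}
\]
for $1\le j\le M$ and $1\le s\le S$. Each $\mathtt{B}_j^{(s)}$ has length $N^{2M+3}K\mathtt{h}_n$ and contains each $\mathtt{A}_i^{(s)}$ exactly $N^{2M+2}$ times, as required.

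Next, given substrings $\mathtt{B},\overline{\mathtt{B}}$ of at least $N^{2M+2}K\mathtt{h}_n$ consecutive symbols from $\mathtt{B}_j^{(s_1)}$ and $\mathtt{B}_k^{(s_2)}$ with $j\neq k$, I would add fewer than $2K\mathtt{h}_n$ symbols to each of $\mathtt{B}$ and $\overline{\mathtt{B}}$ to complete any partial $\mathtt{A}$-blocks at the beginning and end. Call the resulting strings $\mathtt{B}_{\mathrm{aug}}$ and $\overline{\mathtt{B}}_{\mathrm{aug}}$. By Property \ref{property:omit_symbols},
\[
\overline{f}(\mathtt{B},\overline{\mathtt{B}}) \;>\; \overline{f}(\mathtt{B}_{\mathrm{aug}},\overline{\mathtt{B}}_{\mathrm{aug}}) \;-\; \frac{2}{N^{2M+2}}.
\]
Now $\mathtt{B}_{\mathrm{aug}}$ and $\overline{\mathtt{B}}_{\mathrm{aug}}$ are concatenations of whole $\mathtt{A}$-blocks whose index sequences are (substrings of) the symbolic Feldman patterns $B_j$ and $B_k$ of Lemma \ref{lem:symbolic Feldman}. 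Applying that lemma at the level of indices gives $\tilde{f}>1-12/\sqrt{N}$ between the two index strings, and then Corollary \ref{cor:alpha separated blocks} (with the role of $L$ played by $K\mathtt{h}_n$ and the hypothesis providing $\alpha$-separation on every pair of substrings of length at least $K\mathtt{h}_n/R$) yields
\[
\overline{f}(\mathtt{B}_{\mathrm{aug}},\overline{\mathtt{B}}_{\mathrm{aug}}) \;>\; \alpha - \frac{12}{\sqrt{N}} - \frac{2}{R}.
\]
Combining the two displayed inequalities gives the desired bound $\overline{f}(\mathtt{B},\overline{\mathtt{B}})>\alpha-13/\sqrt{N}-2/R$, since $2/N^{2M+2}<1/\sqrt{N}$ for $N\ge 20$.

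There is no genuine obstacle here; the only thing to be careful about is that the hypothesis on $\mathtt{A}$-blocks is stated for \emph{all} pairs $(s_1,s_2)$, so the same estimate in Corollary \ref{cor:alpha separated blocks} applies whether $s_1=s_2$ or $s_1\neq s_2$, and the conclusion therefore holds uniformly over all choices of superscripts, as the statement requires.
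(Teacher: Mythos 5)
Your proposal is correct and follows essentially the same route as the paper, which for this proposition simply says the argument ``follows along the lines of Proposition~\ref{prop:Feldman}'' without the circular operator; you have just written out those lines explicitly. One small remark: when $s_1\ne s_2$ the blocks being interleaved come from two different families, so strictly speaking you are invoking Proposition~\ref{prop:symbol by block replacement} with $\beta=0$ rather than Corollary~\ref{cor:alpha separated blocks} verbatim (the Corollary is the $A_{b_j}=B_{b_j}$ special case), but the hypothesis of Proposition~\ref{prop:OdomFeld} supplies exactly the $\alpha$-separation needed for that more general statement, as you note; this is the same mild abuse the paper itself makes in the proof of Proposition~\ref{prop:Feldman}.
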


\begin{proof}
The proof follows along the lines of the statement on Feldman patterns for the circular system in Proposition \ref{prop:Feldman}. (Here $\mathcal{C}_{n,i_1}$ and $\mathcal{C}_{n,i_2}$ are not applied since we remain in the odometer-based system.) 
\end{proof}
We will also need a statement on the $\overline{f}$ distance between the identical
Feldman pattern but with building blocks from different families (compare with Lemma \ref{lem:samePattern}).
\begin{lem} \label{lem:same}
Let $\alpha\in(0,\frac{1}{7})$ and $n\in\mathbb{N}$, $K,R,S,N,M\in\mathbb{N}\setminus \{0\}$
with $N\geq 20$, and $M,S$ at least $2$. For $1\leq s\leq S$, let $\mathtt{A}_{1}^{(s)},\dots,\mathtt{A}_{N}^{(s)}$ be a family of strings, where each $\mathtt{A}_{j}^{(s)}$ is
a concatenation of $K$ many $n$-blocks. Assume that for all $j_{1},j_{2}\in\left\{ 1,\dots,N\right\} $
and all $s_{1},s_{2}\in\left\{ 1,\dots,S\right\} $ with $s_{1}\neq s_{2}$,
we have $\overline{f}\left(\mathtt{A},\overline{\mathtt{A}}\right)>\alpha$
for all sequences $\mathtt{A},\overline{\mathtt{A}}$ of at least
$K\mathtt{h}_{n}/R$ consecutive symbols from $\mathtt{A}_{j_{1}}^{(s_{1})}$
and $\mathtt{A}_{j_{2}}^{(s_{2})},$ respectively. Then for $1\leq s\leq S,$ we can construct
a family of strings $\mathtt{B}_{1}^{(s)},\dots,\mathtt{B}_{M}^{(s)}$ as in Proposition \ref{prop:OdomFeld} such that
for all $j,k\in\left\{ 1,\dots,M\right\} $, all $s_{1},s_{2}\in\left\{ 1,\dots,S\right\} $ with
$s_{1}\neq s_{2}$, and all strings $\mathtt{B}$, $\overline{\mathtt{B}}$
of at least $N^{2M+2}K\mathtt{h}_{n}$ consecutive symbols from
$\mathtt{B}_{j}^{(s_{1})}$ and $\mathtt{B}_{k}^{(s_{2})}$, respectively, we have 
\[
\overline{f}(\mathtt{B},\overline{\mathtt{B}})>\alpha-\frac{2}{N^{2M+2}}-\frac{2}{R}.
\]
\end{lem}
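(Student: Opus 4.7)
The plan is to run essentially the same argument as for Proposition \ref{prop:OdomFeld}, with the families $\mathtt{A}_1^{(s)},\dots,\mathtt{A}_N^{(s)}$ arranged into Feldman patterns exactly as prescribed in the proof of Proposition \ref{prop:OdomFeld} to produce $\mathtt{B}_1^{(s)},\dots,\mathtt{B}_M^{(s)}$. The new input is that the $\alpha$-separation hypothesis now holds \emph{across families} for every pair of indices $j_1,j_2$, rather than merely for $j_1\neq j_2$ within a family. This is precisely what allows us to drop the $\frac{13}{\sqrt{N}}$ loss coming from the Feldman-pattern machinery.

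Given strings $\mathtt{B}$ and $\overline{\mathtt{B}}$ of at least $N^{2M+2}K\mathtt{h}_n$ consecutive symbols from $\mathtt{B}_j^{(s_1)}$ and $\mathtt{B}_k^{(s_2)}$ with $s_1\neq s_2$, first I would complete any partial $\mathtt{A}$-blocks at the endpoints by augmenting each string with fewer than $2K\mathtt{h}_n$ symbols, so that each becomes a concatenation of full blocks $\mathtt{A}_{j_i}^{(s_1)}$ and $\mathtt{A}_{k_i}^{(s_2)}$ respectively. By the additive version of Property \ref{property:omit_symbols}, this augmentation decreases the $\overline{f}$-distance by at most $\frac{4K\mathtt{h}_n}{2N^{2M+2}K\mathtt{h}_n}=\frac{2}{N^{2M+2}}$.

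Next, because $s_1\neq s_2$, the hypothesis guarantees that every substring of length at least $K\mathtt{h}_n/R$ from any $\mathtt{A}_{j_1}^{(s_1)}$ is more than $\alpha$ apart in $\overline{f}$ from any comparable substring of any $\mathtt{A}_{j_2}^{(s_2)}$ --- whether or not $j_1=j_2$. This means that in applying Corollary \ref{cor:alpha separated blocks} to the augmented strings (viewed as concatenations of length-$K\mathtt{h}_n$ blocks drawn from the two families), we may take $\tilde f=1$, which yields $\overline{f}>\alpha-\frac{2}{R}$ for the augmented strings. Combining with the augmentation estimate gives the claimed bound $\overline{f}(\mathtt{B},\overline{\mathtt{B}})>\alpha-\frac{2}{N^{2M+2}}-\frac{2}{R}$.

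There is no real obstacle here: the argument is a direct simplification of the proof of Proposition \ref{prop:OdomFeld}, parallel to how Lemma \ref{lem:samePattern} simplifies Proposition \ref{prop:Feldman} in the circular setting. The only point worth flagging is that, unlike in Proposition \ref{prop:OdomFeld}, we do not need to organize the block decompositions according to which Feldman pattern each $\mathtt{A}$-block belongs to; the across-families separation is uniform in the indices, so all pieces of the two strings contribute the full $\alpha$ lower bound.
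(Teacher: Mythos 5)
Your proof is correct and takes essentially the same route the paper intends: construct the $\mathtt{B}$-strings as Feldman patterns exactly as in Proposition \ref{prop:OdomFeld}, complete partial $\mathtt{A}$-blocks at the endpoints (costing $2/N^{2M+2}$ by Property \ref{property:omit_symbols}), and then invoke Corollary \ref{cor:alpha separated blocks} with $\tilde f=1$ because the across-family hypothesis makes every pair of building blocks from the two strings $\alpha$-separated (costing only $2/R$). This mirrors precisely how the paper reduces Lemma \ref{lem:samePattern} to Proposition \ref{prop:Feldman} by taking $\tilde f=1$, and your explanation of why the $13/\sqrt{N}$ Feldman-pattern loss disappears is the correct key observation.
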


\section{Feldman Mechanism in the Odometer-based System} \label{sec:OdomFeldMechanism}

Again we use the \emph{Feldman mechanism} to produce an arbitrarily large number of new
blocks whose substantial substrings are almost as far apart in $\overline{f}$
as the building blocks. 
In contrast
to Proposition \ref{prop:OdomFeld}, the constructed words also satisfy
the unique readability property.
\begin{prop}
\label{prop:OdomFeldMech}Let $\alpha\in\left(0,\frac{1}{7}\right)$
and $n ,N,M,R\in\mathbb{N}$ with $R>0$, $N\geq 100$, and $M\geq2$.
Suppose there are $N+1$ many $n$-blocks $\mathtt{A}_{0},\dots,\mathtt{A}_{N}$, which have equal length $\mathtt{h_{n}}$ and satisfy the unique readability
property. Furthermore, if $n>0$ assume that for all $j\neq k,$ we
have $\overline{f}\left(\mathtt{A},\overline{\mathtt{A}}\right)>\alpha$
for all strings $\mathtt{A},\overline{\mathtt{A}}$ of at least $\mathtt{h}_{n}/R$
consecutive symbols from $\mathtt{A}_{j}$ and $\mathtt{A}_{k}$,
respectively. Then we can construct $M$ many $(n+1)$-blocks $\mathtt{\mathtt{B}_{1}},\dots,\mathtt{B}_{M}$
of equal length $\mathtt{h}_{n+1}$ (which are uniform in the $n$-blocks
and satisfy the unique readability property) such that for all $j \ne k$ and all strings
$\mathtt{B}$, $\overline{\mathtt{B}}$ of at least $\mathtt{h}_{n+1}/N$
consecutive symbols from $\mathtt{B}_{j}$ and $\mathtt{B}_{k}$, respectively,
we have 
\[
\overline{f}(\mathtt{B},\overline{\mathtt{B}})>\alpha-\frac{12}{\sqrt{ N}}-\frac{2}{R}-\frac{8}{N}\geq \alpha-\frac{13}{\sqrt{ N}}-\frac{2}{R}.
\]
\end{prop}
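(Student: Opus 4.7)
The plan is to adapt, in the odometer-based setting, the construction and proof of Proposition \ref{prop:FeldmanMechanism} by stripping out all circular-system paraphernalia (the $\mathcal{C}_{n,i}$ operators, the $b$-$e$ spacers, and the $2$-subsection accounting via equation (\ref{eq:sn})). Designate $\mathtt{A}_0$ as a marker $n$-block and, for $1\le k\le M$, define
\[
\mathtt{B}_{k}=\left(\left(\left(\mathtt{A}_{1}\right)^{N^{2k}}\cdots\left(\mathtt{A}_{N}\right)^{N^{2k}}\right)^{N^{2M-2k+1}}\left(\mathtt{A}_{0}\right)^{N^{2M+1}-1}\right)^{N}\left(\mathtt{A}_{0}\right)^{N}.
\]
A direct count gives $\lvert\mathtt{B}_{k}\rvert=(N+1)N^{2M+2}\mathtt{h}_{n}=:\mathtt{h}_{n+1}$, and each $\mathtt{A}_{i}$ with $0\le i\le N$ occurs in $\mathtt{B}_{k}$ exactly $N^{2M+2}$ times (so $\mathtt{B}_k$ is uniform in the $n$-blocks). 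Unique readability follows because the terminal run $(\mathtt{A}_{0})^{N^{2M+1}+N-1}$, formed by concatenating the last internal marker chunk with the final $(\mathtt{A}_{0})^{N}$, is strictly longer than any other maximal internal run of $\mathtt{A}_{0}$'s in any $\mathtt{B}_{k}$.

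For the $\overline{f}$ estimate, fix $j\ne k$ and take substrings $\mathtt{B},\overline{\mathtt{B}}$ of $\mathtt{B}_{j},\mathtt{B}_{k}$ of length at least $\mathtt{h}_{n+1}/N=(N+1)N^{2M+1}\mathtt{h}_{n}$. When $n>0$, first complete any partial $n$-blocks at the two ends of $\mathtt{B}$ and $\overline{\mathtt{B}}$ by adjoining fewer than $2\mathtt{h}_{n}$ symbols to each; by Property \ref{property:omit_symbols} this alters $\overline{f}$ by an amount $O(N^{-(2M+1)})$, absorbed into the final $8/N$. (For $n=0$ this step is vacuous.) Next, delete every occurrence of the marker $\mathtt{A}_{0}$ from both strings. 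Since consecutive internal marker chunks in $\mathtt{B}_{k}$ are separated by a Feldman segment of length $N^{2M+2}\mathtt{h}_{n}$, a substring of length $\mathtt{h}_{n+1}/N$ meets at most two complete marker chunks of combined length $<2N^{2M+1}\mathtt{h}_{n}$. Hence the marker fraction in each of $\mathtt{B}$ and $\overline{\mathtt{B}}$ is at most $2/(N+1)$, and by Property \ref{property:omit_symbols} this deletion increases $\overline{f}$ by at most $4/(N+1)$.

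The modified strings obtained are substrings of the pure Feldman patterns $\Phi_{j}=(\mathtt{A}_{1}^{N^{2j}}\cdots\mathtt{A}_{N}^{N^{2j}})^{N^{2M-2j+1}}$ and $\Phi_{k}$ (reading across a deleted marker chunk simply concatenates the tail of one copy of $\Phi_j$ with the head of the next, which is a substring of $\Phi_j\Phi_j$ and hence again of a pattern of the same form). Their letter-count is at least $(N-1)N^{2M+1}$. The proof of Lemma \ref{lem:symbolic Feldman} applies almost verbatim to such substrings, but because our outer exponent is $N^{2M-2j+1}$ rather than $N^{2M-2j+2}$, the partial-chunk removal costs $O(1/N)$ instead of $O(1/N^{2})$. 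This still yields $\overline{f}(\mathtt{B}',\overline{\mathtt{B}}')>1-4/\sqrt{N}$ (hence $\tilde{f}>1-12/\sqrt{N}$ by Lemma \ref{lem:compare ftilde}) on the letter level. Applying Corollary \ref{cor:alpha separated blocks} with ``letters'' $\mathtt{A}_{i}$ of length $\mathtt{h}_{n}$ and the hypothesis $\overline{f}(\mathtt{A},\overline{\mathtt{A}})>\alpha$ on substrings of length $\ge\mathtt{h}_{n}/R$ gives
\[
\overline{f}(\mathtt{B}',\overline{\mathtt{B}}')>\alpha-\frac{12}{\sqrt{N}}-\frac{2}{R}.
\]
Reinstating the marker- and partial-block error terms from the previous paragraph produces the bound $\alpha-12/\sqrt{N}-2/R-8/N$, as required.

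The main technical obstacle is precisely the sizing issue created by the reduced outer exponent $N^{2M-2k+1}$: the inner Feldman pattern of $\mathtt{B}_{k}$ has letter-count $N^{2M+2}$, which is exactly the threshold of Lemma \ref{lem:symbolic Feldman}, so that lemma cannot be invoked as a black box on substrings shorter than the full pattern. One must re-run its proof with the weaker threshold $N^{2M+1}$ letters and track the additional $O(1/N)$ loss, which is the source of the $8/N$ summand in the stated bound. Everything else is a direct odometer-analog of the circular Feldman-mechanism argument, with the comforting simplification that, without the $(b^{q_{n}-j_{i}}\cdot e^{j_{i}})$ boundary symbols, no $R_n$ or $l_n$ correction terms appear.
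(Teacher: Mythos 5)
Your argument is correct and follows the paper's proof: define the $(n+1)$-blocks by the Feldman-mechanism formula from Proposition \ref{prop:FeldmanMechanism}, strip the markers $\mathtt{A}_0$, and combine the $\tilde{f}$-bound from Lemma \ref{lem:symbolic Feldman} with Corollary \ref{cor:alpha separated blocks}. Your observation that after marker removal the surviving substrings may have only about $(N-1)N^{2M+1}$ letters---below the $N^{2M+2}$-letter hypothesis of Lemma \ref{lem:symbolic Feldman}, so that its proof must be re-run at a looser threshold---is a genuine point the paper's terse proof leaves implicit, though the resulting extra loss in the partial-chunk removal step is only $\frac{2}{(N-1)N}=O(1/N^2)$, not $O(1/N)$, so the $\frac{8}{N}$ slack in the stated bound is chiefly absorbing the marker-deletion and block-completion errors rather than the threshold relaxation.
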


\begin{proof}
We define the $(n+1)$-blocks as in Proposition \ref{prop:FeldmanMechanism}. As in its proof in the case $n=0$ we complete cycles at the beginning and end of each $\mathtt{B}$ and $\overline{\mathtt{B}}$. Once again, we remove the marker blocks and apply Corollary \ref{cor:alpha separated blocks} and $\tilde{f}>1-\frac{12}{\sqrt{N}}$ from Lemma \ref{lem:symbolic Feldman}.
\end{proof}

\section{Cycling Mechanism} \label{sec:cycling}

\begin{figure}
    \centering
    \includegraphics[scale=0.75]{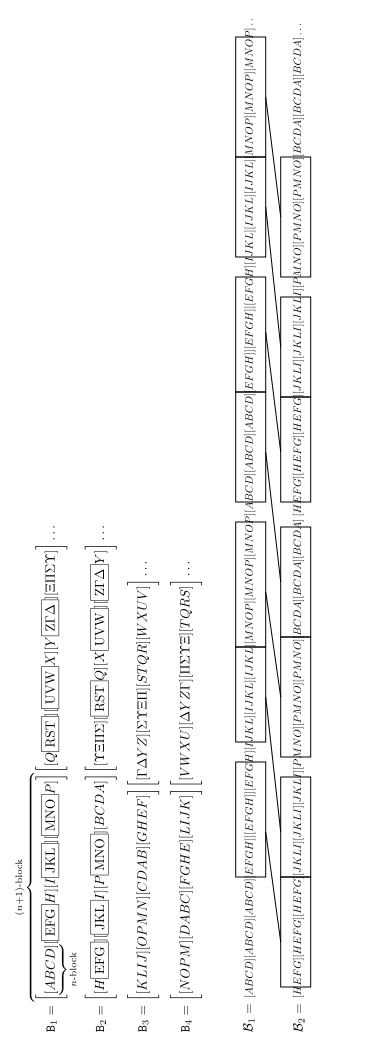}
    \caption{{\bf Heuristic representation of two stages of the cycling mechanism.} Parts of four $(n+2)$-blocks $\mathtt{B}_i$, $i=1,\dots,4$, in the odometer based system and parts of the images $\mathcal{B}_1$ and $\mathcal{B}_2$ under the circular operator (omitting the spacers) are represented. The marked letters indicate a best possible $\overline{f}$ match between $\mathtt{B}_1$ and $\mathtt{B}_2$ with a fit of approximately $\left(1-\frac{1}{4}\right)^2$ (ignoring boundary effects) while the blocks $\mathcal{B}_1$ and $\mathcal{B}_2$ have a very good fit in the circular system (the lines indicating a best possible $\overline{f}$ match).}
    \label{fig:fig3}
\end{figure}

\begin{prop}
\label{prop:cycling} If $n ,K,T\in \mathbb{N}$, $K\geq 2$, $T>0$, $\alpha\in\left(4/T,1/7\right)$, $\delta>0$,
and $\varepsilon \in \left(0,\alpha/(4K)\right)$, then there exist 
$N,m_n\in\mathbb{N}$ and circular coefficients $\left(k_{n+m},\ell_{n+m}\right)_{m=0}^{m_n-1}$ satisfying the following condition. 
If we are given $N+1$ uniquely readable $n$-blocks $\mathtt{B}_{0}^{(n)},\mathtt{B}_{1}^{(n)},\dots,\mathtt{B}_{N}^{(n)}$
in the odometer-based system such that for all $i\ne j$ and all sequences $\mathtt{A}$ and $\overline{\mathtt{A}}$ of at least $\mathtt{h}_n/T$
consecutive symbols from $\mathtt{B}_{i}^{(n)}$ and $\mathtt{B}_{j}^{(n)}$, respectively, we have
$\overline{f}\left(\mathtt{A},\overline{\mathtt{A}}\right)\geq\alpha,$
then we can build $K$ many $(n+m_n)$-blocks $\mathtt{B}^{(n+m_n)}_{1},\dots,\mathtt{B}^{(n+m_n)}_{K}$
of equal length $\mathtt{h}_{n+m_n}$ (with the unique readability
property and uniformity in all blocks from stage $n$ through $n+m_n$) satisfying
the following properties: 
\begin{enumerate}
\item  For all $i\ne j$ and all sequences $\mathtt{B}$ and $\overline{\mathtt{B}}$ of at least $\mathtt{h}_{n+m_n}/K$
consecutive symbols from $\mathtt{B}_{i}^{(n+m_n)}$ and $\mathtt{B}_{j}^{(n+m_n)}$, respectively, we have 
\[
\overline{f}\left(\mathtt{B},\overline{\mathtt{B}}\right)\geq\alpha-\frac{2}{T}-\varepsilon.
\]
\item If $\mathcal{B}_1^{(n+m_n)},\dots,\mathcal{B}_K^{(n+m_n)}$ are the corresponding circular $(n+m_n)$-blocks, then for all $1\leq i,j\leq K$ we have 
\[
\overline{f}\left(\mathcal{B}_{i}^{(n+m_n)},\mathcal{B}_{j}^{(n+m_n)}\right)\leq\delta.
\]
\end{enumerate}
\end{prop}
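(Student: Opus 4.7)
The proof will mirror the shifting-mechanism construction of Proposition \ref{prop:sep}, but with the roles of the odometer-based and circular systems interchanged. The crucial asymmetry is that in going from an odometer $n$-block to its circular image we repeat each $n$-block $\ell_n-1$ times inside every $1$-subsection and then repeat the whole $2$-subsection $q_n$ times. Consequently, a cyclic shift by one $n$-block, which in the odometer costs an $\overline{f}$-distance of roughly $1/k_n$, only costs $O(1/(k_nq_n))+O(1/\ell_n)$ in the circular image because such a shift can be absorbed into one of the $(\ell_n-1)$-fold repetitions (the rest of the image being matched via an order-preserving shift of one $1$-subsection). This is precisely the asymmetry needed to make the odometer blocks remain separated on substantial substrings while their circular images become $\delta$-close.

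\textbf{Main steps.} I would first use Proposition \ref{prop:OdomFeldMech} (iterated a bounded number of times if necessary) to amplify the given $N+1$ uniquely readable $n$-blocks into a large family of $(n+1)$-blocks that are still $(\alpha-\varepsilon/4)$-separated in $\overline{f}$ on substantial substrings of length at least $\mathtt{h}_{n+1}/N$, where $N$ is chosen large enough that the $13/\sqrt{N}$ Feldman-error is below $\varepsilon/4$. Then for $s=1,\dots,m_n-1$ I would run a ``cycling stage'' analogous to Subsection \ref{subsec:IStep}: using Proposition \ref{prop:OdomFeldMech} with $S=K$ to produce multiple Feldman-pattern copies of each of $K$ current types, and defining $K$ types of $(n+s+1)$-blocks so that type $i$ is obtained from type $j$ by a cyclic shift of $i-j$ grouped pre-blocks (with marker blocks inserted for unique readability and uniformity, exactly as in Section \ref{sec:Shifting-Mechanism}). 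The induction hypothesis would be the analog of equations \eqref{eq:I2}--\eqref{eq:I4}, but reversed: substantial substrings of different types remain $(\alpha-E_{n+s})$-separated in the odometer-based system (because substantial substrings contain predominantly one or two types of pre-blocks that are already $\alpha$-separated by induction and Proposition \ref{prop:symbol by block replacement}/Corollary \ref{cor:alpha separated blocks}), while the full circular images are within $\delta_s$, where $\delta_s$ decreases geometrically because each new cycling stage allows one further absorption of a cyclic shift into the $(\ell_{n+s}-1)$-fold repetition.

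\textbf{Iteration, coefficients, and main obstacle.} The circular coefficients $(k_{n+m},\ell_{n+m})_{m=0}^{m_n-1}$ will be chosen with $k_{n+m}=K$ throughout the cycling stages and $\ell_{n+m}$ large enough that the accumulated spacer and boundary errors $\sum_m O(1/\ell_{n+m}) + \sum_m O(1/R_{n+m})$ stay below $\min(\delta/2,\varepsilon/4)$. The number of stages $m_n$ is taken so that $(1-c/K)^{m_n}\alpha<\delta/2$, where $c>0$ is the geometric factor gained in the circular system per cycling stage; this guarantees property (2). Property (1) then follows by combining the inductive odometer estimate with the initial assumption (the $-2/T$ term tracks the single degradation incurred when passing from the substantial-substring hypothesis on the $\mathtt{B}^{(n)}_i$ to the substantial-substring conclusion on the $\mathtt{B}^{(n+m_n)}_i$). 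The hard part will be producing a clean explicit order-preserving match in the circular system witnessing the geometric decay of $\overline{f}$ at each stage: one must exhibit, for any two types $i\ne j$, a match between $\mathcal{B}^{(n+s+1)}_i$ and $\mathcal{B}^{(n+s+1)}_j$ that shifts by one grouped $1$-subsection, matches the $(\ell_{n+s}-1)$-fold repetitions of each pre-block against each other uniformly across all $q_{n+s}$ copies of the $2$-subsection structure, and absorbs the $b,e$ spacer discrepancies and the marker-block remainders within the allowed error. This match is compatible with the unique readability of each stage only because, as in Lemma \ref{lem:ISGroup}, we can complete partial pre-blocks at the ends by adding fewer than $O(q_{n+s}\ell_{n+s})$ symbols, a quantity negligible relative to $\mathtt{h}_{n+m_n}/K$ by our choice of $N$ and the $\ell_{n+m}$.
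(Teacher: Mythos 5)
Your high-level picture---cycling the pre-blocks, exploiting that a cyclic shift is cheap under the circular operator because of the $(\ell_n-1)$-fold repetitions but expensive in the odometer system---is the right starting point, and you are right that one opens each stage with the odometer version of the Feldman mechanism. But the logic of the induction is inverted relative to what the paper actually proves, and this is not a cosmetic issue.

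In the paper's proof the quantity that improves \emph{geometrically} with the number of stages is the \emph{odometer} separation. After the first cycling stage, the $(n+1)$-blocks of different types $i\ne j$ are obtained from one another by cyclically permuting the \emph{same} pre-$(n+1)$-blocks, so they share pre-blocks: a substring of length $\mathtt{h}_{n+1}/K$ of type $1$ (one pre-block) can literally coincide with a substring of type $2$. Their separation on such substrings is governed by the parameter $\beta_{n+2}$, which the paper shows is only about $\alpha/K$, not $\alpha - E_{n+1}$ as you claim. Your induction hypothesis (``substantial substrings of different types remain $(\alpha-E_{n+s})$-separated'') already fails in the base case, and it has to fail: if the odometer $(n+m)$-blocks of different types were $\alpha$-separated on every substantial substring, they could not share content, and then their circular images could not be close either. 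The resolution in the paper is the recursion $\beta_{n+m+1}\approx\beta_{n+m}+\tfrac1K(\alpha_{n+m}-\beta_{n+m})$; this is where the $(1-1/K)^m$ factor lives, and $m_n$ is chosen so that $\beta_{n+m_n}>\alpha-2/T-\varepsilon/2$, i.e.\ for property~(1).

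Conversely, the circular closeness (property~(2)) does \emph{not} decay geometrically. Lemma~\ref{lem:N+mCircular} shows the circular $\overline{f}$-distance between types at stage $m$ is bounded by $\sum_{i=1}^{m-1}\bigl(4/\ell_{n+i}+2/(N(n+i-1)+1)\bigr)$---it \emph{accumulates} small errors, and is kept below $\delta$ by taking $\ell_{n+i}$ and $N(n+i-1)$ large (equations \eqref{eq:Delta-1} and \eqref{eq:l-1}), not by iterating. Your proposed choice ``$m_n$ such that $(1-c/K)^{m_n}\alpha<\delta/2$ guarantees property (2)'' therefore misidentifies both what decays and why. Relatedly, there is no need for, and the paper does not use, $k_{n+m}=K$; the coefficients $k_{n+m}$ are whatever the block construction dictates (very large), and $K$ only controls the number of types. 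Finally, the paper's final stage (Subsection~\ref{subsec:3Final}) is genuinely different from the intermediate ones: it assembles each $(n+m_n)$-block from $\lambda K$ \emph{distinct} pre-$(n+m_n)$-blocks aligned across types, so that every aligned pair is either different-pattern (hence $\alpha_{n+m_n}$-apart) or same-pattern-different-type (hence $\beta_{n+m_n}$-apart), and property~(1) uses exactly that both of these are by then $>\alpha-2/T-\varepsilon/2$. Your sketch has no counterpart of this terminal alignment step and no mechanism to recover the odometer separation that the intermediate cycling stages necessarily sacrifice.
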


\begin{rem*}
Thus we obtain a mechanism to produce an arbitrarily large number of $(n+m_n)$-blocks
that are still apart from each other in the odometer-based system
but arbitrarily close to each other in the circular system. The proof is based on an inductive construction that we call the \emph{cycling mechanism} (indicated in Figure \ref{fig:fig3}) and a final step to guarantee closeness of blocks in the circular system. In each step of the cycling mechanism the blocks of different types are constructed by cycling the used $K$ pre-blocks. On the one hand, this will yield closeness in the circular system under the repetitions in the circular operator. On the other hand, in a matching in the odometer-based system at least one pair of pre-blocks will have a $\overline{f}$ distance close to $\alpha$ and at most $K-1$ many pairs will have a smaller $\overline{f}$ distance. Over the course of the construction this distance will increase towards $\alpha$ (see equation \ref{eq:BetaN+M}).
\end{rem*}

For this construction, let $\left(u_{n+m}\right)_{m\in\mathbb{N}}$
and $\left(e_{n+m}\right)_{m\in\mathbb{N}}$ be increasing sequences
of positive integers such that 
\begin{equation}
\sum_{m\in\mathbb{N}}\frac{2}{Ku_{n+m}^{2}e_{n+m}}<\frac{\delta}{2},\label{eq:Delta-1}
\end{equation}
and

\begin{equation}
\sum_{m\in\mathbb{N}}\left(\frac{4}{u_{n+m}}+\frac{14}{\sqrt{e_{n+m}}}\right)<\frac{\varepsilon}{8}.\label{eq:eps-1}
\end{equation}
Additionally, we define the sequences $(\lambda_{n+m})_{m\in\mathbb{N}}$ and $\left(d_{n+m}\right)_{m\in\mathbb{N}}$ by

\begin{equation}
d_{n+m}=u_{n+m}^{2}\label{eq:d-1}
\end{equation}
and
\[
\lambda_{n+m} = 2d_{n+m}e_{n+m}.
\]
Moreover, we choose $N$ sufficiently large such that 
\begin{equation}
\frac{14}{\sqrt{N}}<\frac{\varepsilon}{8}\label{eq:N}
\end{equation}
and the positive integers $\left(l_{n+m}\right)_{m\in\mathbb{N}}$ sufficiently large
such that 
\begin{equation}
\sum_{m\in\mathbb{N}}\frac{4}{l_{n+m}}<\frac{\delta}{2}.\label{eq:l-1}
\end{equation}
The terms $l_n,l_{n+1},\dots,l_{n+m_n-1}$ are the coefficients in the circular $(n+1)$-, $(n+2)$-, \dots,$(n+m_n)$-blocks, where  $m_n$ is defined below. 
The proof of Proposition \ref{prop:cycling} utilizes the following parameters $\left(\alpha_{n+m}\right)^{m=\infty}_{m=2}$ and $\left(\beta_{n+m}\right)^{m=\infty}_{m=2}$ defined inductively via
\begin{equation}
\alpha_{n+2}=\alpha-\frac{14}{\sqrt{N}}-\frac{2}{T}-\frac{4}{u_{n+1}}-\frac{13}{\sqrt{e_{n+1}}},\label{eq:alphaN+2}
\end{equation}
\begin{equation}
\beta_{n+2}=\frac{1}{K}\alpha -\frac{14}{\sqrt{N}}-\frac{2}{KT}-\frac{4}{u_{n+1}}-\frac{2}{e_{n+1}},\label{eq:BetaN+2}
\end{equation} 
\begin{equation}
\alpha_{n+m+1}=\alpha_{n+2}-\sum_{i=2}^{m}\left(\frac{2}{\lambda_{n+i-1}K+1}+\frac{2}{e_{n+i-1}}+\frac{4}{u_{n+i}}+\frac{13}{\sqrt{e_{n+i}}}\right),\label{eq:AlphaN+M}
\end{equation}
and 
\begin{equation}\label{eq:BetaN+M}
     \beta_{n+m+1}=
     \beta_{n+m}+\frac{1}{K}\left(\alpha_{n+m}-\beta_{n+m}\right)-\frac{2}{e_{n+m-1}}-\frac{4}{u_{n+m}}-\frac{2}{\lambda_{n+m-1}K+1}-\frac{2}{e_{n+m}}.
\end{equation}

Since $\frac{1}{K}\left(\alpha-\frac{2}{T}\right)>\frac{\alpha}{2K}>2\varepsilon$, assumptions (\ref{eq:eps-1}) and (\ref{eq:N}) imply that $\beta_{n+2}>\varepsilon$. We also note that for every $m\geq2$
\begin{equation*}
\alpha_{n+m} \geq\alpha-\frac{14}{\sqrt{N}}-\frac{2}{T}-\sum_{i=1}^{\infty}\left(\frac{14}{\sqrt{e_{n+i}}}+\frac{4}{u_{n+i}}\right) >\alpha-\frac{2}{T}-\frac{\varepsilon}{8}-\frac{\varepsilon}{8},
\end{equation*}
by equation (\ref{eq:AlphaN+M}) and our assumptions (\ref{eq:eps-1})
and (\ref{eq:N}). Similarly, assumption (\ref{eq:eps-1}) implies that in our $\beta_{n+m}$ equation (\ref{eq:BetaN+M}), the terms we subtract from $\beta_2$ over the whole course of the construction are bounded by $\varepsilon/8$.
Due to these bounds, we can choose $m_n$ as the least integer such that
\begin{equation} \label{eq:betacond}
\beta_{n+m_{n}}>\alpha-\frac{2}{T}-\frac{\varepsilon}{2},
\end{equation}
and we will apply our inductive construction until stage $n+m_n$.

\subsection{Initial step: Construction of $(n+1)$-blocks} \label{subsec:3Initial}

First of all, we
choose one $n$-block $\mathtt{B}_{0}^{(n)}$ as a marker. Then we
apply Proposition \ref{prop:OdomFeld} on the remaining $n$-blocks
$\mathtt{B}_{1}^{(n)},\dots,\mathtt{B}_{N}^{(n)}$ to build $\tilde{N}(n+1)\coloneqq K\cdot\left(\lambda_{n+1}+1\right)$
many Feldman patterns denoted by $\mathtt{A}_{i,j}$, $i=1,\dots,K$,
$j=1,\dots,\lambda_{n+1}+1$. We will call them \emph{pre-$(n+1)$-blocks.} In particular, these
have length $\tilde{\mathtt{h}}_{n+1}=N^{2\cdot\tilde{N}(n+1)+3}\cdot\mathtt{h}_{n}$
and are uniform in the $n$-blocks $\mathtt{B}_{1}^{(n)},\dots,\mathtt{B}_{N}^{(n)}$
by construction. More precisely, every pre-$(n+1)$-block contains
each $n$-block $\mathtt{B}_{i}^{(n)}$, $1\leq i\leq N$, exactly
\begin{equation}
\bar{N}(n)\coloneqq N^{2\cdot\tilde{N}(n+1)+2}\label{eq:3n}
\end{equation}
 many times and pre-$(n+1)$-blocks in the circular system have length
$\tilde{q}_{n+1}=N^{2\cdot\tilde{N}(n+1)+3}\cdot l_{n}\cdot q_{n}$.
In order to obtain uniformity, we will define the marker segment by 
\[
\mathtt{a}_{n}=\left(\mathtt{B}_{0}^{(n)}\right)^{K\bar{N}(n)}.
\]
Moreover, we have 
\begin{equation}
\overline{f}\left(\mathtt{A},\overline{\mathtt{A}}\right)\geq\alpha-\frac{13}{\sqrt{N}}-\frac{2}{T}\label{eq:pren+1}
\end{equation}
for any strings $\mathtt{A}$ and $\overline{\mathtt{A}}$ of at least
$\tilde{\mathtt{h}}_{n+1}/N=N^{2\cdot\tilde{N}(n+1)+2}\cdot\mathtt{h}_{n}$
consecutive symbols in different pre-$(n+1)$-blocks by Proposition
\ref{prop:OdomFeld}. 

Finally, we define the $(n+1)$-blocks: 
\begin{align*}
(n+1)\text{-blocks of type \ensuremath{1}: \;} & \mathtt{B}_{1,1}^{(n+1)}=\mathtt{A}_{1,1}\mathtt{A}_{2,1}\dots\mathtt{A}_{K-1,1}\mathtt{A}_{K,1}\mathtt{a_{n}},\\
 & \mathtt{B}_{1,2}^{(n+1)}=\mathtt{A}_{1,2}\mathtt{A}_{2,2}\dots\mathtt{A}_{K-1,2}\mathtt{A}_{K,2}\mathtt{a_{n}},\\
 & \mathtt{B}_{1,3}^{(n+1)}=\mathtt{A}_{1,3}\mathtt{A}_{2,3}\dots\mathtt{A}_{K-1,3}\mathtt{A}_{K,3}\mathtt{a_{n}},\\
 & \mathtt{B}_{1,4}^{(n+1)}=\mathtt{A}_{1,4}\mathtt{A}_{2,4}\dots\mathtt{A}_{K-1,4}\mathtt{A}_{K,4}\mathtt{a_{n}},\\
 & \;\vdots\\
 & \mathtt{B}_{1,\lambda_{n+1}}^{(n+1)}=\mathtt{A}_{1,\lambda_{n+1}}\mathtt{A}_{2,\lambda_{n+1}}\dots\mathtt{A}_{K-1,\lambda_{n+1}}\mathtt{A}_{K,\lambda_{n+1}}\mathtt{a_{n}},\\
(n+1)\text{-blocks of type \ensuremath{2}: \;} & \mathtt{B}_{2,1}^{(n+1)}=\mathtt{A}_{2,1}\mathtt{A}_{3,1}\dots\mathtt{A}_{K,1}\mathtt{A}_{1,1}\mathtt{a_{n}},\\
 & \mathtt{B}_{2,2}^{(n+1)}=\mathtt{A}_{K,2}\mathtt{A}_{1,2}\dots\mathtt{A}_{K-1,2}\mathtt{a_{n}},\\
 & \mathtt{B}_{2,3}^{(n+1)}=\mathtt{A}_{2,3}\mathtt{A}_{3,3}\dots\mathtt{A}_{K,3}\mathtt{A}_{1,3}\mathtt{a_{n}},\\
 & \mathtt{B}_{2,4}^{(n+1)}=\mathtt{A}_{K,4}\mathtt{A}_{1,4}\dots\mathtt{A}_{K-1,4}\mathtt{a_{n}},\\
 & \;\vdots\\
 & \mathtt{B}_{2,\lambda_{n+1}}^{(n+1)}=\mathtt{A}_{K,\lambda_{n+1}}\mathtt{A}_{1,\lambda_{n+1}}\dots\mathtt{A}_{K-1,\lambda_{n+1}}\mathtt{a_{n}},\\
\vdots & \;\vdots\\
(n+1)\text{-blocks of type \ensuremath{K}: \;} & \mathtt{B}_{K,1}^{(n+1)}=\mathtt{A}_{K,1}\mathtt{A}_{1,1}\dots\mathtt{A}_{K-1,1}\mathtt{a_{n}},\\
 & \mathtt{B}_{K,2}^{(n+1)}=\mathtt{A}_{2,2}\mathtt{A}_{3,2}\dots\mathtt{A}_{K,2}\mathtt{A}_{1,2}\mathtt{a_{n}},\\
 & \mathtt{B}_{K,3}^{(n+1)}=\mathtt{A}_{K,3}\mathtt{A}_{1,3}\dots\mathtt{A}_{K-1,3}\mathtt{a_{n}},\\
 & \mathtt{B}_{K,4}^{(n+1)}=\mathtt{A}_{2,4}\mathtt{A}_{3,4}\dots\mathtt{A}_{K,4}\mathtt{A}_{1,4}\mathtt{a_{n}},\\
 & \;\vdots \\
 & \mathtt{B}_{K,\lambda_{n+1}}^{(n+1)}=\mathtt{A}_{2,\lambda_{n+1}}\mathtt{A}_{3,\lambda_{n+1}}\dots\mathtt{A}_{K,\lambda_{n+1}}\mathtt{A}_{1,\lambda_{n+1}}\mathtt{a_{n}}.
\end{align*}
Here, the index $i\in\left\{ 1,\dots,K\right\} $ in $\mathtt{B}_{i,j}^{(n+1)}$
indicates the type and $j=1,\dots,\lambda_{n+1}$ numbers the $(n+1)$-blocks
of that type consecutively. We note that for $j$ odd the block $\mathtt{B}_{i+1,j}^{(n+1)}$
is obtained from $\mathtt{B}_{i,j}^{(n+1)}$ by cycling the pre-$(n+1)$-blocks
to the left. On the other hand, for $j$ even the block $\mathtt{B}_{i+1,j}^{(n+1)}$
is obtained from $\mathtt{B}_{i,j}^{(n+1)}$ by cycling the pre-$(n+1)$-blocks
to the right. Additionally, we define the next marker block 
\[
\mathtt{B}_{0}^{(n+1)}=\mathtt{A}_{1,\lambda_{n+1}+1}^{(n+1)}\mathtt{A}_{2,\lambda_{n+1}+1}^{(n+1)}\dots\mathtt{A}_{K,\lambda_{n+1}+1}^{(n+1)}\mathtt{a}_{n},
\]
where $\mathtt{A}_{i,\lambda_{n+1}+1}^{(n+1)}$, $i=1,\dots,K$, have
not been used in any of the other $(n+1)$-blocks. Hence, there are
are $N(n+1)+1=\lambda_{n+1}K+1$ many $(n+1)$-blocks in total. We also
note that every $(n+1)$-block is uniform in the $n$-blocks by equation
(\ref{eq:3n}). 

\subsection{Inductive step: Construction of $(n+m)$-blocks} \label{subsec:Step}

In an inductive process we construct $(n+m)$-blocks for $m\geq 2$. Assume that in our inductive construction we have constructed
$K\lambda_{n+m-1}$ many $(n+m-1)$-blocks
$\mathtt{B}_{i,j}^{(n+m-1)}$ of $K$ different types, where for $m=2$ the $(n+1)$-blocks are the ones constructed in Subsection \ref{subsec:3Initial} and for $m\geq 3$ the $(n+m-1)$-blocks are constructed according to the following formula (with $\lambda=\lambda_{n+m-1}$)
\begin{align*}
& \ \ \ \ \ \ \ (n+m-1)\text{-blocks of type \ensuremath{1}: \;} \\ 
& \mathtt{B}_{1,1}^{(n+m-1)}=\mathtt{A}_{1,1}^{(n+m-1)}\mathtt{A}_{2,2}^{(n+m-1)}\dots\mathtt{A}_{K,K}^{(n+m-1)}\mathtt{a}_{n+m-2},\\
 & \mathtt{B}_{1,2}^{(n+m-1)}=\mathtt{A}_{1,K+1}^{(n+m-1)}\mathtt{A}_{2,K+2}^{(n+m-1)}\dots\mathtt{A}_{K,2K}^{(n+m-1)}\mathtt{a}_{n+m-2},\\
 & \mathtt{B}_{1,3}^{(n+m-1)}=\mathtt{A}_{1,2K+1}^{(n+m-1)}\mathtt{A}_{2,2K+2}^{(n+m-1)}\dots\mathtt{A}_{K,3K}^{(n+m-1)}\mathtt{a}_{n+m-2},\\
 & \mathtt{B}_{1,4}^{(n+m-1)}=\mathtt{A}_{1,3K+1}^{(n+m-1)}\mathtt{A}_{2,3K+2}^{(n+m-1)}\dots\mathtt{A}_{K,4K}^{(n+m-1)}\mathtt{a}_{n+m-2},\\
 & \;\vdots\\
 & \mathtt{B}_{1,\lambda}^{(n+m-1)}=\mathtt{A}_{1,(\lambda-1)K+1}^{(n+m-1)}\mathtt{A}_{2,(\lambda-1)K+2}^{(n+m-1)}\dots\mathtt{A}_{K,\lambda K}^{(n+m-1)}\mathtt{a}_{n+m-2},\\
& \ \ \ \ \ \ \ (n+m-1)\text{-blocks of type \ensuremath{2}: \;} \\
& \mathtt{B}_{2,1}^{(n+m-1)}=\mathtt{A}_{1,2}^{(n+m-1)}\mathtt{A}_{2,3}^{(n+m-1)}\dots\mathtt{A}_{K-1,K}^{(n+m-1)}\mathtt{A}_{K,1}^{(n+m-1)}\mathtt{a}_{n+m-2},\\
 & \mathtt{B}_{2,2}^{(n+m-1)}=\mathtt{A}_{1,2K}^{(n+m-1)}\mathtt{A}_{2,K+1}^{(n+m-1)}\dots\mathtt{A}_{K,K-1}^{(n+m-1)}\mathtt{a}_{n+m-2},\\
 & \mathtt{B}_{2,3}^{(n+m-1)}=\mathtt{A}_{1,2K+2}^{(n+m-1)}\mathtt{A}_{2,2K+3}^{(n+m-1)}\dots\mathtt{A}_{K-1,3K}^{(n+m-1)}\mathtt{A}_{K,2K+1}^{(n+m-1)}\mathtt{a}_{n+m-2},\\
 & \mathtt{B}_{2,4}^{(n+m-1)}=\mathtt{A}_{1,4K}^{(n+m-1)}\mathtt{A}_{2,3K+1}^{(n+m-1)}\dots\mathtt{A}_{K,4K-1}^{(n+m-1)}\mathtt{a}_{n+m-2},\\
 & \;\vdots\\
 & \mathtt{B}_{2,\lambda}^{(n+m-1)}=\mathtt{A}_{1,\lambda K}^{(n+m-1)}\mathtt{A}_{2,(\lambda-1)K+1}^{(n+m-1)}\dots\mathtt{A}_{K,\lambda K-1}^{(n+m-1)}\mathtt{a}_{n+m-2}, \\
 & \;\vdots\\
& \ \ \ \ \ \ \ (n+m-1)\text{-blocks of type \ensuremath{K}: \;} \\
& \mathtt{B}_{K,1}^{(n+m-1)}=\mathtt{A}_{1,K}^{(n+m-1)}\mathtt{A}_{2,1}^{(n+m-1)}\dots\mathtt{A}_{K,K-1}^{(n+m-1)}\mathtt{a}_{n+m-2},\\
 & \mathtt{B}_{K,2}^{(n+m-1)}=\mathtt{A}_{1,K+2}^{(n+m-1)}\mathtt{A}_{2,K+3}^{(n+m-1)}\dots\mathtt{A}_{K-1,2K}^{(n+m-1)}\mathtt{A}_{K,K+1}^{(n+m-1)}\mathtt{a}_{n+m-2},\\
 & \mathtt{B}_{K,3}^{(n+m-1)}=\mathtt{A}_{1,3K}^{(n+m-1)}\mathtt{A}_{2,2K+1}^{(n+m-1)}\dots\mathtt{A}_{K,3K-1}^{(n+m-1)}\mathtt{a}_{n+m-2},\\
 & \mathtt{B}_{K,4}^{(n+m-1)}=\mathtt{A}_{1,3K+2}^{(n+m-1)}\mathtt{A}_{2,3K+3}^{(n+m-1)}\dots\mathtt{A}_{K-1,4K}^{(n+m-1)}\mathtt{A}_{K,3K+1}^{(n+m-1)}\mathtt{a}_{n+m-2},\\
 & \;\vdots \\
 & \mathtt{B}_{K,\lambda}^{(n+m-1)}=\mathtt{A}_{1,(\lambda-1)K+2}^{(n+m-1)}\dots\mathtt{A}_{K-1,\lambda K}^{(n+m-1)}\mathtt{A}_{K,(\lambda -1)K+1}^{(n+m-1)}\mathtt{a}_{n+m-2},
\end{align*}
using pre-$(n+m-1)$-blocks $\mathtt{A}^{(n+m-1)}_{i,j}$ of length $\tilde{h}_{n+m-1}$ and a marker segment $\mathtt{a}_{n+m-2}=\left(\mathtt{B}_{0}^{(n+m-2)}\right)^{\bar{N}(n+m-2)}$ with $\bar{N}(n+m-2)$ chosen according to equation (\ref{eq:3N(n+m)}) such that the $(n+m-1)$-blocks are uniform in $(n+m-2)$-blocks. We note that for $j$ odd the block $\mathtt{B}_{i+1,j}^{(n+m-1)}$
is obtained from $\mathtt{B}_{i,j}^{(n+m-1)}$ by cycling the second
index to the left. On the other hand,
for $j$ even the block $\mathtt{B}_{i+1,j}^{(n+m-1)}$ is obtained
from $\mathtt{B}_{i,j}^{(n+m-1)}$ by cycling the second index to the right. Additionally, we have a marker block
\[
\mathtt{B}_{0}^{(n+m-1)}=\mathtt{A}_{1,\lambda_{n+m-1}+1}^{(n+m-1)}\mathtt{A}_{2,\lambda_{n+m-1}+1}^{(n+m-1)}\dots\mathtt{A}_{K,\lambda_{n+m-1}+1}^{(n+m-1)}\mathtt{a}_{n+m-2},
\]
where the pre-$(n+m-1)$-blocks $\mathtt{A}_{i,\lambda_{n+m-1}+1}^{(n+m-1)}$
have not been used in any other $(n+m-1)$-block. Hence, there are
$N(n+m-1)+1=\lambda_{n+m-1}K+1$ many $(n+m-1)$-blocks in total. 

In our inductive construction process for $m\geq 3$, for any strings $\mathtt{A},\overline{\mathtt{A}}$ of at least
$\tilde{h}_{n+m-1}/e_{n+m-2}$ consecutive symbols in $\mathtt{A}_{i_{1},j_{1}}^{(n+m-1)}$
and $\mathtt{A}_{i_{2},j_{2}}^{(n+m-1)},$ we assume
\begin{equation}
\overline{f}\left(\mathtt{A},\overline{\mathtt{A}}\right)\geq\beta_{n+m-1}\;\text{in case of \ensuremath{i_{1}\neq i_{2},\;}\ensuremath{j_{1}=j_{2}}, }\label{eq:n+mdifftype}
\end{equation}
and 
\begin{equation}
\overline{f}\left(\mathtt{A},\overline{\mathtt{A}}\right)\geq\alpha_{n+m-1}\;\text{ in case of \ensuremath{j_{1}\neq j_{2}} for all $i_1,i_2$,}\label{eq:n+mdiffPatt}
\end{equation}
with the numbers $\alpha_{n+m-1}$ and $\beta_{n+m-1}$ from equations (\ref{eq:alphaN+2}) to (\ref{eq:BetaN+M}).
In the corresponding circular system we have 
\begin{equation}
\overline{f}\left(\mathcal{A}_{i_{1},j}^{(n+m-1)},\mathcal{A}_{i_{2},j}^{(n+m-1)}\right)\leq\sum_{i=1}^{m-2}\left(\frac{4}{l_{n+i}}+\frac{2}{N(n+i-1)+1}\right).\label{eq:N+Mcirc}
\end{equation}
Note that this assumption is void in case of $m=2$. In the odometer-based system we will use equation (\ref{eq:pren+1}) for the first inductive step.

In the inductive step starting with $m \geq 2$, we use $(n+m-1)$-blocks to define grouped $(n+m-1)$-blocks
$\mathtt{G}_{i,j}^{(n+m-1)}$ (where $i=1,\dots,K$ indicates the type
of used $(n+m-1)$-blocks and $j=0,\dots,e_{n+m-1}-1$ enumerates the
grouped $(n+m-1)$-blocks of that type) as follows: 
\[
\mathtt{G}_{i,j}^{(n+m-1)}=\mathtt{B}_{i,j\cdot2d_{n+m-1}+1}^{(n+m-1)}\mathtt{B}_{i,j\cdot2d_{n+m-1}+2}^{(n+m-1)}\dots\mathtt{B}_{i,(j+1)\cdot2d_{n+m-1}}^{(n+m-1)},
\]
i.e. it is a concatenation of $2d_{n+m-1}$ many $(n+m-1)$-blocks of
the same type. In the following Lemmas we see that different grouped
blocks are still apart from each other in the odometer-based system but grouped blocks with coinciding index $j$ can be made arbitrarily
close to each other in the circular system.
\begin{lem}[Distance between grouped $(n+m-1)$-blocks in the odometer-based system]
\label{lem:Distn+m} Let $i_{1},i_{2}\in\left\{ 1,\dots,K\right\} $,
$j_{1},j_{2}\in\left\{ 0,\dots,e_{n+m-1}-1\right\} $ and $\mathtt{G},\overline{\mathtt{G}}$
be strings of at least $u_{n+m-1}\mathtt{h}_{n+m-1}$ consecutive
symbols from grouped $(n+m-1)$-blocks $\mathtt{G}_{i_{1},j_{1}}^{(n+m-1)}$
and $\mathtt{G}_{i_{2},j_{2}}^{(n+m-1)}$ respectively.
\begin{enumerate}
\item For $i_{1}\neq i_{2}$ and $j_{1}=j_{2}$ we have 
\[
\begin{split}
    & \overline{f}\left(\mathtt{G},\overline{\mathtt{G}}\right)\geq \\
& \begin{cases}
\frac{1}{K}\alpha -\frac{14}{\sqrt{N}}-\frac{2}{KT}-\frac{2}{u_{n+1}}, & \text{for } m=2; \\
\beta_{n+m-1}+\frac{1}{K}\left(\alpha_{n+m-1}-\beta_{n+m-1}\right)-\frac{2}{e_{n+m-2}}-\frac{2}{u_{n+m-1}}-\frac{2}{N(n+m-2)+1}, & \text{for } m\geq 3.
\end{cases}
\end{split}
\]
\item For $j_{1}\neq j_{2}$ and all $i_{1},i_{2}$ we have 
\[
\overline{f}\left(\mathtt{G},\overline{\mathtt{G}}\right)\geq
\begin{cases}
\alpha-\frac{14}{\sqrt{N}}-\frac{2}{T}-\frac{2}{u_{n+1}}, &  \text{ for } m=2; \\
\alpha_{n+m-1}-\frac{2}{e_{n+m-2}}-\frac{2}{u_{n+m-1}}-\frac{2}{N(n+m-2)+1}, & \text{ for } m\geq 3.
\end{cases}
\]
\end{enumerate}
\end{lem}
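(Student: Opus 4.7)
The plan is to reduce the comparison between $\mathtt{G}$ and $\overline{\mathtt{G}}$ to a comparison of sequences of pre-$(n+m-1)$-blocks (treated as symbols) and then invoke Proposition~\ref{prop:symbol by block replacement} (for $m\ge 3$) or Corollary~\ref{cor:alpha separated blocks} (for $m=2$). First I would strip the at most two partial $(n+m-1)$-blocks at the ends of $\mathtt{G}$ and $\overline{\mathtt{G}}$, which by Property~\ref{property:omit_symbols} costs at most $2/u_{n+m-1}$ in $\overline{f}$, since each string has length at least $u_{n+m-1}\mathtt{h}_{n+m-1}$. Next I would delete the marker segments $\mathtt{a}_{n+m-2}$ sitting inside each remaining $(n+m-1)$-block; by uniformity these form a fraction $1/(N(n+m-2)+1)$ of each $(n+m-1)$-block, contributing at most $2/(N(n+m-2)+1)$ to the error (for $m=2$ this fraction $1/(N+1)$ will be absorbed into the $14/\sqrt{N}$ slack). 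What remains are pure concatenations of pre-$(n+m-1)$-blocks $\mathtt{A}^{(n+m-1)}_{i,r}$.

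I would then label each pre-$(n+m-1)$-block symbol by its second index $r$ alone. For $m\ge 3$ the inductive hypotheses \eqref{eq:n+mdifftype} and \eqref{eq:n+mdiffPatt} supply exactly the dichotomy required by Proposition~\ref{prop:symbol by block replacement} with $L=\tilde{\mathtt{h}}_{n+m-1}$ and $R=e_{n+m-2}$: pairs of pre-blocks with different labels have $\overline{f}$ distance at least $\alpha_{n+m-1}$ on substrings of length at least $\tilde{\mathtt{h}}_{n+m-1}/e_{n+m-2}$, while pairs with the same label (which always occur at different positions of the two grouped blocks, hence with different first index) have $\overline{f}$ distance at least $\beta_{n+m-1}$. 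For $m=2$ the bound from Proposition~\ref{prop:OdomFeld} applies uniformly to all distinct pre-blocks with constant $\alpha-13/\sqrt{N}-2/T$, so I would instead apply Corollary~\ref{cor:alpha separated blocks} with $L=\tilde{\mathtt{h}}_{n+1}$, $R=N$, using the full pre-block identity as the label.

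The crux is computing the worst-case $\tilde{f}$ of the resulting label sequences. Case~(2) is easy: since $j_1\ne j_2$ forces the two groups to use disjoint second-index ranges, no approximate match can contain any pair, so $\tilde{f}=1$ and both propositions yield the full $\alpha_{n+m-1}$ (resp.\ $\alpha-13/\sqrt{N}-2/T$), which combine with the corrections above to match the claim. Case~(1) requires the main work. In $\mathtt{G}^{(n+m-1)}_{i_1,j}$ and $\mathtt{G}^{(n+m-1)}_{i_2,j}$ each second index in the common range $\{2d_{n+m-1}jK+1,\dots,2d_{n+m-1}(j+1)K\}$ occurs exactly once in each group, distributed across the $2d_{n+m-1}$ enumerations $s$; within a corresponding pair of $(n+m-1)$-blocks of enumeration $s$, the $K$ second indices $\{(s-1)K+1,\dots,sK\}$ appear as the two cyclic shifts of each other by $|i_2-i_1|$ positions (left if $s$ is odd, right if $s$ is even), whose longest common subsequence is $K-|i_2-i_1|\le K-1$, with equality at $|i_2-i_1|=1$. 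Since distinct enumerations occupy disjoint second-index ranges, no cross-$(n+m-1)$-block matches exist, so the total LCS is at most $2d_{n+m-1}(K-1)$ and $\tilde{f}\ge 1/K$. Feeding this into Proposition~\ref{prop:symbol by block replacement} yields the interpolation $\beta_{n+m-1}+(\alpha_{n+m-1}-\beta_{n+m-1})/K$, while the same $\tilde{f}=1/K$ fed into Corollary~\ref{cor:alpha separated blocks} for $m=2$ gives $(\alpha-13/\sqrt{N}-2/T)/K$, which absorbs $13/(K\sqrt{N})+2/N+2/(N+1)$ into $14/\sqrt{N}$ to match the claim. The hard part will be this LCS computation: verifying that the order-preserving constraint together with the disjointness of second-index ranges across enumerations really forces at most $K-1$ matches per block pair, with no clever global matching exploiting the alternating left/right cycling direction able to do better.
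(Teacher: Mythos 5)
Your proof follows essentially the same route as the paper's: after completing partial $(n+m-1)$-blocks (cost $2/u_{n+m-1}$) and removing the marker segments (cost $2/(N(n+m-2)+1)$), you invoke Proposition~\ref{prop:symbol by block replacement} (for $m\ge3$, with the dichotomy given by \eqref{eq:n+mdifftype}--\eqref{eq:n+mdiffPatt}) or Corollary~\ref{cor:alpha separated blocks} (for $m=2$, using \eqref{eq:pren+1}), with $\tilde{f}\ge 1/K$ in case~(1) and $\tilde{f}=1$ in case~(2), exactly as the paper does. One small correction to your LCS computation, which the paper omits: the longest common subsequence of $(1,\dots,K)$ and its cyclic shift by $d$ positions is $\max(K-d,\,d)$, not $K-d$; nevertheless the bound $\le K-1$ you actually use holds for every $d\in\{1,\dots,K-1\}$, so your conclusion $\tilde{f}\ge 1/K$ is unaffected.
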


\begin{proof}
In the first case we have $j_{1}=j_{2}$. We treat $\mathtt{G}$ and $\overline{\mathtt{G}}$ as strings of complete $(n+m-1)$-blocks
by adding fewer than $2h_{n+m-1}$ symbols to complete partial blocks at the beginning and end of $\mathtt{G}$ and $\overline{\mathtt{G}}$. These constitute
a fraction of at most $2/u_{n+m-1}$ of the total length.
Additionally, we ignore the marker segments $\mathtt{a}_{n+m-2}$
which form a fraction of $1/(N(n+m-2)+1)$ of the length of each
$(n+m-1)$-block due to uniformity and so of $\mathtt{G}$ as well as $\overline{\mathtt{G}}$. On the remaining strings in case of $m=2$ we apply Corollary \ref{cor:alpha separated blocks} with $\tilde{f}\geq \frac{1}{K}$ and equation (\ref{eq:pren+1}) which yields
\[
\overline{f}\left(\mathtt{G},\overline{\mathtt{G}}\right)\geq\frac{1}{K} \left(\alpha-\frac{13}{\sqrt{N}}-\frac{2}{T}\right)-\frac{2}{N}-\frac{2}{N+1}-\frac{2}{u_{n+1}} \geq\frac{1}{K}\alpha -\frac{14}{\sqrt{N}}-\frac{2}{KT}-\frac{2}{u_{n+1}}.
\]
On the remaining strings $\mathtt{G}_{\text{mod}}$ and $\overline{\mathtt{G}}_{\text{mod}}$ in case of $m\geq 3$ we use Proposition \ref{prop:symbol by block replacement} with $\tilde{f}\geq \frac{1}{K}$ and equations (\ref{eq:n+mdifftype}) and (\ref{eq:n+mdiffPatt}) to obtain
\[
\overline{f}\left(\mathtt{G}_{\text{mod}},\overline{\mathtt{G}}_{\text{mod}}\right)\geq \frac{1}{K} \alpha_{n+m-1} + \left(1-\frac{1}{K}\right)\beta_{n+m-1}-\frac{2}{e_{n+m-2}},
\]
which implies the claim.

In the second case we observe that $\mathtt{G}$ and $\overline{\mathtt{G}}$
do not have any Feldman pattern of pre-$(n+m-1)$-blocks in common due to $j_{1}\neq j_{2}$. As before we complete partial blocks at the beginning and end of $\mathtt{G}$ and $\overline{\mathtt{G}}$ and remove the marker segments $\mathtt{a}_{n+m-2}$. On the remaining strings in case of $m=2$ we apply Corollary \ref{cor:alpha separated blocks} with $\tilde{f}=1$ and equation (\ref{eq:pren+1}), while for $m\geq 3$ we use Corollary \ref{cor:alpha separated blocks} with $\tilde{f}=1$ and equation (\ref{eq:n+mdiffPatt}).
\end{proof}

\begin{lem}[Distance between grouped $(n+m-1)$-blocks in the circular system]
\label{lem:N+mCircular} For all $i_{1},i_{2}\in\left\{ 1,\dots,K\right\} $
and $j\in\left\{ 0,\dots,e_{n+m-1}-1\right\},$ we have 
\[
\overline{f}\left(\mathcal{G}_{i_{1},j}^{(n+m-1)},\mathcal{G}_{i_{2},j}^{(n+m-1)}\right)\leq\sum_{i=1}^{m-1}\left(\frac{4}{l_{n+i}}+\frac{2}{N(n+i-1)+1}\right).
\]
\end{lem}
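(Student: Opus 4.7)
My plan is to decompose both grouped blocks into their $2d_{n+m-1}$ constituent circular $(n+m-1)$-blocks and reduce the problem to estimating $\overline{f}\bigl(\mathcal{B}_{i_1,s}^{(n+m-1)},\mathcal{B}_{i_2,s}^{(n+m-1)}\bigr)$ for each index $s$ in the group $\{j\cdot 2d_{n+m-1}+1,\dots,(j+1)\cdot 2d_{n+m-1}\}$. Since these circular blocks all have equal length $q_{n+m-1}$, Property \ref{property:substring_matching} applied to the obvious position-by-position matching gives
\[
\overline{f}\bigl(\mathcal{G}_{i_1,j}^{(n+m-1)},\mathcal{G}_{i_2,j}^{(n+m-1)}\bigr)\le\max_{s}\overline{f}\bigl(\mathcal{B}_{i_1,s}^{(n+m-1)},\mathcal{B}_{i_2,s}^{(n+m-1)}\bigr),
\]
so it suffices to bound each pairwise distance on the right by the claimed expression. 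I would then proceed by induction on $m$.

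The key combinatorial observation is that the sequence of pre-$(n+m-1)$-blocks appearing in $\mathtt{B}_{i_2,s}^{(n+m-1)}$ is obtained from that in $\mathtt{B}_{i_1,s}^{(n+m-1)}$ by a cyclic shift of the second indices by $\delta=i_2-i_1$ within the relevant group $\{(s-1)K+1,\dots,sK\}$ of Feldman patterns. Consequently, position $\ell$ in $\mathtt{B}_{i_2,s}^{(n+m-1)}$ carries a pre-block with the same second index as position $\ell+\delta$ in $\mathtt{B}_{i_1,s}^{(n+m-1)}$, but with first indices differing by $\delta$. Translating this to the circular picture, within each 2-subsection of the circular $(n+m-1)$-block the pre-block images occupy consecutive slots of equal length, and a corresponding shift by $\delta$ such slots aligns same-second-index pre-blocks across the two circular blocks. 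On the aligned bulk I invoke the inductive hypothesis (\ref{eq:N+Mcirc}) to control $\overline{f}\bigl(\mathcal{A}_{\ell+\delta,j'}^{(n+m-1)},\mathcal{A}_{\ell,j'}^{(n+m-1)}\bigr)$ slot by slot, contributing the inductive sum $\sum_{i=1}^{m-2}\bigl(\tfrac{4}{l_{n+i}}+\tfrac{2}{N(n+i-1)+1}\bigr)$.

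It then remains to account for the boundary effects of the shift. Two current-level sources contribute: first, the $\delta$ pre-block slots at the tail of each 2-subsection of the shifted $\mathcal{B}_{i_1,s}^{(n+m-1)}$ are matched against marker slots (or against slots from the neighbouring 2-subsection with a different spacer exponent) in $\mathcal{B}_{i_2,s}^{(n+m-1)}$; since the marker segment $\mathtt{a}_{n+m-2}$ forms a fraction $\tfrac{1}{N(n+m-2)+1}$ of each $(n+m-1)$-block by uniformity, this mismatch contributes at most $\tfrac{2}{N(n+m-2)+1}$. Second, the spacers $b^{q-j_p}$ and $e^{j_p}$ introduced by the circular operator at the relevant level occupy a fraction bounded by $\tfrac{4}{l_{n+m-1}}$ of the total length and can be deleted via Property \ref{property:omit_symbols}. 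Summing the inductive and current-level contributions yields the claimed bound. For the base case $m=2$ the inductive sum is empty and the aligned pre-blocks coincide exactly (same type, same Feldman pattern), so only the marker and spacer contributions remain.

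The main obstacle I anticipate is the bookkeeping that links the odometer-level cyclic shift to a character-shift in the circular image: one has to verify that the mismatched portion is truly confined to the marker tail of each 2-subsection and to the spacer pattern, and that the resulting fractions match $\tfrac{2}{N(n+m-2)+1}$ and $\tfrac{4}{l_{n+m-1}}$ precisely rather than a larger constant times these quantities. The placement of the marker segment at the end of each $(n+m-1)$-block (rather than distributed throughout), together with the uniformity of $(n+m-2)$-blocks within $(n+m-1)$-blocks, is what keeps this bookkeeping tractable, since it localises the wrap-around mismatch into a single contiguous tail within each 2-subsection.
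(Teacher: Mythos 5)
Your reduction $\overline{f}\bigl(\mathcal{G}_{i_1,j}^{(n+m-1)},\mathcal{G}_{i_2,j}^{(n+m-1)}\bigr)\le\max_s\overline{f}\bigl(\mathcal{B}_{i_1,s}^{(n+m-1)},\mathcal{B}_{i_2,s}^{(n+m-1)}\bigr)$ is where the argument diverges from the paper's proof and where a genuine gap opens. The paper's proof explicitly invokes the fact that \emph{under the operator $\mathcal{C}_{n+m-1}$ each $(n+m-1)$-block is repeated $l_{n+m-1}-1$ many times} inside $\mathcal{G}_{i,j}^{(n+m-1)}$, and it is precisely this repetition that produces the budget term $\tfrac{4}{l_{n+m-1}}$. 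Once you have reduced to a single pair $\mathcal{B}_{i_1,s}^{(n+m-1)}$, $\mathcal{B}_{i_2,s}^{(n+m-1)}$, the parameter $l_{n+m-1}$ simply does not occur: it only governs how $(n+m-1)$-blocks are assembled into level $n+m$, and the only spacers present inside a circular $(n+m-1)$-block are those of level $\le n+m-2$. So the $b^{q-j_p}$, $e^{j_p}$ that you propose to delete ``at the relevant level'' at cost $\tfrac{4}{l_{n+m-1}}$ are not inside the strings you are comparing after the reduction --- they live outside the $\mathcal{B}$'s, in $\mathcal{G}$, and they actually match perfectly (same exponent $j_k$) and cost nothing.

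The concrete failure of the reduced bound is in the treatment of the cyclic shift. When you shift by $\delta$ pre-block slots inside a single $\mathcal{B}_{i,s}^{(n+m-1)}$, a fraction $\delta/K$ of the pre-block slots in each $2$-subsection wrap into the neighbouring $2$-subsection. For those wrapped slots you are matching $\mathcal{C}_{n+m-2,t}(\mathtt{A}_{\ell,j'})$ against $\mathcal{C}_{n+m-2,t'}(\mathtt{A}_{\ell',j'})$ with $t\ne t'$, i.e.\ with different level-$(n+m-2)$ spacer exponents $j_t\ne j_{t'}$. This incurs an extra cost of order $\tfrac{1}{l_{n+m-2}}$, which by the growth condition $l_{n+m-1}\ge l_{n+m-2}^2$ strictly dominates the budgeted $\tfrac{4}{l_{n+m-1}}$. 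Your proposal asserts that the wrap-around mismatch ``contributes at most $\tfrac{2}{N(n+m-2)+1}$'' (the marker fraction), but the wrapped pre-block slots form a fraction $\delta/K$ of each $2$-subsection --- an amount vastly larger than the marker fraction --- and the loss from crossing a $2$-subsection boundary is a spacer-exponent loss, not a marker loss. The inductive hypothesis (\ref{eq:N+Mcirc}) is stated for pre-block images with a \emph{fixed} $\mathcal{C}_{n+m-2,k}$ on both sides and therefore does not absorb this extra cost. The paper avoids the problem by keeping the $(l_{n+m-1}-1)$-fold repetition inside the string being matched, so that the shift can be accounted for once per repeated $(n+m-1)$-block and absorbed into the $\tfrac{4}{l_{n+m-1}}$ term; your decomposition destroys that structure before the matching is performed.
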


\begin{proof}
We recall that the marker segment $\mathtt{a}_{n+m-2}$ occupies a fraction
of $\frac{1}{N(n+m-2)+1}$ of the total length of each $(n+m-1)$-block. Moreover, for every $j=1,\dots,\lambda_{n+m-1}$
the $(n+m-1)$-block $\mathtt{B}_{i_{2},j}^{(n+m-1)}$ is obtained from
$\mathtt{B}_{i_{1},j}^{(n+m-1)}$ by a cycling permutation of the Feldman patterns used for the pre-$(n+m-1)$-blocks.
Under the cyclic operator $\mathcal{C}_{n+m-1}$ each $(n+m-1)$-block
is repeated $l_{n+m-1}-1$ many times. Hence, the $\overline{f}$ distance
between $\mathcal{G}_{i_{1},j}^{(n+m-1)}$ and $\mathcal{G}_{i_{2},j}^{(n+m-1)}$
is at most 
\[
M+\frac{4}{l_{n+m-1}}+\frac{2}{N(n+m-2)+1},
\]
where $M$ is the $\overline{f}$ distance of pre-$(n+m-1)$-blocks of the same pattern in the
circular system. In case of $m=2$ this distance $M=0$, while for $m\geq 3$ we obtain the claim with the aid of equation
(\ref{eq:N+Mcirc}).
\end{proof}

For each type $i\in\left\{ 1,\dots,K\right\} $ we use the $e_{n+m-1}$
many grouped $(n+m-1)$-blocks of type $i$ as building blocks for the
Feldman patterns $\mathtt{A}_{i,j}^{(n+m)}$, $j=1,\dots,\left(\lambda_{n+m}+1\right)K$ which are the pre-$(n+m)$-blocks with length $\tilde{h}_{n+m}$. For every $i\in\left\{ 1,\dots,K\right\} $
each pattern $\mathtt{A}_{i,j}^{(n+m)}$, $j=1,\dots,\left(\lambda_{n+m}+1\right)K$,
contains each $(n+m-1)$-block of type $i$ exactly 
\begin{equation} \label{eq:3N(n+m)}
\bar{N}(n+m-1)=\left(e_{n+m-1}\right)^{2\left(\lambda_{n+m}+1\right)K+3}
\end{equation}
many times by the construction in Proposition \ref{prop:OdomFeld}. 
\begin{lem}[Separation and closeness of pre-$(n+m)$-blocks of the same Feldman
pattern]
 Let $j\in\left\{ 1,\dots,\left(\lambda_{n+m}+1\right)K\right\} $
and $i_{1},i_{2}\in\left\{ 1,\dots,K\right\} $, $i_{1}\neq i_{2}$.
For all strings $\mathtt{A}$ and $\overline{\mathtt{A}}$ of at least
$\tilde{h}_{n+m}/e_{n+m-1}$ consecutive symbols in $\mathtt{A}_{i_{1},j}^{(n+m)}$
and $\mathtt{A}_{i_{2},j}^{(n+m)}$, respectively, we have
\[
\overline{f}\left(\mathtt{A},\overline{\mathtt{A}}\right)\geq\frac{1}{K}\alpha -\frac{14}{\sqrt{N}}-\frac{2}{TK}-\frac{4}{u_{n+1}}-\frac{2}{e_{n+1}},
\]
in case of $m=2$; while for $m\geq3$, we have $\overline{f}\left(\mathtt{A},\overline{\mathtt{A}}\right)$ greater than or equal to
\[
\beta_{n+m-1}+\frac{1}{K}\left(\alpha_{n+m-1}-\beta_{n+m-1}\right)-\frac{2}{e_{n+m-2}}-\frac{4}{u_{n+m-1}}-\frac{2}{N(n+m-2)+1}-\frac{2}{e_{n+m-1}}.
\]
 For the corresponding strings in the
circular system we have 
\[
\overline{f}\left(\mathcal{A}_{i_{1},j}^{(n+m)},\mathcal{A}_{i_{2},j}^{(n+m)}\right)\leq\sum_{i=1}^{m-1}\left(\frac{4}{l_{n+i}}+\frac{2}{N(n+i-1)+1}\right).
\]
\end{lem}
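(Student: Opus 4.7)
The plan is to decouple the two conclusions, which are essentially independent.  For fixed $j$ and varying $i$, the pre-$(n+m)$-block $\mathtt{A}_{i,j}^{(n+m)}$ is the \emph{same} Feldman pattern but with building blocks (grouped $(n+m-1)$-blocks) drawn from a different family -- precisely the setup of the symbol-by-block lemmas developed in Section~\ref{sec:OdomFeld}, together with the closeness estimate at the grouped-block level supplied by Lemma~\ref{lem:N+mCircular}.

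For the odometer-based separation, I would apply Lemma~\ref{lem:same} with the role of the $K$ families played by the $K$ types of grouped $(n+m-1)$-blocks.  The required cross-family lower bound on substrings comes from part~(1) of Lemma~\ref{lem:Distn+m} when the second indices of the grouped blocks agree, and from part~(2) when they differ; the binding constraint is part~(1).  The only delicate bookkeeping is to chase the substring-length threshold: Lemma~\ref{lem:same} asks for separation on substrings of $\geq K_\text{lem}\mathtt{h}_\text{lem}/R$ consecutive symbols, where $K_\text{lem}\mathtt{h}_\text{lem}=2d_{n+m-1}\mathtt{h}_{n+m-1}$ is the length of a grouped block.  To match the $u_{n+m-1}\mathtt{h}_{n+m-1}$ threshold of Lemma~\ref{lem:Distn+m}, we choose $R=u_{n+m-1}$, which is legal since $2d_{n+m-1}/u_{n+m-1}=2u_{n+m-1}\geq R$.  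The conclusion $\overline{f}(\mathtt{B},\overline{\mathtt{B}})\geq \text{(bound)}-\frac{2}{N^{2M+2}}-\frac{2}{R}$ then subtracts $\frac{2}{u_{n+m-1}}$ from the Lemma~\ref{lem:Distn+m}(1) bound, plus a term controlled by $\frac{2}{e_{n+m-1}}$ (since $N^{2M+2}$ is a high power of $e_{n+m-1}$).  Carrying this out in the case $m=2$ gives exactly $\frac{1}{K}\alpha-\frac{14}{\sqrt{N}}-\frac{2}{TK}-\frac{4}{u_{n+1}}-\frac{2}{e_{n+1}}$, and in the case $m\geq 3$ the analogous expression with $\beta_{n+m-1}+\frac{1}{K}(\alpha_{n+m-1}-\beta_{n+m-1})$ in place of $\frac{1}{K}\alpha$ and the corresponding error parameters.

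For the circular-system closeness, I would argue by position-by-position alignment.  Since the Feldman pattern indexed by $j$ is identical for $i_1$ and $i_2$, at each position $t$ the blocks $\mathtt{A}_{i_1,j}^{(n+m)}$ and $\mathtt{A}_{i_2,j}^{(n+m)}$ contain grouped $(n+m-1)$-blocks $\mathtt{G}_{i_1,r(t)}^{(n+m-1)}$ and $\mathtt{G}_{i_2,r(t)}^{(n+m-1)}$ that share the same second index $r(t)$.  Because the circular operator $\mathcal{C}_{n+m-1}$ attaches spacer decorations $b^{q_{n+m-1}-j_p}, e^{j_p}$ that depend only on the position $p$ of each $(n+m-1)$-block inside the pre-$(n+m)$-block, these decorations are identical for $\mathtt{A}_{i_1,j}^{(n+m)}$ and $\mathtt{A}_{i_2,j}^{(n+m)}$.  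Hence the position-by-position decomposition of $\mathcal{A}_{i_1,j}^{(n+m)}$ and $\mathcal{A}_{i_2,j}^{(n+m)}$ into corresponding circular images of grouped blocks furnishes a match of the type to which Property~\ref{property:substring_matching} applies.  Each constituent pair satisfies the bound from Lemma~\ref{lem:N+mCircular}, and the weighted average reproduces the same bound, i.e.\ $\sum_{i=1}^{m-1}\bigl(\tfrac{4}{l_{n+i}}+\tfrac{2}{N(n+i-1)+1}\bigr)$.

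The main obstacle will be the bookkeeping in the odometer-based step: making sure the substring-length threshold in the conclusion ($\tilde{h}_{n+m}/e_{n+m-1}$) is compatible with the threshold needed at the grouped-block level, while simultaneously absorbing the $-2/N^{2M+2}-2/R$ error from Lemma~\ref{lem:same} into the stated $-4/u_{n+m-1}-2/e_{n+m-1}$ slack.  The circular-system half is essentially a direct corollary of Lemma~\ref{lem:N+mCircular} plus Property~\ref{property:substring_matching}, and introduces no new error terms.
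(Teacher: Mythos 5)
Your proposal is correct and takes essentially the same approach as the paper: apply Lemma~\ref{lem:same} with $R=u_{n+m-1}$ and the cross-type bounds from Lemma~\ref{lem:Distn+m} (part~(1) being the binding one) for the odometer-based lower bound, and deduce the circular closeness directly from Lemma~\ref{lem:N+mCircular} using the aligned Feldman-pattern decomposition. Your bookkeeping of the substring-length thresholds and the extra error terms $\tfrac{2}{u_{n+m-1}}+\tfrac{2}{e_{n+m-1}}$ matches the paper's stated bounds.
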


\begin{proof}
The statement in the odometer based system follows from the first part of Lemma \ref{lem:Distn+m}
and Lemma \ref{lem:same} (with $R=u_{n+m-1}$) because $\mathtt{A}_{i_{1},j}^{(n+m)}$
and $\mathtt{A}_{i_{2},j}^{(n+m)}$ are constructed as the same Feldman
pattern with the grouped $(n+m-1)$-blocks of different type but same pattern as building
blocks. This also yields the statement in the circular system as a direct consequence
of Lemma \ref{lem:N+mCircular}.
\end{proof}

We will also need a statement on the $\overline{f}$ distance between different
Feldman patterns in the odometer based system.

\begin{lem}[Separation of pre-$(n+m)$-blocks of different Feldman patterns]
 Let $j_{1},j_{2}\in\left\{ 1,\dots,\left(\lambda_{n+m}+1\right)K\right\} $,
$j_{1}\neq j_{2}$, and $i_{1},i_{2}\in\left\{ 1,\dots,K\right\} $.
For all strings $\mathtt{A}$ and $\overline{\mathtt{A}}$ of at least
$\tilde{h}_{n+m}/{e_{n+m-1}}$ consecutive symbols in $\mathtt{A}_{i_{1},j_1}^{(n+m)}$
and $\mathtt{A}_{i_{2},j_2}^{(n+m)}$, respectively, we have 
\[
\overline{f}\left(\mathtt{A},\overline{\mathtt{A}}\right)\geq \begin{cases}
\alpha-\frac{14}{\sqrt{N}}-\frac{2}{T}-\frac{4}{u_{n+1}}-\frac{13}{\sqrt{e_{n+1}}}, & \text{for } m=2; \\
\alpha_{n+m-1}-\frac{2}{e_{n+m-2}}-\frac{4}{u_{n+m-1}}-\frac{2}{N(n+m-2)+1}-\frac{13}{\sqrt{e_{n+m-1}}}, & \text{for } m\geq 3.
\end{cases}
\]
\end{lem}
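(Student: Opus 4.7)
The plan is to mirror the proof of Lemma \ref{lem:ISSepDiff}, but replacing the circular Feldman separation (Proposition \ref{prop:Feldman}) with its odometer-based analogue (Proposition \ref{prop:OdomFeld}), and replacing the grouped-block estimate in the circular system (Lemma \ref{lem:ISGroup}) with the corresponding odometer estimate that we already have, namely Lemma \ref{lem:Distn+m}. No circular operator $\mathcal{C}_{n+m-1,\cdot}$ is involved, so the proof is in fact slightly simpler than in the circular setting.

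First I would set up Proposition \ref{prop:OdomFeld} with the following dictionary: the role of the ``$n$-blocks'' is played by the $(n+m-1)$-blocks $\mathtt{B}_{i,t}^{(n+m-1)}$, the role of the families $\mathtt{A}_{1}^{(s)},\dots,\mathtt{A}_{N}^{(s)}$ is played by the grouped $(n+m-1)$-blocks $\mathtt{G}_{i,0}^{(n+m-1)},\dots,\mathtt{G}_{i,e_{n+m-1}-1}^{(n+m-1)}$ of type $i$ (so the family index $s$ corresponds to the type $i\in\{1,\dots,K\}$ and the position index $j$ corresponds to the grouping index $s\in\{0,\dots,e_{n+m-1}-1\}$), and the Feldman parameter $N$ becomes $e_{n+m-1}$. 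Each grouped $(n+m-1)$-block is a concatenation of $2d_{n+m-1}$ many $(n+m-1)$-blocks, which plays the role of the parameter $K$ in Proposition \ref{prop:OdomFeld}. With these identifications, $\mathtt{A}_{i,j}^{(n+m)}$ is by construction the $j$-th Feldman pattern in the family of type $i$.

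Next I would verify the hypothesis of Proposition \ref{prop:OdomFeld}. The required lower bound on $\overline{f}$ between substrings (of at least the appropriate length, dictated by the parameter $R=u_{n+m-1}$) of two grouped blocks $\mathtt{G}_{i_1,s_1}^{(n+m-1)}$ and $\mathtt{G}_{i_2,s_2}^{(n+m-1)}$ with $s_1\neq s_2$ is exactly case (2) of Lemma \ref{lem:Distn+m}, which yields the bound $\alpha-\frac{14}{\sqrt{N}}-\frac{2}{T}-\frac{2}{u_{n+1}}$ when $m=2$ and $\alpha_{n+m-1}-\frac{2}{e_{n+m-2}}-\frac{2}{u_{n+m-1}}-\frac{2}{N(n+m-2)+1}$ when $m\geq 3$. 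Note that Lemma \ref{lem:Distn+m}(2) applies regardless of whether $i_1=i_2$ or $i_1\neq i_2$, which is precisely what is needed to handle both of the cases $s_1=s_2$ and $s_1\neq s_2$ in the multi-family version of Proposition \ref{prop:OdomFeld}.

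Finally, applying Proposition \ref{prop:OdomFeld} to this setup with $j_1\neq j_2$ produces the conclusion
\[
\overline{f}(\mathtt{A},\overline{\mathtt{A}})>\alpha'-\frac{13}{\sqrt{e_{n+m-1}}}-\frac{2}{u_{n+m-1}},
\]
where $\alpha'$ is the grouped-block bound from the previous step, and the length requirement $\tilde{h}_{n+m}/e_{n+m-1}$ translates exactly to the length bound $N^{2M+2}K\mathtt{h}_{n}$ in Proposition \ref{prop:OdomFeld} under the identifications above. Substituting the two values of $\alpha'$ gives the two claimed inequalities. The only point requiring a little care is bookkeeping the bookkeeping of the length thresholds and the counting parameter $M$ (so that the hypothesis on the substring lengths is actually satisfied), but this is purely arithmetic and proceeds identically to the already-verified circular case. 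Conceptually, there is no new obstacle here; this lemma is the odometer-based shadow of Lemma \ref{lem:ISSepDiff}.
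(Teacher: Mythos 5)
Your proposal is correct and is in fact exactly the paper's argument (which the paper compresses to one sentence): invoke case (2) of the grouped-block distance Lemma~\ref{lem:Distn+m} to get the $\alpha'$-separation hypothesis, then apply Proposition~\ref{prop:OdomFeld} with $N \mapsto e_{n+m-1}$, $R \mapsto u_{n+m-1}$, family index $\mapsto$ type $i$, building blocks $\mapsto$ grouped $(n+m-1)$-blocks of length $2d_{n+m-1}\mathtt{h}_{n+m-1}$. The arithmetic you sketch (the loss $13/\sqrt{e_{n+m-1}} + 2/u_{n+m-1}$ appended to each of the two grouped-block bounds, and the length threshold $N^{2M+2}K\mathtt{h}_n = \tilde{h}_{n+m}/e_{n+m-1}$) checks out, and the hypothesis threshold $2d_{n+m-1}\mathtt{h}_{n+m-1}/u_{n+m-1} = 2u_{n+m-1}\mathtt{h}_{n+m-1} \geq u_{n+m-1}\mathtt{h}_{n+m-1}$ is indeed covered by Lemma~\ref{lem:Distn+m}.
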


\begin{proof}
Since we consider different Feldman pattern with the grouped $(n+m-1)$-blocks
as building blocks, we obtain from the second part of Lemma  \ref{lem:Distn+m}
and Proposition \ref{prop:OdomFeld} (with $R=u_{n+m-1}$).
\end{proof}

In the next step, we use these Feldman patterns $\mathtt{A}_{i,j}^{(n+m)}$
to define $(n+m)$-blocks for $i=1,\dots,K$ and $j=1,\dots,\lambda_{n+m}$
as in the formula at the beginning of Subsection \ref{subsec:Step} with $n+m-1$ replaced by $n+m$,
and an additional marker block 
$\mathtt{B}_{0}^{(n+m)}=\mathtt{A}_{1,\lambda_{n+m}+1}^{(n+m)}\mathtt{A}_{2,\lambda_{n+m}+1}^{(n+m)}\dots\mathtt{A}_{K,\lambda_{n+m}+1}^{(n+m)}\mathtt{a}_{n+m-1}$
with the marker segment $\mathtt{a}_{n+m-1}=\left(\mathtt{B}_{0}^{(n+m-1)}\right)^{\bar{N}(n+m-1)}$. We also
note that every $(n+m)$-block contains exactly one pattern $\mathtt{A}_{i,j}^{(n+m)}$
of each type $i\in\left\{ 1,\dots,K\right\} $ and thus it is uniform
in the $(n+m-1)$-blocks. Thus, the inductive step has been accomplished.

\subsection{Final step: Construction of $(n+m_n)$-blocks} \label{subsec:3Final}

As foreshadowed in equation (\ref{eq:betacond}) we follow the inductive construction scheme
until $\beta_{n+m_{n}}>\alpha-\frac{2}{T}-\frac{\varepsilon}{2}$ and we have constructed Feldman patterns $\mathtt{A}_{i,j}^{(n+m_{n})}$, $j=1,\dots,\left(\lambda_{n+m_{n}}+1\right)K$, $i=1,\dots,K$, 
of length $\tilde{h}_{n+m_{n}}$. 
In particular, we have 
\begin{equation}
\overline{f}\left(\mathtt{A},\overline{\mathtt{A}}\right)\geq\beta_{n+m_{n}}>\alpha-\frac{2}{T}-\frac{\varepsilon}{2}\label{eq:3distFinal}
\end{equation}
for all strings $\mathtt{A},\overline{\mathtt{A}}$ of at least $\tilde{h}_{n+m_{n}}/e_{n+m_{n}-1}$
consecutive symbols in $\mathtt{A}_{i_{1},j_1}^{(n+m_{n})}$ and $\mathtt{A}_{i_{2},j_2}^{(n+m_{n})}$,
respectively, for $i_{1}\neq i_{2}$ or $j_1\neq j_2$. On the other hand, we have 
\begin{equation}
\overline{f}\left(\mathcal{A}_{i_{1},j}^{(n+m_{n})},\mathcal{A}_{i_{2},j}^{(n+m_{n})}\right)\leq\sum_{i=1}^{m_{n}-1}\left(\frac{4}{l_{n+i}}+\frac{2}{N(n+i-1)+1}\right)\label{eq:FinalCircular}
\end{equation}
in the circular system. Moreover, we recall that for every $i\in\left\{ 1,\dots,K\right\} $
each pattern $\mathtt{A}_{i,j}^{(n+m_{n})}$, $j=1,\dots,\left(\lambda_{n+m_{n}}+1\right)K$,
contains each $(n+m_{n}-1)$-block of type $i$ exactly 
\[
\bar{N}(n+m_{n}-1)=\left(e_{n+m_{n}-1}\right)^{2\cdot\left(\lambda_{n+m_{n}}+1\right)\cdot K+3}
\]
many times. In the final step, we define $K$ many $(n+m_{n})$-blocks
as follows (with $\lambda=\lambda_{n+m_n}$): 
\begin{align*}
\mathtt{B}_{1}^{(n+m_{n})}= & \mathtt{A}_{1,1}^{(n+m_{n})}\mathtt{A}_{2,2}^{(n+m_{n})}\dots\mathtt{A}_{K,K}^{(n+m_{n})}\mathtt{A}_{1,K+1}^{(n+m_{n})}\dots \mathtt{A}_{K,2K}^{(n+m_{n})}\dots \mathtt{A}_{K,\lambda K}^{(n+m_{n})}\mathtt{a}_{n+m_n-1}\\
\mathtt{B}_{2}^{(n+m_{n})}= & \mathtt{A}_{2,1}^{(n+m_{n})}\mathtt{A}_{3,2}^{(n+m_{n})}\dots\mathtt{A}_{1,K}^{(n+m_{n})}\mathtt{A}_{2,K+1}^{(n+m_{n})}\dots \mathtt{A}_{1,2K}^{(n+m_{n})}\dots \mathtt{A}_{1,\lambda K}^{(n+m_{n})}\mathtt{a}_{n+m_n-1}\\
\vdots\; & \;\vdots\\
\mathtt{B}_{K}^{(n+m_{n})}= & \mathtt{A}_{K,1}^{(n+m_{n})}\mathtt{A}_{1,2}^{(n+m_{n})}\dots\mathtt{A}_{K-1,K}^{(n+m_{n})}\mathtt{A}_{K,K+1}^{(n+m_{n})}\dots \mathtt{A}_{K-1,2K}^{(n+m_{n})}\dots \mathtt{A}_{K-1,\lambda K}^{(n+m_{n})}\mathtt{a}_{n+m_n-1}
\end{align*}
with
\[
\mathtt{a}_{n+m_n-1}=\left(\mathtt{B}_{0}^{(n+m_{n}-1)}\right)^{\lambda_{n+m}\cdot \bar{N}(n+m_{n}-1)}.
\]

We note that each $(n+m_{n})$-block contains exactly $\lambda_{n+m_n}$ patterns
of each type. Hence, it is uniform in the $(n+m_{n}-1)$-blocks. We prove the statement in Proposition \ref{prop:cycling} on the $\overline{f}$ distance in the
odometer-based system.

\begin{proof}[Proof of part (1) in Proposition \ref{prop:cycling}]
By adding fewer than $2\tilde{h}_{n+m_n}$ symbols to each $\mathtt{B}$
and $\overline{\mathtt{B}}$ we can complete any partial pre-$(n+m_n)$-blocks at the beginning and end of $\mathtt{B}$
and $\overline{\mathtt{B}}$. This change increases the $\overline{f}$ distance between $\mathtt{B}$
and $\overline{\mathtt{B}}$ by at most 
\[
\frac{4\tilde{h}_{n+m_n}}{2\mathtt{h}_{n+m_{n}}/K}<\frac{2\tilde{h}_{n+m_n}}{\lambda_{n+m_n}\tilde{h}_{n+m_n}}=\frac{2}{\lambda_{n+m_n}}.
\]
In the next step, we ignore the marker segment $\mathtt{a}_{n+m_{n}-1}$ which occupies
a fraction of $1/(N(n+m_{n}-1)+1)$ in each
$(n+m_{n})$-block due to uniformity and so a fraction of at most
$K(N(n+m_{n}-1)+1)^{-1}<e_{n+m_{n}-1}^{-1}$ of the total length of $\mathtt{B}$
and $\overline{\mathtt{B}}$. On the remaining strings all pre-$(n+m_n)$-blocks are different from each other. Hence, they are at least $\alpha-\frac{2}{T}-\frac{\varepsilon}{2}$ apart in $\overline{f}$ on substantial substrings of at least $\tilde{h}_{n+m_{n}}/e_{n+m_{n}-1}$ consecutive symbols by equation (\ref{eq:3distFinal}). We apply Corollary \ref{cor:alpha separated blocks} with $\tilde{f}=1$ to obtain
\[
\overline{f}\left(\mathtt{B},\overline{\mathtt{B}}\right)\geq\alpha-\frac{2}{T}-\frac{\varepsilon}{2} - \frac{4}{e_{n+m_{n}-1}}-\frac{2}{\lambda_{n+m_n}},
\]
which yields the claim.
\end{proof}

By choosing the circular coefficients $\left(l_{n+m}\right)_{m\in\mathbb{N}}$
to grow sufficiently fast as in equation (\ref{eq:l-1}) we also obtain
the second statement in Proposition \ref{prop:cycling}.

\begin{proof}[Proof of part (2) in Proposition \ref{prop:cycling}]
Since the marker segments and the used Feldman
patterns of the pre-$(n+m_{n})$-blocks are aligned and only the used
type of blocks differs, we use equation (\ref{eq:FinalCircular}) to
obtain 
\[
\overline{f}\left(\mathcal{B}_{i}^{(n+m_{n})},\mathcal{B}_{j}^{(n+m_{n})}\right) \leq\sum_{i=1}^{m_{n}-1}\frac{4}{l_{n+i}}+\sum_{i=1}^{m_{n}-1}\frac{2}{Ku_{n+i-1}^{2}e_{n+i-1}+1} <\delta
 \]
with the aid of assumptions (\ref{eq:Delta-1}) and (\ref{eq:l-1})
in the last step.
\end{proof}
Hence the proof of Proposition \ref{prop:cycling} has been accomplished.

\section{Proof of Theorem \ref{thm:converse}} \label{sec:ConverseProof}

To define the construction sequence for the odometer-based system
inductively we choose an increasing
sequence $\left(K_s\right)_{s\in\mathbb{N}}$ of positive integers
with 
\begin{equation}
\sum_{k\in\mathbb{N}}\frac{15}{\sqrt{K_s}}<\frac{1}{32}\label{eq:3M}
\end{equation}
and two decreasing sequences $\left(\varepsilon_{s}\right)_{s\in\mathbb{N}}$
and $\left(\delta_{s}\right)_{s\in\mathbb{N}}$ of positive real numbers
such that $\delta_{s}\searrow0$, $\varepsilon_s <1/(64K_s)$, and
\begin{equation}
\sum_{s\in\mathbb{N}}\varepsilon_{s}<\frac{1}{32}.\label{eq:3eps}
\end{equation}
We start by applying Proposition \ref{prop:OdomFeldMech} on $K_0+1$
symbols to obtain $N(1)+1$ many uniform and uniquely readable $1$-blocks
that are $\alpha_{1}\coloneqq(1/8)-13K_0^{-1/2}$ apart in $\overline{f}$ on
substantial substrings of length at least $\mathtt{h}_1/K_0$. Here, the number $N(1)+1$ is chosen such that
it allows the application of the cycling mechanism from Proposition
\ref{prop:cycling} to obtain $K_{1}+1$ many $n_{1}$-blocks (where
$n_{1}\coloneqq 1+m_{1}$ with $m_1$ from Proposition \ref{prop:cycling}) that are $\alpha_{2}=\alpha_{1}-2K_0^{-1}-\varepsilon_{0}$
apart on substantial subshifts of length at least $\mathtt{h}_{n_{1}}/(K_{1}+1)$
in the odometer-based system and are $\delta_{0}$-close in the corresponding
circular system. 

We continue by applying Proposition \ref{prop:OdomFeldMech} on those
blocks to obtain sufficiently many $\left(n_{1}+1\right)$-blocks
(which are $\alpha_{3}=\alpha_{2}-13K_{1}^{-1/2}$ apart on substantial
substrings of length at least $\mathtt{h}_{n_1+1}/K_1$ in the odometer-based system) such that we can apply Proposition
\ref{prop:cycling} again to get $K_{2}+1$ many $n_{2}$-blocks (with
$n_{2}\coloneqq n_{1}+1+m_{n_{1}+1}$) that are $\alpha_{4}=\alpha_{3}-2K_{1}^{-1}-\varepsilon_{1}$
apart on substantial subshifts of length at least $\mathtt{h}_{n_{2}}/(K_{2}+1)$
in the odometer-based system and $\delta_{1}$-close in the circular
system.

Continuing like this we produce $n$-blocks that are at least 
\[
\frac{1}{8}-\sum_{s\in\mathbb{N}}\left(\frac{15}{\sqrt{K_{s}}}+\varepsilon_{s}\right)>\frac{1}{16}
\]
apart from each other by the requirements (\ref{eq:3M}) and (\ref{eq:3eps}).
Hence, the odometer-based system cannot be loosely Bernoulli by Lemma \ref{lem:Rothstein}.

On the other hand, the $\overline{f}$ distance between $n$-blocks in the circular
system goes to zero because $\delta_{s}\searrow0$. Thus, the odometer-based
system is loosely Bernoulli by Lemma \ref{lem:Rothstein}. Since the blocks constructed by Propositions
\ref{prop:OdomFeldMech} and \ref{prop:cycling} satisfy the properties
of uniformity and unique readability, our construction sequence satisfies
those as well. 

\subsection*{Acknowledgements}

We thank M. Foreman and J.-P. Thouvenot for helpful conversations.
In particular, we thank Foreman for his patient explanations of his
work with B. Weiss during his visit to Indiana University. We are
also grateful for Thouvenot's suggestion to use Rothstein's argument
in our positive entropy example. 

\end{document}